\newcommand{\map}[1]{\xrightarrow{#1}}
\newcommand{\iso}{\cong}
\newcommand{\define}{\stackrel{\mathrm{def}}{=}}
\newcommand{\imes}{\ltimes}
\newcommand{\End}{\mathrm{End}}
\newcommand{\Spec}{\mathrm{Spec}}
\newcommand{\Q}{\mathbb Q}
\newcommand{\Z}{\mathbb Z}
\newcommand{\R}{\mathbb R}
\newcommand{\C}{\mathbb C}
\newcommand{\F}{\mathbb F}
\newcommand{\A}{\mathbb A}
\newcommand{\co}{\mathcal O}
\newcommand{\alg}{\mathrm{alg}}
\newcommand{\ord}{\mathrm{ord}}
\newcommand{\SL}{\mathrm{SL}}
\newcommand{\GL}{\mathrm{GL}}
\newcommand{\Sym}{\mathrm{Sym}}
\newcommand{\et}{\mathrm{et}}
\newcommand{\beef}{\sharp}
\newcommand{\leftcirc}{{}^{\circ}}
\begin{document}
\author{Benjamin Howard and Keerthi Madapusi}
\title[Kudla's modularity conjecture on integral models]{Kudla's modularity conjecture on integral models of orthogonal Shimura varieties}
\date{}

\address{Department of Mathematics\\Boston College\\ 140 Commonwealth Ave. \\Chestnut Hill, MA 02467, USA}
\email{howardbe@bc.edu}
\email{madapusi@bc.edu}

\thanks{B.H.~was supported in part by NSF grants  DMS-2101636 and DMS-1801905. K.M.~was supported in part by NSF grants DMS-220804 and DMS-1802169.}

\begin{abstract}
We construct a family of special cycle classes on the regular integral model of an orthogonal Shimura variety, and show that these cycle classes appear as Fourier coefficients of a Siegel modular form.  Passing to the generic fiber of the Shimura variety recovers a result of Bruinier and Raum, originally conjectured by Kudla.
\end{abstract}

\maketitle
\setcounter{tocdepth}{1}

\tableofcontents

\theoremstyle{plain}
\newtheorem{theorem}{Theorem}[subsection]
\newtheorem{bigtheorem}{Theorem}[section]

\newtheorem{proposition}[theorem]{Proposition}
\newtheorem{lemma}[theorem]{Lemma}
\newtheorem{corollary}[theorem]{Corollary}
\newtheorem{bigcorollary}[bigtheorem]{Corollary}

\theoremstyle{definition}
\newtheorem{definition}[theorem]{Definition}
\newtheorem{hypothesis}[theorem]{Hypothesis}
\newtheorem{bighypothesis}[bigtheorem]{Hypothesis}

\theoremstyle{remark}
\newtheorem{remark}[theorem]{Remark}
\newtheorem{example}[theorem]{Example}
\newtheorem{question}[theorem]{Question}

\numberwithin{equation}{section}
\renewcommand{\thebigtheorem}{\Alph{bigtheorem}}
\renewcommand{\thebigcorollary}{\Alph{bigcorollary}}
\renewcommand{\thebighypothesis}{\Alph{bighypothesis}}


\section{Introduction}


Throughout this paper, we denote by  $(V,Q)$  a quadratic space over $\Q$ of signature $(n,2)$  with $n\ge 1$.
Associated to $V$ is a Shimura datum $(G,\mathcal{D})$ with reflex field $\Q$, where  the reductive group $G=\mathrm{GSpin}(V)$ of spinor similitudes sits in  an exact sequence
\[
1 \to \mathbb{G}_m \to G \to \mathrm{SO}(V) \to 1.
\]
Fixing a  $\Z$-lattice $L\subset V$ on which  the quadratic form takes integral values determines a compact open subgroup $K \subset G(\A_f)$, and hence a smooth complex orbifold
\[
M (\C) = G (\Q) \backslash \mathcal{D}  \times G (\A_f) / K.
\]
By the theory of canonical models of Shimura varieties, these are the complex points of a 
smooth Deligne-Mumford stack  $M \to \Spec(\Q)$   of dimension $n$.

The Shimura variety $M$ carries special cycles of all codimensions,  whose arithmetic properties are the subject of a series of conjectures of Kudla \cite{kudla-msri}.  See also \cite{kudla97}, \cite{KR-hilbert}, \cite{KR-siegel}, and \cite{KRYbook}. 
The organizing principle of these conjectures is that the special cycles should behave like coefficients of the theta kernel used to lift automorphic forms from a symplectic group to an orthogonal group.

  In particular, the special cycles should themselves be, in a suitable sense,  the coefficients of a Siegel modular form.  
  This is a now a theorem of Bruinier and Raum, and the goal of this paper is to extend their modularity result to  special cycles on the canonical integral model of $M$.


\subsection{Modularity on the generic fiber}


For any integer $d\ge 1$, let $\Sym_d(\Q)$ be the set of symmetric $d\times d$ matrices with rational coefficients.  

Let $L^\vee \subset V$ be the dual lattice to $L$ under the bilinear form determined by $Q$.
To each  $T \in \Sym_d(\Q)$ and each tuple of cosets $\mu=(\mu_1,\ldots,\mu_d) \in (L^\vee / L)^d$, Kudla associates a special cycle
\[
Z(T,\mu) \to M 
\]
of  pure codimension $\mathrm{rank}(T)$.
The Shimura variety $M$ carries a distinguished line bundle $\omega$, called the \emph{tautological bundle} or the \emph{line bundle of weight one modular forms}, and we follow Kudla in using the intersection pairing in the Chow ring to define the \emph{corrected cycle class}
\begin{equation}\label{intro generic corrected}
C(T,\mu) = \underbrace{c_1(\omega^{-1}) \cdots c_1(\omega^{-1})}_{ d-\mathrm{rank}(T) }  \cdot Z(T,\mu) 
\in \mathrm{CH}^{d}(M)
\end{equation}
in the codimension $d$ Chow group.
Here $c_1(\omega^{-1}) \in \mathrm{CH}^1(M)$ is the first Chern class of $\omega^{-1}$.
These Chow groups, like all Chow groups appearing in this paper, are taken with $\Q$-coefficients.

The metaplectic double cover of $\mathrm{Sp}_{2d}(\Z)$ acts via the Weil representation $\omega_{L,d}$ on the finite dimensional $\C$-vector space  $S_{L,d}$ of functions  $ (L^\vee / L)^d \to \C$.
The dual representation  has a canonical basis $\{ \phi^*_\mu \}_\mu \subset S_{L,d}^*$ indexed by $d$-tuples $\mu$ as above, and so we may form
\[
C(T) = \sum_{ \mu \in (L^\vee / L)^d } C(T,\mu) \otimes \phi^*_\mu \in \mathrm{CH}^{d}(M) \otimes_\Q S_{L,d}^*.
\]

The following conjecture of Kudla was proved by Borcherds \cite{Bor:GKZ}  in the case of codimension $d=1$ 
 (and before that by Gross-Kohnen-Zagier \cite{GKZ} in the very special case where $M$ is a modular curve).
The general case was  proved   by  Bruinier and Raum   \cite{BWR},  using ideas from the thesis of W.~Zhang \cite{zhang-thesis} to reduce to the case $d=1$.

\begin{bigtheorem}[Bruinier-Raum]\label{BigThm:generic_modularity}
The formal generating series 
\[
\sum_{ T\in \Sym_d(\Q) } C(T) \cdot q^T
\]
converges to a Siegel modular form of weight $\frac{n}{2} +1 $ and representation 
\[
\omega_{L,d}^* : \mathrm{Sp}_{2d}(\Z) \to \GL( S_{L,d}^* ) .
\]
Convergence and modularity are understood in the following sense: for any $\Q$-linear functional $\iota : \mathrm{CH}^{d}(M) \to \C$, the formal generating series
\[
\sum_{ T\in \Sym_d(\Q) }  \iota( C(T))  \cdot q^T
\]
with coefficients in $S_{L,d}^*$ is the $q$-expansion of a holomorphic Siegel modular form  of the stated weight and representation.
\end{bigtheorem}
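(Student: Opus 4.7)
The plan is to prove \hyperref[BigThm:generic_modularity]{Theorem~A} by induction on the codimension $d$, following the strategy originating in W.~Zhang's thesis and brought to fruition by Bruinier and Raum. The base case $d=1$ is precisely Borcherds's theorem on the modularity of the generating series of Heegner divisors, which supplies the input to the induction. The inductive engine is a general criterion identifying when a formal $q$-expansion valued in a finite-dimensional representation of the metaplectic cover of $\mathrm{Sp}_{2d}(\Z)$ is a Siegel modular form: it suffices to verify (a) the $\GL_d(\Z)$-equivariance of its Fourier coefficients under conjugation $T \mapsto \gamma T \gamma^t$ in the Levi of the Siegel parabolic, and (b) the Jacobi-form modularity of each of its Fourier--Jacobi coefficients relative to the Klingen embedding $\mathbb{H}_{d-1} \times \mathbb{H}_1 \hookrightarrow \mathbb{H}_d$.

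Condition (a) for the corrected classes $C(T,\mu)$ is immediate from the definition of the special cycles, which depend only on the $\GL_d(\Z)$-orbit of the defining tuple up to the natural twist by $\omega_{L,d}^*$. The substantive content lies in condition (b). I would decompose each $T \in \Sym_d(\Q)$ as
\[
T = \begin{pmatrix} T' & t \\ t^{\mathrm t} & m \end{pmatrix}, \qquad T' \in \Sym_{d-1}(\Q),\ t\in\Q^{d-1},\ m\in\Q,
\]
and group the generating series after fixing $T'$ and $\mu' \in (L^\vee/L)^{d-1}$. The resulting series should be identified, via intersection theory in $\mathrm{CH}^{\bullet}(M)$, with the product of $C(T',\mu')$ and a codimension-$1$ generating series built from the classes $C((m),\mu_d)$, twisted by a theta factor that records the off-diagonal data $(t,\mu_d)$. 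Applying the inductive hypothesis in codimension $d-1$ (to the $\Q$-linear functional on $\mathrm{CH}^d(M)$ obtained by composing $\iota$ with intersection against $C(T',\mu')$) together with the Jacobi-form refinement of Borcherds's theorem for the codimension-$1$ factor, each Fourier--Jacobi coefficient acquires the required Jacobi modular transformation law in weight $\tfrac{n}{2}+1$ with representation induced by $\omega_{L,d}^*$.

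The main obstacle is the precise intersection-theoretic identification at the heart of the inductive step. For $T$ of non-maximal rank the naive product $Z(T',\mu') \cdot Z((m),\mu_d)$ is not equal to $Z(T,\mu)$; the discrepancy is an excess-intersection contribution, and it is exactly this contribution that makes the correction factor $c_1(\omega^{-1})^{d-\mathrm{rank}(T)}$ in \eqref{intro generic corrected} indispensable. Matching this geometric discrepancy with the singular Fourier coefficients predicted by the Jacobi-form decomposition on the automorphic side—so that the two sides agree term by term even at degenerate $T$—is the delicate bookkeeping required to close the induction. Once this identification is in place, the remainder of the argument is a formal verification of symmetries and a careful tracking of Weil representation twists.
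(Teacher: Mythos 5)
Your skeleton is the right one --- it is exactly the Bruinier--Raum/Zhang strategy that the paper relies on: the formal-series theorem of [BWR] reduces Siegel modularity to (a) $\GL_d(\Z)$-symmetry of the coefficients plus (b) Jacobi modularity of every Fourier--Jacobi coefficient, and (b) is where all the work lies. Condition (a) is indeed immediate in the generic fiber. But your mechanism for (b) has a genuine gap. You propose to identify the Fourier--Jacobi coefficient at fixed $T'$ with ``the product of $C(T',\mu')$ and a codimension-$1$ generating series \ldots twisted by a theta factor,'' computed in $\mathrm{CH}^\bullet(M)$. The intersection formula gives
\[
C(T',\mu')\cdot C((m),\mu_d) \;=\; \sum_{t\in\Q^{d-1}} C\!\left(\begin{smallmatrix} T' & t \\ {}^t t & m\end{smallmatrix}\right),
\]
i.e.\ the product in the Chow ring of $M$ collapses the sum over the off-diagonal vector $t$. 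The Jacobi form, however, must carry each $t$ with its own weight $\xi^{t}$ in the elliptic variables, and there is no way to reinsert the theta factor term by term once the classes have been added in $\mathrm{CH}^d(M)$. The separation of the $t$-contributions happens only on the cycle $Z(T',\mu')$ itself: one writes $Z(T',\mu')=\bigsqcup_g M^\flat_g$ as a union of smaller orthogonal Shimura varieties, introduces for each $w$ in the positive definite space $W$ attached to $T'$ the divisor $\mathcal{Y}(m,\mu_d,w)\to Z(T',\mu')$ recording the orthogonal projection of the new special endomorphism onto $W$, identifies these with Borcherds' special divisors $Z^\flat_g(m,\nu)$ on each $M^\flat_g$, applies Borcherds' genus-one theorem there, and only then tensors with the theta series of the positive definite lattice $\Lambda_g=W\cap gL_{\widehat\Z}$ and pushes forward to $M$. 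This is the content of \S\ref{ss:aux} and Proposition \ref{prop:generic divisor generating}; it cannot be replaced by an intersection computation on $M$ alone.

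A second, related point: there is no induction on $d$. Invoking ``the inductive hypothesis in codimension $d-1$'' by composing $\iota$ with intersection against $C(T',\mu')$ would at best give modularity of a series summed over $T'\in\Sym_{d-1}(\Q)$ under $\widetilde\Gamma_{d-1}$, which is neither the fixed-$T'$ Fourier--Jacobi coefficient nor a statement about the Jacobi group $\widetilde\Gamma_1\ltimes M_{2,d-1}(\Z)$ that condition (b) requires. The only automorphic input needed is the $d=1$ Borcherds theorem (applied on each component $M^\flat_g$ of $Z(T',\mu')$, for every $T'$); the passage from all-genus-$d$ Jacobi modularity to genus-$d$ Siegel modularity is done in one stroke by the main theorem of [BWR], not by climbing from $d-1$ to $d$.
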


Strictly speaking, the results of Borcherds and Bruinier-Raum apply to the Chow group of the complex fiber $M(\C)$, not the canonical model  over $\Q$. 
The proof for the canonical model is the same, using the fact that all Borcherds products on $M(\C)$ are algebraic and defined over the field of rational numbers  \cite{HMP}.
 In any case, Theorem \ref{BigThm:generic_modularity} in the form stated here is a consequence of our main result,  Theorem \ref{BigThm:integral_modularity} below.


\subsection{Modularity on the integral model}


Throughout the paper we work with a finite set of primes $\Sigma$ containing all primes $p$ for which the lattice $L_p$ is not maximal (Definition \ref{def:max lattice}), and abbreviate 
\[
\Z[\Sigma^{-1}] = \Z[p^{-1}:\, p\in \Sigma].  
\]
In \cite{HMP} one finds the construction of a normal and flat Deligne-Mumford stack 
\[
\mathcal{M} \to \Spec(  \Z[\Sigma^{-1}]  )
\]
with generic fiber $M$.  
Soon we will impose stronger assumptions on $\Sigma$,  to guarantee that $\mathcal{M}$ is regular.

For each  $T \in \Sym_d(\Q)$ and $\mu \in (L^\vee / L)^d$ we define a \emph{naive special cycle}
\[
\mathcal{Z}(T,\mu) \to \mathcal{M} 
\]
whose generic fiber agrees with Kudla's $Z(T,\mu)$.  Our definition of this cycle is via a moduli interpretation.
The integral model carries a \emph{Kuga-Satake abelian scheme} $\mathcal{A} \to \mathcal{M}$ whose pullback to any $\mathcal{M}$-scheme   $S \to \mathcal{M}$ has a distinguished $\Z$-submodule
\[
V(\mathcal{A}_S) \subset \End(\mathcal{A}_S)
\]
of \emph{special endomorphisms}.  
The space of special endomorphisms is endowed with a positive definite quadratic form, and  the $S$-points of  $\mathcal{Z}(T,\mu)$ are  in bijection with $d$-tuples $x=(x_1,\ldots, x_d) \in V(\mathcal{A}_S)_\Q^d$ of special quasi-endomorphisms  with moment matrix $Q(x)=T$,  whose denominators are controlled (in a precise sense) by the tuple $\mu=(\mu_1,\ldots, \mu_d)$.
For example, if $\mu_i=0$ then $x_i \in V(\mathcal{A}_S)$.

We insist on a modular definition of $\mathcal{Z}(T,\mu)$, as opposed to simply taking the Zariski closure of $Z(T,\mu)$ in the integral model, because this is necessary to ensure that the special cycles  behave well under intersections and pullbacks to smaller orthogonal Shimura varieties (as in Theorems \ref{BigThm:intro intersection} and \ref{BigThm:intro pullback} below).

This insistence comes with a high cost: the naive special cycles need not be flat over $\Z[\Sigma^{-1}]$, and need not  be equidimensional.  
Although those irreducible components of $\mathcal{Z}(T,\mu)$  that are flat over $\Z[\Sigma^{-1}]$ have  codimension $\mathrm{rank}(T)$ in $M$, there will often be irreducible components of the wrong codimension supported in nonzero characteristics.

The intuition behind this phenomenon is easy to explain.  At a characteristic $p$ geometric point  $s\to \mathcal{M}$ at which $\mathcal{A}_s$ is supersingular, the rank of the space of special endomorphisms $V(\mathcal{A}_s)$ is as large as it can be (namely, $n+2$).
This is large enough  that if the entries of $T$ are integral and highly divisible, the entire supersingular locus of $\mathcal{M}_{\F_p}$ will be  contained in $\mathcal{Z}(T,\mu)$.
It is known \cite{HP}  that this supersingular locus has dimension roughly $n/2$, and so the naive cycles $\mathcal{Z}(T,\mu)$ tend to have vertical irreducible components of dimension $>n/2$, regardless of the rank  of $T$.
For this reason,  one cannot construct cycle classes on $\mathcal{M}$ simply by imitating the construction \eqref{intro generic corrected}.

\begin{bighypothesis}
For the remainder of the introduction we assume that $\Sigma$ satisfies the hypotheses of Proposition \ref{prop:regularity}, guaranteeing that  $\mathcal{M}$ is regular.  If, for example, the discriminant of $L$ is odd and squarefree then $\Sigma=\emptyset$ satisfies these hypotheses.
\end{bighypothesis}

We will  construct  \emph{corrected} (or perhaps \emph{derived})  cycle classes
\[
\mathcal{C}(T,\mu) \in \mathrm{CH}^d(\mathcal{M}) 
\]
for all  integers   $d\ge 1$,  all $T \in \Sym_d(\Q)$,  and all $\mu \in (L^\vee / L)^d$.
These cycle classes vanish unless $T$ is positive semi-definite.
In \S \ref{ss:naive comparison} we prove the following result, showing that our construction is compatible with the classes already constructed in the generic fiber.

\begin{bigtheorem}\label{BigThm:intro comparison}
Restricting $\mathcal{C}(T,\mu)$ to the generic fiber recovers  \eqref{intro generic corrected}.   
Moreover,  if  the naive cycle  $\mathcal{Z}(T,\mu)$ is equidimensional of codimension $\mathrm{rank}(T)$ in $\mathcal{M}$, then 
\[
\mathcal{C}(T,\mu)  =  \underbrace{c_1(\omega^{-1}) \cdots c_1(\omega^{-1})}_{ d-\mathrm{rank}(T) }  \cdot \mathcal{Z}(T,\mu) 
\in \mathrm{CH}^{d}(\mathcal{M})
\]
for a distinguished line bundle $\omega$ on $\mathcal{M}$.
\end{bigtheorem}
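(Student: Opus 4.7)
The plan is to unwind the definition of $\mathcal{C}(T,\mu)$ and, in each of the two situations, verify that the derived/excess corrections built into the construction collapse to the classical intersection-theoretic formula on the right-hand side.

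For part~(1), I would restrict the whole construction to the generic fiber $M \hookrightarrow \mathcal{M}$. The key observation is that over $\Q$ the Kuga-Satake abelian scheme has no supersingular geometric points, so the rank of the special endomorphism lattice at any geometric point of $M$ is at most $n$. Standard dimension counting then forces $Z(T,\mu)$ to be equidimensional of codimension $\mathrm{rank}(T)$ in $M$ whenever non-empty. Granting that the construction of $\mathcal{C}(T,\mu)$ commutes with flat open immersions---which should be formal, since $\omega$, the divisors $\mathcal{Z}(m,\nu)$, and the auxiliary Chern classes all pull back tautologically---part~(1) reduces to part~(2) applied over $M$.

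For part~(2), assume $\mathcal{Z}(T,\mu)$ is equidimensional of codimension $r = \mathrm{rank}(T)$ in the regular Deligne-Mumford stack $\mathcal{M}$. The construction of $\mathcal{C}(T,\mu)$ is of the standard derived/Fulton shape: one realizes $\mathcal{Z}(T,\mu)$ as the refined zero locus of a tautological section $s$ of a rank-$d$ bundle $\mathcal{E}$ built from the universal tuple of special quasi-endomorphisms, and sets
\[
\mathcal{C}(T,\mu) = c_d(\mathcal{E},s) \in \mathrm{CH}^d(\mathcal{M})
\]
via a localized top Chern class (equivalently, via the Koszul complex on $s$). When $\mathcal{Z}(T,\mu)$ has the expected codimension $r$ and $\mathcal{M}$ is regular, the components of $s$ form a Koszul-regular sequence along $\mathcal{Z}(T,\mu)$, so the localized Chern class collapses to
\[
c_{d-r}\bigl(\mathcal{E}|_{\mathcal{Z}(T,\mu)}\bigr) \cdot [\mathcal{Z}(T,\mu)].
\]
It then remains to identify the rank-$(d-r)$ excess bundle with $\omega^{\oplus (d-r)}$, which expresses the tautological fact that the ``collapsing'' directions in the universal tuple of special quasi-endomorphisms are exactly the isotropic line in the Hodge filtration on the first relative de~Rham cohomology of $\mathcal{A}$, i.e.\ the line bundle $\omega$ of weight-one modular forms.

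The main obstacle is this last identification of the excess bundle with $\omega^{\oplus (d-r)}$: it is where the specific geometry of Kudla cycles on orthogonal Shimura varieties enters, and it is what forces the $c_1(\omega^{-1})^{d-r}$ correction to appear with the precise exponent $d-r$. I would carry it out by a local Kodaira-Spencer computation at a geometric point of $\mathcal{Z}(T,\mu)$, comparing the normal bundle of $\mathcal{Z}(T,\mu)$ in $\mathcal{M}$ with the fibre of $\mathcal{E}$, and using the standard isotropic-line description of $\omega$. Once this identification is in hand, the Koszul regularity (which is automatic from equidimensionality plus regularity of $\mathcal{M}$) and the formal reduction of part~(1) to part~(2) are routine.
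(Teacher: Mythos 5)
Your reduction of part (1) to part (2) is essentially sound and matches the paper in spirit (the paper makes "restriction to the generic fiber" precise by enlarging $\Sigma$ to kill all vertical components of $\mathcal{Z}(T,\mu)$, after which equidimensionality holds integrally and part (2) applies). The problem is part (2), where your argument rests on a construction of $\mathcal{C}(T,\mu)$ that is not the one in the paper and, more importantly, on an input that does not exist in the form you need. You posit a rank-$d$ bundle $\mathcal{E}$ on $\mathcal{M}$ with a tautological global section $s$ whose refined zero locus is $\mathcal{Z}(T,\mu)$. No such global section exists: the obstruction sections $\mathrm{obst}_{x_i}\in H^0(\widetilde S,\omega^{-1}|_{\widetilde S})$ are defined only on first-order infinitesimal neighborhoods of the special divisors, and the special cycles are defined by a moduli problem, not as zero loci of sections of a bundle on $\mathcal{M}$. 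The actual definition is the iterated derived tensor product, in $G$-theory, of the classes $[\co_{\mathcal{Z}(t_i,\mu_i)}]$ attached to the \emph{diagonal entries} of $T$ (with $[\co_\mathcal{M}]-[\omega]$ substituted when $(t_i,\mu_i)=(0,0)$), pushed into $F^dK_0(\mathcal{M})_\Q$ and mapped to the Chow group via Gillet--Soul\'e. Also, your intermediate claim that the components of $s$ form a Koszul-regular sequence along $\mathcal{Z}(T,\mu)$ cannot be right when $r=\mathrm{rank}(T)<d$: a regular sequence of length $d$ cuts out codimension $d$, not $r$.

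The step you correctly flag as the main obstacle --- identifying the rank-$(d-r)$ excess with $\omega^{\oplus(d-r)}$ --- is exactly where the real content lies, and a "local Kodaira--Spencer computation at a geometric point" will not deliver it. Because the construction is built from the diagonal entries of $T$, a rank-$r$ matrix with all diagonal entries nonzero (e.g.\ $T=\left(\begin{smallmatrix}1&1\\1&1\end{smallmatrix}\right)$) yields $d$ honest divisors meeting improperly, and splitting off the $d-r$ excess directions globally and coherently in $K$-theory is precisely the linear invariance $\mathcal{C}(T,\mu)=\mathcal{C}({}^tATA,\mu A)$. The paper proves this (Proposition \ref{prop:linear invariance}) by a delicate argument with local lifts of the obstruction sections and homotopies between the resulting Koszul-type complexes, globalizing \cite{How19}; different local splittings differ by homotopies, and showing the induced maps on homology sheaves glue is the whole point. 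With linear invariance in hand, the paper reduces to $T=\mathrm{diag}(T',0_{d-r})$ with $T'$ positive definite, handles the positive-definite block by a regular-sequence/Tor-vanishing argument (Serre's intersection multiplicities for Cohen--Macaulay modules), handles the zero block by the direct computation $\mathcal{C}(0_{d-r},0)=c_1(\omega^{-1})^{d-r}$, and combines the two via the intersection formula of Proposition \ref{prop:intersection formula}. Your outline omits all of this; as written it assumes the conclusion of the hardest step.
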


The next two results show that our corrected cycle classes behave well under intersections and pullbacks to smaller Shimura varieties. Analogous  formulas in the generic fiber are proved in \cite{YZZ} and  \cite{kudla21}.

The following is stated in the text as Proposition \ref{prop:intersection formula}.

\begin{bigtheorem}\label{BigThm:intro intersection}
For  all positive integers $d'$ and $d''$,  symmetric matrices  
\[
T' \in \Sym_{d'}(\Q) \quad\mbox{and}\quad T'' \in \Sym_{d''}(\Q),
\]
 and tuples 
$\mu' \in (L^\vee/L)^{d'}$ and  $\mu'' \in (L^\vee / L)^{d''}$, we have the intersection formula
\[
\mathcal{C} (T',\mu') \cdot \mathcal{C} (T'',\mu'') 
= \sum_{ T = \left(\begin{smallmatrix}  T' & * \\ * & T''  \end{smallmatrix} \right)   } \mathcal{C}(T,\mu)
\]
in the codimension $d'+d''$ Chow group of $\mathcal{M}$. 
On the right hand side  $\mu = ( \mu',\mu'')$ is the concatenation of  $\mu'$ and $\mu''$.
\end{bigtheorem}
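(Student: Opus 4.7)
The plan is to reduce the intersection formula to a moduli-theoretic decomposition of the fiber product, and then to argue that the correction procedure defining $\mathcal{C}(T,\mu)$ is compatible with such fiber products.

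First, the moduli description of the naive cycles immediately yields an identification of stacks
\[
\mathcal{Z}(T', \mu') \times_\mathcal{M} \mathcal{Z}(T'', \mu'') \iso \bigsqcup_{T} \mathcal{Z}(T, \mu),
\]
where $T$ runs over symmetric $(d'+d'')\times(d'+d'')$ matrices with diagonal blocks $T'$ and $T''$, and $\mu = (\mu', \mu'')$. Indeed, an $S$-valued point on the left is a pair of tuples of special quasi-endomorphisms $x' \in V(\mathcal{A}_S)_\Q^{d'}$ and $x'' \in V(\mathcal{A}_S)_\Q^{d''}$ with moment matrices $T'$ and $T''$ and denominators controlled by $\mu'$ and $\mu''$, respectively. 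The concatenation $x = (x', x'')$ then has a well-defined moment matrix $T = Q(x)$ whose diagonal blocks are $T'$ and $T''$ and whose denominators are controlled by $\mu$. Since $T$ is locally constant on $S$, we obtain the decomposition above.

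Second, this decomposition must lift to the derived/obstruction-theoretic enhancements used to construct the corrected classes. By the construction alluded to in Theorem \ref{BigThm:intro comparison}, the class $\mathcal{C}(T,\mu)$ comes from equipping $\mathcal{Z}(T,\mu)$ with an obstruction theory built from the Kuga-Satake abelian scheme and the quadratic form on special endomorphisms, producing a virtual class of pure codimension $d$ in $\mathcal{M}$. The central observation is that the obstruction theory for $\mathcal{Z}(T,\mu)$ attached to a tuple of length $d' + d''$ decomposes, under the concatenation identification, as the direct sum of the obstruction theories attached to the factors $\mathcal{Z}(T',\mu')$ and $\mathcal{Z}(T'',\mu'')$. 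The excess data used to correct each naive cycle to a class of the right codimension is additive in this sense, essentially because the normal/conormal data for the vanishing of the moment matrix is a block-diagonal construction in the quadratic lattice $V(\mathcal{A}_S)$.

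Third, since $\mathcal{M}$ is regular under the running hypothesis, we invoke the general compatibility of the intersection product in $\mathrm{CH}^*(\mathcal{M})$ with refined Gysin pullbacks: the intersection $\mathcal{C}(T',\mu') \cdot \mathcal{C}(T'',\mu'')$ is computed as the diagonal pullback from $\mathcal{M} \times \mathcal{M}$ applied to the external product of the two corrected classes. Combining this with the additivity of obstruction theories and the moduli decomposition of the fiber product gives precisely
\[
\mathcal{C}(T',\mu') \cdot \mathcal{C}(T'',\mu'') = \sum_{T} \mathcal{C}(T,\mu),
\]
as desired.

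The main obstacle is the second step: genuinely matching the derived enhancement (or obstruction theory) underlying $\mathcal{C}(T,\mu)$ with the derived fiber product of those underlying $\mathcal{C}(T',\mu')$ and $\mathcal{C}(T'',\mu'')$. This has to be carried out at the virtual level rather than just set-theoretically, because the naive fiber product acquires spurious vertical components of the wrong dimension precisely at supersingular points, and it is the obstruction-theoretic correction that controls the contribution of those components. A block-diagonal decomposition of the quadratic form on $V(\mathcal{A}_S)$ provides the input, but translating this cleanly through the construction of the corrected classes is where the bulk of the technical work will lie.
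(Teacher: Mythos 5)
Your outline captures the right structure and the first step is exactly the paper's Proposition \ref{prop:naive intersection}: the fiber product $\mathcal{Z}(T',\mu')\times_\mathcal{M}\mathcal{Z}(T'',\mu'')$ decomposes on moduli as $\bigsqcup_T \mathcal{Z}(T,\mu)$. Where you diverge is in steps two and three. The paper does not realize $\mathcal{C}(T,\mu)$ via a perfect obstruction theory and virtual class; it defines the derived fundamental class $[\co^{\mathrm{derived}}_{\mathcal{Z}(T,\mu)}]$ directly as the iterated cap product $z_1\cap\cdots\cap z_d$ of the individual divisor classes $z_i$ (with $z_i=[\co_{\mathcal{M}}]-[\omega]$ when $(t_i,\mu_i)=(0,0)$), living in the coniveau filtration of $G$-theory, and then pushes into $F^dK_0(\mathcal{M})_\Q$ and applies the Gillet--Soul\'e isomorphism with Chow groups. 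With that definition, your ``main obstacle'' --- matching the derived enhancement of the concatenated cycle with the derived fiber product of the two factors --- evaporates: the class $z_1'\cap\cdots\cap z'_{d'}\cap z_1''\cap\cdots\cap z''_{d''}$ restricts along one open-and-closed immersion to $[\co^{\mathrm{derived}}_{\mathcal{Z}(T',\mu')}]\cap[\co^{\mathrm{derived}}_{\mathcal{Z}(T'',\mu'')}]$ and along the other to $\sum_T[\co^{\mathrm{derived}}_{\mathcal{Z}(T,\mu)}]$, so the identity is essentially a tautology of the construction, and the spurious vertical components you worry about are handled automatically by working in the coniveau filtration rather than with cycles. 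Conversely, carrying out your plan literally would require the derived moduli interpretation of $\mathcal{Z}^{\mathrm{der}}(T,\mu)$ and the compatibility of its obstruction theory with concatenation, which is the content of Remark \ref{rem:fancy construction} and the companion paper \cite{Madapusi_Pera2022-ib}; that route is viable but substantially heavier than what is needed here. Also note that the paper's intersection pairing on Chow groups with support is built from the $K$-theoretic tensor product \eqref{eqn:k0_pairing}, not from a Fulton-style diagonal/refined Gysin pullback, though this is a cosmetic difference.
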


Now fix a positive definite self-dual quadratic lattice $\Lambda$, so that the orthogonal direct sum 
\[
L^\beef = L \oplus \Lambda
\]
has signature $(n+ \mathrm{rank}(\Lambda) , 2)$. 
This lattice determines its own Shimura datum, its own regular  integral model $\mathcal{M}^\beef$ over  $\Z[\Sigma^{-1}]$, and its own family of corrected special cycle classes
\[
 \mathcal{C}^\beef(T^\beef, \mu^\beef)\in \mathrm{CH}^d( \mathcal{M}^\beef)
\]
indexed by $T^\beef \in \Sym_d(\Q)$ and $\mu^\beef \in (L^{\beef ,\vee} / L^\beef )^d$. 
The isometric embedding $L \to L^\beef$ determines a finite and unramified morphism $f : \mathcal{M} \to \mathcal{M}^\beef$, inducing a pullback 
\[
f^* : \mathrm{CH}^d( \mathcal{M}^\beef) \to \mathrm{CH}^d( \mathcal{M} ).
\]

The following theorem is a special case of Proposition \ref{prop:derived pullback}.

\begin{bigtheorem}\label{BigThm:intro pullback}
There is a decomposition
\[
f^* \mathcal{C}^\beef(T^\beef, \mu^\beef)  =
 \sum_{   \substack{ S, T \in \Sym_d(\Q)  \\   S+T  = T^\beef   }    }
  R_\Lambda(S) \cdot   \mathcal{C} (  T  , \mu )
\]
of classes in $\mathrm{CH}^d( \mathcal{M})$, where $\mu=\mu^\sharp$ viewed as an element of 
\[
(L^\vee/L)^d \iso (L^{\sharp,\vee} / L^\sharp)^d,
\]
and $R_\Lambda(S)$ is the number of tuples $y\in \Lambda^d$ with moment matrix   $Q(y)=S$.
\end{bigtheorem}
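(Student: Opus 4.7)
The plan is to establish a moduli-theoretic decomposition of $f^{-1}\mathcal{Z}^\beef(T^\beef,\mu^\beef)$ into a disjoint union of naive cycles on $\mathcal{M}$ indexed by vectors $y\in\Lambda^d$, and then to transfer that decomposition through the derived construction of the corrected classes.

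For the moduli decomposition, the essential input is the orthogonal splitting $L^\beef=L\oplus\Lambda$ with $\Lambda$ positive definite and self-dual. For any $\mathcal{M}$-scheme $S$, the pullback of the Kuga-Satake abelian scheme from $\mathcal{M}^\beef$ should admit a canonical orthogonal decomposition
\[
V(f^*\mathcal{A}^\beef_S)_\Q=V(\mathcal{A}_S)_\Q\oplus\Lambda_\Q,
\]
where the $\Lambda$-summand arises from the Clifford action of $\Lambda\subset L^\beef$ on the Kuga-Satake representation. A $d$-tuple $x^\beef$ of special quasi-endomorphisms with moment matrix $T^\beef$ then decomposes uniquely as $(x,y)$ with $x\in V(\mathcal{A}_S)_\Q^d$ and $y\in\Lambda_\Q^d$, satisfying $Q(x)+Q(y)=T^\beef$. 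Positive definiteness of $\Lambda$ and positive semi-definiteness of $Q(x)$ bound $y$, while the integrality condition encoded by $\mu^\beef$---interpreted via the self-duality identification $L^{\beef,\vee}/L^\beef\iso L^\vee/L$---forces $y\in\Lambda^d$ and matches $\mu^\beef$ with $\mu$. Since the assignment $x^\beef\mapsto y$ is locally constant on $S$, one obtains a scheme-theoretic disjoint union
\[
f^{-1}\mathcal{Z}^\beef(T^\beef,\mu^\beef)=\bigsqcup_{\substack{y\in\Lambda^d\\ T^\beef-Q(y)\succeq 0}}\mathcal{Z}\bigl(T^\beef-Q(y),\mu\bigr).
\]

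Next, I would propagate this to the corrected cycle classes. The class $\mathcal{C}^\beef(T^\beef,\mu^\beef)$ is built, as in the body of the paper, from a derived enhancement of $\mathcal{Z}^\beef(T^\beef,\mu^\beef)$---locally, the zero locus of a tautological section of an obstruction bundle arising from the Hodge filtration on $V(\mathcal{A}^\beef)$ and the tuple $x^\beef$. Under the pullback along $f$, both the obstruction bundle and its section split compatibly with the decomposition of $V(f^*\mathcal{A}^\beef)_\Q$: the $V(\mathcal{A})$-factor is precisely the obstruction datum defining $\mathcal{C}(T^\beef-Q(y),\mu)$, while the $\Lambda$-factor is locally constant and is responsible only for the clopen decomposition already recorded. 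Functoriality of the derived class construction therefore yields
\[
f^*\mathcal{C}^\beef(T^\beef,\mu^\beef)=\sum_{\substack{y\in\Lambda^d\\ T^\beef-Q(y)\succeq 0}}\mathcal{C}\bigl(T^\beef-Q(y),\mu\bigr),
\]
and grouping by $S=Q(y)$ introduces the multiplicity $R_\Lambda(S)$, giving the stated formula.

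The main obstacle will be verifying the compatibility of the derived construction with the pullback: one must rule out residual excess-intersection contributions both between components indexed by distinct $y$'s and from the regularly embedded closed substack $\mathcal{M}\hookrightarrow\mathcal{M}^\beef$ itself. The crux is that the $\Lambda$-component of the tautological section is horizontal---a locally constant section of a trivial bundle pulled back from $\Spec\Z$---so its derived vanishing locus genuinely is a disjoint union of clopens rather than a single virtual cycle of smaller dimension. Once this splitting is established, the theorem reduces to the functoriality of the derived fundamental class under $f^*$, together with the combinatorial grouping by $S=Q(y)$.
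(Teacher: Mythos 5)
Your first step---the clopen decomposition of $\mathcal{Z}^\beef(T^\beef,\mu^\beef)\times_{\mathcal{M}^\beef}\mathcal{M}$ into naive cycles $\mathcal{Z}(T^\beef-Q(y),\mu)$ indexed by $y\in\Lambda^d$---is correct and is exactly Proposition \ref{prop:naive pullback} (specialized to self-dual $\Lambda$, which forces $\nu=0$). The gap is in the second step, at the very point you flag as the main obstacle. The components with $y_i\neq 0$ and $t_i^\beef=Q(y_i)$ are the improper ones: there the pullback of the divisor $\mathcal{Z}^\beef(t_i^\beef,\mu_i^\beef)$ contains a copy of all of $\mathcal{M}$, cut out by the section determined by $y_i$. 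Your resolution---that the $\Lambda$-component of the obstruction is a locally constant section of a trivial bundle, so its derived vanishing locus contributes nothing beyond the clopen decomposition---is incorrect. The obstruction to deforming $y_i$ is a section of $\omega^{\beef,-1}$, which restricts to the nontrivial tautological bundle $\omega^{-1}$ on $\mathcal{M}$ and vanishes identically on the improper component; its derived zero locus is the derived self-intersection of the zero section of a line bundle and contributes the class $[\co_\mathcal{M}]-[\omega]$, i.e.\ the factor $c_1(\omega^{-1})$. This excess contribution is not something to be ruled out: it is precisely the correction factor built into $\mathcal{C}(T,\mu)$ when $\mathrm{rank}(T)<d$, and without producing it your formula fails whenever $\mathrm{rank}(T^\beef-Q(y))<\mathrm{rank}(T^\beef)$ for some contributing $y$.

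The paper's proof of the key identity (Lemma \ref{lem:adjunction}, the case $(t_i^\beef,\mu_i^\beef)\neq(0,0)$ and $(t_i,\mu_i)=(0,0)$) does not split obstruction bundles directly. It factors the derived pullback of $[\co_{\mathcal{Z}^\beef(t_i^\beef,\mu_i^\beef)}]$ through the derived self-intersection of that divisor, and identifies the latter with $[\co_{\mathcal{Z}_i^\beef}]-[\omega^\beef|_{\mathcal{Z}_i^\beef}]$ by invoking the linear invariance of derived fundamental classes (Proposition \ref{prop:linear invariance}), applied to compare $\mathcal{Z}^\beef\bigl(\mathrm{diag}(t_i^\beef,0),(\mu_i^\beef,0)\bigr)$ with its $\GL_2(\Z)$-translate. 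Linear invariance is itself a substantial deformation-theoretic result (a globalization of \cite{How19}) and is the essential input your sketch omits. If you prefer your obstruction-bundle route, you must show directly that $\co_{\mathcal{Z}_i^\beef}\otimes^{\mathbb{L}}_{\co_{\mathcal{M}^\beef}}\co_{\mathcal{M}}$, restricted to the improper component, has class $[\co_\mathcal{M}]-[\omega]$; since $\mathcal{Z}_i^\beef\to\mathcal{M}^\beef$ is only finite unramified rather than a closed immersion, this again requires the \'etale-local obstruction sections and a homotopy argument of the same kind as in the proof of linear invariance.
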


Our main result, stated in the text as Theorem \ref{thm:modularity},  is an  extension of the Bruinier-Raum theorem (n\'{e}e Kudla's modularity conjecture) to the integral model $\mathcal{M}$.

\begin{bigtheorem}\label{BigThm:integral_modularity}
The formal generating series 
\[
\sum_{ T\in \Sym_d(\Q) } \mathcal{C}(T) \cdot q^T
\]
with coefficients
\[
\mathcal{C}(T) \define \sum_{ \mu \in (L^\vee / L)^d } \mathcal{C}(T,\mu) \otimes \phi^*_\mu \in \mathrm{CH}^{d}(\mathcal{M}) \otimes_\Q S_{L,d}^* 
\]
converges to a holomorphic Siegel modular form of weight $\frac{n}{2} +1 $ and representation $\omega_{L,d}^*$.
\end{bigtheorem}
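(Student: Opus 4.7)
The natural plan is to reduce Theorem \ref{BigThm:integral_modularity} to Theorem \ref{BigThm:generic_modularity} by exploiting the localization exact sequence for Chow groups of the regular stack $\mathcal{M}$: namely,
\[
\bigoplus_{p \notin \Sigma} \mathrm{CH}^{d-1}(\mathcal{M}_{\F_p})_\Q \to \mathrm{CH}^d(\mathcal{M})_\Q \xrightarrow{j^*} \mathrm{CH}^d(M)_\Q \to 0.
\]
Any $\Q$-linear functional $\iota : \mathrm{CH}^d(\mathcal{M})_\Q \to \C$ decomposes into a ``generic'' piece factoring through $j^*$ and a finite collection of ``vertical'' pieces factoring through the pushforward from some $\mathrm{CH}^{d-1}(\mathcal{M}_{\F_p})_\Q$. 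By Theorem \ref{BigThm:intro comparison}, the generic piece of the series $\sum_T \iota(\mathcal{C}(T)) \, q^T$ agrees with the Bruinier--Raum series, whose modularity is Theorem \ref{BigThm:generic_modularity}. It therefore suffices to show that for every $p \notin \Sigma$ and every vertical class $\alpha \in \mathrm{CH}^{d-1}(\mathcal{M}_{\F_p})_\Q$, the series $\sum_T \langle \alpha, \mathcal{C}(T)\rangle \, q^T$ is a Siegel modular form of the prescribed weight and type.

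For the vertical part I would mimic the Bruinier--Raum strategy \cite{BWR} on the integral model. The pullback formula of Theorem \ref{BigThm:intro pullback}, packaged as generating series, reads
\[
\vartheta_\Lambda(\tau) \cdot \Phi(\tau) \;=\; f_\Lambda^{*}\Phi^\beef(\tau),
\]
where $\Phi$ and $\Phi^\beef$ are the Siegel generating series on $\mathcal{M}$ and $\mathcal{M}^\beef$, and $\vartheta_\Lambda$ is the (scalar-valued) theta series of the positive-definite self-dual lattice $\Lambda$. Varying $\Lambda$ produces a large supply of relations among the classes $\mathcal{C}(T,\mu)$, while the intersection formula of Theorem \ref{BigThm:intro intersection} controls the block-diagonal Fourier coefficients of $\Phi$ as products of lower-rank coefficients. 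Combining these two inputs, one should be able to reduce modularity of $\Phi$ for arbitrary $d$ to the case $d=1$, exactly as in the inductive argument of W.~Zhang's thesis reproduced in \cite{BWR}: the intersection formula shows that block-diagonal coefficients are products of coefficients of lower-genus series, and the pullback formula, by increasing the rank of $V$, produces enough codimension-one relations to rigidify the whole picture.

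The main obstacle is then the codimension-one case on the integral model: establishing that $\sum_{m} \mathcal{C}(m,\mu) \, q^m$ is a vector-valued elliptic modular form of weight $\frac{n}{2}+1$. On the generic fiber this is Borcherds' theorem, and the integral Borcherds products constructed in \cite{HMP} upgrade it to a statement about the Picard group of $\mathcal{M}$ modulo vertical contributions at each $\mathcal{M}_{\F_p}$. The remaining task is to identify the vertical corrections in the definition of $\mathcal{C}(m,\mu)$---the classes in $\mathrm{CH}^1(\mathcal{M}_{\F_p})_\Q$ that must be incorporated to compensate for the components of $\mathcal{Z}(m,\mu)$ of the wrong dimension, especially over the supersingular locus---and to verify that their generating series is itself modular, most likely by matching it explicitly with an Eisenstein series or theta lift for the group of spinor similitudes over $\Q_p$. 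Forcing the corrections to be not merely intersection-theoretically natural (which already makes Theorems \ref{BigThm:intro intersection} and \ref{BigThm:intro pullback} work) but also arithmetically modular in this precise sense is where I expect the bulk of the technical work to reside.
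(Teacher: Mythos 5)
There is a genuine gap, and it sits exactly where you place the ``bulk of the technical work.'' Your plan is to run the Zhang/Bruinier--Raum induction directly on $\mathcal{M}$, reducing modularity for arbitrary $d$ to the case $d=1$. But the Bruinier--Raum criterion requires showing that each formal Fourier--Jacobi coefficient, indexed by $T_0\in \Sym_{d-1}(\Q)$, is a Jacobi form; on the integral model this forces you to treat the cycles $\mathcal{Z}(T',\mu')$ as divisors on $\mathcal{Z}(T_0,\mu_0)$ and to control $\mathrm{CH}^1(\mathcal{Z}(T_0,\mu_0))$. That is precisely what fails for general $T_0$: the naive cycles acquire vertical irreducible components of the wrong dimension over the supersingular loci, so they are neither equidimensional nor flat, the pushforward does not land in the expected Chow group, and there is no injectivity of $\mathrm{CH}^1(\mathcal{Z}(T_0,\mu_0))\to \mathrm{CH}^1(Z(T_0,\mu_0))$ with which to import Borcherds' generic-fiber result. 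The paper only establishes these geometric inputs (Propositions \ref{prop:low key} and \ref{prop:CH_1_injective}) under the hypothesis $\mathrm{rank}(T_0)\le (n-2r(L)-4)/3$, so the induction can only be run when $d$ is small relative to $n$. Your localization-sequence decomposition of a functional into generic and vertical pieces does not help here either: the generic piece is indeed Bruinier--Raum, but you offer no mechanism for the vertical piece beyond the hope of matching it with an Eisenstein series or a local theta lift, which is not an argument and is not how the paper proceeds.

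You do have the right ingredient in hand --- the factorization $f_\Lambda^*\Phi^\beef = \vartheta_{\Lambda,d}\cdot \Phi$ and the idea of varying $\Lambda$ --- but it must be used in the opposite direction. One first proves the theorem for $\mathcal{M}^\beef$ with $L^\beef = L\oplus\Lambda$ and $\mathrm{rank}(\Lambda)$ large, because there $d$ \emph{is} small relative to $n^\beef$ and the small-codimension argument (equidimensionality, injectivity of restriction to the generic fiber, Jacobi modularity, linear invariance) applies. Dividing $f^*\phi^\beef$ by $\vartheta_{\Lambda,d}$ then exhibits $\phi$ as a meromorphic Siegel modular form of the correct weight with poles along the zero locus of $\vartheta_{\Lambda,d}$, and holomorphy follows by varying $\Lambda$: B\"ocherer's theorem guarantees that for any $\tau_0\in\mathcal{H}_d$ there is a self-dual positive definite $\Lambda$ of large rank with $\vartheta_{\Lambda,d}(\tau_0)\ne 0$. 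Finally, your concern about vertical corrections in the $d=1$ case is a non-issue: in codimension one the naive cycles $\mathcal{Z}(m,\mu)$ are already generalized Cartier divisors, no correction term is needed, and \cite[Theorem B]{HMP} gives the full modularity of the divisor generating series on the integral model, vertical components included.
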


Using  the finite-dimensionality of the space of holomorphic Siegel modular forms of a fixed weight and representation, one immediately obtains the following corollary of Theorem \ref{BigThm:integral_modularity}.

\begin{bigcorollary}
As $ T \in \Sym_d(\Q)$ and $\mu\in (L^\vee / L)^d$ vary, the cycle classes $\mathcal{C}(T,\mu)$ span a finite-dimensional subspace of $\mathrm{CH}^{d}(\mathcal{M})$.
\end{bigcorollary}

In~\cite{Madapusi2022}, the second author uses methods from derived algebraic geometry to construct derived cycle classes on essentially every Shimura variety to which Kudla's conjectures apply, and shows that they satisfy certain anticipated properties, giving an alternative proof of Theorems C, D, E above as special cases. However, the lack of Borcherds products in this generality has so far prevented progress towards any version of the main Theorem F in settings beyond the one treated in this paper.


\subsection{Outline of the paper}


In \S \ref{s:integral models} we recall the essentials of the theory of integral models of orthogonal Shimura varieties, and the families of special cycles  that live on them.   Our main reference for this material is \cite{HMP}, although many of the results we cite appeared before that in \cite{mp:spin}, \cite{AGHMP17}, and \cite{AGHMP18}.

The first new results appear in \S \ref{s:good cycles}, in which we investigate some of the finer geometric structure of the special cycles $\mathcal{Z}(T,\mu)$,  under the assumption that $\mathrm{rank}(T)$ is small compared to $n$.  
We remark that  the notion of smallness here depends on the lattice $L$, not just $n$, but  if $L$ is self-dual then  small means $\mathrm{rank}(T) \le (n-4)/3$.
What we show is that in this situation the special cycle $\mathcal{Z}(T,\mu)$ is flat over $\Z[\Sigma^{-1}]$, and equidimensional of the expected codimension $\mathrm{rank}(T)$ in $\mathcal{M}$.   In particular, when $\mathrm{rank}(T)$ is small one can define a corrected cycle class $\mathcal{C}(T,\mu)$ by imitating  \eqref{intro generic corrected}.

The generic fiber $Z(T,\mu)$ is smooth,  however even when $\mathrm{rank}(T)$ is small the special cycle $\mathcal{Z}(T,\mu)$ need not be regular, or even locally integral; it will often have irreducible components that cross in positive characteristic.  However, we can say enough about the geometry of its  irreducible components to prove in \S \ref{s:good cycles} the injectivity of the restriction map to the generic fiber
\[
\mathrm{CH}^1( \mathcal{Z}(T,\mu) ) \to \mathrm{CH}^1( Z(T,\mu) ) .
\]

Now suppose that $d$ is small relative to $n$.
Having shown in \S \ref{s:good cycles} that the special cycles $\mathcal{Z}(T,\mu)$ with $T\in \Sym_d(\Q)$ are well-behaved, we prove in \S \ref{s:low codimension} that the generating series of corrected cycles $\mathcal{C}(T,\mu)$ is modular.
This is done by  fixing  $T \in \Sym_d(\Q)$ and $\mu \in (L^\vee/L)^d$, and considering the family of special cycles 
 $\mathcal{Z}(T',\mu')$ in which $T'\in \Sym_{d+1}(\Q)$ has upper left $d\times d$ block $T$, and the first $d$ components of $\mu'$ are equal to $\mu$.   As $(T',\mu')$ varies, the resulting special cycles can be viewed as divisors 
 \[
 \mathcal{Z}(T,'\mu') \to \mathcal{Z}(T,\mu)
 \]
 on the fixed $\mathcal{Z}(T,\mu)$, and we prove that they  form the coefficients of a Jacobi form of index $T$ valued in $\mathrm{CH}^1(\mathcal{Z}(T,\,u))$. 
 The essential point is that, by the preceding paragraph, it suffices to check this in the generic fiber.
 In the generic fiber it follows, as in  \cite{zhang-thesis}, by  realizing $Z(T,\mu)$ as a union of orthogonal Shimura varieties and applying the modularity results of Borcherds \cite{Bor:GKZ}.
 By the main result of \cite{BWR}, this Jacobi modularity, for every pair $(T,\mu)$, implies that Theorem \ref{BigThm:integral_modularity} holds under our assumption that  $d$ is small relative to $n$.

 To remove the assumption that $d$ is small, we must first overcome the lack of equidimensionality of $\mathcal{Z}(T,\mu)$.
 In  \S \ref{s:derived cycle classes} we define the corrected classes $\mathcal{C}(T,\mu)$ needed even to state Theorem \ref{BigThm:integral_modularity} in full generality.  
 The construction itself is somewhat formal.  It relies on the close relations between Chow groups and $K$-theory proved in \cite{Gillet1987-ny} for schemes, and extended to stacks in \cite{Gillet1984-tk} and \cite{Gillet2009-tw}.

 Theorem \ref{BigThm:intro intersection} follows directly from the definition of $\mathcal{C}(T,\mu)$, but Theorems \ref{BigThm:intro comparison} and \ref{BigThm:intro pullback} seem to lie much deeper.  If the modularity of Theorem \ref{BigThm:integral_modularity} is to hold, the classes 
 $\mathcal{C}(T)$ of that theorem must satisfy the linear invariance property
 \[
 \mathcal{C}(T) = \mathcal{C}({}^t A T A)
 \]
 for any $A \in \GL_d(\Z)$.  While the analogous invariance of the naive special cycles $\mathcal{Z}(T,\mu)$ is obvious, the invariance of the corrected cycle classes encodes subtle information about self-intersections. 
 We prove this invariance in Proposition \ref{prop:linear invariance} by globalizing the arguments used in \cite{How19} to prove the analogous invariance for special cycles on unitary Rapoport-Zink spaces.
 The linear invariance is then used in an essential way in the proofs of 
Theorems \ref{BigThm:intro comparison} and \ref{BigThm:intro pullback}.

Finally, in \S \ref{s:main result} we prove Theorem \ref{BigThm:integral_modularity} in full generality.  
The idea here is simple enough to explain in a few sentences.  To prove the modularity of the generating series
\[
\phi(\tau) = \sum_T \mathcal{C}(T) q^T
\]
with coefficients in $\mathrm{CH}^d(\mathcal{M}) \otimes S_{L,d}^*$, pick an auxiliary positive definite self-dual lattice $\Lambda$.
As in Theorem \ref{BigThm:intro pullback}, we may  form   the quadratic lattice $L^\beef = L \oplus \Lambda$ of signature $(n^\beef,2)$, and the corresponding generating series
\[
\phi^\beef(\tau) = \sum_T \mathcal{C}^\beef (T) q^T
\]
with coefficients in $\mathrm{CH}^d(\mathcal{M}^\beef) \otimes S_{L^\beef,d}^*$.
One can rephrase the pullback formula of Theorem \ref{BigThm:intro pullback} as a factorization
\[
f^* \phi^\beef(\tau)  = \phi(\tau) \cdot \vartheta_{\Lambda,d}(\tau),
\]
where $\vartheta_{\Lambda,d}(\tau)$ is the usual scalar-valued genus $d$ Siegel theta series determined by the lattice $\Lambda$.
If we choose $\Lambda$ to have large rank, then $d$ will be much smaller than $n^\beef$, and so the modularity of the left hand side follows from the results of \S \ref{s:good cycles}.  Combining this with the modularity of $\vartheta_{\Lambda,d}(\tau)$ shows that $\phi(\tau)$ is a meromorphic Siegel modular form with poles supported on the vanishing locus of $\vartheta_{\Lambda,d}(\tau)$.
The lattice $\Lambda$, being an arbitrary and auxiliary choice, can then be varied to show that $\phi(\tau)$ is actually holomorphic.


\subsection{Acknowledgements}


The authors thank Andreas Mihatsch for alerting us to a misstatement in an earlier version of this paper, Steve Kudla for pointing out a subtlety in our use of the embedding trick,  and the anonymous referee for helpful comments and suggestions.


\section{The Shimura variety and its special cycles}
\label{s:integral models}


This section contains little in the way of new results.
Our goal is to recall from \cite{HMP} the  integral model of the Shimura variety associated to a quadratic space $(V,Q)$  over $\Q$ of signature $(n,2)$,  and the special cycles on that integral model.


\subsection{The Shimura variety}
\label{ss:initial data}


As in the introduction, we denote by  $G=\mathrm{GSpin}(V)$ the group of spinor similitudes of $V$. 
 By construction,  $G$ is an algebraic subgroup of the group of units of the Clifford algebra $C(V)$. 
The bilinear form associated to the quadratic form $Q$ is denoted
\begin{equation}\label{bilinear}
 [x,y]=Q(x+y)-Q(x)-Q(y).
 \end{equation}
 If we define a Hermitian symmetric domain 
\[
\mathcal{D} = \{ z\in V_\C : [z,z]=0,\, [z,\overline{z}] <0 \} / \C^\times  \subset \mathbb{P}(V_\C),
\]
then the pair  $(G,\mathcal{D})$ is a Hodge type Shimura datum with reflex field $\Q$.

Any choice of compact open subgroup in $G(\A_f)$ determines a Shimura variety, but we shall only consider subgroups of a particular type.
Fix a $\Z$-lattice $L\subset V$ satisfying $Q(L) \subset \Z$,  and let $L^\vee$ denote the dual lattice relative to the bilinear form \eqref{bilinear}. 
 For every prime $p$,  abbreviate $L_p=L\otimes \Z_p$, and let 
$
C(L_p) \subset C(V_p)
$ 
be the $\Z_p$-subalgebra generated by $L_p \subset C(V_p)$.
The compact open subgroup
\begin{equation}\label{best compact open}
K_p = G(\Q_p) \cap C(L_p)^\times 
\end{equation}
of $G(\Q_p)$  is the largest one that stabilizes the lattice $L_p$ and acts trivially on the discriminant group $L_p^\vee / L_p$.
The compact open subgroup
\begin{equation}\label{compact open}
K   =  \prod_p   K_p \subset G (\A_f)
\end{equation}
 determines a complex orbifold 
\[
M (\C) = G (\Q) \backslash \mathcal{D}  \times G (\A_f) / K,
\]
whose canonical model $M \to \Spec(\Q)$  is a smooth Deligne-Mumford stack of dimension $n$.

\begin{remark}
For a given prime $p$, one can make the compact open subgroup $K_p \subset G(\Q_p)$ as small as one wants by replacing $L$ by $p^kL$ for some $k\gg 0$.  In particular one is free to assume that  $K$ is neat, so that $M$ is a scheme rather than a stack.  
The penalty for doing so appears in the next subsection, when we form the integral model of $M$ over $\Z [ \Sigma^{-1}]$.
This smaller choice of $L$ will not be maximal at $p$, and so $p$ must be included in the finite set of bad primes $\Sigma$ that we invert.
\end{remark}

\begin{remark}
\label{rem:conn comp complex fiber}
As the derived subgroup $\mathrm{Spin}(V)\subset G$ is simply connected, we find by~\cite[(2.7.1)]{deligne:travaux} that the space of connected components of $M(\C)$ is a torsor under 
\[
\pi_0(\A^\times/\Q^\times\nu(K)\R_{>0})\iso \A_f^\times/\Q_{>0}^\times\nu(K),
\]
where $\nu:G\to \mathbb{G}_m$ is the spinor norm. 
It follows that if $\nu(K) = \widehat{\Z}^\times$, then $M$ is geometrically connected. 
This holds in particular if $L$ contains isotropic vectors $\ell,\ell_*\in L$ with $[\ell,\ell_*] = 1$.
\end{remark}


Suppose  $V^\beef$ is a  quadratic space of signature $(n^\beef,2)$, and let $(G^\beef, \mathcal{D}^\beef)$ be the associated Shimura datum.
 A $\Z$-lattice $L^\beef \subset V^\beef$  on which the quadratic form is $\Z$-valued determines a compact open subgroup  $K^\beef \subset G^\beef(\A_f)$  as in \eqref{compact open}, and hence a Shimura variety $M^\beef$ over $\Q$.

An  isometric embedding 
$
L\hookrightarrow L^\beef
$
 determines an injection  of Clifford algebras $C(V) \to C(V^\beef)$, which then induces a  closed immersion  of algebraic groups $G \hookrightarrow G^\beef$ exhibiting $G$ as the pointwise stabilizer of the orthgonal complement of $V \subset V^\beef$. 
 This embedding of groups induces an embedding of Shimura data
\[
(G , \mathcal{D})\to (G^\beef ,\mathcal{D}^\beef), 
\]
 As $K  \subset  K^\beef \cap G(\A_f)$, the theory of canonical models  implies the existence of a  finite and unramified morphism
\begin{equation}\label{shimura embedding}
M \to M^\beef
\end{equation}
of Deligne-Mumford stacks, given on $\C$-points  by
\[
G(\Q) \backslash \mathcal{D}  \times G (\A_f) / K 
 \map{ (  z,g) \mapsto (z,g) } 
 G^\beef (\Q) \backslash \mathcal{D} ^\beef \times G^\beef (\A_f) / K^\beef.
\]

More generally, for  any $g\in G^\beef (\A_f)$ we may replace $L$ by the quadratic lattice  $L_g = V \cap g L^\beef_{\widehat{\Z}}$  throughout the discussion above.
  The compact open subgroup associated to this lattice is  
\[
K_g = gK^\beef g^{-1}\cap G(\A_f),
\]
and the associated Shimura variety $M_g$ admits a  finite unramified morphism
\begin{equation}\label{twisted shimura embedding}
M_g \to M^\beef
\end{equation}
  given on $\C$-points by
\[
G(\Q) \backslash \mathcal{D}  \times G (\A_f) / K_g
\map{ ( z,h) \mapsto  (z,hg) } 
G^\beef(\Q) \backslash \mathcal{D} ^\beef \times G^\beef (\A_f) / K^\beef.
\]


\subsection{Integral models and special cycles}\label{ss:integral model}


We will use   \cite[\S 6]{HMP}  as our primary reference for the theory of  integral models of $M$.
See also  \cite{KMP}, \cite{AGHMP17}, and \cite{AGHMP18}.

\begin{definition}\label{def:max lattice}
For a prime $p$, we say that $L_p$ is \emph{maximal} if  there is no larger $\Z_p$-lattice of $V_p$ on which $Q$ is $\Z_p$-valued. 
 We say that $L_p$ is \emph{hyperspecial} if either
 \begin{itemize}
  \item $L_p$ is self-dual, or
  \item $p=2$, $\dim_\Q(V)$ is odd,  and  $[ L^\vee_2  : L_2 ]$ is not divisible by $4$.
 \end{itemize}
    We call $L$  \emph{maximal} or \emph{hyperspecial} if $L_p$ has this property for every $p$.
    \end{definition}

  \begin{remark}
Note that 
   \[
  L_p \mbox{ self-dual}  \implies  L_p \mbox{ hyperspecial} \implies  L_p \mbox{ maximal},
  \]
  and that
  $ L_p \mbox{ not maximal} \implies p^2 \mbox{ divides } [L^\vee : L].$
 \end{remark}

\begin{remark}
A hyperspecial lattice $L_p$ was called an \emph{almost self-dual} lattice  in \cite[Definition 6.1.1]{HMP}. 
 If $L_p$ is hyperspecial then   \eqref{best compact open} is a hyperspecial subgroup in the usual sense, justifying the terminology.  See  \cite[\S 6.3]{HMP}.
\end{remark}

As in the introduction,  $\Sigma$ will always denote  a finite  set of primes containing all primes $p$ for which  $L_p$ is not maximal.  
The constructions of \cite[\S 6]{HMP} provide us with a  normal and flat Deligne-Mumford stack 
\[
\mathcal{M} \to \Spec(\Z[\Sigma^{-1}])
\]
with generic fiber $M$.
Strictly speaking, \emph{loc.~cit.}~constructs an integral model  over the localization $\Z_{(p)}$ for any $p$ at which $L_p$ is maximal; these can be collated into a model  over $\Z[\Sigma^{-1}]$ as in \cite[\S 9.1]{HMP}.

\begin{proposition}\label{prop:regularity}
Assume that $\Sigma$ satisfies both
\begin{itemize}
  \item $p\in \Sigma$ for all  primes   $p$  such that  $p^2$ divides  $[ L^\vee : L]$, 
  \item if  $L_2$ is not hyperspecial then $2\in \Sigma$.
\end{itemize}
The stack  $\mathcal{M}$ is regular, and  for any $p \not\in \Sigma$ the localization $\mathcal{M}_{\Z_{(p)}}$ is the canonical integral model of $M$ in the sense of \cite[Definition 4.3]{mp:spin}.
\end{proposition}

\begin{proof}
If   $p\not\in \Sigma$ then either $L_p$ is hyperspecial, or $p$ is odd and $\ord_p(  [L^\vee : L]) =1$.
In the former case  $\mathcal{M}_{\Z_{(p)}}$ is the smooth canonical integral model of $M$ over $\Z_{(p)}$  constructed  \cite{KisinJAMS} and  \cite{KMP}.  In the latter case $\mathcal{M}_{\Z_{(p)}}$ is the regular canonical integral model constructed in~\cite[Theorem 7.4]{mp:spin}.  
\end{proof}

\begin{remark}
By Proposition \ref{prop:regularity}, if  $[L^\vee : L]$ is odd and squarefree then $\mathcal{M}$ is regular for any choice of $\Sigma$, including $\Sigma=\emptyset$.
 \end{remark}

 \begin{remark}
 If $p \not\in \Sigma$ is an odd prime  with $\mathrm{ord}_p([L^\vee:L]) = 2$, the model $\mathcal{M}_{\Z_{(p)}}$  is no longer regular.
  It does admit a regular resolution constructed by Pappas-Zachos~\cite{pappas_zachos}, which has a certain canonicity property formulated by Pappas~\cite{pappas_integral}, and now proven by Daniels-Youcis~\cite{daniels_youcis}. It would be interesting to extend the results of this paper to these regular integral models as well.
 \end{remark}

The integral model $\mathcal{M}$ comes with a  \emph{tautological line bundle}
\begin{equation}\label{taut bundle}
\omega \in \mathrm{Pic}( \mathcal{M} ), 
\end{equation}
called the \emph{line bundle of weight one modular forms} in \cite[\S 6.3]{HMP},  whose fiber at a complex point 
 \[
(z,g) \in G (\Q) \backslash \mathcal{D} \times G(\A_f) / K \iso \mathcal{M} (\C)
\]
 is identified with the isotropic line $\C z \subset V_\C$.

\begin{remark}\label{rem:int shimura embedding}
Recall from \eqref{shimura embedding} the morphism $M \to M^\beef$ of canonical models induced by an isometric embedding $L \hookrightarrow L^\beef$.  If $\Sigma$  also contains all primes $p$ for which $L_p^\beef$ is not maximal, then $M^\beef$ has its own flat and normal integral model 
\[
\mathcal{M}^\beef \to \Spec(\Z[\Sigma^{-1}])
\]
 and  the morphism  above  extends uniquely to a finite morphism
\[
\mathcal{M} \to \mathcal{M}^\beef.
\]
The tautological bundle on $\omega^\beef$ on the target pulls back to the tautological bundle $\omega$ on the source.  See \cite[Proposition 6.6.1]{HMP}.
\end{remark}

%
%
%

The integral model $\mathcal{M}$ also comes with a \emph{Kuga-Satake abelian scheme}
$
\mathcal{A} \to \mathcal{M}.
$
For every scheme $S\to \mathcal{M}$ there is a canonical (\emph{e.g.}~functorial in $S$)  subspace
  \[
    V(\mathcal{A}_S)_\Q\subset \End(\mathcal{A}_S)_{\Q}
  \]
  of \emph{special quasi-endomorphisms},   carrying a positive definite quadratic form defined by 
  $
     Q(x) = x\circ x
  $
  as elements of $\Q\subset \End(\mathcal{A}_S)_\Q$. 
  More generally,  for every coset $\mu\in L^\vee/L$ there is a subset
  \[
    V_\mu(\mathcal{A}_S)\subset V(\mathcal{A}_S)_\Q
  \]
  of \emph{special quasi-endomorphisms with denominator $\mu$}.   When $\mu=0$ this agrees with
  \[
 V(\mathcal{A}_S) \define V(\mathcal{A}_S)_\Q\cap \End(\mathcal{A}_S) ,
  \]
  and the subsets indexed by distinct cosets are disjoint.
Again, we refer the reader to \cite[\S 6]{HMP} for details.

\begin{remark}\label{rem:special congruence}
For any  $x\in V_\mu(\mathcal{A}_S)$, we have
\[
Q(x) \equiv Q(\tilde{\mu})\pmod{\Z},
\]
where $\tilde{\mu}\in L^\vee$ is any lift of $\mu$.  See~\cite[Proposition 4.5.4]{AGHMP18}.
\end{remark}

\begin{remark}\label{rem:special identification}
Suppose  $s\in \mathcal{M}(\C)$ is  a complex point corresponding to a pair 
\[
(z,g) \in G (\Q) \backslash \mathcal{D} \times G(\A_f) / K \iso \mathcal{M} (\C).
\]
 Recalling that $z\in V_\C$ is a nonzero isotropic vector,    there is a canonical  identification
\[
V(\mathcal{A}_s)_\Q =   \{x\in V  : \,  [x,z]  = 0\} ,
\]
respecting quadratic forms and satisfying
\[
V_\mu(\mathcal{A}_s) = \{x\in V  :\, x\in g \cdot ( \mu +  L_{\widehat{\Z}} ) \} .
\]
Here we are regarding $\mu + L_{\widehat{\Z}} \subset V_{\A_f}$.
\end{remark}

\begin{definition}
For   $t\in \Q$ and  $\mu \in L^\vee/L$,  the \emph{special divisor}
\begin{equation}\label{special divisors}
\mathcal{Z} (t,\mu) \to \mathcal{M}
\end{equation}
is the  finite, unramified, and relatively representable $\mathcal{M}$-stack
whose functor of points assigns to any scheme $S\to \mathcal{M}$ the set
\[
\mathcal{Z}(t,\mu)(S) = \{ x \in V_\mu(\mathcal{A}_S) : Q(x) = t \}.
\]
\end{definition}

The definition of special divisors can be generalized as follows.

\begin{definition}
Given an integer $d \ge 1$, a matrix $T\in \Sym_d(\Q)$, and a tuple of cosets
$
\mu = (\mu_1 ,\ldots, \mu_d) \in (L^\vee / L)^d,
$
the \emph{special cycle} 
\begin{equation}\label{special cycle}
\mathcal{Z}(T,\mu) \to \mathcal{M}
\end{equation}
is the finite, unramified, and relatively representable $\mathcal{M}$-stack
whose functor of points assigns to a scheme $S \to \mathcal{M}$ the set of all tuples 
\[
x=(x_1,\ldots, x_d) \in V_{\mu_1}(\mathcal{A}_S) \times \cdots \times V_{\mu_d}(\mathcal{A}_S)
\]
whose moment matrix 
\begin{equation}\label{inner products}
Q(x) \define \left( \frac{[x_i,x_j]}{2}  \right) \in \Sym_d(\Q)
\end{equation}
satisfies $Q(x)=T$.
\end{definition}

\begin{remark}\label{rem:pos def T}
As $V(\mathcal{A}_S)_\Q$ is a positive definite quadratic space, the special cycle  $\mathcal{Z}(T,\mu)$ is empty unless $T$ is positive semi-definite.
\end{remark}


\subsection{Special cycles as Shimura varieties} 
\label{ss:special shimura}


Given a special cycle \eqref{special cycle}, we explain how to write its generic fiber  
\[
Z(T,\mu) = \mathcal{Z}(T,\mu)_\Q
\]
as a disjoint union of Shimura varieties.  
We may assume that $T\in \Sym_d(\Q)$ is positive semi-definite, for otherwise $Z(T,\mu)=\emptyset$ by Remark \ref{rem:pos def T}.

Endow the space of column vectors $\Q^{d}$ with the  (possibly degenerate) quadratic form  
$
Q ( w ) = {}^tw T w ,
$
let $\mathrm{rad}(Q) \subset \Q^d$ be its  radical, 
and define a positive definite quadratic space 
\begin{equation}\label{W space}
W  = \Q^d / \mathrm{rad}(Q)
\end{equation}
of dimension $\mathrm{rank}(T)$.
Let $e_1,\ldots, e_d \in W$ be the images of the standard basis vectors in $\Q^d$.
Using the notation \eqref{inner products},  the tuple $e=(e_1,\ldots, e_d)$ has moment matrix $Q(e)=T$.

If $S$ is any scheme, a morphism  $S\to \mathcal{Z}(T,\mu)$ determines a tuple 
\[
x = (x_1,\ldots, x_d) \in V(\mathcal{A}_S)_\Q^d
\]
with $Q(x) = T$, and hence an isometric embedding 
\begin{equation}\label{W special embed}
W\map{e_i \mapsto x_i}  V(\mathcal{A}_S )_\Q.
\end{equation}

\begin{lemma}
\label{lem:W to V embeddings}
If  $Z(T,\mu)$ is non-empty then there exists an isometric embedding $W\hookrightarrow V$.
\end{lemma}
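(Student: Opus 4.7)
The plan is to produce the embedding by pulling back the tautological tuple of special quasi-endomorphisms to a geometric point of $Z(T,\mu)$ and then using the moduli description recalled in \S \ref{ss:integral model}. Since $Z(T,\mu)$ is a finite, unramified, relatively representable $M$-stack defined over $\Q$, its non-emptiness yields a $\C$-point $s \in \mathcal{M}(\C)$ together with a tuple $x=(x_1,\ldots,x_d) \in V(\mathcal{A}_s)_\Q^d$ whose moment matrix is $T$. By Remark \ref{rem:special identification}, the $\Q$-vector space $V(\mathcal{A}_s)_\Q$ is isometrically identified with the $\Q$-subspace $\{v \in V : [v,z]=0\}$ of $V$, so each $x_i$ may already be regarded as an element of $V$ satisfying $[x_i,x_j]/2 = T_{ij}$.

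Next, I would consider the $\Q$-linear map $\varphi : \Q^d \to V$ sending $e_i \mapsto x_i$ and show that its kernel equals $\mathrm{rad}(Q)$. One inclusion is straightforward: if $a=(a_1,\ldots,a_d)^t \in \Q^d$ satisfies $Ta=0$, then $\varphi(a) = \sum_i a_i x_i$ satisfies $[\varphi(a),x_j] = 2(Ta)_j = 0$ for every $j$, hence is orthogonal to its own span; by the positive definiteness of the quadratic form on $V(\mathcal{A}_s)_\Q$ recalled in \S \ref{ss:integral model}, this forces $\varphi(a)=0$, giving $\mathrm{rad}(Q) \subset \ker(\varphi)$. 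The reverse inclusion follows by a dimension count: again by positive definiteness, the rank of the Gram matrix of $x_1,\ldots,x_d$ agrees with the dimension of their $\Q$-span in $V(\mathcal{A}_s)_\Q$, so $\dim_\Q \mathrm{image}(\varphi) = \mathrm{rank}(T)$ and hence $\dim_\Q \ker(\varphi) = d - \mathrm{rank}(T) = \dim_\Q \mathrm{rad}(Q)$.

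Finally, $\varphi$ descends to an injection $W = \Q^d/\mathrm{rad}(Q) \hookrightarrow V$, and the identity $Q(\varphi(a)) = {}^taTa$, which is precisely the quadratic form inherited by $W$ from $\Q^d$, shows that this injection is an isometry. I do not anticipate any serious obstacle; the only mildly subtle point is the appeal to positive definiteness of $V(\mathcal{A}_s)_\Q$, which is exactly what allows one to pass from vanishing Gram products to vanishing vectors and thereby identify $\mathrm{rad}(Q)$ with $\ker(\varphi)$.
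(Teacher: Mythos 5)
Your proof is correct and takes essentially the same route as the paper: pass to a complex point of $Z(T,\mu)$, use Remark \ref{rem:special identification} to identify $V(\mathcal{A}_s)_\Q$ with a subspace of $V$, and compose with the map \eqref{W special embed}. The extra detail you supply (identifying $\ker(\varphi)$ with $\mathrm{rad}(Q)$ via positive definiteness) is exactly the justification the paper leaves implicit when it asserts that \eqref{W special embed} is an isometric embedding.
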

\begin{proof}
Using Remark \ref{rem:special identification}, a complex point $s\in Z(T,\mu)(\C)$ determines an isometric embedding 
\[
W\map{ \eqref{W special embed} }   V(\mathcal{A}_s)_\Q \subset V. \qedhere
\]
\end{proof}

By  Lemma \ref{lem:W to V embeddings} we may assume there exists an isometric embedding 
$W \hookrightarrow V$, which we now fix.
Any two embeddings lie in the same $G(\Q)$-orbit, so the particular choice  is unimportant.
Let $V^\flat \subset V$ be the orthogonal complement of $W$, so that  $V^\flat$ has signature $(n^\flat,2)$ with $n^\flat = n -\mathrm{rank}(T)$, and 
\[
V = V^\flat \oplus W.
\]
Applying the constructions of \S \ref{ss:initial data}  to $V^\flat$,  we obtain a reductive group $G^\flat = \mathrm{GSpin}(V^\flat)$ and   an embedding of Shimura data
\[
(G^\flat,\mathcal{D}^\flat )\hookrightarrow (G,\mathcal{D}).
\]

The vectors  $e_1,\ldots, e_d \in W \subset V$ determine a subset
\[
\Xi( T, \mu ) \define \{ g\in G(\A_f) :   e_i\in  g \cdot  ( \mu_i +  L_{\widehat{\Z}} )   \mbox{ for all } 1\le i \leq d  \} , 
\]
in which we  regard $\mu + L_{\widehat{\Z}} \subset V_{\A_f}$, and each $g \in \Xi( T, \mu )$ 
determines  $\Z$-lattices 
 \begin{equation}\label{prime sublattices}
 L_g^\flat = V^\flat \cap  g L_{\widehat{\Z}} ,\qquad \Lambda_g = W \cap g L_{\widehat{\Z}}.
 \end{equation}
 As the quadratic form on $V^\flat$ is $\Z$-valued on $L^\flat_g$, the constructions of \S \ref{ss:initial data} associate to  it a Shimura datum $(G^\flat, \mathcal{D}^\flat)$, Shimura variety  $M^\flat_g$   over $\Q$, and a finite unramified morphism  $M^\flat_g \to M$ as in \eqref{twisted shimura embedding}.

 \begin{proposition}\label{prop:ZTmu_uniformization}
  The set $\Xi(T,\mu)$ is stable under left multiplication by $G^\flat(\A_f)\subset G(\A_f)$
 and right multiplication by the compact open subgroup  $K \subset G(\A_f)$ of \eqref{compact open}.
The Shimura variety $M_g^\flat$ depends only on the double coset
 $G^\flat(\Q) g K$, and there is an  isomorphism of $M$-stacks
\begin{equation}\label{cycle presentation}
\bigsqcup_{ g \in G^\flat(\Q)\backslash \Xi(T,\mu)/K } M^\flat_g \iso   Z(T, \mu).
\end{equation}
 \end{proposition}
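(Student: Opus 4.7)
The plan is to work entirely with the complex-analytic description of $Z(T,\mu)_\C$ afforded by Remark~\ref{rem:special identification}, reducing every claim to an orbit analysis for the $G(\Q)$-action on $d$-tuples of vectors in $V$ with moment matrix $T$. The key structural observation is that $G^\flat \subset G$ is the full preimage under $G\to \mathrm{SO}(V)$ of the pointwise stabilizer $\mathrm{SO}(V^\flat)$ of $W = \mathrm{span}(e_1,\ldots,e_d)$ (both maps have kernel $\mathbb{G}_m$), so that $G^\flat(\A_f)$ fixes each $e_i$. Combined with the fact that $K$ acts trivially on $L^\vee/L$ by~\eqref{best compact open}, this yields the first claim: left multiplication by $G^\flat(\A_f)$ and right multiplication by $K$ both preserve the condition defining $\Xi(T,\mu)$. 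The second claim is then a direct computation showing $K_{\gamma g k}^\flat = \gamma K_g^\flat \gamma^{-1}$ and $L_{\gamma g k}^\flat = \gamma L_g^\flat$ for $\gamma\in G^\flat(\Q)$ and $k\in K$, so that $\gamma$ induces a canonical isomorphism $M_g^\flat \iso M_{\gamma g k}^\flat$ compatible with~\eqref{twisted shimura embedding}.

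For the isomorphism, I would define the map $M_g^\flat \to Z(T,\mu)$ by sending a complex point $(z,h) \in \mathcal{D}^\flat \times G^\flat(\A_f)$ to the point $(z, hg)\in M(\C)$ equipped with the tuple $e = (e_1,\ldots,e_d)$ viewed as special quasi-endomorphisms via Remark~\ref{rem:special identification}. This lands in $Z(T,\mu)$ because each $e_i$ is automatically orthogonal to $z \in V^\flat_\C$; the denominator condition $e_i \in hg\cdot(\mu_i + L_{\widehat{\Z}})$ collapses, using that $h$ fixes $e_i$, to the defining condition $e_i \in g\cdot(\mu_i + L_{\widehat{\Z}})$ of $\Xi(T,\mu)$; and the moment matrix of $e$ equals $T$ by construction of $W$. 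Well-definedness on equivalence classes in $M_g^\flat$ is immediate from the first claim together with the computation of $K_g^\flat$ above.

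Bijectivity on $\C$-points then follows from two steps. For surjectivity, given a complex point of $Z(T,\mu)$ represented by $(z, g')$ with associated tuple $x\in V^d$ of moment matrix $T$, Witt's extension theorem produces $\bar\gamma\in \mathrm{SO}(V)(\Q)$ with $\bar\gamma e_i = x_i$, and this lifts to $\gamma \in G(\Q)$ because $H^1(\Q, \mathbb{G}_m) = 0$; replacing $(z, g')$ by $(\gamma^{-1}z, \gamma^{-1}g')$ normalizes the tuple to $e$, places $\gamma^{-1}z$ in $\mathcal{D}^\flat$, and forces $\gamma^{-1}g'\in \Xi(T,\mu)$, exhibiting the point as coming from $M_{\gamma^{-1}g'}^\flat$. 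For injectivity, any $\gamma \in G(\Q)$ witnessing an equivalence between two representatives that preserves the tuple $e$ must lie in the stabilizer $G^\flat(\Q)$ of $e$, after which a short computation forces the two representatives into the same double coset and yields the required identification in the corresponding $M_g^\flat$. Since both sides are finite unramified over $M$, the bijection on complex points upgrades to the claimed isomorphism of $M$-stacks. The main obstacle is the careful bookkeeping of the interlocking $G^\flat(\A_f)$, $K$, and $G(\Q)$ actions; the one substantive external input is the Witt extension together with its $\mathrm{GSpin}$-lifting.
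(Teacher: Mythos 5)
Your proposal is correct and follows essentially the same route as the paper: both reduce to the complex uniformization of Remark~\ref{rem:special identification}, use the transitivity of $G(\Q)$ on tuples $x\in V^d$ with $Q(x)=T$ to normalize to the tuple $e$, identify the stabilizer with $G^\flat(\Q)$ and the condition $[z,e_i]=0$ with $z\in\mathcal{D}^\flat$, and then descend to $\Q$ via canonical models. The only difference is cosmetic: you spell out the transitivity via Witt extension plus Hilbert 90, which the paper simply asserts.
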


 \begin{proof}
 Only the decomposition \eqref{cycle presentation} is nontrivial. For that  we use Remark \ref{rem:special identification}
  to identify points of $Z(T,\mu)(\C)$ with 
  \[
G(\Q) \big\backslash   \left\{
  (z,x,g) \in \mathcal{D} \times V^d  \times G(\A_f)   : 
\begin{array}{c}   
  Q(x) = T \\ {  [z,x_i]  =0, \ \forall\,  1 \le i \le d } \\
  { x_i \in  g \cdot( \mu_i + L_{\widehat{\Z}}) , \ \forall\,  1 \le i \le d }
 \end{array}
  \right\} \big/ K.
  \]
  
  The key point is that the group $G(\Q)$ acts transitively on the set of tuples $x\in V^d$ satisfying $Q(x)=T$.  Thus any element  of the double quotient above  is represented by a triple of the form $(z,e,g)$, where $e=(e_1,\ldots, e_d) \in W^d \subset V^d$.  
  As the stabilizer of $e$ is precisely $G^\flat(\Q) \subset G(\Q)$,  and the condition $[z,e_i]=0$ for $1\le i \le d$ is equivalent to $z\in \mathcal{D}^\flat \subset \mathcal{D}$, we may rewrite the double quotient above as
   \[
   Z(T,\mu)(\C) \iso 
G^\flat(\Q) \backslash   \mathcal{D}^\flat   \times \Xi(T,\mu)  / K.
  \]

Over the complex fiber, the decomposition \eqref{cycle presentation} follows easily from this. In particular, for every $g$, we have maps $M_{g,\C}\to Z(T,\mu)_{\C}$ of finite unramified stacks over $M_{\C}$. To finish, it is enough to know that these maps descend over $\Q$: This will give the map underlying the isomorphism~\eqref{cycle presentation}, and that it is an isomorphism can be checked over $\C$. The desired descent to $\Q$ is in fact a consequence of the theory of canonical models for Shimura varieties and uses the moduli interpretation of the cycle $Z(T,\mu)$; see the argument in~\cite[Proposition 6.5]{mp:spin}.
 \end{proof}


\subsection{Basic properties of special cycles} 
\label{ss:cycle basics}


First  we explain in what sense the morphisms \eqref{special divisors}, which are not closed immersions, deserve to be called special divisors.

\begin{definition}\label{def:general cartier}
Suppose $D\to X$ is any finite, unramified, and relatively representable morphism  of Deligne-Mumford stacks.
By \cite[\href{https://stacks.math.columbia.edu/tag/04HJ}{Tag 04HG}]{stacks-project} there is an \'etale cover  $U \to X$ by a scheme such that the pullback $D_U\to U$ is a finite disjoint union 
\[
D_U = \bigsqcup_i D_U^i
\]
with each map $D_U^i \to U$ a closed immersion.
If each of these  closed immersions is an effective Cartier divisor on $U$ in the usual sense (the corresponding ideal sheaves are invertible),  then we call $D\to X$ a \emph{generalized Cartier divisor}.  
\end{definition}

\begin{remark}\label{rem:cartier bundle}
Any generalized Cartier divisor $D\to X$ determines an effective Cartier divisor (in the usual sense)  on $X$.  
Indeed, if we choose an \'etale cover $U \to X$ as in Definition \ref{def:general cartier}, then
$\bm{D}_U = \sum_i  D_U^i$ is an effective Cartier divisor on $U$.  The descent data for $D_U$ relative to $U \to  X$ induces descent data for $\bm{D}_U$, which then determines an effective Cartier divisor  $\bm{D} \hookrightarrow X$.
\end{remark}

\begin{proposition}\label{prop:divisors are divisors}
Fix $t\in \Q$ and $\mu \in L^\vee/L$.
\begin{enumerate}
  \item If $t>0$ then $\mathcal{Z}(t,\mu) \to \mathcal{M} $ is a generalized Cartier divisor.
    \item  If $t<0$ then $\mathcal{Z}(t,\mu) =\emptyset$.
  \item
  If  $t=0$ then
\[
\mathcal{Z}(0,\mu) = \begin{cases}
\mathcal{M} & \mbox{if }\mu=0 \\
\emptyset& \mbox{if }\mu\neq 0.
\end{cases}
\]
\end{enumerate}
\end{proposition}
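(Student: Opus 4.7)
Parts (2) and (3) are immediate from what is already available. Part (2) is Remark \ref{rem:pos def T} in the case $d=1$. For (3), positive definiteness of $Q$ on $V(\mathcal{A}_S)_\Q$ forces any $x$ with $Q(x)=0$ to be the zero quasi-endomorphism; since $0\in V_0(\mathcal{A}_S)$ and the subsets $V_\mu(\mathcal{A}_S)$ for distinct $\mu$ are disjoint, the functor $\mathcal{Z}(0,\mu)(-)$ is empty when $\mu\neq 0$, and is the constant functor $S\mapsto\{0\}$ when $\mu=0$, exhibiting $\mathcal{Z}(0,0)\to\mathcal{M}$ as the identity.

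For part (1), the plan is to verify the Cartier condition \'etale-locally. Since $\mathcal{Z}(t,\mu)\to\mathcal{M}$ is finite, unramified, and relatively representable, we may pass to an \'etale cover $U\to\mathcal{M}$ as in Definition \ref{def:general cartier} over which $\mathcal{Z}(t,\mu)_U$ decomposes as a finite disjoint union of closed immersions $D^i_U\hookrightarrow U$. On each such $D^i_U$ one has a distinguished special quasi-endomorphism $x$ of $\mathcal{A}_{D^i_U}$ with $Q(x)=t$ and denominator $\mu$, and it suffices to prove that the ideal of $D^i_U$ is principal and generated by a non-zero-divisor in the completed local ring $\widehat{\co}_{U,s}$ at every geometric point $s\in D^i_U$.

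The main input is the crystalline/Hodge-theoretic deformation theory of special quasi-endomorphisms for the Kuga-Satake family, as developed in \cite{mp:spin}, \cite{AGHMP17}, \cite{AGHMP18}, and packaged in \cite[\S 6]{HMP}. The completed local ring $\widehat{\co}_{\mathcal{M},s}$ carries a universal isotropic line $\ell$ in the de Rham realization of $\mathcal{A}$ — globalizing to the line bundle \eqref{taut bundle} and recovering at $s$ the isotropic line of Remark \ref{rem:special identification}. A given $x\in V(\mathcal{A}_s)_\Q$ extends to an infinitesimal thickening inside $\Spf\widehat{\co}_{\mathcal{M},s}$ if and only if it is orthogonal to the deformation of $\ell$, a single linear equation in the local parameters. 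Since $t>0$ guarantees $x\neq 0$, the functional $[x,-]$ is nonzero on the ambient quadratic space, and the corresponding equation is nontrivial in $\widehat{\co}_{U,s}$, exhibiting $D^i_U$ as an effective Cartier divisor there.

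The one point that requires real care — and is therefore the main obstacle — is checking that the defining equation is a non-zero-divisor uniformly over $\Spec\Z[\Sigma^{-1}]$, including at primes $p\notin\Sigma$ at which $L_p$ is merely maximal (rather than hyperspecial) and $\mathcal{M}$ is non-smooth. This is handled by the explicit local-model descriptions of $\mathcal{M}$ at such primes in \cite[\S 6]{HMP} and \cite[\S 7]{mp:spin}: the deformation equation of $x$ is identified with (a factor of) a coordinate function on the local model, which is automatically a non-zero-divisor. Once this is granted, Remark \ref{rem:cartier bundle} promotes the collection of the $D^i_U$ to an honest effective Cartier divisor on $\mathcal{M}$.
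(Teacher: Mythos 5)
Your argument is correct and follows essentially the same route as the paper: parts (2) and (3) are immediate from positive definiteness of the quadratic form on $V(\mathcal{A}_S)_\Q$, and part (1) rests on the obstruction theory for deforming special quasi-endomorphisms from \cite[Propositions 6.5.1--6.5.2]{HMP} — the obstruction section of $\omega^{-1}$ on square-zero thickenings is precisely the ``orthogonality to the deformed isotropic line'' equation you describe, and together with Nakayama's lemma it shows the ideal of $\mathcal{Z}(t,\mu)$ is \'etale-locally principal. The one place you overcomplicate matters is the non-zero-divisor step: since $\mathcal{M}$ is normal, its \'etale local rings are domains, so a nonzero principal generator is automatically a non-zero-divisor and no appeal to local models at the non-hyperspecial primes is needed.
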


\begin{proof}
The first assertion is~\cite[Proposition 6.5.2]{HMP}, 
while the second and third follow immediately from the definitions (and Remark \ref{rem:pos def T}).

For future reference, we recall the main ingredient of the proof of (1). Suppose that
\[
\xymatrix{
{   S } \ar[r]  \ar[d]   &   { \mathcal{Z} (t,\mu) }  \ar[d] \\
  { \widetilde{S} }   \ar[r]  &  {   \mathcal{M} }
}
\]
is  a commutative diagram of stacks in which  $S \to \widetilde{S}$ is a closed immersion of schemes defined by an ideal sheaf $J \subset \co_{\widetilde{S}}$ with $J^2=0$.   
The top horizontal arrow corresponds to a special quasi-endomorphism
$
x \in V_\mu(\mathcal{A}_S),
$
and we want to know when $x$ lies in the image of the (injective) restriction map
\begin{equation}\label{special deformation}
V_\mu(\mathcal{A}_{\widetilde{S}}) \to V_\mu(\mathcal{A}_S).
\end{equation}
Equivalently, when there is a (necessarily unique) dotted arrow 
\[
\xymatrix{
{   S } \ar[r]  \ar[d]   &   { \mathcal{Z}(t,\mu) }  \ar[d] \\
  { \widetilde{S} }   \ar@{-->}[ur] \ar[r]  &  {   \mathcal{M} }
}
\]
making the diagram commute.
In this situation, \cite[Proposition 6.5.1]{HMP} provides us with 
a canonical section
\begin{equation*}
\mathrm{obst}_x \in H^0 \big( \widetilde{S} , \omega|^{-1}_{\widetilde{S} } \big),
\end{equation*}
called the \emph{obstruction to deforming $x$},  with the property that  $x$ lies in the image of \eqref{special deformation} if and only if $\mathrm{obst}_x=0$.

Using this and Nakayama's lemma, one  shows that at any geometric point $z \to \mathcal{Z}(t,\mu)$,  the kernel of the natural surjection
\[
 \co^\et_{\mathcal{M} , z} \to \co^\et_{\mathcal{Z}(t,\mu) , z} 
\]
 is  a principal ideal, and (1) follows from this.  Again, see \cite[Proposition 6.5.2]{HMP} for  details.
\end{proof}

\begin{proposition} 
\label{prop:ZTmu_dim_bound}
Fix a special cycle \eqref{special cycle}.
Every irreducible component $\mathcal{Z} \subset \mathcal{Z}(T,\mu)$ satisfies
 \[
 \mathrm{dim}(  \mathcal{Z}  ) \ge   \mathrm{dim}(\mathcal{M})-\mathrm{rank}(T).
 \]
If equality holds, then $\mathcal{Z}(T,\mu)$ is a local complete intersection over $\Z[\Sigma^{-1}]$ at every point of $\mathcal{Z}$.
\end{proposition}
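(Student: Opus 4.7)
The plan is to analyze $\mathcal{Z}(T,\mu)$ locally at each geometric point via the deformation theory of special quasi-endomorphisms, and then conclude using Krull's Hauptidealsatz together with the Cohen--Macaulay property of $\mathcal{M}$.

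First I would fix a geometric point $z\to\mathcal{Z}(T,\mu)$ corresponding to a tuple $x=(x_1,\ldots,x_d)$ with $Q(x)=T$, set $R=\widehat{\co}^{\et}_{\mathcal{M},z}$, and let $\mathfrak{a}\subset R$ be the ideal cutting out $\mathcal{Z}(T,\mu)$ in $\mathcal{M}$ at $z$. Since the entries $[x_i,x_j]\in\Q$ are locally constant on any connected base, the condition that each $x_i$ deforms individually is equivalent to the whole tuple deforming while preserving $Q(x)=T$. Hence $\mathfrak{a}$ is the sum of the ideals cutting out the special divisors $\mathcal{Z}(Q(x_i),\mu_i)$ at $z$, and by the proof of Proposition~\ref{prop:divisors are divisors}(1) each of these ideals is principal, generated by an element $f_i\in R$ obtained from the obstruction $\mathrm{obst}_{x_i}$ via a trivialization of $\omega$ over $\Spec R$. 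Thus $\mathfrak{a}=(f_1,\ldots,f_d)$.

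Next I would cut the number of generators from $d$ down to $r\define\mathrm{rank}(T)$ using the positive-definiteness of the quadratic form on $V(\mathcal{A}_z)_\Q$, which forces the $\Q$-span of $x_1,\ldots,x_d$ to have dimension exactly $r$; after reindexing I may assume $x_{r+1},\ldots,x_d\in\mathrm{span}_\Q(x_1,\ldots,x_r)$. The crucial (and most delicate) input, and what I expect to be the main obstacle, is that the obstruction map $x\mapsto\mathrm{obst}_x$ is $\Q$-linear in $x\in V(\mathcal{A})_\Q$; this linearity is built into the construction of \cite[Proposition~6.5.1]{HMP}, where the obstruction arises from a linear map coming from the Hodge filtration on the realization of $V(\mathcal{A})$. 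Granted this, the $f_i$ inherit the same $\Q$-linear relations as the $x_i$, and so $\mathfrak{a}=(f_1,\ldots,f_r)$.

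Finally, Krull's Hauptidealsatz applied in $R$ immediately yields $\dim(R/\mathfrak{a})\geq\dim R - r = \dim\mathcal{M} - r$, which is the asserted bound for every irreducible component of $\mathcal{Z}(T,\mu)$ passing through $z$. If equality holds at $z$, then the $r$ elements $(f_1,\ldots,f_r)$ cut the dimension of $R$ down by exactly $r$; since $R$ is Cohen--Macaulay (regular under the hypothesis of Proposition~\ref{prop:regularity}, and in any case LCI over $\Z[\Sigma^{-1}]$ in the setup of \cite{HMP}), this forces $(f_1,\ldots,f_r)$ to be a regular sequence. Consequently $\mathcal{Z}(T,\mu)\hookrightarrow\mathcal{M}$ is a regular immersion at $z$, and composing with the LCI morphism $\mathcal{M}\to\Spec(\Z[\Sigma^{-1}])$ gives the asserted LCI property of $\mathcal{Z}(T,\mu)$ over $\Z[\Sigma^{-1}]$ at every point of the component $\mathcal{Z}$.
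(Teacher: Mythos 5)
Your proposal is correct and follows essentially the same route as the paper: the ideal of $\mathcal{Z}(T,\mu)$ at a geometric point is generated by $\mathrm{rank}(T)$ elements via the obstruction theory of special endomorphisms (the step you flag as delicate is precisely what the paper outsources to the deformation theory of Proposition~\ref{prop:divisors are divisors} and \cite[Corollary 5.17]{mp:spin}), after which Krull's Hauptidealsatz gives the dimension bound and the Cohen--Macaulay/regular-sequence argument gives the local complete intersection claim when equality holds.
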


\begin{proof}
For any geometric point $z \to \mathcal{Z}(T,\mu)$, the kernel of the natural surjection
\[
 \co^\et_{\mathcal{M},z} \to 
\co^\et_{\mathcal{Z}(T,\mu),z} 
\]
 is generated by $d = \mathrm{rank}(T)$ elements.
This follows from Nakayama's lemma and the deformation theory used in the proof of Proposition~\ref{prop:divisors are divisors}; see also  \cite[Corollary 5.17]{mp:spin}. 
From this, the asserted inequality is immediate. Moreover, it is clear that $\mathcal{Z}(T,\mu)$ is a local complete intersection at $z$ whenever 
\[
\dim(  \co^\et_{\mathcal{Z}(T,\mu),z} ) = \dim( \co^\et_{\mathcal{M},z}  )- d = \dim(\mathcal{M}) - \mathrm{rank}(T).\qedhere
\]

\end{proof}

\begin{proposition}
\label{prop:naive linear invariance}
For any special cycle \eqref{special cycle}  and any  $A\in \mathrm{GL}_d(\Z)$, 
there is an isomorphism of $\mathcal{M}$-stacks 
\[
\mathcal{Z}(T ,\mu ) \iso \mathcal{Z}({}^tA T A , \mu A).
\] 
\end{proposition}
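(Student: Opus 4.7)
The plan is to write down an explicit isomorphism at the level of functors of points. Given $A=(a_{ij})\in\mathrm{GL}_d(\Z)$, I define a map of $\mathcal{M}$-stacks $\mathcal{Z}(T,\mu)\to\mathcal{Z}({}^tATA,\mu A)$ on an $S$-point by sending the tuple $x=(x_1,\ldots,x_d)$ to the tuple $y=xA$, whose $j$-th component is $y_j=\sum_{i}a_{ij}x_i$. Since $A^{-1}$ also lies in $\mathrm{GL}_d(\Z)$, right multiplication by $A^{-1}$ gives an inverse, so once well-definedness is checked the isomorphism is automatic.

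Well-definedness has two parts. First, a direct bilinearity computation using \eqref{inner products} shows
\[
Q(xA)_{jk}=\tfrac{1}{2}\bigl[\textstyle\sum_i a_{ij}x_i,\sum_l a_{lk}x_l\bigr]=\sum_{i,l}a_{ij}a_{lk}\tfrac{[x_i,x_l]}{2}=({}^tAQ(x)A)_{jk},
\]
so if $Q(x)=T$ then $Q(xA)={}^tATA$, as required. Second, I need to verify the denominator condition: for each $j$, the quasi-endomorphism $\sum_{i}a_{ij}x_i$ lies in $V_{(\mu A)_j}(\mathcal{A}_S)$, where $(\mu A)_j=\sum_i a_{ij}\mu_i\in L^\vee/L$.

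This second step is the key technical input. The point is that the subsets $V_\mu(\mathcal{A}_S)\subset V(\mathcal{A}_S)_\Q$ form a $\Z$-linear refinement of the integral structure on $V(\mathcal{A}_S)_\Q$: the union $\bigsqcup_{\mu\in L^\vee/L}V_\mu(\mathcal{A}_S)$ is a $\Z$-submodule of $V(\mathcal{A}_S)_\Q$, and the association $x\mapsto\mu$ is additive modulo $L$. For complex points this is transparent from Remark~\ref{rem:special identification}, which realizes each $V_\mu(\mathcal{A}_s)$ as the intersection of a fixed $\Q$-subspace of $V$ with the coset $g\cdot(\mu+L_{\widehat{\Z}})$ inside $V_{\A_f}$; integer linear combinations of elements of such cosets land in the coset indexed by the corresponding integer linear combination of the $\mu_i$, since $L_{\widehat{\Z}}$ is a $\widehat{\Z}$-module. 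In general, the structure of $V_\mu(\mathcal{A}_S)$ in \cite{HMP} is inherited functorially from this complex picture (via the local-system/crystalline realizations used to define the cycles), and the same additivity holds. This is the one place where one must appeal to the construction in \cite{HMP} rather than the bare definition reproduced in the text.

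Granting this $\Z$-linearity, the map $x\mapsto xA$ sends $\mathcal{Z}(T,\mu)(S)$ into $\mathcal{Z}({}^tATA,\mu A)(S)$ functorially in $S\to\mathcal{M}$; the inverse $y\mapsto yA^{-1}$ exhibits the map as a functorial isomorphism, which by Yoneda yields the desired isomorphism of $\mathcal{M}$-stacks. I expect the only genuine obstacle to be the bookkeeping of the previous paragraph; everything else is a formal change of basis. This is in sharp contrast with the analogous invariance of the corrected cycle classes stated as Proposition~\ref{prop:linear invariance}, which encodes nontrivial self-intersection information and requires the serious work alluded to in the introduction.
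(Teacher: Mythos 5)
Your proposal is correct and is essentially the paper's own proof, which consists of the single sentence defining the map $x\mapsto xA$ on $S$-points; the moment-matrix computation and the invertibility via $A^{-1}$ are exactly what the authors leave implicit. Your extra care about the denominator condition (the $\Z$-linearity of $x\mapsto\mu$ modulo $L$, inherited from the construction of $V_\mu(\mathcal{A}_S)$ in \cite{HMP}) is the only point the paper does not spell out, and your account of it is accurate.
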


\begin{proof}
Given a scheme $S \to \mathcal{M}$, the isomorphism sends a tuple
\[
(x_1,\ldots, x_d) \in V(\mathcal{A}_S)_\Q^d
\]
to the tuple $(x_1,\ldots, x_d) \cdot A \in V(\mathcal{A}_S)_\Q^d$.
\end{proof}

\begin{proposition}\label{prop:naive intersection}
Given positive integers $d'$ and $d''$,  symmetric matrices 
\[
T' \in \Sym_{d'}(\Q) \quad \mbox{and}\quad  T'' \in \Sym_{d''}(\Q),
\]
and tuples $\mu' \in (L^\vee/L)^{d'}$ and  $\mu'' \in (L^\vee / L)^{d''}$,
there is a canonical isomorphism of $\mathcal{M}$-stacks
\[
\mathcal{Z}(T',\mu') \times_{\mathcal{M}} \mathcal{Z}(T'',\mu'') \iso
 \bigsqcup_{   T = \left(\begin{smallmatrix}  T' & * \\ * & T''  \end{smallmatrix} \right)   }  \mathcal{Z}(T,\mu)
\]
where the disjoint union is over all $T\in \Sym_{d'+d''}(\Q)$ of the indicated form, 
and $\mu = ( \mu',\mu'') \in (L^\vee/L)^{d'+d''}$ is the concatenation of the tuples $\mu'$ and $\mu''$.
\end{proposition}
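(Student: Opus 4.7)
The proof is essentially formal, following from the moduli-theoretic definitions. The plan is to unwind both sides in terms of tuples of special quasi-endomorphisms on the Kuga-Satake abelian scheme and observe that concatenation and splitting of such tuples give mutually inverse morphisms, with the off-diagonal block of the moment matrix indexing the connected components of the disjoint union.

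More precisely, for a scheme $S \to \mathcal{M}$, an $S$-point of the fiber product $\mathcal{Z}(T',\mu') \times_{\mathcal{M}} \mathcal{Z}(T'',\mu'')$ is by definition a pair of tuples
\[
x' = (x'_1,\ldots,x'_{d'}) \in V_{\mu'_1}(\mathcal{A}_S)\times\cdots\times V_{\mu'_{d'}}(\mathcal{A}_S), \quad Q(x') = T',
\]
and
\[
x'' = (x''_1,\ldots,x''_{d''}) \in V_{\mu''_1}(\mathcal{A}_S)\times\cdots\times V_{\mu''_{d''}}(\mathcal{A}_S), \quad Q(x'') = T''.
\]
Their concatenation $x = (x',x'')$ is a tuple of $d' + d''$ special quasi-endomorphisms, with the $i$-th entry lying in $V_{\mu_i}(\mathcal{A}_S)$ for the concatenated tuple $\mu = (\mu',\mu'')$. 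The moment matrix $Q(x) \in \Sym_{d'+d''}(\Q)$ is well-defined, and by construction its upper-left $d' \times d'$ block is $T'$ and its lower-right $d'' \times d''$ block is $T''$.

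Conversely, given any $T \in \Sym_{d'+d''}(\Q)$ with those prescribed diagonal blocks and any tuple $x \in \prod V_{\mu_i}(\mathcal{A}_S)$ with $Q(x) = T$, splitting into the first $d'$ entries and the last $d''$ entries yields precisely a point of $\mathcal{Z}(T',\mu') \times_{\mathcal{M}} \mathcal{Z}(T'',\mu'')$. These two operations are visibly functorial in $S$ and mutually inverse, so they define an isomorphism of $\mathcal{M}$-stacks
\[
\mathcal{Z}(T',\mu')\times_{\mathcal{M}} \mathcal{Z}(T'',\mu'') \iso \bigsqcup_{T}\mathcal{Z}(T,\mu),
\]
where the union runs over all $T \in \Sym_{d'+d''}(\Q)$ of the indicated shape. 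The union is genuinely disjoint because the moment matrix $Q(x)$ is a well-defined invariant of the concatenated tuple: two different values of $T$ cannot be realized simultaneously.

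There is no real obstacle here, only a minor bookkeeping point to address: one should note that although the indexing set of $T$'s is a priori infinite, for any given $S$-point only one value of $T$ is relevant, so the disjoint union is well-defined as a stack. This also justifies that the statement should be read as an isomorphism of $\mathcal{M}$-stacks in which both sides are interpreted via their functors of points in the natural way.
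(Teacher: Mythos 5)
Your proof is correct and follows the same route as the paper, which simply identifies the $S$-points of both sides with pairs of tuples $(x',x'')$ having moment matrices $T'$ and $T''$. Your additional remarks on disjointness and the infinite indexing set are harmless elaborations of the same functor-of-points argument.
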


\begin{proof}
 For any $\mathcal{M}$-scheme $S$, the $S$-valued points on both sides can be identified with the set of tuples
\[
(x',x'')\in\prod_{i=1}^{d'} V_{\mu'_i}(\mathcal{A}_S)\times\prod_{j=1}^{d''} V_{\mu''_{j}}(\mathcal{A}_S)
\]
such that  $Q(x')=T'$ and $Q(x'') = T''$.
\end{proof}

Suppose we have an  isometric embedding  $L\hookrightarrow L^\beef$ as in the discussion leading to \eqref{shimura embedding}.
As in Remark \ref{rem:int shimura embedding}, we assume that  $\Sigma$ contains all primes $p$ for which $L_p^\beef$ is not maximal, so that there is a morphism of integral models 
\[
 \mathcal{M}  \to \mathcal{M} ^\beef 
\]
over $\Z[\Sigma^{-1}]$.  The target of this morphism  has its own  special cycles 
\[
\mathcal{Z}^\beef(T^\beef,\mu^\beef) \to \mathcal{M}^\beef
\]
 indexed by $T^\beef \in \Sym_d(\Q)$ and $\mu^\beef \in (L^{\beef,\vee}/ L^\beef)^d$, 
and we wish to  describe their pullbacks to $\mathcal{M}$.

Denoting by $\Lambda \subset L^\beef$ the set of vectors  orthogonal to $L$,  there are  inclusions of lattices
\[
L \oplus \Lambda \subset L^\beef \subset L^{\beef,\vee} \subset L^\vee \oplus \Lambda^\vee .
\]
Given cosets
\[
\mu \in L^\vee/L ,\qquad \nu \in \Lambda^\vee /\Lambda, \qquad \mu^\beef \in L^{\beef,\vee}/L^\beef,
\]
we write $\mu+ \nu=   \mu^\beef$ to indicate that the natural map
\[
(L^\vee \oplus \Lambda^\vee) / (L  \oplus \Lambda) \to (L^\vee \oplus \Lambda^\vee ) / L^\beef
\]
sends 
\[
\mu + \nu \mapsto   \mu^\beef \in   L^{\beef,\vee} / L^\beef   \subset     ( L^\vee \oplus \Lambda^\vee ) / L^\beef .
\]

\begin{proposition}\label{prop:naive pullback}
There is an isomorphism of $\mathcal{M}$-stacks
\[
 \mathcal{Z}^\beef (T^\beef, \mu^\beef)  \times_{ \mathcal{M}^\beef} \mathcal{M}
\iso 
\bigsqcup_{   \substack{ T \in \Sym_d(\Q)  \\  \mu\in (L^\vee / L)^d } } 
 \bigsqcup_{ \substack{   \nu \in ( \Lambda^\vee/\Lambda)^d \\ \mu+\nu =  \mu^\beef   } } 
 \bigsqcup_{   \substack{ y\in  \nu + \Lambda^d \\  T+ Q(y) = T^\beef   }    }
   \mathcal{Z} (  T  , \mu ),
\]
where $\mu+\nu =  \mu^\beef$ is understood as above, but componentwise (that is to say, $\mu_i +\nu_i = \mu^\beef_i$ for every $1\le i \le d$).
\end{proposition}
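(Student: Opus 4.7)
Both sides of the claimed isomorphism are defined by their functors of points on $\mathcal{M}$-schemes, so it suffices to exhibit a functorial bijection. Given an $\mathcal{M}$-scheme $S$, an $S$-point of the fiber product on the left is a tuple $x^\beef = (x_1^\beef,\ldots,x_d^\beef)$ with $x_i^\beef \in V_{\mu_i^\beef}(\mathcal{A}^\beef_S)$ and $Q(x^\beef) = T^\beef$, while an $S$-point of the right-hand side consists of discrete data $(T,\mu,\nu,y)$ as indexed in the statement together with a tuple $x \in V(\mathcal{A}_S)_\Q^d$ satisfying $x_i \in V_{\mu_i}(\mathcal{A}_S)$ and $Q(x) = T$. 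The bijection will be produced by decomposing each $x_i^\beef$ orthogonally into a ``variable'' $V$-component $x_i$ and a ``constant'' $\Lambda$-component $y_i$.

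The structural input is that for any $\mathcal{M}$-scheme $S$, the orthogonal decomposition $V^\beef_\Q = V_\Q \oplus \Lambda_\Q$ induces a canonical orthogonal decomposition
\[
V(\mathcal{A}^\beef_S)_\Q = V(\mathcal{A}_S)_\Q \oplus \Lambda_\Q
\]
of positive-definite quadratic spaces over $\Q$. The summand $\Lambda_\Q$ arises because $\Lambda \subset V^\beef$ is fixed pointwise by $G \subset G^\beef$, so that every $\lambda \in \Lambda$ yields a ``constant'' special quasi-endomorphism of the pullback of $\mathcal{A}^\beef$ to $\mathcal{M}$; the identification of the orthogonal complement with $V(\mathcal{A}_S)_\Q$ follows from the compatibility of the Kuga--Satake constructions for $L$ and $L^\beef$ under the isometric embedding. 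At complex points this is immediate from Remark~\ref{rem:special identification}, and the corresponding integral assertion is part of the framework of~\cite{HMP} (compare Remark~\ref{rem:int shimura embedding}). This decomposition then refines to denominators: writing $x^\beef = x + y$, the condition $x^\beef \in V_{\mu^\beef}(\mathcal{A}^\beef_S)$ is equivalent to the existence of a unique pair $(\mu,\nu) \in L^\vee/L \times \Lambda^\vee/\Lambda$ with $\mu + \nu = \mu^\beef$ (in the sense of the statement), together with $x \in V_\mu(\mathcal{A}_S)$ and $y \in \nu + \Lambda$. The pair $(\mu,\nu)$ is produced by lifting $\mu^\beef$ to $L^{\beef,\vee} \subset L^\vee \oplus \Lambda^\vee$ and projecting to the two factors; independence of the chosen lift is precisely the indeterminacy measured by $L^\beef$.

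Applying this componentwise to a $d$-tuple and using orthogonality to deduce $Q(x^\beef) = Q(x) + Q(y)$, the moment-matrix condition $Q(x^\beef) = T^\beef$ transforms into $T + Q(y) = T^\beef$ where $T = Q(x)$. Crucially, the $\Lambda$-component $y = (y_1,\ldots,y_d)$ is a vector in $\nu + \Lambda^d$ which is constant along $S$, and the discrete data $(T,\mu,\nu,y)$ that arise in this way are exactly the indexing set of the disjoint union on the right-hand side; the inverse map is given by $x^\beef_i := x_i + y_i$. The main obstacle is the structural input in the second paragraph: establishing the integral orthogonal decomposition of $V(\mathcal{A}^\beef_S)_\Q$ together with its compatibility with the denominator subsets $V_\mu$. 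The complex-analytic statement is transparent from Remark~\ref{rem:special identification}, but the integral refinement rests on the detailed characterization of special endomorphisms through their $\ell$-adic, crystalline, and de Rham realizations developed in \cite{HMP} and \cite{AGHMP18}.
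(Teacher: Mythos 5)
Your proposal is correct and follows essentially the same route as the paper: both rest on the canonical isometric embedding $V(\mathcal{A}_S)\oplus\Lambda\hookrightarrow V(\mathcal{A}^\beef_S)$ (which the paper cites from \cite[Proposition 6.6.2]{HMP}), the induced bijection of denominator subsets $V_{\mu^\beef}(\mathcal{A}^\beef_S)\iso\bigsqcup_{\mu+\nu=\mu^\beef}V_\mu(\mathcal{A}_S)\times(\nu+\Lambda)$, and the componentwise decomposition of the moment matrix. The integral input you correctly flag as the main point is exactly what the paper outsources to \cite{HMP}, so there is no gap.
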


\begin{proof}
By  \cite[Proposition 6.6.2]{HMP}, 
for any scheme $S \to \mathcal{M}$ there is a canonical isometric embedding
\[
V(\mathcal{A}_S)\oplus \Lambda \hookrightarrow V(\mathcal{A}^\beef_S) .
\]
Here $\mathcal{A} \to \mathcal{M}$ and $\mathcal{A}^\beef \to \mathcal{M}^\beef$ are the Kuga-Satake abelian schemes, and  the $\oplus$ on the left  is the orthogonal direct sum. 
This embedding determines a $\Q$-linear isometry
\[
V(\mathcal{A}^\beef_S)_\Q \iso 
 V(\mathcal{A}_S)_\Q \oplus \Lambda_\Q  ,
\]
which  restricts to a bijection
\[
V_{\mu^\beef}(\mathcal{A}^\beef_S) \iso
 \bigsqcup_{ \substack{  \mu \in L^\vee /L \\ \nu \in \Lambda^\vee/ \Lambda \\ \mu+\nu =  \mu^\beef } }V_{\mu}(\mathcal{A}_S) \times (\nu+\Lambda)
\]
for every $\mu^\beef\in L^{\beef, \vee} /L^\beef$.  
The proposition follows easily from this and the definition of  special cycles.
\end{proof}

 
\section{Special cycles of low codimension}
\label{s:good cycles}


Keep $L\subset V$ and $\mathcal{M} \to \Spec(\Z[\Sigma^{-1}])$  as in \S \ref{ss:initial data}  and\S \ref{ss:integral model}. 
Given a positive semi-definite $T\in  \Sym_d(\Q)$ and a $\mu \in (L^\vee/L)^d$,
our goal is prove that if $\mathrm{rank}(T)$ is  small  relative to $n=\dim(V)$, 
then the special cycle $\mathcal{Z}(T,\mu)$ is   equidimensional and flat over $\Z[\Sigma^{-1}]$.

  We  also show  that divisor classes on  $\mathcal{Z}(T,\mu)$  are determined by their restriction to the generic fiber.  
In \S \ref{s:low codimension}, this property will allow us to deduce modularity results for cycles on $\mathcal{M}$ from  known modularity results on its generic fiber.


\subsection{Connectedness in low codimension}


Our notion of smallness of $\mathrm{rank}(T)$ is always relative to the fixed lattice $L$, and depends on the following integer $r(L)$ associated to it.

 \begin{definition}\label{defn:rL}
Denote by  $r(L)$   the smallest integer $r\geq 0$ such that $L$ is isometric to a  $\Z$-module direct summand of a self-dual quadratic $\Z$-module $L^\beef$ of signature $(n+r,2)$.  The existence of such an $L^\beef$  follows from Proposition~\ref{prop:r genus appendix}. 
\end{definition}

\begin{remark}\label{rem:r genus}
For any $g\in G(\A_f)$ we have 
\[
r(L) = r(L_g),
\]
 where $L_g = V \cap g L_{\widehat{\Z}}$.
 This is   immediate from the fact that if $L$ embeds isometrically as a $\Z$-module direct summand of $L^\beef$, then $L_g$  embeds isometrically as a $\Z$-module direct summand of   $L^\beef_g = V^\beef \cap g{L^\beef_{\widehat{\Z}}}$.
\end{remark}

 \begin{proposition} \label{prop:ZTmu geom conn}
Suppose $T\in  \Sym_d(\Q)$ and  $\mu \in (L^\vee/L)^d$.  If
\[
 \mathrm{rank}(T) \leq \frac{ n - 2 r(L) -4 }{3} ,
\]
  then every connected component of the  generic fiber
  $
  Z(T,\mu) = \mathcal{Z}(T,\mu)_\Q
  $
   is geometrically connected.
 \end{proposition}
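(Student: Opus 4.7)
The plan is to combine the uniformization of $Z(T,\mu)$ as a disjoint union of smaller Shimura varieties with the connectedness criterion of Remark~\ref{rem:conn comp complex fiber}, reducing the assertion to a lattice-theoretic statement about $L^\flat_g$.

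First, I would apply Proposition~\ref{prop:ZTmu_uniformization} to write
\[
Z(T,\mu) \iso \bigsqcup_{g \in G^\flat(\Q) \backslash \Xi(T,\mu)/K} M^\flat_g,
\]
so that every connected component of $Z(T,\mu)$ is a connected component of some individual $M^\flat_g$. It therefore suffices to show that each $M^\flat_g$, viewed as the Shimura variety for $\mathrm{GSpin}(V^\flat)$ at the level cut out by $L^\flat_g = V^\flat \cap g L_{\widehat{\Z}}$, is geometrically connected. By Remark~\ref{rem:conn comp complex fiber} this is implied by the existence of isotropic vectors $\ell, \ell_* \in L^\flat_g$ with $[\ell, \ell_*]=1$, i.e.\ by the existence of a primitively embedded hyperbolic plane.

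To produce such a hyperbolic pair, I would exploit that $L^\flat_g$ has small codimension inside a self-dual lattice. By Definition~\ref{defn:rL} together with Remark~\ref{rem:r genus}, $L_g$ embeds isometrically as a $\Z$-module direct summand of a self-dual lattice of signature $(n+r(L),2)$. Intersecting with $V^\flat$ exhibits $L^\flat_g$ as a primitive sublattice of this self-dual ambient whose orthogonal complement has rank at most $\mathrm{rank}(T) + r(L)$. Standard discriminant-group theory then shows that the finite group $L^{\flat,\vee}_g/L^\flat_g$ is generated by at most $\mathrm{rank}(T) + r(L)$ elements. I would then invoke a lattice-theoretic criterion of Nikulin type: an indefinite quadratic lattice of signature $(a,2)$ whose discriminant group admits a set of $s$ generators contains a hyperbolic plane as soon as $a \geq 2s+4$. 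Setting $a = n - \mathrm{rank}(T)$ and $s = \mathrm{rank}(T) + r(L)$ turns this inequality into the hypothesis $\mathrm{rank}(T) \leq (n - 2r(L) - 4)/3$.

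The main obstacle I expect is the last step: isolating and applying the precise lattice-theoretic lemma on hyperbolic planes in lattices of small co-rank inside a self-dual ambient. I would expect this input to live (or be easily extractable) from the appendix containing Proposition~\ref{prop:r genus appendix}, so that the proof reduces essentially to the geometric uniformization above plus the discriminant-rank bookkeeping that translates into the stated numerical inequality.
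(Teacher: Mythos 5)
Your proposal matches the paper's proof in all essentials: reduce via Proposition~\ref{prop:ZTmu_uniformization} to the geometric connectedness of each $M^\flat_g$, apply Remark~\ref{rem:conn comp complex fiber} to reduce that to finding a unimodular hyperbolic pair in $L^\flat_g$, bound the number of generators of $L^{\flat,\vee}_g/L^\flat_g$ by $\mathrm{rank}(T)+r(L)$ using the primitive embedding into a self-dual lattice, and invoke the hyperbolic-plane criterion, which is exactly Proposition~\ref{prop:hyperbolic embeddings} of the appendix (your ``$a\ge 2s+4$'' is its ``$\mathrm{rank}_\Z \ge 2\gamma+6$''). The numerology also checks out, so this is the paper's argument.
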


 \begin{proof}
 It suffices to show that each $\Q$-stack $M_g^\flat$ appearing in  \eqref{cycle presentation}  is geometrically connected.  
 Set $n^\flat = n-\mathrm{rank}(T)$, and recall from \eqref{prime sublattices} the quadratic lattice $L_g^\flat \subset V^\flat$ of  signature $(n^\flat,2)$ used to define $M_g^\flat$.
 Using Remark \ref{rem:conn comp complex fiber}, we are  reduced to proving the existence of  isotropic vectors
 $
 \ell ,\ell_* \in L_g^\flat
 $
 with $[\ell ,\ell_*]=1$.

In general, if  $N$ is a  quadratic $\Z$-module with $N_\Q$ non-degenerate, let $\gamma(N)$ the minimal number of elements needed to generate the finite abelian group $N^\vee / N$.  
This quantity only depends on the $\widehat{\Z}$-quadratic space $N_{ \widehat{\Z}}$. 
Moreover, if we realize $N \subset N^\beef$ as a $\Z$-module direct summand of a self-dual quadratic $\Z$-module as in Definition \ref{defn:rL}, there is a canonical surjection
\[
N^\beef\iso N^{\beef,\vee}\to N^\vee
\]
 whose restriction to $N$ is just the inclusion $N\to N^\vee$.  The induced  surjection
$
N^\beef / N \to N^\vee / N
$
shows that
\[
\gamma(N) \le \mathrm{rank}_\Z(N^\beef) - \mathrm{rank}_\Z(N).
\]

As in Remark \ref{rem:r genus}, set $L_g = V \cap g L_{\widehat{\Z}}$ and  abbreviate 
\[
r=r(L)=r(L_g).
\] 
Fix an embedding  $L_g \to L^\beef$ as a $\Z$-module direct summand of a self-dual quadratic lattice of signature $(n+r,2)$.   As the submodule $L^\flat_g \subset L_g$ of \eqref{prime sublattices} is a  $\Z$-module direct summand,  the paragraph above  implies
\[
\gamma(L_g^\flat ) \le  \mathrm{rank}_\Z(L^\beef) - \mathrm{rank}_\Z(L_g^\flat) 
= \mathrm{rank}(T) +r .
\]
This implies the first inequality  in 
\[
2 \cdot  \gamma(L_g^\flat) +6  \le 2 \cdot \mathrm{rank}(T) + 2r + 6  \le n - \mathrm{rank}(T) + 2  =      \mathrm{rank}_\Z (L_g^\flat)
\]
(the second is by the hypotheses of the proposition),
and so Proposition \ref{prop:hyperbolic embeddings} implies the existence of the desired isotropic vectors $\ell,\ell_* \in L_g^\flat$.
\end{proof}

 
\subsection{Geometric properties in low codimension: the self-dual case}
\label{ss:geometric_props}
 

In this subsection, we assume that $L$ is self-dual.  In particular, $L$ is hyperspecial (Definition \ref{def:max lattice}), and the integral model $\mathcal{M}$ is a smooth   $\Z[\Sigma^{-1}]$-stack by the proof of Proposition \ref{prop:regularity}.

Let $\Lambda$ be a positive definite quadratic $\Z$-module. 
 Set
\[
\mathsf{L}(\Lambda) = \{ \Z\mbox{-lattices }\Lambda'\subset \Lambda_\Q : \;\Lambda\subset \Lambda'\subset(\Lambda')^\vee\subset\Lambda^\vee\},
\]
and for each $\Lambda'\in \mathsf{L}(\Lambda)$, write $\mathcal{Z}(\Lambda')$ for the finite unramified stack over $\mathcal{M}$ with functor of points
\[
\mathcal{Z}(\Lambda')(S) = \{\mbox{isometric embeddings }\iota:\Lambda'\hookrightarrow V(\mathcal{A}_S)\}
\]
for any scheme $S \to \mathcal{M}$.

\begin{remark}\label{rem:secret cycle}
The  above stacks are actually  special cycles under a different name.
In what follows we fix a basis $e_1,\ldots, e_d \in \Lambda$,  and let $T=Q(e)\in \Sym_d(\Q)$ be the moment matrix of $e=(e_1,\ldots, e_d)$.  There is a canonical isometry
$\Lambda_\Q  \iso W$,
where the right hand side is the quadratic space \eqref{W space} determined by $T$, and a canonical isomorphism of $\mathcal{M}$-stacks
\[
\mathcal{Z}(\Lambda) \iso \mathcal{Z}(T,0)
\]
where $0=(0,\ldots,0) \in (L^\vee/L)^d$.
Indeed, an $S$-valued point of the left hand side is an isometric embedding $\iota: \Lambda \to V(\mathcal{A}_S)$, and the  tuple
$
x= (\iota(e_1) , \ldots, \iota(e_d)) \in V(\mathcal{A}_S),
$
 defines an $S$-point of  the right hand side.
\end{remark}

For each $\Lambda'\in \mathsf{L}(\Lambda)$,  the natural map
\[
\mathcal{Z}(\Lambda') \xrightarrow{\iota\mapsto \iota\vert_{\Lambda}}\mathcal{Z}(\Lambda) 
\]
 is  a closed immersion. 
Henceforth we  regard $\mathcal{Z}(\Lambda')$ as a closed substack of $\mathcal{Z}(\Lambda)$, so that 
$
\mathcal{Z}(\Lambda')\subset \mathcal{Z}(\Lambda'')
$
whenever $\Lambda''\subset \Lambda'$ is an inclusion of lattices in $\mathsf{L}(\Lambda)$. 
The  open substack of $\mathcal{Z}(\Lambda')$ defined by 
\[
\leftcirc\mathcal{Z}(\Lambda') = \mathcal{Z}(\Lambda')\smallsetminus \bigcup_{\Lambda'\subsetneq \Lambda''}\mathcal{Z}(\Lambda'')
\]
is then a locally closed substack of $\mathcal{Z}(\Lambda)$.

By construction, we have the equality of sets
\begin{equation}\label{geometric Lambda decomp}
 \mathcal{Z}(\Lambda)(k) = \bigsqcup_{\Lambda'\in \mathsf{L}(\Lambda)}\leftcirc\mathcal{Z}(\Lambda')(k)
\end{equation}
 for any algebraically closed field $k$ with $\mathrm{char}(k)\not\in \Sigma$.
 In fact, given a point $s\in \mathcal{Z}(\Lambda)(k)$ corresponding to an isometric embedding $\iota: \Lambda\to V(\mathcal{A}_s)$, we have $s\in \leftcirc\mathcal{Z}(\Lambda')(k)$ if and only if
\[
\Lambda' = V(\mathcal{A}_s)\cap \iota(\Lambda)_\Q .
\]
This last equality says simply that $\Lambda' \subset \Lambda_\Q$ is the largest lattice such that $\iota$ extends to $\iota : \Lambda' \to V(\mathcal{A}_s)$.

\begin{remark}
In the notation of  \S \ref{ss:DMchow}, the decomposition \eqref{geometric Lambda decomp} amounts to saying that 
the topological space $| \mathcal{Z}(\Lambda)|$ is the disjoint union of its locally closed subsets $|\leftcirc\mathcal{Z}(\Lambda')|$.
\end{remark}

In the generic fiber we have the following strengthening of \eqref{geometric Lambda decomp}.  

\begin{lemma}
\label{lem:union_lambda_prime}
For every $\Lambda' \in \mathsf{L}(\Lambda)$ the  morphism
\[
\leftcirc \mathcal{Z}(\Lambda')_\Q \to \mathcal{Z}(\Lambda)_\Q
\]
is an open and closed immersion, and there is an isomorphism  of $\Q$-stacks
\[
 \mathcal{Z}(\Lambda)_\Q \iso 
\bigsqcup_{\Lambda'\in \mathsf{L}(\Lambda)}\leftcirc\mathcal{Z}(\Lambda')_\Q
\]
 inducing the bijection  \eqref{geometric Lambda decomp}  on geometric points of characteristic $0$.
\end{lemma}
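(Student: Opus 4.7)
The plan is to deduce the lemma from the complex uniformization of Proposition \ref{prop:ZTmu_uniformization}. First note that $\mathsf{L}(\Lambda)$ is in bijection with subgroups of the finite abelian group $\Lambda^\vee/\Lambda$, hence is finite, so the proposed decomposition has only finitely many strata. Fix a basis $e=(e_1,\ldots,e_d)$ of $\Lambda$ with moment matrix $T=Q(e)$, so that Remark \ref{rem:secret cycle} identifies $\mathcal{Z}(\Lambda)$ with $\mathcal{Z}(T,0)$, and similarly identifies each $\mathcal{Z}(\Lambda')$ with some $\mathcal{Z}(T',0)$ after choosing a basis of $\Lambda'$.

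The heart of the argument is to show that for each $\Lambda'\in\mathsf{L}(\Lambda)$, the closed immersion $\mathcal{Z}(\Lambda')_\Q\hookrightarrow\mathcal{Z}(\Lambda)_\Q$ is also open. Since $\Lambda\subset\Lambda'$ has finite index, $W=\Lambda_\Q=\Lambda'_\Q$, and so $\mathrm{rank}(T)=\mathrm{rank}(T')=\dim_\Q W$. By Proposition \ref{prop:ZTmu_uniformization}, both $\mathcal{Z}(\Lambda)_\Q$ and $\mathcal{Z}(\Lambda')_\Q$ are finite disjoint unions of orthogonal Shimura varieties of the common pure dimension $n-\dim_\Q W$, hence smooth and equidimensional of the same dimension. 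A closed immersion of smooth Deligne-Mumford stacks of the same pure dimension is necessarily open-and-closed: locally it is a regular embedding of codimension zero, equivalently each irreducible component of the source maps isomorphically onto an irreducible component of the target, and the image is simply a union of connected components.

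The stratum $\leftcirc\mathcal{Z}(\Lambda')_\Q$ is by definition the open substack of $\mathcal{Z}(\Lambda')_\Q$ obtained by removing the finitely many closed substacks $\mathcal{Z}(\Lambda'')_\Q$ with $\Lambda'\subsetneq\Lambda''$. By the previous paragraph each of these $\mathcal{Z}(\Lambda'')_\Q$ is open-and-closed in $\mathcal{Z}(\Lambda')_\Q$, and $\mathcal{Z}(\Lambda')_\Q$ is open-and-closed in $\mathcal{Z}(\Lambda)_\Q$, so $\leftcirc\mathcal{Z}(\Lambda')_\Q$ is itself open-and-closed in $\mathcal{Z}(\Lambda)_\Q$, establishing the first assertion. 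For the disjoint union decomposition, the identity \eqref{geometric Lambda decomp} on $\C$-points shows that the finite collection $\{\leftcirc\mathcal{Z}(\Lambda')_\Q\}_{\Lambda'\in\mathsf{L}(\Lambda)}$ of open-and-closed substacks is pairwise disjoint and covers $\mathcal{Z}(\Lambda)_\Q(\C)$; this automatically upgrades to an isomorphism of $\Q$-stacks, since two finite families of open-and-closed substacks agreeing on geometric points agree as substacks.

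The main obstacle will be the equidimensionality step: one must know that $\mathcal{Z}(\Lambda')_\Q$ has no components of dimension exceeding $n-\dim_\Q W$. This is precisely what the explicit uniformization of Proposition \ref{prop:ZTmu_uniformization} provides, and is a feature special to the generic fiber; in mixed characteristic the analogous statement fails, since the naive cycle $\mathcal{Z}(\Lambda')$ can acquire excess vertical components supported at characteristics where the Kuga-Satake abelian scheme is supersingular.
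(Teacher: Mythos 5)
Your proof is correct, and it rests on the same essential input as the paper's --- the uniformization of Proposition \ref{prop:ZTmu_uniformization} --- but uses it in a genuinely different way. The paper's proof identifies the strata explicitly: under $\mathcal{Z}(\Lambda)_\Q \iso \bigsqcup_g M_g^\flat$, the substack $\leftcirc\mathcal{Z}(\Lambda')_\Q$ is precisely the union of those $M_g^\flat$ with $\Lambda' = \Lambda_\Q \cap gL_{\widehat{\Z}}$ (one checks via Remark \ref{rem:special identification} that the saturation $V(\mathcal{A}_s)\cap\iota(\Lambda)_\Q$ is constant equal to $\Lambda_g$ along each $M_g^\flat$), which delivers both assertions in one stroke. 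You instead extract from the uniformization only the soft consequences that each $\mathcal{Z}(\Lambda')_\Q$ is smooth and equidimensional of the common dimension $n-\dim_\Q W$, so that the closed immersions $\mathcal{Z}(\Lambda'')_\Q\hookrightarrow\mathcal{Z}(\Lambda')_\Q$ are regular of codimension zero and hence open; the decomposition then follows by spreading \eqref{geometric Lambda decomp} from characteristic-zero geometric points to reduced open-and-closed substacks. Your route is slightly longer but cleanly isolates the role of the generic fiber (equidimensionality), which is exactly what fails integrally, as your closing remark correctly notes; the paper's route is shorter but requires the small adelic computation matching strata to double cosets. One cosmetic point: when you invoke ``the previous paragraph'' to see that $\mathcal{Z}(\Lambda'')_\Q$ is open-and-closed in $\mathcal{Z}(\Lambda')_\Q$, you are really rerunning the same dimension argument with $\Lambda'\subsetneq\Lambda''$ in place of $\Lambda\subset\Lambda'$; this is harmless since all lattices in $\mathsf{L}(\Lambda)$ span the same $W$, but it is worth saying explicitly.
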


\begin{proof}
Proposition~\ref{prop:ZTmu_uniformization} and Remark \ref{rem:secret cycle} give us a decomposition 
\[
\mathcal{Z}(\Lambda)_\Q \iso Z(T,0) \iso 
 \bigsqcup_{g\in G^\flat(\Q)\backslash \Xi(T,0)/K}M^\flat_g ,
\]
which depends on a choice\footnote{If no such embedding exists then $\mathcal{Z}(\Lambda)_\Q  =\emptyset$ by Lemma~\ref{lem:W to V embeddings},  and there is nothing to prove.}  of  isometric embedding 
$
\Lambda_\Q  \hookrightarrow V.
$
Under this bijection, the locally closed substack 
$\leftcirc \mathcal{Z}(\Lambda')_\Q$ is identified with the disjoint union of those $M^\flat_g$ for which $g$ satisfies  $\Lambda' = \Lambda_\Q\cap g L_{\widehat{\Z}}$.  Here the intersection is taken inside $V_{\widehat{\Z}}$.
  The lemma follows immediately.
\end{proof}

The key geometric result is the following.

\begin{proposition}
\label{prop:zLam_geom_props}
Fix a $\Lambda' \in   \mathsf{L}(\Lambda)$, and assume
$
\mathrm{rank}_\Z(\Lambda)\leq (n-4)/2.
$
\begin{enumerate}
  \item 
  The  $\Z[\Sigma^{-1}]$-stack  $\leftcirc\mathcal{Z}(\Lambda')$ is  normal and flat, and equidimensional of dimension
  \[
n-\mathrm{rank}_\Z(\Lambda) + 1 = \dim(\mathcal{M}) - \mathrm{rank}_\Z(\Lambda).
\]
    \item 
  For any prime $p \not\in \Sigma$,  the special fiber $\leftcirc\mathcal{Z}(\Lambda')_{\F_p}$ is geometrically normal and equidimensional of dimension $n-\mathrm{rank}_\Z(\Lambda)$.

  \item For  any prime $p\not\in \Sigma$, the natural maps
   \[
       \pi_0\big(\leftcirc\mathcal{Z}(\Lambda')_{\F_p^\alg}\big)  \rightarrow
       \pi_0\big(\leftcirc\mathcal{Z}(\Lambda')_{ \Z^\alg_{(p)} }\big)
          \leftarrow \pi_0\big(\leftcirc\mathcal{Z}(\Lambda')_{ \Q^\alg }\big)
   \]
   are bijections, where $\Z^\alg_{(p)}$ is the integral closure of $\Z_{(p)}$ in $\Q^\alg$.
\end{enumerate}
\end{proposition}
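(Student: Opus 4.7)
The plan is to reduce all three claims to the regularity of an auxiliary integral model $\mathcal{M}^\flat$ of a smaller orthogonal Shimura variety, via a formal identification with $\leftcirc\mathcal{Z}(\Lambda')$ at every geometric point.

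Fix a geometric point $s \in \leftcirc\mathcal{Z}(\Lambda')(k)$, whose associated isometric embedding $\iota : \Lambda' \hookrightarrow V(\mathcal{A}_s)$ has saturated image by the defining property of the open stratum. Using Lemma \ref{lem:W to V embeddings} I would pick an isometric embedding $\Lambda'_\Q \hookrightarrow V$, set $V^\flat = \{v\in V : [v,\Lambda'_\Q]=0\}$, and $L^\flat = L \cap V^\flat$. A preliminary step is to check that the rank hypothesis $\mathrm{rank}_\Z(\Lambda) \le (n-4)/2$, combined with the self-duality of $L$, controls $[L^{\flat,\vee}:L^\flat]$ well enough that $L^\flat$ satisfies the hypotheses of Proposition \ref{prop:regularity} for our set $\Sigma$. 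Consequently the associated integral model $\mathcal{M}^\flat$ is a regular Deligne--Mumford stack over $\Z[\Sigma^{-1}]$ of dimension $n - d + 1$, where $d = \mathrm{rank}_\Z(\Lambda)$.

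The core step is a local isomorphism. Choosing a basis $e_1,\ldots, e_d$ of $\Lambda'$, the cycle $\mathcal{Z}(\Lambda')$ is cut out locally in the smooth stack $\mathcal{M}$ by the $d$ obstruction sections $\mathrm{obst}_{\iota(e_i)}$ of the line bundle $\omega^{-1}$ recalled in the proof of Proposition \ref{prop:divisors are divisors}. I would argue that saturation of $\iota$ forces these sections to form a regular sequence in the completed local ring at $s$: their $k$-linear images in the cotangent line at $s$ are computed via the crystalline (or de Rham, in characteristic zero) realization of $V(\mathcal{A}_s)$, and saturation ensures that $\iota(\Lambda')\otimes k$ injects into the corresponding graded piece of the Hodge filtration, so the resulting linearized map is nondegenerate. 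The formally regular subscheme of dimension $n-d+1$ thus cut out is then identified with the formal neighborhood of $\mathcal{M}^\flat$ using the Kisin--Madapusi Pera description of deformations of Kuga--Satake data, matching deformations in which $\iota$ extends with deformations of the smaller Kuga--Satake datum on $V^\flat$.

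Granted this formal isomorphism at every geometric point, parts (1) and (2) follow since regularity, flatness, dimension, and geometric normality of special fibers all transfer from $\mathcal{M}^\flat$ to $\leftcirc\mathcal{Z}(\Lambda')$ through an \'etale-local identification. For (3) I would combine flatness of $\leftcirc\mathcal{Z}(\Lambda')$ over $\Z_{(p)}^\alg$---which makes every irreducible component meet the generic fiber---with the assertion that each connected component of $\leftcirc\mathcal{Z}(\Lambda')_{\Z_{(p)}^\alg}$ has geometrically connected special fiber; the latter follows from the analogous statement for the smooth integral model of each smaller orthogonal Shimura variety $\mathcal{M}^\flat_g$ appearing in the generic-fiber decomposition of Lemma \ref{lem:union_lambda_prime}, which in turn is a consequence of Zariski's connectedness theorem applied to a Baily--Borel compactification. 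The main technical obstacle will be verifying the regular-sequence property for the obstruction sections at characteristic-$p$ points; the saturation of $\iota$ and the rank bound $d \le (n-4)/2$ both enter here, the latter ensuring enough ambient room for the linearizations to remain linearly independent after reduction modulo $p$.
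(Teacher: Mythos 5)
Your strategy has a genuine gap, and it occurs at the very first step. You propose to identify $\leftcirc\mathcal{Z}(\Lambda')$ formally-locally with a regular integral model $\mathcal{M}^\flat$ attached to $L^\flat = \Lambda'^\perp \cap L$, after checking that the rank bound $\mathrm{rank}_\Z(\Lambda)\le (n-4)/2$ forces $L^\flat$ to satisfy the hypotheses of Proposition \ref{prop:regularity}. It does not: the rank bound says nothing about the discriminant of $\Lambda'$ or of $L^\flat$, and $[L^{\flat,\vee}:L^\flat]$ can be divisible by $p^2$ for primes $p\notin\Sigma$ (indeed $L^\flat_p$ need not even be maximal, so the integral model $\mathcal{M}^\flat$ over $\Z[\Sigma^{-1}]$ is not available in the paper's framework). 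The paper explicitly flags this in the proof of Proposition \ref{prop:generic divisor generating}: ``the lattice $L_g^\flat$ may be far from maximal. Fortunately, we have no need for any integral model of $M_g^\flat$ over $\Z[\Sigma^{-1}]$.''

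The second, more structural problem is that your formal-local isomorphism would prove that $\leftcirc\mathcal{Z}(\Lambda')$ is \emph{regular}, whereas the proposition only asserts normality --- and regularity is false in general. The nondegeneracy you claim for the linearized obstruction map fails at special characteristic-$p$ points (e.g.\ in the supersingular locus), which is exactly why the actual argument, following [HMP, \S 7.1], works with an open ``primitive'' substack $\mathcal{Z}^{\mathrm{pr}}(\Lambda')$ whose special fibers are smooth only \emph{outside a codimension-$2$ substack}, shows the complement of $\mathcal{Z}^{\mathrm{pr}}(\Lambda')_{\F_p}$ in $\leftcirc\mathcal{Z}(\Lambda')_{\F_p}$ also has codimension $\ge 2$, and then deduces normality and flatness by a Serre-criterion-type argument (Proposition 7.1.6 of loc.\ cit.). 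A regular sequence of obstruction sections would at best give a local complete intersection, not regularity; upgrading to regularity requires the differentials to stay independent, which is precisely what breaks down. Finally, for part (3) your appeal to Zariski connectedness on a Baily--Borel compactification does not apply to the non-proper integral stratum; the bijectivity of $\pi_0$ between the special and generic fibers is a deep statement proved (for $\mathrm{rank}(\Lambda')=1$) in the second author's work on irreducibility of moduli of polarized K3 surfaces in finite characteristic, and the paper invokes the generalization of that argument rather than any compactness principle.
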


\begin{proof}
We will use results from~\cite[ \S 7.1]{HMP} to which the reader is encouraged to refer for details. The key point is that, under our hypotheses, there exists an open substack (see Proposition 7.1.2 of \emph{loc. cit.})
\[
\mathcal{Z}^{\mathrm{pr}}(\Lambda') \subset \leftcirc\mathcal{Z}(\Lambda')
\]
with the following properties:
\begin{enumerate}
  \item It has the same generic fiber as $\leftcirc\mathcal{Z}(\Lambda')$.
  \item For any prime $p \not\in \Sigma$, the special fiber $\mathcal{Z}^{\mathrm{pr}}(\Lambda')_{\F_p}$ is smooth outside of a codimension $2$ substack.
\end{enumerate}

Moreover, Lemma 7.1.5 of \emph{loc. cit.} shows that the complement of $\mathcal{Z}^{\mathrm{pr}}(\Lambda')_{\F_p}$ in $\leftcirc\mathcal{Z}(\Lambda')_{\F_p}$ has codimension at least $2$. 
The statement there assumes that $\Lambda$ is maximal, but this is only used to ensure that $\Lambda'$ maps to a direct summand of $V(\mathcal{A}_s)$ for every geometric point $s \to \leftcirc\mathcal{Z}(\Lambda')_{\F_p}$. For us, this holds essentially by definition of $\leftcirc\mathcal{Z}(\Lambda')_{\F_p}$; see the comments after \eqref{geometric Lambda decomp}. 

Combining the above with the argument of Proposition 7.1.6 of \emph{loc. cit.} proves assertions (1) and (2).  Assertion (3) follows from~\cite[Theorem B]{Madapusi2025}.
\end{proof}

Proposition \ref{prop:zLam_geom_props} has two consequences, which are  of fundamental importance to our arguments. The first   requires the following technical lemma of commutative algebra.

\begin{lemma}
\label{lem:cm_flat}
Suppose  $R$ is a Cohen-Macaulay local ring over $\Z[\Sigma^{-1}]$. The following are equivalent:
\begin{enumerate}
  \item $R$ is flat over $\Z[\Sigma^{-1}]$.
  \item For every minimal prime $P\subset R$, $R/P$ is flat over $\Z[\Sigma^{-1}]$.
\end{enumerate}
\end{lemma}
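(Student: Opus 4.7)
The plan is to reduce flatness over $\Z[\Sigma^{-1}]$ to a statement about non-zero-divisors, and then use the Cohen–Macaulay hypothesis to translate this into a condition purely about the minimal primes of $R$.

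First, I would observe that $\Z[\Sigma^{-1}]$ is a Dedekind domain (in fact a PID), so a module over it is flat if and only if it is torsion-free. For $R$, torsion-freeness amounts to the condition that every prime $p \notin \Sigma$ acts as a non-zero-divisor. One should briefly note that (1) and (2) each force the structure map $\Z[\Sigma^{-1}] \to R$ to be injective (otherwise a nonzero $a \in \Z[\Sigma^{-1}]$ would kill $R$, resp.\ kill each $R/P$), so we may regard $\Z[\Sigma^{-1}]$ as a subring of $R$.

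Next I would invoke the Cohen–Macaulay hypothesis in the form $\mathrm{Ass}(R) = \mathrm{Min}(R)$: a CM local ring has no embedded associated primes. Consequently an element of $R$ is a non-zero-divisor precisely when it lies outside the (finite) union of the minimal primes. Thus $R$ is flat over $\Z[\Sigma^{-1}]$ if and only if no minimal prime $P$ of $R$ contains any prime $p \notin \Sigma$, equivalently $P \cap \Z[\Sigma^{-1}] = (0)$ for every minimal prime $P$.

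On the other side, for each minimal prime $P$ the quotient $R/P$ is a domain, so it is torsion-free over $\Z[\Sigma^{-1}]$ (equivalently, flat) if and only if the composition $\Z[\Sigma^{-1}] \to R \to R/P$ is injective, i.e.\ $P \cap \Z[\Sigma^{-1}] = (0)$. Thus (1) and (2) translate to exactly the same condition on the minimal primes of $R$, proving the equivalence.

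The only place where care is needed is the direction (2) $\Rightarrow$ (1): without the CM hypothesis one cannot rule out embedded associated primes of $R$ that contain some $p \notin \Sigma$ while being strictly larger than every minimal prime, which would give $R$ $\Z[\Sigma^{-1}]$-torsion even though each $R/P$ is torsion-free. The CM assumption is precisely what eliminates this possibility, and this is the key point to emphasize in the write-up.
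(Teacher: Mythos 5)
Your argument is correct and follows essentially the same route as the paper: flatness over the Dedekind domain $\Z[\Sigma^{-1}]$ reduces to each $p\notin\Sigma$ being a non-zero-divisor, and the Cohen--Macaulay hypothesis (no embedded associated primes) identifies the zero-divisors of $R$ with the union of its minimal primes. The translation of both conditions into $P\cap\Z[\Sigma^{-1}]=(0)$ for all minimal $P$ is just a slightly more explicit write-up of the paper's two-sentence proof.
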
 
\begin{proof}
This is easily deduced from the following two facts.
 First, a $\Z[\Sigma^{-1}]$-algebra $S$ is flat if and only if every prime $p\notin\Sigma$ is a non-zero divisor in $S$. Second, since $R$ is Cohen-Macaulay, its zero-divisors are precisely those contained in some minimal prime of $R$.
\end{proof}

\begin{proposition}
\label{prop:Z_T_lci}
If  
$
\mathrm{rank}_\Z(\Lambda)\leq (n-4)/2,
$
then  $\mathcal{Z}(\Lambda)$ is a flat, reduced, local complete intersection over $\Z[\Sigma^{-1}]$, and is equidimensional of dimension $\dim ( \mathcal{M})  - \mathrm{rank}_\Z(\Lambda)$.
\end{proposition}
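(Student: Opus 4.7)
The plan is to combine the stratification of $\mathcal{Z}(\Lambda)$ by the locally closed substacks $\leftcirc\mathcal{Z}(\Lambda')$ coming from \eqref{geometric Lambda decomp} with the geometric control over each stratum supplied by Proposition \ref{prop:zLam_geom_props}. Via Remark \ref{rem:secret cycle} I may regard $\mathcal{Z}(\Lambda) \iso \mathcal{Z}(T, 0)$ for some $T \in \Sym_d(\Q)$ with $\mathrm{rank}(T) = \mathrm{rank}_\Z(\Lambda)$, so Proposition \ref{prop:ZTmu_dim_bound} supplies the lower bound $\dim(C) \geq \dim(\mathcal{M}) - \mathrm{rank}_\Z(\Lambda)$ on every irreducible component $C \subset \mathcal{Z}(\Lambda)$, together with the criterion that equality forces the local complete intersection property at every point of $C$.

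First I would establish a matching upper bound. The generic point of $C$ lies in a unique stratum $\leftcirc\mathcal{Z}(\Lambda')$; since $\Lambda' \subset \Lambda_\Q$ we have $\mathrm{rank}_\Z(\Lambda') = \mathrm{rank}_\Z(\Lambda)$, and Proposition \ref{prop:zLam_geom_props}(1) gives $\dim(\leftcirc\mathcal{Z}(\Lambda')) = \dim(\mathcal{M}) - \mathrm{rank}_\Z(\Lambda)$, hence the desired upper bound on $\dim(C)$. This immediately yields equidimensionality of the expected dimension, and the LCI property follows from the equality clause of Proposition \ref{prop:ZTmu_dim_bound}. Reducedness would then be immediate: LCI implies Cohen--Macaulay, hence $S_1$, while at each generic point the \'etale local ring of $\mathcal{Z}(\Lambda)$ agrees with that of the ambient (normal) stratum, which is a field, giving $R_0$.

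The step I expect to require the most care is flatness. Cohen--Macaulayness lets me invoke Lemma \ref{lem:cm_flat}, which reduces the problem to showing that each irreducible component of $\mathcal{Z}(\Lambda)$, with its reduced structure, is flat over $\Z[\Sigma^{-1}]$ --- equivalently, has generic point of residue characteristic zero. I would argue by contradiction: if some component $C$ were entirely supported in a fiber over $p \notin \Sigma$, then a dense open of $C$ would lie in some $\leftcirc\mathcal{Z}(\Lambda')_{\F_p}$, which has dimension only $n - \mathrm{rank}_\Z(\Lambda)$ by Proposition \ref{prop:zLam_geom_props}(2), whereas $\dim(C) = n + 1 - \mathrm{rank}_\Z(\Lambda)$ from the first step. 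This contradiction shows every component is horizontal, hence flat, and Lemma \ref{lem:cm_flat} closes the argument. The main obstacle is therefore concentrated in Proposition \ref{prop:zLam_geom_props}(2): the strict drop of one in dimension between a stratum and its special fiber is precisely what powers the flatness argument, reflecting the smoothness-outside-codimension-two of each stratum that is hidden inside that proposition.
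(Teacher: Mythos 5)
Your overall architecture matches the paper's: the lower bound and LCI criterion from Proposition \ref{prop:ZTmu_dim_bound}, the upper bound from the stratification \eqref{geometric Lambda decomp} together with Proposition \ref{prop:zLam_geom_props}(1), and Lemma \ref{lem:cm_flat} to pass from flatness of the irreducible components to flatness of $\mathcal{Z}(\Lambda)$ itself. Your flatness-of-components step (a vertical component would be squeezed into some $\leftcirc\mathcal{Z}(\Lambda')_{\F_p}$, which is one dimension too small by Proposition \ref{prop:zLam_geom_props}(2)) is a harmless variant of the paper's, which instead notes that each component is an irreducible component of the Zariski closure of a flat stratum and is therefore flat.

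The one step that does not go through as written is your $R_0$ argument for reducedness. You assert that at a generic point $\eta$ of a component the \'etale local ring of $\mathcal{Z}(\Lambda)$ agrees with that of the ambient stratum $\leftcirc\mathcal{Z}(\Lambda')$. But the stratum is only \emph{locally closed} in $\mathcal{Z}(\Lambda)$: since $\mathcal{Z}(\Lambda')$ is a closed substack, all you get a priori is a surjection $\co^\et_{\mathcal{Z}(\Lambda),\eta} \to \co^\et_{\mathcal{Z}(\Lambda'),\eta}$, and nothing yet rules out a nilpotent kernel --- that is precisely the content of reducedness at $\eta$. Two repairs are available: the paper's, which is to observe that once flatness is known the generic points lie in characteristic zero, where $\mathcal{Z}(\Lambda)_\Q$ is generically smooth by the complex uniformization of Proposition \ref{prop:ZTmu_uniformization}; or, closer to your phrasing, to invoke Lemma \ref{lem:union_lambda_prime}, which says the strata are open and closed in the generic fiber, so that the local rings genuinely agree there. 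Either way the argument requires flatness first, so the order reducedness-then-flatness in your writeup must be reversed.
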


\begin{proof}
Fix an irreducible component 
\[
\mathcal{Z}\subset \mathcal{Z}(\Lambda)=\mathcal{Z}(T,0)
\]
 and a geometric generic point $s \to \mathcal{Z}$.\footnote{In Appendix A.1, we explain the notion of a `generic point' $\xi\to M$ of an irreducible component $Z$ of a Deligne-Mumford stack $M$, where $\xi$ is a \emph{punctual stack}. Such a punctual stack admits a finite \'etale cover $\Spec~L\to \xi$ by the spectrum of a field $L$, and a geometric generic point is one obtained by taking a separably closed extension of such a field $L$.} 
 
 By \eqref{geometric Lambda decomp}, there is a unique $\Lambda'\in \mathsf{L}(\Lambda)$ such that $s\to \leftcirc\mathcal{Z}(\Lambda')$. 
By claim (1)  of Proposition \ref{prop:zLam_geom_props},  the stack 
$\leftcirc\mathcal{Z}(\Lambda')$ is equidimensional of dimension $ \dim(\mathcal{M}) - \mathrm{rank}_\Z(\Lambda),$
and so the same is true of its  Zariski closure 
 $
 \overline{ \leftcirc\mathcal{Z}(\Lambda') } \subset \mathcal{Z}(\Lambda).
 $
The inclusion 
 $
 \mathcal{Z} \subset  \overline{ \leftcirc\mathcal{Z}(\Lambda') }
 $
 therefore implies 
 \[
 \dim(\mathcal{Z}) \le  \dim(\mathcal{M}) - \mathrm{rank}_\Z(\Lambda). 
 \]
As the other inequality follows from Proposition~\ref{prop:ZTmu_dim_bound}, we have proved both that $\mathcal{Z}$ has the expected dimension, and that it is equal to an irreducible component of 
$ \overline{ \leftcirc\mathcal{Z}(\Lambda') }$. 
 This latter stack is flat over $\Z[\Sigma^{-1}]$ by  claim (1) of Proposition \ref{prop:zLam_geom_props}, and hence so is $\mathcal{Z}$.

It now follows from  Proposition~\ref{prop:ZTmu_dim_bound}  that $\mathcal{Z}(\Lambda)$ is a local complete intersection over $\Z[\Sigma^{-1}]$.   In particular   $\mathcal{Z}(\Lambda)$ is Cohen-Macaulay, and hence flat by   Lemma \ref{lem:cm_flat} and  the flatness of its irreducible components proved above. To now see that it is reduced, it is enough to know that it is generically smooth, which follows from the complex uniformization in Proposition~\ref{prop:ZTmu_uniformization}.
\end{proof}

\begin{proposition}
\label{prop:CH_1_injective}
If 
$
 \mathrm{rank}_\Z(\Lambda)\leq (n-4)/3,
$
 then  restriction 
 \begin{align*}
\mathrm{CH}^1(\mathcal{Z}(\Lambda)) \to \mathrm{CH}^1( \mathcal{Z}(\Lambda)_\Q)
\end{align*}
to the generic fiber  is injective.
\end{proposition}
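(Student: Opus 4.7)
The plan is to invoke the localization exact sequence for Chow groups,
\begin{equation*}
\bigoplus_{p\notin\Sigma}\mathrm{CH}^0(\mathcal{Z}(\Lambda)_{\F_p}) \xrightarrow{i_*}\mathrm{CH}^1(\mathcal{Z}(\Lambda)) \to \mathrm{CH}^1(\mathcal{Z}(\Lambda)_\Q) \to 0,
\end{equation*}
and show that $i_*$ vanishes: the class $[C]$ of every irreducible component $C$ of every special fiber $\mathcal{Z}(\Lambda)_{\F_p}$ (for $p\notin\Sigma$) is zero in $\mathrm{CH}^1(\mathcal{Z}(\Lambda))_\Q$.

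The key structural fact I would prove first is that each irreducible component $\mathcal{Z}$ of $\mathcal{Z}(\Lambda)$ has irreducible special fiber $\mathcal{Z}_{\F_p}$, so that the components of $\mathcal{Z}(\Lambda)_{\F_p}$ are precisely the special fibers of the components of $\mathcal{Z}(\Lambda)$. Two ingredients combine for this. First, Cohen--Macaulayness of $\mathcal{Z}(\Lambda)$ (from the LCI assertion of Proposition \ref{prop:Z_T_lci}) together with the generic reducedness of $\mathcal{Z}(\Lambda)_{\F_p}$ (from the geometric normality of the strata, Proposition \ref{prop:zLam_geom_props}(2)) forces $\mathcal{Z}(\Lambda)_{\F_p}$ to be everywhere reduced. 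Consequently, at the generic point $\eta$ of any component $C$, the local ring $\mathcal{O}_{\mathcal{Z}(\Lambda),\eta}$ is a one-dimensional Cohen--Macaulay local ring whose maximal ideal is generated by $p$, hence a discrete valuation ring, so $\eta$ lies on a \emph{unique} irreducible component of $\mathcal{Z}(\Lambda)$. Second, Lemma \ref{lem:union_lambda_prime}, Proposition \ref{prop:zLam_geom_props}(3), and Proposition \ref{prop:ZTmu geom conn} (applied with $r(L)=0$ in the self-dual case) together show that the number of irreducible components of $\mathcal{Z}(\Lambda)_{\F_p}$ equals that of $\mathcal{Z}(\Lambda)_\Q$, which equals the number of irreducible components of $\mathcal{Z}(\Lambda)$ by flatness. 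Combined with the injection $C \mapsto (\text{unique component of }\mathcal{Z}(\Lambda)\text{ containing }C)$, this forces each $\mathcal{Z}_{\F_p}$ to be irreducible.

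Given this, the proof is immediate. For each component $C$ of $\mathcal{Z}(\Lambda)_{\F_p}$, let $\mathcal{Z}$ be the unique irreducible component of $\mathcal{Z}(\Lambda)$ with $\mathcal{Z}_{\F_p}=C$; being integral with nonempty generic fiber, $\mathcal{Z}$ is flat over $\Z[\Sigma^{-1}]$. The regular function $p$ on $\mathcal{Z}$ has principal divisor $[C]$ (with multiplicity one, by the DVR computation at $\eta$), so $[C]=0$ in $\mathrm{CH}^1(\mathcal{Z})$, and pushing forward along the closed immersion $\mathcal{Z}\hookrightarrow \mathcal{Z}(\Lambda)$ yields $[C]=0$ in $\mathrm{CH}^1(\mathcal{Z}(\Lambda))_\Q$. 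The main technical obstacle lies in the structural fact above; once the reducedness of $\mathcal{Z}(\Lambda)_{\F_p}$ and the matching of component counts across generic and special fibers are both in place, the Chow-theoretic conclusion is a one-line computation.
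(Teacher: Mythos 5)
Your reduction via the localization sequence is the same first step the paper takes, but the ``key structural fact'' on which everything else rests --- that each irreducible component $\mathcal{Z}\subset\mathcal{Z}(\Lambda)$ has irreducible (and reduced) special fiber, equivalently that $\mathcal{O}_{\mathcal{Z}(\Lambda),\eta}$ is a DVR with uniformizer $p$ at every generic point $\eta$ of $\mathcal{Z}(\Lambda)_{\F_p}$ --- is precisely what fails in general. The paper warns about this explicitly right after \eqref{strong component bijection}: the reduction $\mathcal{Z}_{\F_p}$ ``need not be irreducible (or reduced), and an irreducible component of $\mathcal{Z}(\Lambda)_{\F_p}$ may be contained in $\mathcal{Z}_{\F_p}$ for more than one $\mathcal{Z}$'' (components of $\mathcal{Z}(\Lambda)$ cross in positive characteristic). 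The specific flaw in your deduction is the claim that generic reducedness of $\mathcal{Z}(\Lambda)_{\F_p}$ follows from Proposition \ref{prop:zLam_geom_props}(2). That proposition concerns the stack $\leftcirc\mathcal{Z}(\Lambda')_{\F_p}$, an open substack of the \emph{closed} substack $\mathcal{Z}(\Lambda')_{\F_p}\subset\mathcal{Z}(\Lambda)_{\F_p}$. At a generic point $\eta$ of $\mathcal{Z}(\Lambda)_{\F_p}$ lying in a deeper stratum $\Lambda'\supsetneq\Lambda$, the local ring of $\mathcal{Z}(\Lambda)_{\F_p}$ merely surjects onto that of $\mathcal{Z}(\Lambda')_{\F_p}$; the target being a field says nothing about nilpotents or excess length in the source. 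The set-theoretic stratification \eqref{geometric Lambda decomp} controls points, not multiplicities. Indeed, in \eqref{reduction decomp} the paper records that $I(\Lambda,\leftcirc\mathcal{Z})_{\F_p}$ is a sum of vertical components with multiplicities given by lengths of \'etale local rings, asserts only that $m(\Lambda,\leftcirc\mathcal{Z})\neq 0$, and never that the other multiplicities vanish or equal one. If your structural fact held, the paper's entire induction on $|\mathsf{L}(\Lambda)|$ would be unnecessary.

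The correct argument has to confront the fact that $\mathrm{div}(p)$ on a horizontal component $\mathcal{Z}$ is a $\Z$-linear combination of \emph{several} vertical classes. The paper disposes of the contaminating terms as follows: the terms supported on deeper strata $\Lambda'\supsetneq\Lambda$ are pushforwards of vertical classes from $\mathcal{Z}(\Lambda')$, hence vanish by induction on the size of $\mathsf{L}(\Lambda)$; the terms with $\Lambda'=\Lambda$ but $\leftcirc\mathcal{Z}'\neq\leftcirc\mathcal{Z}$ vanish because distinct components of the normal stack $\leftcirc\mathcal{Z}(\Lambda)$ have disjoint closures (the final claim of Lemma \ref{lem:component param}); what remains is $m(\Lambda,\leftcirc\mathcal{Z})\cdot I_p(\Lambda,\leftcirc\mathcal{Z})=0$ with $m(\Lambda,\leftcirc\mathcal{Z})\neq 0$, and one divides --- which is where $\Q$-coefficients are used, since the multiplicity need not be $1$. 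Your one-line endgame only works after this bookkeeping; as written, the proposal has a genuine gap.
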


\begin{proof}
This amounts  to proving the triviality of the subspace 
\begin{equation}\label{vertical subspace}
\mathrm{CH}^1_{\mathrm{vert}}(\mathcal{Z}(\Lambda)) \subset \mathrm{CH}^1(\mathcal{Z}(\Lambda))
\end{equation}
spanned by the irreducible components of $\mathcal{Z}(\Lambda)_{\F_p}$ as $p\notin \Sigma$ varies.  For this we will use the following lemma, which provides a parametrization of those  components.

\begin{lemma}\label{lem:component param}
For a Deligne-Mumford stack $\mathcal{X}$, denote by $\pi_{\mathrm{irr}}(\mathcal{X})$ its set of irreducible components. For any prime $p\not\in\Sigma$ there are canonical bijections 
\begin{equation}\label{component param}
\xymatrix{
 { \bigsqcup\limits_{\Lambda'\in \mathsf{L}(\Lambda)} \pi_{\mathrm{irr}} \big(\leftcirc \mathcal{Z}(\Lambda')  \big)     } \ar[r]     \ar[d]  
 & 
{ \bigsqcup\limits_{\Lambda'\in \mathsf{L}(\Lambda)} \pi_{\mathrm{irr}} (\leftcirc \mathcal{Z}(\Lambda')_{\F_p}  )   }  \ar[d]    \\  
{   \pi_{\mathrm{irr}}  ( \mathcal{Z}(\Lambda)  )   }  
&
{  \pi_{\mathrm{irr}} ( \mathcal{Z}(\Lambda)_{\F_p}  )   }  
}
\end{equation}
characterized as follows:
\begin{enumerate}
\item
The horizontal arrow takes an irreducible component   $\leftcirc\mathcal{Z} \subset \leftcirc \mathcal{Z}(\Lambda')$ to its reduction  $\leftcirc\mathcal{Z}_{\F_p}\subset \leftcirc \mathcal{Z}(\Lambda')_{\F_p}$.
\item
The vertical arrow on the left takes an irreducible component   of the locally closed substack $\leftcirc \mathcal{Z}(\Lambda') \subset \mathcal{Z}(\Lambda)$ to its Zariski closure.
\item
The vertical arrow on the right takes an irreducible component of  the locally closed substack $\leftcirc \mathcal{Z}(\Lambda')_{\F_p} \subset \mathcal{Z}(\Lambda)_{\F_p}$ to its Zariski closure.
\end{enumerate}
Moreover, given distinct irreducible components
\[
\leftcirc\mathcal{Z}_1,\leftcirc \mathcal{Z}_2 \subset \leftcirc \mathcal{Z}(\Lambda') ,
\]
the intersection of $\leftcirc\mathcal{Z}_1$ with the Zariski closure of $\leftcirc \mathcal{Z}_2$ in $\mathcal{Z}(\Lambda)$ is empty.
\end{lemma}

\begin{proof}
We  first show that for any $\Lambda'\in \mathsf{L}(\Lambda)$,  all arrows in
\[
\xymatrix{
 {   \pi_0\big(\leftcirc \mathcal{Z}(\Lambda')_{\F^\alg_p} \big)   }  \ar[r]   \ar[d]_a  & {  \pi_0 \big(\leftcirc\mathcal{Z}(\Lambda')_{\Z^\alg_{(p)}} \big)  }  \ar[d]^b & {  \pi_0\big(\leftcirc \mathcal{Z}(\Lambda')_{\Q^\alg} \big) }  \ar[l] \ar[d]^c  \\
 {   \pi_0\big(\leftcirc \mathcal{Z}(\Lambda')_{\F_p} \big)   }  \ar[r]_d    & {  \pi_0 \big(\leftcirc\mathcal{Z}(\Lambda')_{\Z_{(p)}} \big)  }  & {  \pi_0\big(\leftcirc \mathcal{Z}(\Lambda')_{\Q} \big) }  \ar[l]^e
}
\]
are bijective.  Claim (3) of Proposition \ref{prop:zLam_geom_props} shows that both horizontal arrows in the top row are bijective.  
Proposition \ref{prop:ZTmu geom conn} and our hypothesis on $\mathrm{rank}(\Lambda)=\mathrm{rank}(T)$ guarantee that every connected component of $\mathcal{Z}(\Lambda)_\Q = Z(T,0)$ is geometrically connected 
(note that $r(L)=0$ by our assumption that  $L$ is self-dual), and so the same is true of $\leftcirc \mathcal{Z}(\Lambda')_\Q$ by Lemma \ref {lem:union_lambda_prime}.
This shows that the arrow labeled $c$ is  bijective.
The morphism 
\[
 \leftcirc\mathcal{Z}(\Lambda')_{\Z_{(p)}} \to \Spec(\Z_{(p)})
 \]
  is flat with reduced special fiber by claims (1) and (2) of Proposition \ref{prop:zLam_geom_props}, and so \cite[\href{https://stacks.math.columbia.edu/tag/055J}{Tag 055J}]{stacks-project}
implies that   the arrow labeled $e$ is injective.  The arrow labeled $b$ is surjective  because it is induced by a surjective morphism of stacks.
 It  follows that all arrows in the square on the right are bijections, as is the composition  $d\circ a$.
This implies the injectivity of $a$, and   surjectivity follows by the same reasoning as for $b$.
The arrow labeled $d$ is bijective because, at this point, we know the bijectivity of all the other arrows.

Now we turn to the diagram \eqref{component param}.
By Proposition \ref{prop:zLam_geom_props} each $\leftcirc \mathcal{Z}(\Lambda')$ is normal and flat over $\Z[\Sigma^{-1}]$, and so there are canonical identifications
\[
\pi_{\mathrm{irr}} \big(  \leftcirc \mathcal{Z}(\Lambda')\big)  
= \pi_{\mathrm{irr}}  \big(  \leftcirc \mathcal{Z}(\Lambda')_{\Z_{(p)}}\big) 
= \pi_0  \big(  \leftcirc \mathcal{Z}(\Lambda')_{\Z_{(p)}}\big).
\]
 Similarly, the normality of   $\leftcirc \mathcal{Z}(\Lambda')_{\F_p}$ implies 
 \[
\pi_{\mathrm{irr}} \big(  \leftcirc \mathcal{Z}(\Lambda')_{\F_p}\big)  
= \pi_0  \big(  \leftcirc \mathcal{Z}(\Lambda')_{\F_p }\big).
\]
 Combining this with the paragraph above yield the top horizontal bijection in \eqref{component param}.
The vertical bijections in \eqref{component param} are formal consequences of \eqref{geometric Lambda decomp} and our dimension calculations; see the proof of Proposition \ref{prop:Z_T_lci}.

The final claim follows from the normality of $\leftcirc \mathcal{Z}(\Lambda')$.
 Suppose $s \to \leftcirc \mathcal{Z}_1$ is a geometric point also contained in the Zariski closure of $\leftcirc \mathcal{Z}_2$ in $\mathcal{Z}(\Lambda)$.  This is the same as the Zariski closure of $\leftcirc \mathcal{Z}_2$ in $\mathcal{Z}(\Lambda')$, and hence any open subset of $\mathcal{Z}(\Lambda')$ containing $s$ must intersect $\leftcirc \mathcal{Z}_2$.
 One such open subset is  $\leftcirc \mathcal{Z}_1$ itself, and so $\leftcirc \mathcal{Z}_1 \cap \leftcirc \mathcal{Z}_2 \neq \emptyset$.  This is impossible, as these are distinct \emph{connected} components of $\leftcirc\mathcal{Z}(\Lambda')$.
\end{proof}

Lemma \ref{lem:component param} determines a canonical bijection
\begin{equation}\label{strong component bijection}
   \pi_{\mathrm{irr}} ( \mathcal{Z}(\Lambda)   )    \to 
 \pi_{\mathrm{irr}} ( \mathcal{Z}(\Lambda)_{\F_p}  )   ,
\end{equation}
but this does \emph{not} send an irreducible component $\mathcal{Z} \subset  \mathcal{Z}(\Lambda)$ to its reduction $\mathcal{Z}_{\F_p}\subset  \mathcal{Z}(\Lambda)_{\F_p}  $.
Indeed,  this reduction need not be irreducible (or reduced), and an irreducible component of $\mathcal{Z}(\Lambda)_{\F_p}$ may be contained in $\mathcal{Z}_{\F_p}$ for more than one $\mathcal{Z}$.
Instead,  the bijection sends $\mathcal{Z}$ to  a distinguished irreducible component of $\mathcal{Z}_{\F_p}$.

\begin{remark}
Although we will not need to do so, one can show  that this distinguished component can be characterized in the following way.
If we pull back the Kuga-Satake abelian scheme to a generic geometric point $\eta \to \mathcal{Z}$ then    there is a tautological isometric embedding $\Lambda \subset V(\mathcal{A}_\eta)$, and a largest
 $\Lambda' \subset \Lambda_\Q$  for which this  extends to  
$
\Lambda' \subset V(\mathcal{A}_\eta) .
$
It follows that if  $s \to \mathcal{Z}_{\F_p}$ is a geometric generic point of an irreducible component, then also  $\Lambda' \subset V(\mathcal{A}_s) $.
The distinguished irreducible component is the unique one for which this last inclusion cannot be extended to any larger lattice in $\Lambda_\Q$.  
\end{remark}
 
 We now return to the proof of Proposition \ref{prop:CH_1_injective}. 
 Fix a prime $p\not\in \Sigma$,  a $\Lambda' \in \mathsf{L}(\Lambda)$,  and an irreducible component $\leftcirc\mathcal{Z} \subset \leftcirc\mathcal{Z}(\Lambda')$.
 Using  Lemma \ref{lem:component param}, we see that the Zariski closure of $\leftcirc\mathcal{Z}$ in $\mathcal{Z}(\Lambda)$ is an irreducible component
 \begin{equation}\label{integral component}
 I(\Lambda', \leftcirc\mathcal{Z}) \in \pi_{\mathrm{irr}} (  \mathcal{Z}(\Lambda) ) ,
 \end{equation}
while  the Zariski closure of  $\leftcirc\mathcal{Z}_{\F_p}$ in $\mathcal{Z}(\Lambda)_{\F_p}$ is an irreducible component
\[
 I_p(\Lambda', \leftcirc\mathcal{Z}) \in \pi_{\mathrm{irr}} (  \mathcal{Z}(\Lambda)_{\F_p} ) 
\]
 contained in \eqref{integral component}.
 Theses two components correspond under the bijection \eqref{strong component bijection},  and all  irreducible components of $\mathcal{Z}(\Lambda)$  and $\mathcal{Z}(\Lambda)_{\F_p}$  are of this form.

To prove the triviality of the subspace \eqref{vertical subspace}, we must therefore prove the triviality of all cycle classes
\begin{equation}\label{vertical chow}
I_p (\Lambda', \leftcirc\mathcal{Z}) \in \mathrm{CH}^1( \mathcal{Z}(\Lambda)) .
\end{equation}
This will be by induction on the size of $\mathsf{L}(\Lambda)$.

The base case is when $\mathsf{L}(\Lambda) = \{\Lambda\}$, which happens exactly when $\Lambda$ is maximal. 
In this case  $\leftcirc\mathcal{Z}(\Lambda) = \mathcal{Z}(\Lambda)$, and so 
every  irreducible component $\leftcirc \mathcal{Z}\subset  \leftcirc \mathcal{Z}(\Lambda)$ is already Zariski closed in $\mathcal{Z}(\Lambda)$.  It follows that
\[
 I_p(\Lambda', \leftcirc\mathcal{Z}) =  \leftcirc\mathcal{Z}_{\F_p} =  I(\Lambda', \leftcirc\mathcal{Z})_{\F_p},
\]
which is trivial in $\mathrm{CH}^1(\mathcal{Z}(\Lambda))$. 
 Indeed, it is the Weil divisor of the rational function  on $\mathcal{Z}(\Lambda)$ that is  $p$ on  $I(\Lambda', \leftcirc\mathcal{Z})$ and $1$ on all other irreducible components.

We now turn to the inductive step. 
For any $\Lambda' \in \mathsf{L}(\Lambda)$,  the inclusion $\mathcal{Z}(\Lambda') \subset \mathcal{Z}(\Lambda)$  induces a pushforward (Proposition \ref{prop:chow pushforward})
\[
\mathrm{CH}^1(\mathcal{Z}(\Lambda')) \to \mathrm{CH}^1(\mathcal{Z}(\Lambda)).
\]
For an irreducible component $\leftcirc\mathcal{Z} \subset \leftcirc\mathcal{Z}(\Lambda')$ we have 
\[
I_p(\Lambda', \leftcirc\mathcal{Z}) \in
\pi_{\mathrm{irr}} (  \mathcal{Z}(\Lambda')_{\F_p} ) \subset \pi_{\mathrm{irr}} (  \mathcal{Z}(\Lambda)_{\F_p} )
\]
by construction, and  \eqref{vertical chow} is the pushforward of the corresponding class
\[
I_p(\Lambda', \leftcirc\mathcal{Z}) \in   \mathrm{CH}^1(\mathcal{Z}(\Lambda')).
\]
If $\Lambda \subsetneq \Lambda'$ then  this last  class is  trivial  by the induction hypothesis, and hence so is  \eqref{vertical chow}.

It now suffices to show that every  $I_p(\Lambda,\leftcirc\mathcal{Z})$ is rationally equivalent to $0$ on $\mathcal{Z}(\Lambda)$.
Consider the corresponding irreducible component $I(\Lambda,\leftcirc\mathcal{Z})$ of $\mathcal{Z}(\Lambda)$.    By the parametrization of the irreducible components of $\mathcal{Z}(\Lambda)_{\F_p}$,  there is  an equality
\begin{equation}\label{reduction decomp}
 I(\Lambda,\leftcirc\mathcal{Z})_{\F_p} 
= \sum_{  
\substack{   \Lambda' \in \mathsf{L}(\Lambda) \\   \leftcirc\mathcal{Z}'  \in \pi_{\mathrm{irr}}( \leftcirc \mathcal{Z}(\Lambda'))
} }
 m(\Lambda',\leftcirc \mathcal{Z}') \cdot 
  I_p(\Lambda' , \leftcirc\mathcal{Z}')  \in \mathscr{Z}^1(\mathcal{Z}(\Lambda))
\end{equation}
for some multiplicities $ m(\Lambda',\leftcirc \mathcal{Z}') \in \Z$. More precisely, $I(\Lambda,\leftcirc\mathcal{Z})_{\F_p}$ is an effective Cartier divisor on $\mathcal{Z}(\Lambda)$, and the multiplicities are given by the length of its \'etale local rings at each of its generic points, each of which of course is the generic point of an irreducible component in $\pi_{\mathrm{irr}}(\mathcal{Z}(\Lambda)_{\F_p})$. Note in particular that this means that the multiplicity $m(\Lambda,\leftcirc\mathcal{Z})$ is \emph{non-zero}.

First we consider those terms on the right hand side for which $\Lambda'=\Lambda$.
\begin{lemma}
For any $ \leftcirc\mathcal{Z}'  \in \pi_{\mathrm{irr}}( \leftcirc \mathcal{Z}(\Lambda)) $ we have
\[
 m(\Lambda,\leftcirc \mathcal{Z}')  \neq 0 \iff \mathcal{Z}'=\mathcal{Z}.
\]
\end{lemma}

\begin{proof}
 By construction we have
\[
  I_p(\Lambda, \leftcirc\mathcal{Z}) \subset   I(\Lambda, \leftcirc\mathcal{Z})_{\F_p} ,
\]
and so
 $
  m(\Lambda ,\leftcirc \mathcal{Z}) \neq 0.
 $
Conversely, if  $ m(\Lambda ,\leftcirc \mathcal{Z}') \neq 0$ then
 \[
 I_p (\Lambda ,\leftcirc \mathcal{Z}') \subset I (\Lambda ,\leftcirc \mathcal{Z})_{\F_p},
 \]
 and hence 
 $
 \leftcirc \mathcal{Z}'_{\F_p} \subset I (\Lambda ,\leftcirc \mathcal{Z})_{\F_p}. 
 $
 In particular,  $\leftcirc \mathcal{Z}'$ intersects the closure of $\leftcirc \mathcal{Z}$ in $\mathcal{Z}(\Lambda)$, and so 
$ \leftcirc \mathcal{Z}' =\leftcirc \mathcal{Z}$ by the final claim of Lemma \ref{lem:component param}.
 \end{proof}

If we take the image of \eqref{reduction decomp} in $\mathrm{CH}^1(\mathcal{Z}(\Lambda))$,
the left hand side vanishes because it the Weil divisor of the rational function on $\mathcal{Z}(\Lambda)$ that is $p$ on $I(\Lambda , \leftcirc\mathcal{Z})$ and $1$ on all other irreducible components.  
 On the right hand side we have proven the vanishing of every term with $\Lambda \subsetneq \Lambda'$, and of every term with $\Lambda'=\Lambda$ and $\leftcirc\mathcal{Z}' \neq \leftcirc \mathcal{Z}$.  Thus
\[
0 =
 m(\Lambda,\leftcirc \mathcal{Z}) \cdot 
  I_p(\Lambda , \leftcirc\mathcal{Z})  
\]
 in the Chow group.  As our Chow groups have rational coefficients, it follows that $ I_p(\Lambda , \leftcirc\mathcal{Z})  =0$,  completing  the proof of Proposition \ref{prop:CH_1_injective}.
 \end{proof}

 
\subsection{Geometric properties in low codimension: the general case}
 

We now return to the consideration of the general case where $L$ is not necessarily self-dual.

Let $r=r(L)$ and  $L \hookrightarrow L^\beef$ be as in Definition~\ref{defn:rL}.
Thus $L^\beef$ is a self-dual quadratic $\Z$-module of  signature $(n+r,2)$, containing $L$ as a $\Z$-module direct summand.   As in Remark \ref{rem:int shimura embedding},  there is an induced finite morphism  
\[
\mathcal{M}\to \mathcal{M}^\beef 
\]
 of normal integral models over $\Z[\Sigma^{-1}]$.
 The target comes with its own Kuga-Satake abelian scheme $\mathcal{A}^\beef \to \mathcal{M}^\beef$,  and its own family of special cycles
 \[
 \mathcal{Z}^\beef( T^\beef,\mu^\beef )\to \mathcal{M}^\beef.
 \]
 Of course we must have $\mu^\beef =0$,  by the self-duality of $L^\beef$, so we abbreviate
 \[
  \mathcal{Z}^\beef( T^\beef)= \mathcal{Z}^\beef( T^\beef, 0).
 \]

At last, we arrive at the main result of \S \ref{s:good cycles}. 

\begin{proposition}
\label{prop:low key}
Fix $T \in \Sym_d(\Q)$ and $\mu \in (L^\vee/L)^d$, and suppose 
\[
\mathrm{rank}(T) \leq \frac{n-2r-4}{3}.
\]
The special cycle $\mathcal{Z}(T,\mu)$ is a flat, reduced, local complete intersection over $\Z[\Sigma^{-1}]$, and is equidimensional of codimension $\mathrm{rank}(T)$ in $\mathcal{M}$.  
Moreover,  restriction to the generic fiber
\[
\mathrm{CH}^1(\mathcal{Z}(T,\mu) ) \to  \mathrm{CH}^1( Z(T,\mu)  )
\]
is injective.
\end{proposition}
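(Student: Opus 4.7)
My plan is to reduce the statement to the self-dual case handled in Propositions~\ref{prop:Z_T_lci} and~\ref{prop:CH_1_injective} via the auxiliary embedding $L\hookrightarrow L^\beef$ from Definition~\ref{defn:rL}, where $L^\beef$ is a self-dual lattice of signature $(n+r,2)$ with $r=r(L)$. Lemma~\ref{lem:ZTmu open closed} already furnishes an open and closed immersion
\[
\mathcal{Z}(T,\mu) \hookrightarrow \bigsqcup_{T^\beef \in \mathsf{S}}\mathcal{Z}^\beef(T^\beef),
\]
in which each $T^\beef\in \mathsf{S}$ is positive semi-definite of rank $k\define r+\mathrm{rank}(T)$. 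Since flatness, reducedness, being a local complete intersection, and equidimensionality of a fixed dimension are local properties, they transfer through such an immersion; and each open and closed substack cuts out a direct summand of $\mathrm{CH}^1$ compatible with generic restriction. So it suffices to establish both assertions for every $\mathcal{Z}^\beef(T^\beef)$.

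The next step is to identify each $\mathcal{Z}^\beef(T^\beef)$ with the cycle $\mathcal{Z}^\beef(\Lambda^\beef)$ attached to a \emph{positive definite} lattice $\Lambda^\beef$ of rank $k$, so that the results of \S\ref{ss:geometric_props} apply. Let $R\subset \Q^{r+d}$ be the radical of the quadratic form determined by $T^\beef$. Since $\Z^{r+d}/(R\cap \Z^{r+d})$ is torsion-free, the sublattice $R\cap \Z^{r+d}$ is primitive in $\Z^{r+d}$, so we can choose $A\in \mathrm{GL}_{r+d}(\Z)$ with ${}^tA\, T^\beef A$ of block form $\mathrm{diag}(T'',0)$ and $T''$ positive definite of rank $k$. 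Proposition~\ref{prop:naive linear invariance} gives $\mathcal{Z}^\beef(T^\beef)\cong \mathcal{Z}^\beef(\mathrm{diag}(T'',0))$, and positive definiteness of $V(\mathcal{A}^\beef_S)_\Q$ forces the components of any tuple in this cycle indexed by the null block to vanish. What remains is $\mathcal{Z}^\beef(T'')$, which by Remark~\ref{rem:secret cycle} is $\mathcal{Z}^\beef(\Lambda^\beef)$ for $\Lambda^\beef$ the positive definite lattice of rank $k$ whose chosen basis has Gram matrix $T''$.

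The hypothesis $\mathrm{rank}(T)\leq (n-2r-4)/3$ translates to $\mathrm{rank}_\Z(\Lambda^\beef) = k \leq (n+r-4)/3$, and (using $n+r\geq 4$, which follows from $n\geq 2r+4$) also to $\mathrm{rank}_\Z(\Lambda^\beef)\leq (n+r-4)/2$. Proposition~\ref{prop:Z_T_lci} applied to the self-dual $L^\beef$ then yields that each $\mathcal{Z}^\beef(\Lambda^\beef)$ is flat, reduced, a local complete intersection, and equidimensional of dimension $\dim(\mathcal{M}^\beef) - k = \dim(\mathcal{M}) - \mathrm{rank}(T)$, while Proposition~\ref{prop:CH_1_injective} gives injectivity of restriction on $\mathrm{CH}^1$ to the generic fiber. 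Transferring both conclusions back through the open and closed immersion of the first step completes the proof.

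The main obstacle is the integral reduction in the second step: one must realize the change of variables by an $A\in \mathrm{GL}_{r+d}(\Z)$ and not merely in $\mathrm{GL}_{r+d}(\Q)$, because Proposition~\ref{prop:naive linear invariance} requires integrality. This hinges on the primitivity of $R\cap \Z^{r+d}$. Once this is in place, everything else is a formal transfer of the self-dual results obtained in \S\ref{ss:geometric_props}.
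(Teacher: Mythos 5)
Your proposal is correct and follows essentially the same route as the paper: reduce via the open and closed immersion of Lemma~\ref{lem:ZTmu open closed} to the self-dual case, identify each $\mathcal{Z}^\beef(T^\beef)$ with a stack $\mathcal{Z}^\beef(\Lambda^\beef)$ for a positive definite lattice of rank $r(L)+\mathrm{rank}(T)$, and invoke Propositions~\ref{prop:Z_T_lci} and~\ref{prop:CH_1_injective}. The only (harmless) difference is that you effect the identification by an integral change of basis $A\in\GL_{r+d}(\Z)$ killing the radical and then apply Proposition~\ref{prop:naive linear invariance}, whereas the paper passes directly to the quotient lattice in $W^\beef$ and applies Remark~\ref{rem:secret cycle} to the generating tuple.
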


\begin{proof}
If we set $n^\beef = n + r$, then every $T^\beef\in \mathsf{S}$ in Lemma~\ref{lem:ZTmu open closed}   satisfies
\[
\mathrm{rank}(T^\beef) = \mathrm{rank}(T) +r  \le   \frac{n^\beef -4}{3}  .
\]

Let $W^\sharp$ be the positive definite quadratic space of rank $\mathrm{rank}(T^\beef)$ associated to $T^\beef$ as in \eqref{W space}, and let $\Lambda^\beef \subset W^\beef$ be the $\Z$-lattice spanned by the  distinguished generators
$
e_1,\ldots, e_{n+r} \in W^\beef.
$
As in \S \ref{ss:geometric_props},  there is an associated Deligne-Mumford stack
\[
\mathcal{Z}^\beef (\Lambda^\beef) \to \mathcal{M}^\beef 
\]
parametrizing isometric embeddings of $\Lambda^\beef$ into $V(\mathcal{A}^\beef)$.

As the tuple $e^\beef =(e_1,\ldots, e_{n+r})$ has moment matrix $T^\beef = Q(e^\beef)$ by construction, Remark \ref{rem:secret cycle} provides us with a canonical isomorphism of $\mathcal{M}^\beef$-stacks
$
\mathcal{Z}^\beef (\Lambda^\beef) \iso \mathcal{Z}^\beef(T^\beef) .
$

We now need the following lemma.
\begin{lemma}
\label{lem:ZTmu open closed}
There exists a finite subset 
\[
\mathsf{S} \subset \Sym_{r +d}(\Q)
\]
 of  positive semi-definite matrices  of rank $r+\mathrm{rank}(T)$ such that there is an open and closed immersion of $\mathcal{M}^\beef$-stacks
\[
\mathcal{Z}(T,\mu)\hookrightarrow \bigsqcup_{T^\beef\in \mathsf{S}}\mathcal{Z}^\beef(T^\beef) .
\]
\end{lemma}
\begin{proof}
Let  $\Lambda \subset L^\beef$ be the orthogonal to $L \subset L^\beef$. By the self-duality of $L^\beef$ there are canonical bijections
 \[
L^\vee/L \leftarrow L^\beef/(L\oplus \Lambda)\rightarrow  \Lambda^\vee/\Lambda.
\]
It follows that  there is a (unique)  $\nu\in (\Lambda^\vee/\Lambda)^d$ such that $\mu+\nu = 0$ as elements of $ L^\beef/(L\oplus \Lambda)$.   
If we fix a lift $ \tilde{\nu}\in (\Lambda^\vee)^d$ and set 
\[
T_1  =  T+Q(\tilde{\nu}) \in \Sym_d(\Q),
\]
then Proposition~\ref{prop:naive pullback} implies that there is  an open and closed immersion
\begin{equation}\label{first immersion}
  \mathcal{Z}(T,\mu) \hookrightarrow \mathcal{Z}^\beef( T_1  )\times_{\mathcal{M}^\beef}\mathcal{M}.
\end{equation}

%
%
%
%

As  in the proof of Proposition~\ref{prop:naive pullback}, for any scheme $S \to \mathcal{M}$ 
there is  a canonical isometric embedding
\[
V(\mathcal{A}_S)\oplus \Lambda \hookrightarrow V(\mathcal{A}^\beef_S)
\]
for any scheme $S \to \mathcal{M}$,  extending $\Q$-linearly to an isomorphism
\[
V(\mathcal{A}_S)_\Q \oplus \Lambda_\Q \iso V(\mathcal{A}^\beef_S)_\Q.
\]
In particular, we have a canonical embedding $\Lambda \to V(\mathcal{A}^\beef_{\mathcal{M}})$. 
A choice of basis $y_1,\ldots, y_r \in \Lambda$ therefore determines a morphism
 \begin{equation}\label{second immersion}
\mathcal{M}\to \mathcal{Z}^\beef(T_2)
 \end{equation}
 of $\mathcal{M}^\beef$-stacks,  where $T_2=Q(y)$ is the moment matrix of the tuple
\[
y=(y_1,\ldots, y_r) \in \Lambda^r \subset V(\mathcal{A}^\beef_\mathcal{M})^r.
\]
This map is in fact an open and closed immersion. Since it is known to be finite, it is enough to know that it is an open immersion. For this, note that both source and target are normal Deligne-Mumford stacks, flat over $\Z_{(p)}$: for $\mathcal{Z}^\beef(T_2)$, this follows from our numerical hypotheses and Proposition \ref{prop:Z_T_lci}. By~\cite[Lemma 5.1.12]{Madapusi2025}, it now suffices to check that the map is generically an open immersion. This can be checked using the argument in~\cite[Lemma 7.1]{mp:spin}.

Combining \eqref{first immersion} and \eqref{second immersion} yields an open and closed immersion
\[
\mathcal{Z}(T,\mu) \to 
\mathcal{Z}^\beef( T_1 )\times_{\mathcal{M}^\beef}\mathcal{Z}^\beef(T_2)\iso
 \bigsqcup_{T^\beef = \begin{pmatrix}
T_1 &*\\
*& T_2
\end{pmatrix}}\mathcal{Z}^\beef(T^\beef),
\]
where we have used the product formula from Proposition~\ref{prop:naive intersection}. 
Explicitly, for any scheme $S\to \mathcal{M}$, a point of $\mathcal{Z}(T,\mu)(S)$ is given by a tuple $x\in \prod_{i=1}^d V_{\mu_i}(\mathcal{A}_S)$ satisfying $Q(x) = T$. The immersion sends $x$ to the tuple
\[
x^\sharp = (x+\tilde{\nu},y)
\in  V(\mathcal{A}^\beef_{S})^d \times V(\mathcal{A}^\beef_S)^{r} 
= V(\mathcal{A}^\beef_S)^{d+r}
\]
in the factor indexed by $T^\sharp = Q(x^\sharp)$.

It  remains to show that $\mathcal{Z}(T,\mu)$ only meets those $\mathcal{Z}^\beef(T^\beef)$ with $T^\beef$ positive semi-definite and
\[
\mathrm{rank}(T^\beef) = \mathrm{rank}(T) + r  = \mathrm{rank}(T) + \mathrm{rank}_\Z(\Lambda).
\] 
This  follows from Remark \ref{rem:pos def T} and the observation (noting that every component of $\tilde{\nu}\in \Lambda_\Q^d$ is a $\Q$-linear combination of $y_1,\ldots, y_r\in \Lambda$)  that the components of   $x^\beef$   and the components of $(x,y)$
generate the same subspace of $V(\mathcal{A}^\beef_S)_\Q = V(\mathcal{A}_S)_\Q\oplus\Lambda_\Q$.
\end{proof}

Using Lemma \ref{lem:ZTmu open closed}, the desired properties of $\mathcal{Z}(T,\mu)$ follow immediately from the corresponding properties of $\mathcal{Z}^\beef (\Lambda^\beef)$ proved in Proposition  \ref{prop:Z_T_lci} and   Proposition  \ref{prop:CH_1_injective}.
\end{proof}

 
\section{Modularity in low codimension}
\label{s:low codimension}
 

Keep the quadratic lattice $L\subset V$ and the integral model  $\mathcal{M}$ over  $\Z[\Sigma^{-1}]$ as in  \S \ref{ss:initial data} and \S \ref{ss:integral model}.
We consider the family of special cycles $ \mathcal{Z}(T',\mu')$
 on $\mathcal{M}$   indexed by  those $T' \in \Sym_{d+1}(\Q)$ whose  upper left $d\times d$ block is a fixed $T\in \Sym_d(\Q)$.
Roughly speaking, our goal is show that if $d$ is small  then these special cycles  are the Fourier coefficients of a Jacobi  form of index $T$, valued in the codimension  $d+1$ Chow group of $\mathcal{M}$.  

Such a result was proved in the generic fiber (without restriction on $d$) in the thesis of W.~Zhang, by reducing it to a modularity result  of Borcherds for generating series of divisors.
It is the crucial Proposition \ref{prop:low key} that will  allow  us to deduce the analogous  result on the integral model from the results of Borcherds in the generic fiber.

 
\subsection{Jacobi forms}
\label{ss:jacobi}
 

  We recall just enough of the theory of Jacobi forms to fix our conventions, as these differ slightly  from \cite{zhang-thesis}  and \cite{BWR}.

Fix an integer  $g\ge 1$.  The Siegel modular group $\Gamma_g=\mathrm{Sp}_{2g}(\Z)$ acts on the   Siegel half-space $\mathcal{H}_g \subset \Sym_g(\C)$ via the usual formula
\[
\gamma \cdot \tau = (A\tau+B)(C\tau+D)^{-1},
\]
where we have written
\[
\gamma= \begin{pmatrix} A&B\\C&D  \end{pmatrix}  \in \Gamma_g.
\]

Let $\Gamma_1=\SL_2(\Z)$ act on the space of matrices $M_{2,g-1}(\Z)$ by left multiplication. Following~\cite{BWR}, we regard the \emph{Jacobi group} 
\[
J_g \define \Gamma_1 \imes M_{2,g-1}(\Z) 
\]
as a sub\emph{set} of $\Gamma_g$ using the injective function (\emph{not} a group homomorphism)
\begin{equation}\label{jacobi embedding}
\left( \begin{pmatrix} a& b\\c&d \end{pmatrix} , \begin{pmatrix} {}^tx \\ {}^ty \end{pmatrix} \right)
\mapsto 
\left(
\begin{array}{c c | c c }
1_{g-1} & - y & 0_{g-1} & x \\
0 & a & a\, {}^tx+b\, {}^ty & b \\ \hline
0_{g-1} & 0 & 1_{g-1} & 0 \\
0 & c & c \, {}^t  x + d\, {}^ty & d
\end{array}\right)
\end{equation}
for column vectors $x,y\in \Z^{g-1}$. Note  that the restriction of this injective map to the subgroup $\Gamma_1\subset J_g$ is actually a group homomorphism.

\begin{remark}
As in \cite[\S 4]{WR15}, there is an \emph{extended Jacobi group} $J_g^{ \mathrm{ext} }$, which can be realized both as a subgroup of  $\Gamma_g$, and as a central extension 
\[
1 \to \Z \to J_g^{ \mathrm{ext} } \to J_g \to 1
\]
with the property that the surjection to  $J_g$ admits a set-theoretic section whose image generates $J_g^{\mathrm{ext}}$.  The use of the function \eqref{jacobi embedding} is a convenient way of hiding the presence of the larger extended Jacobi group, as the subset $J_g \subset \Gamma_g$ generates a subgroup isomorphic to   $J_g^{ \mathrm{ext} }$.  
\end{remark}

The metaplectic double cover of the Siegel modular group is denoted 
$
\widetilde{\Gamma}_g.
$
Its elements are pairs
\begin{equation}\label{meta element}
\tilde{\gamma} = ( \gamma , j_\gamma  ) \in \widetilde{\Gamma}_g,
\end{equation}
consisting of a  $\gamma \in \Gamma_g$  and a holomorphic function  $j_\gamma(\tau)$  on $\mathcal{H}_g$ whose square  is $\det(C\tau+D)$.
As  in \cite[(5)]{BWR}, the \emph{metaplectic Jacobi group}
\[
\widetilde{J}_g \define \widetilde{\Gamma}_1 \imes M_{2,g-1}(\Z)  
\]
can be identified with a subset of  $\widetilde{\Gamma}_g$, using an injection lifting 
\eqref{jacobi embedding}.  Moreover, the restriction of this embedding to $\widetilde{\Gamma}_1$ is also a group homomorphism. In this way, we can view $\widetilde{\Gamma}_1$ as a \emph{subgroup} of $\widetilde{\Gamma}_g$.

In the following definition, taken from \cite[\S 2.2]{BWR},  we write elements of the Siegel half-space as
\[
\tau  = \begin{pmatrix} \tau'' & z \\ {}^t z & \tau'  \end{pmatrix} \in \mathcal{H}_g
\quad \mbox{with} \quad \tau' \in \mathcal{H}_1, \quad \tau'' \in \mathcal{H}_{g-1}, 
\]
 and $z\in \C^{g-1}$ a column vector.

\begin{definition}\label{def:jacobi form}
Suppose $ \rho : \widetilde{\Gamma}_g \to \GL(V)$ is a finite dimensional  representation with finite kernel.
A  holomorphic function 
\[
\phi_T : \mathcal{H}_1 \times \C^{g-1} \to V
\]
 is a \emph{Jacobi form of half-integral weight $k$,  index $T\in \Sym_{g-1}(\Q)$, and representation $ \rho$} if the function 
\begin{equation}\label{jacobi augment}
\Phi_T(\tau) \define \phi_T(\tau',z)\cdot e^{2\pi i \mathrm{Tr}( T\cdot  \tau'' ) }
\end{equation}
on $\mathcal{H}_g$  satisfies the transformation law
\[
\Phi_T(\gamma \cdot \tau) =  j_\gamma (\tau)^{2k}  \rho( \tilde{ \gamma} ) \cdot \Phi_T(\tau) 
\]  
for all elements \eqref{meta element} in the subset $\widetilde{J}_g \subset \widetilde{\Gamma}_g$, and if  for all $\alpha,\beta \in \Q^{g-1}$ the function
$
\phi_T( \tau' , \tau'   \alpha +\beta)
$
of  $\tau' \in \mathcal{H}_1$ is holomorphic at $\infty$.
\end{definition}

Any Jacobi form of representation $\rho$  has a  Fourier expansion
 \[
 \phi_T( \tau' , z ) = \sum_{ \substack{ m \in \Q  \\ \alpha \in \Q^{g-1} } } 
 c(m,\alpha) \cdot q^m \xi_1^{\alpha_1} \cdots \xi_{g-1}^{\alpha_{g-1}  },
 \]
 where we have set $q^m = e^{2\pi i m \tau'}$ and $\xi_i^{\alpha_i} = e^{2\pi i \alpha_i z_i}$.

\begin{remark}
Suppose  $ \rho : \widetilde{\Gamma}_g \to \GL(V)$  is a finite dimensional representation with finite kernel.
Given a  holomorphic Siegel modular form 
  $\phi : \mathcal{H}_g \to V$ of half-integer weight $k$ and representation $\rho$, there is  a Fourier-Jacobi expansion
\[
\phi(\tau ) = \sum_{ T \in \Sym_{g-1}  (\Q)}  \phi_T( \tau', z ) \cdot e^{2\pi i \mathrm{Tr}( T\cdot  \tau'' ) }
\]
in which each coefficient $\phi_T$ is a Jacobi form of the weight $k$, index $T$, and representation $\rho$.
\end{remark}

In practice, the representation $\rho$ will always be a form of the Weil representation.
 Let $N$ be a free $\Z$-module of finite rank endowed with a (nondegenerate) quadratic form $Q$.  Denote by 
\[
S_{N,g} =  \C [ ( N^\vee / N)^g ]  \iso \C[ N^\vee/N]^{\otimes g}
\]
the finite dimensional  vector space of $\C$-valued functions on $(N^\vee/N)^g$, and by $S_{N,g}^*$ its $\C$-linear dual.
For any $\mu \in (N^\vee / N)^g$ we denote by 
$
\phi_\mu \in S_{N, g}
$
the characteristic function of $\mu$.  As $\mu$ varies these form a basis of $S_{N,g}$, and we denote by 
$
\phi^*_\mu \in S^*_{N, g}
$
the dual basis vectors.

Denote by 
\begin{equation}\label{weil}
\omega_{N,g} : \widetilde{\Gamma}_g \to \GL( S_{N,g} )
\quad \mbox{and}\quad \omega^*_{N,g} : \widetilde{\Gamma}_g \to \GL( S^*_{N,g} )
\end{equation}
the Weil representation and its contragredient.  
To resolve some confusion in the literature, we now pin down the precise normalization of \eqref{weil}.
 Suppose $N$ has signature $(p,q)$, and  $\tilde{\gamma}=(\gamma, j_\gamma)$ is as in \eqref{meta element}.
\begin{enumerate}
\item
If 
$
\gamma = \left(\begin{smallmatrix}
A & 0 \\ 
0 & D
\end{smallmatrix} \right)
$
then, noting that $j_\gamma$ is a constant function (in fact a square root of $\det(A)=\det(D)=\pm 1$), 
\[
\omega_{N,g}(  \tilde{\gamma} ) \cdot \phi_\mu =  j_\gamma^{p-q} \cdot  \phi_{\mu \cdot A^{-1}} .
\]
\item
 If $ \gamma = \left(\begin{smallmatrix}
I & B \\ 
0 & I
\end{smallmatrix} \right)$  and $j_\gamma =1$ then
\[
\omega_{N,g}(  \tilde{\gamma} ) \cdot \phi_\mu =   e^{-2\pi i \mathrm{Tr}( Q( \tilde{\mu} ) B)}  \cdot  \phi_{\mu} ,
\]
where $Q( \tilde{\mu})\in \Sym_g(\Q)$ is the moment matrix of any lift $\tilde{\mu} \in (N^\vee)^g$.
\item
 If $ \gamma = \left(\begin{smallmatrix}
0 & -I \\ 
I & 0
\end{smallmatrix} \right)$  and the square root $j_\gamma(\tau) = \sqrt{\det(\tau)}$ is the standard branch determined by   
$
j_\gamma( i I_g ) = e^{ g \pi i /4} ,
$
  then
\[
\omega_{N,g}(  \tilde{\gamma}) \cdot \phi_\mu =   \frac{e^{   \pi i g(p-q)/4}}{ [ N^\vee : N]^{g/2} }   
\sum_{ \nu \in (N^\vee/N)^g } e^{2\pi i [ \mu,\nu] }  \cdot  \phi_{\nu} .
\]
\end{enumerate}
These relations determine $\omega_{N,g}$ uniquely.  The normalization is such that if  $N$  positive definite of rank $n$, the theta series
\[
\vartheta_{N,g}(\tau) = \sum_{ \mu \in (N^\vee/N)^g } 
\left(  \sum_{ x\in  \mu+ N^g}  e^{ 2\pi i  \mathrm{Tr}(\tau Q(x)) }  \right)
\phi^*_\mu 
\]
 is a Siegel modular form of weight $n/2$ and  representation $\omega^*_{N,g}$.
 
 \begin{remark}
 Our Weil representation does not agree with the Weil representation  $\rho_{N,g}$  of \cite[Definition 2.2]{zhang-thesis}. 
Instead, the isomorphism 
 \[
 S_{N,g} \map{\phi_\mu \mapsto \phi_\mu^*  }  S_{N,g}^*
 \]
  identifies  $\rho_{N,g}$  with $\omega_{N,g}^*$.   Alternatively, our $\omega_{N,g}$ agrees with Zhang's $\rho_{-N,g}$, where $-N$ has the same underlying $\Z$-module as $N$, but is endowed with the signature $(q,p)$ quadratic form $-Q$.
 \end{remark}
  
 \begin{remark}
There is a recurring error in \cite{zhang-thesis}, originating in the proof of \cite[Theorem 2.9]{zhang-thesis}.
 That proof claims that  a certain generating series of Borcherds  is a modular form of representation  $ \rho_{L,1}^*$, where   $L$ is a quadratic lattice of signature $(n,2)$.   In fact, this modular form has representation $ \omega_{L,1}^*$;  see the proof of Proposition \ref{prop:generic divisor generating}.
 The point is that Borcherds works with a lattice $L$ of signature $(2,n)$, and to  reformulate the theorem for signature $(n,2)$ one must replace the quadratic form by its negative.
 This does not change the space $S_{L,1}$, but it does change  the Weil representation (see the previous remark).
  \end{remark}

\begin{remark}
When $g=1$ we omit it from the notation, so that 
\[
\widetilde{\Gamma} = \widetilde{\Gamma}_1
\quad \mbox{and}\quad
\mathcal{H}=\mathcal{H}_1,
\]
 and the Weil representation and its contragredient are 
\[
\omega_{N} : \widetilde{\Gamma} \to \GL( S_{N} )
\quad \mbox{and}\quad
\omega^*_{N} : \widetilde{\Gamma} \to \GL( S^*_{N} ).
\]
\end{remark}

\subsection{Statement of modularity in low codimension}
 

 Throughout the remainder of \S  \ref{s:low codimension} we impose the following two hypotheses.

\begin{hypothesis}\label{hyp:regular primes}
The integral model $\mathcal{M}$ over $\Z[\Sigma^{-1}]$ is regular.  See Proposition \ref{prop:regularity} for  conditions on  $\Sigma$ that guarantee this.
\end{hypothesis}

\begin{hypothesis}\label{hyp:low codimension}
Recalling the integer  $r(L) \ge 0$  of  Definition \ref{defn:rL}, we  assume  that $d$ is a positive integer satisfying 
\[
d+1  \leq \frac{n-2r(L)-4}{3}  .
\]
\end{hypothesis}

The first hypothesis is needed\footnote{If we knew that  there was a well-defined intersection product on the rational Chow group of a normal (but not necessarily regular)  stack, then Hypothesis \ref{hyp:regular primes} would be unnecessary throughout \S \ref{s:low codimension}.
   In fact, it would be sufficient to know that there is a well-defined intersection product between \emph{Cartier divisors} and arbitrary cycle classes on a normal stack.} to make sense of  \eqref{eqn:def_corrected_class} below, which requires   a well-defined intersection product on the rational Chow groups of $\mathcal{M}$.  This is  available to us if $\mathcal{M}$ is regular, as explained in \S \ref{ss:chow}.  The second hypothesis is imposed so that we may make use of  Proposition \ref{prop:low key}.

Suppose $T' \in \Sym_{d+1}(\Q)$ and $\mu' \in (L^\vee/L)^{d+1}$.
Hypothesis \ref{hyp:low codimension} and Proposition \ref{prop:low key} imply that the special cycle
\[
\mathcal{Z}(T',\mu') \to \mathcal{M}
\]
is flat over $\Z[\Sigma^{-1}]$, and equidimensional of codimension $\mathrm{rank}(T')$ in $\mathcal{M}$.
By Definition~\ref{defn:naive class}, there is an associated  \emph{naive cycle class}
\begin{equation*}
[\mathcal{Z}(T',\mu')] \in \mathrm{CH}^{\mathrm{rank}(T')}( \mathcal{M} ).
\end{equation*}
Define the \emph{corrected cycle class} 
 \begin{equation}\label{eqn:def_corrected_class}
 \mathcal{C}(T',\mu')    \define  \underbrace{c_1(\omega^{-1}) \cdots c_1(\omega^{-1})}_{d+1-\mathrm{rank}(T') } \cdot  [\mathcal{Z}(T',\mu')] \in \mathrm{CH}^{d+1} (\mathcal{M}) ,
 \end{equation}
 where $c_1(\omega^{-1})$ is the image of the inverse tautological line bundle \eqref{taut bundle} under the  first Chern class map of Definition \ref{def:first chern class}.
 Abbreviate
\[
\mathcal{C}(T') = \sum_{\mu'\in (L^\vee/L)^{d+1}}\mathcal{C}(T',\mu')  \otimes \phi^*_{\mu'}\in 
\mathrm{CH}^{d+1}(\mathcal{M})\otimes S^*_{L,{d+1}}.
\]
The remainder of  \S \ref{s:low codimension} is devoted to the proof of the following result.

\begin{proposition}\label{prop:jacobi generating}
For any fixed  $T\in \Sym_d(\Q)$, the formal generating series 
\[
 \sum_{    \substack{  m\in \Q  \\   \alpha\in    \Q^d     }   } 
\mathcal{C}  \left(\begin{matrix}  T &  \frac{\alpha}{2} \\  \frac{{}^t \alpha}{2}   & m   \end{matrix}\right)   
 \cdot   q^m \xi_1^{\alpha_1}\cdots \xi_d^{\alpha_d} 
\]
with coefficients in $\mathrm{CH}^{d+1}( \mathcal{M} ) \otimes S^*_{L,d+1}$   is a Jacobi form  of index $T$,  weight $1+\frac{n}{2}$,  and representation 
\[
\omega^*_{L,d+1}: \widetilde{\Gamma}_{d+1} \to \GL(S_{L,d+1}^*) .
\]
Here  Jacobi modularity is understood,  as in Theorem \ref{BigThm:generic_modularity},  after applying any $\Q$-linear functional  $\mathrm{CH}^{d+1}( \mathcal{M} ) \to \C$.
\end{proposition}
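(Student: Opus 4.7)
The plan is as follows. Fix $\mu\in(L^\vee/L)^d$ for the first $d$ components of $\mu'$; by linearity it suffices to prove Jacobi modularity of the sub-series for each fixed $\mu$. By the intersection formula of Proposition \ref{prop:naive intersection}, the naive cycle $\mathcal{Z}(T',(\mu,\mu_{d+1}))$ is the open-and-closed substack of the fibre product $\mathcal{Z}(T,\mu)\times_{\mathcal{M}}\mathcal{Z}(m,\mu_{d+1})$ picked out by the locally constant function $\alpha_i=[x_i,x_{d+1}]$ on the universal additional special endomorphism $x_{d+1}$. Combining this with the projection formula along $\iota\colon\mathcal{Z}(T,\mu)\to\mathcal{M}$, which lets powers of $c_1(\omega^{-1})$ on $\mathcal{M}$ be absorbed into pushforwards as powers of $c_1(\iota^*\omega^{-1})$, each coefficient $\mathcal{C}(T',(\mu,\mu_{d+1}))$ is rewritten as $\iota_*$ of an explicit class on $\mathcal{Z}(T,\mu)$. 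This class is a divisor class when $T$ is non-degenerate (the main case), and a higher-codimension class still corrected by $c_1(\omega^{-1})$ in general.

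Second, Hypothesis \ref{hyp:low codimension} ensures that $\mathrm{rank}(T)\le d$ satisfies the bound of Proposition \ref{prop:low key}; in particular $\mathcal{Z}(T,\mu)$ is equidimensional and flat over $\Z[\Sigma^{-1}]$, and the restriction $\mathrm{CH}^1(\mathcal{Z}(T,\mu))\to\mathrm{CH}^1(Z(T,\mu))$ is injective. Thus (in the non-degenerate case) it suffices to prove Jacobi modularity of the resulting generating series of divisor classes after restriction to the generic fibre $Z(T,\mu)$. On the generic fibre I would apply the uniformization of Proposition \ref{prop:ZTmu_uniformization}, writing $Z(T,\mu)=\bigsqcup_g M^\flat_g$ as a disjoint union of smaller orthogonal Shimura varieties of signature $(n-\mathrm{rank}(T),2)$. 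Under the orthogonal splitting $V=V^\flat\oplus W$, a candidate additional special endomorphism decomposes as $x_{d+1}=x_{d+1}^\flat+y_{d+1}$ with $y_{d+1}\in W$; the inner-product conditions $[e_i,y_{d+1}]=\alpha_i$ together with the denominator condition $\mu_{d+1}$ restrict $y_{d+1}$ to a finite union of cosets of the positive definite lattice $\Lambda_g=W\cap gL_{\widehat{\Z}}$, and the residual condition on $x_{d+1}^\flat\in V^\flat$ is precisely a Borcherds-type special divisor condition on $M^\flat_g$ of norm $m-Q(y_{d+1})$.

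Consequently, the restricted series on $M^\flat_g$ factorizes as the product of a classical Jacobi theta kernel for $\Lambda_g$ (a Jacobi form of index $T$ and weight $\mathrm{rank}(T)/2$) with Borcherds' generating series of special divisors on $M^\flat_g$ (a modular form of weight $1+(n-\mathrm{rank}(T))/2$ valued in $\mathrm{CH}^1(M^\flat_g)\otimes S^*_{L^\flat_g}$, by the $d=1$ case of Theorem \ref{BigThm:generic_modularity}). The product has weight $1+n/2$. The main obstacle is the representation-theoretic bookkeeping: one must verify that under the splitting $L_{\widehat{\Z}}\simeq g(L^\flat_g\oplus\Lambda_g)_{\widehat{\Z}}$, the tensor product of the theta representation for $\Lambda_g$ with $\omega^*_{L^\flat_g}$ assembles, as $g$ ranges over $G^\flat(\Q)\backslash\Xi(T,\mu)/K$, into the restriction of $\omega^*_{L,d+1}$ to the metaplectic Jacobi group $\widetilde{J}_{d+1}$. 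This is precisely the Fourier-Jacobi decomposition of the Siegel theta kernel for $L$ at genus $d+1$, and is the analogue in our setting of the combinatorial calculation carried out (for classical Chow groups in the generic fibre) in W.~Zhang's thesis.
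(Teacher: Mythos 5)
Your proposal is correct and follows essentially the same route as the paper: auxiliary divisors $\mathcal{Y}(m,\mu_{d+1},w)$ on $\mathcal{Z}(T,\mu)$ parametrized by the orthogonal projection to $W$, the projection formula to absorb powers of $c_1(\omega^{-1})$, reduction to the generic fiber via the injectivity of $\mathrm{CH}^1(\mathcal{Z}(T,\mu))\to\mathrm{CH}^1(Z(T,\mu))$ from Proposition \ref{prop:low key}, and then the factorization over each $M^\flat_g$ into Borcherds' divisor generating series times the Jacobi theta kernel of $\Lambda_g$. The one point where the paper's argument differs organizationally is the Heisenberg part of $\widetilde{J}_{d+1}$: rather than extracting it from the elliptic transformation of the theta kernel, the paper verifies it directly on the Chow group of $\mathcal{M}$ via the explicit Weil representation formulas together with the linear invariance $\mathcal{C}(T')=\mathcal{C}({}^tAT'A)$ (Lemma \ref{lem:weil cycle}), so only the $\widetilde{\Gamma}_1$-transformation is ever checked through Borcherds; your route works too, but you should make explicit that the shift $w\mapsto w+\textstyle\sum y_ie_i$ changes the coset condition from $\mu_{d+1}$ to $\mu_{d+1}+\sum y_i\mu_i$ and that the phases $e^{2\pi i\,{}^tx\alpha}$ match $e^{2\pi i\sum x_i[\mu_i,\mu_{d+1}]}$ via Remark \ref{rem:special congruence} — this is exactly where the restriction of $\omega^*_{L,d+1}$ to the unipotent subgroup enters.
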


 Proposition \ref{prop:jacobi generating}, when combined  with the main result of \cite{BWR}, is already enough to show   that 
\[
 \sum_{ T' \in \Sym_{d+1} (\Q) } \mathcal{C}(T') \cdot q^{T'} 
\]
 is the $q$-expansion of a Siegel modular form of representation $\omega^*_{L,d+1} $.  See   Theorem \ref{thm:modularity} and its proof for details.


\subsection{Auxiliary cycle classes}
\label{ss:aux}


In this subsection we work with a fixed positive semi-definite $T\in \Sym_d(\Q)$ and 
$
\mu=(\mu_1,\ldots, \mu_d) \in (L^\vee/L)^d .
$

Given a $T'\in \Sym_{d+1}(\Q)$ with   upper left $d\times d$ block $T$,  and a  $ \mu_{d+1} \in  L^\vee/L $, there is a morphism 
\begin{equation}\label{drop one endo}
\mathcal{Z}  (  T' ,  \mu' ) \to \mathcal{Z} (T, \mu )
\end{equation}
of special cycles on $\mathcal{M}$, where $\mu'=( \mu_1,\ldots,\mu_{d+1})$.
This morphism sends an $S$-valued point 
\[
(x_1,\ldots, x_{d+1}) \in V_{\mu_1}(\mathcal{A}_S) \times \cdots \times  V_{\mu_{d+1}}(\mathcal{A}_S) 
\]
 of the source to its truncation
\[
(x_1,\ldots, x_d) \in V_{\mu_1}(\mathcal{A}_S) \times \cdots \times  V_{\mu_d}(\mathcal{A}_S) .
\]
The morphism \eqref{drop one endo} is finite and unramified, by the same argument as in the proof of \cite[Proposition 2.7.2]{AGHMP17}.

Recall from \eqref{W space} that $T$ determines a positive definite quadratic space $W$ of dimension $\mathrm{rank}(T)$, together with distinguished generators  $e_1,\ldots, e_d \in W$. 
As in \eqref{W special embed}, a functorial point  $S\to \mathcal{Z}(T,\mu)$ determines   an isometric embedding 
\begin{equation}\label{W special embed 2}
W\map{e_i \mapsto x_i}  V(\mathcal{A}_S )_\Q.
\end{equation}
This allows us to define a family of finite unramified stacks
\begin{equation}\label{first aux cycle}
\mathcal{Y} (m,\mu_{d+1},w)\to \mathcal{Z}(T ,\mu )
\end{equation}
indexed by  $m\in \Q$,   $\mu_{d+1}\in L^\vee/L$,   and $w\in W$, with functor of points
\[
\mathcal{Y}(m,\mu_{d+1},w) (S) 
= \left\{x_{d+1}\in V_{\mu_{d+1}}(\mathcal{A}_S):  \begin{array}{c} Q(x_{d+1}-w) = m \\ 
{ [x_{d+1},e_i] = [w,e_i] \ \forall  1\le i \le d }
  \end{array} \right\}.
\]
Note that  the conditions $[ x_{d+1} , e_i] = [w,e_i]$ are equivalent to $w$ being the orthogonal projection of $x_{d+1}$ to $W \subset V(\mathcal{A}_S)_\Q$.

The following Proposition shows that the new stacks \eqref{first aux cycle} are not really new at all.
They are  special cycles we already know, but indexed in a different way.

\begin{proposition}\label{prop:divisor is cycle}
There is an isomorphism of $\mathcal{Z}(T,\mu)$-stacks
\begin{equation}\label{divisor is cycle}
  \mathcal{Y}(m,\mu_{d+1},w) \iso \mathcal{Z} (T'  , \mu' ),
\end{equation}
where  $\mu'=(\mu_1,\ldots, \mu_{d+1})$, and 
\[
T' = \begin{pmatrix}
T&\frac{\alpha}{2}\\
\frac{{}^t \alpha}{2} & m + Q(w)
\end{pmatrix} ,
\]
 for the column vector $\alpha\in \Q^d$  with components $\alpha_i = [w,e_i]$.
Moreover, 
\[
m\ne 0 \iff \mathrm{rank}(T') = \mathrm{rank}(T)+1,
\]
 and when these conditions hold both \eqref{drop one endo} and  \eqref{first aux cycle} are generalized Cartier divisors  (Definition \ref{def:general cartier}).
\end{proposition}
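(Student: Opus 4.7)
My plan has three steps: establish the isomorphism on $S$-points, verify the rank formula by elementary linear algebra, and deduce the Cartier divisor property by combining Proposition \ref{prop:low key} with the obstruction theory already used in the proof of Proposition \ref{prop:divisors are divisors}.

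First I would construct the map \eqref{divisor is cycle} on the level of functors. Given an $S$-point of $\mathcal{Y}(m,\mu_{d+1},w)$ lying over a tuple $(x_1,\ldots,x_d)\in \mathcal{Z}(T,\mu)(S)$, the element $x_{d+1}\in V_{\mu_{d+1}}(\mathcal{A}_S)$ satisfies $[x_{d+1},x_i] = [w,e_i] = \alpha_i$ under the embedding \eqref{W special embed 2}, while $Q(x_{d+1}) = Q(w) + Q(x_{d+1}-w) = Q(w)+m$ because $x_{d+1}-w$ is orthogonal to $W$ by hypothesis. So the enlarged tuple $(x_1,\ldots,x_{d+1})$ has moment matrix $T'$ and cosets $\mu' = (\mu,\mu_{d+1})$, defining a point of $\mathcal{Z}(T',\mu')$; the inverse sends $(x_1,\ldots,x_{d+1})$ back to $x_{d+1}$, which satisfies the $\mathcal{Y}$-conditions by reversing the computation. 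This assignment is plainly natural in $S$.

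Next, to compute $\mathrm{rank}(T')$, I write $w = \sum c_i e_i$ with $c \in \Q^d$. Because $[e_i,e_j] = 2T_{ij}$ in $W$, one finds $\alpha = 2Tc$ and $Q(w) = {}^tc\,Tc$, so
\[
T' = \begin{pmatrix} T & Tc \\ {}^tc\,T & m+{}^tc\,Tc \end{pmatrix}.
\]
The symmetric row-and-column operation that subtracts $c_i$ times the $i$-th row (resp.\ column) from the last one reduces $T'$ to the block-diagonal matrix $\mathrm{diag}(T,m)$, so $\mathrm{rank}(T') = \mathrm{rank}(T) + 1$ precisely when $m \neq 0$.

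Finally, for the generalized Cartier divisor claim: if $m < 0$ then both sides of \eqref{divisor is cycle} are empty (the left by positive definiteness of $V(\mathcal{A}_S)_\Q$, the right by Remark \ref{rem:pos def T} since $T'$ fails to be positive semi-definite), so the assertion is vacuous. When $m > 0$, Hypothesis \ref{hyp:low codimension} lets me apply Proposition \ref{prop:low key} to both $\mathcal{Z}(T,\mu)$ and $\mathcal{Z}(T',\mu')$: each is a local complete intersection (and hence Cohen-Macaulay) over $\Z[\Sigma^{-1}]$, of codimensions $\mathrm{rank}(T)$ and $\mathrm{rank}(T)+1$ in $\mathcal{M}$ respectively. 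The obstruction theory recalled in the proof of Proposition \ref{prop:divisors are divisors}(1) shows that \'etale-locally on $\mathcal{Z}(T,\mu)$, the closed substack $\mathcal{Z}(T',\mu')$ is cut out by the single equation $\mathrm{obst}_{x_{d+1}}$ governing the deformability of the additional endomorphism $x_{d+1}$. Since this equation defines a closed substack of pure codimension $1$ inside the Cohen-Macaulay stack $\mathcal{Z}(T,\mu)$, it must be a non-zero divisor \'etale-locally, so the corresponding ideal sheaf is invertible and \eqref{drop one endo} is a generalized Cartier divisor; the same then holds for \eqref{first aux cycle} through \eqref{divisor is cycle}. I expect the main technical friction to be marshalling Proposition \ref{prop:low key} to supply both the expected codimension and Cohen-Macaulayness needed to promote the obstruction equation to a regular element.
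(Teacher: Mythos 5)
Your proposal is correct and follows essentially the same route as the paper: the same functor-of-points identification, the same reduction of the Cartier claim to Cohen--Macaulayness of $\mathcal{Z}(T,\mu)$ plus the dimension count from Proposition \ref{prop:low key} (the paper packages the non-zero-divisor criterion as Lemma \ref{lem:nzd_cm} and obtains the single local equation by realizing $\mathcal{Z}(T',\mu')$ inside $\mathcal{Z}(T,\mu)\times_{\mathcal{M}}\mathcal{Z}(m+Q(w),\mu_{d+1})$ via Proposition \ref{prop:naive intersection}, which is the same equation your obstruction section provides). Your rank computation via the explicit congruence ${}^tE\,T'E=\mathrm{diag}(T,m)$ is a slightly cleaner substitute for the paper's argument through the radical of the auxiliary quadratic space $W'$, but it is only a cosmetic variation.
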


\begin{proof}
 Let $W'$ be the quadratic space of dimension $\mathrm{rank}(T')$ determined by $T'$, exactly as the space $W$ of  \eqref{W space} was determined by $T$. 
As we do not assume that $T'$ is positive semi-definite, the quadratic space $W'$ need not be positive definite, but it is  nondegenerate (by construction).
   There are  distinguished vectors $e_1,\ldots, e_{d+1}$ that span $W'$, the vectors $e_1,\ldots, e_d$ span a positive definite subspace $W\subset W'$ isometric to \eqref{W space},  and the tuples
\[
e =(e_1,\ldots, e_d) \in W^d
\quad\mbox{and}\quad
e' =(e_1,\ldots, e_{d+1}) \in (W')^{d+1}
\]
satisfy $Q(e)=T$ and  $Q(e') = T'$.  Using the relation between $T'$ and $w$,  one checks  first that  $w$ is the orthogonal projection of $e_{d+1}$ to $W$, and then that 
\[
0 = [ e_{d+1}-w,w] = Q(e_{d+1}) -Q( e_{d+1}-w) - Q(w) .
\]
In particular
\begin{equation}\label{proj length}
Q( e_{d+1}-w) =m.
\end{equation}

Now we construct the isomorphism \eqref{divisor is cycle}.  
Fix an $\mathcal{M}$-scheme $S$ and a tuple 
\[
x=(x_1,\ldots, x_d) \in V_{\mu_1}(\mathcal{A}_S) \times \cdots \times V_{\mu_d}(\mathcal{A}_S).
\]
with moment matrix $Q(x)=T$.   This determines  a  point   $x\in \mathcal{Z}(T,\mu)(S)$, and an isometric embedding $W \to V(\mathcal{A}_S)_\Q$ by \eqref{W special embed 2}.

A lift of $x$ to $\mathcal{Z}(T',\mu')(S)$ determines a special quasi-endomorphism
\[
x_{d+1} \in V_{\mu_{d+1}} (\mathcal{A}_S),
\]
which then determines an extension of $W \to V(\mathcal{A}_S)_\Q$ to 
\[
W' \map{e_i \mapsto x_i} V(\mathcal{A}_S)_\Q.
\]
The calculation \eqref{proj length} shows that  $Q(x_{d+1}-w)=m$, and combining this with
$
[x_{d+1} , e_i] = [x_{d+1},x_i] = \alpha_i 
$
shows that $x_{d+1}$ defines a lift of $x$ to  $\mathcal{Y}(m,\mu_{d+1},w)(S)$.

Conversely,  any lift of $x$ to $\mathcal{Y}(m,\mu_{d+1},w)(S)$ corresponds to an $x_{d+1} \in V_{\mu_{d+1}} (\mathcal{A}_S)$ with the property that $Q(x_{d+1}-w)=m$, and the orthogonal projection of $x_{d+1}$ to 
$W \subset V(\mathcal{A}_S)_\Q$ is $w$.   An elementary linear algebra argument shows that 
$x'=(x_1,\ldots, x_{d+1})$ has moment matrix $T'$, so determines a lift of $x$ to $\mathcal{Z}(T',\mu' )(S)$.
This  establishes the isomorphism \eqref{divisor is cycle}.

If  $m=0$ then  \eqref{proj length} implies that  $e_{d+1}-w$ is an isotropic vector orthogonal to $W$, which is therefore contained in the radical of the quadratic form on $W'$.
  As this radical is trivial, $e_{d+1}=w\in W$.
It follows that  $W'=W$, and  $\mathrm{rank}(T') = \mathrm{rank}(T)$.

Now suppose   $m\neq 0$.  It follows from   \eqref{proj length} that  $e_{d+1}\neq w$,  hence $e_{d+1} \neq W$ and 
\[
\mathrm{rank}(T') = \dim(W') = \dim(W)+1=\mathrm{rank}(T)+1.
\]
It remains to show that \eqref{first aux cycle} is a generalized Cartier divisor. 
 In light of the isomorphism \eqref{divisor is cycle},  it suffices to show  the same for \eqref{drop one endo}.

By Remark \ref{rem:pos def T} we may assume that $T'$ is  positive semi-definite, for otherwise $\mathcal{Z}(T',\mu') = \emptyset$ and the claim is vacuous.
This assumption  implies that $W'$ is a positive definite quadratic space, and so \eqref{proj length} implies $m>0$.  In particular  $m+Q(w) >0$. 
By Proposition \ref{prop:divisors are divisors}, the right vertical arrow in 
\[
\xymatrix{
 { \mathcal{Z}(T,\mu) \times_\mathcal{M} \mathcal{Z}( m+Q(w), \mu_{d+1}  ) }  \ar[r]  \ar[d] &   {  \mathcal{Z}( m+Q(w) , \mu_{d+1}  )  }  \ar[d]   \\
   {  \mathcal{Z}(T,\mu)   } \ar[r]  & {   \mathcal{M}  } 
}
\]
is a generalized Cartier divisor. Proposition \ref{prop:naive intersection} allows us to realize $\mathcal{Z}(T',\mu')$ as an open and closed substack of the upper left corner, and so there exists an \'etale cover $U\to \mathcal{Z}(T,\mu)$ such that
$
  \mathcal{Z}(T',\mu')_U \to U
$
is a disjoint union of closed immersions $Z_i\to U$, each of which is locally defined by the vanishing of a section of $\co_U$. 
To see that $Z_i$ is an effective Cartier divisor on $U$, we must show that this section is not a zero divisor.  
This relies on the following lemma of commutative algebra.

\begin{lemma}
\label{lem:nzd cm}
Suppose that $S$ is a Cohen-Macaulay local Noetherian ring with maximal ideal $\mathfrak{m}$; then an element $a\in \mathfrak{m}$ is a non-zero divisor if and only if $\dim S/(a) = \dim S - 1$.
\end{lemma}
\begin{proof}
By Krull's Hauptidealsatz~\cite[\href{https://stacks.math.columbia.edu/tag/00KV}{Tag 00KV}]{stacks-project}, we have 
\[
\dim ( S/(a)  ) \ge \dim (S) - 1,
\]
 with equality holding exactly when $a$ is not contained in any minimal prime of $S$. On the other hand, saying that $a$ is a non-zero divisor is equivalent to saying that $a$ is not contained in any associated prime of $S$. Since $S$ is Cohen-Macaulay, its associated primes are precisely its minimal ones, and so the lemma follows.
\end{proof}

 Recall that Hypothesis~\eqref{hyp:low codimension} guarantees  that $\mathcal{Z}(T',\mu')$ has dimension 
\[
\dim( \mathcal{M} ) - \mathrm{rank}(T') = \dim (\mathcal{M} )- \mathrm{rank}(T) - 1 = \dim( \mathcal{Z}(T,\mu) ) - 1.
\]
As $\mathcal{Z}(T,\mu)$ is Cohen-Macaulay by Proposition \ref{prop:low key}, the desired conclusion now follows from Lemma~\ref{lem:nzd cm}.
\end{proof}

If $m\neq 0$,  Proposition \ref{prop:divisor is cycle} allows us to define, using Remark \ref{rem:cartier bundle}, 
\begin{equation}\label{first aux bundle}
[  \mathcal{Y}(m,\mu_{d+1},w) ]  \in \mathrm{CH}^1 ( \mathcal{Z}(T,\mu)  ) 
\end{equation}
as the  cycle class associated to the generalized Cartier divisor \eqref{first aux cycle}.
 More precisely, it is the first Chern class (Definition \ref{def:first chern class}) of the associated line bundle.

We next extend the definition to  $m=0$.  In this case the condition $Q(x_{d+1}-w) =0$ imposed in the definition of the domain of \eqref{first aux cycle}  can be satisfied by at most the  vector $x_{d+1}=w$, and so 
\[
\mathcal{Y}(0,\mu_{d+1},w) (S)
= \begin{cases}
\mathcal{Z}(T,\mu)(S)   & \mbox{if\ }  w \in V_{ \mu_{d+1} } ( \mathcal{A}_S ) \\
\emptyset & \mbox{otherwise.}
\end{cases}
\]
This  implies that the morphism \eqref{first aux cycle} is a closed immersion.  
 On the other hand,  Proposition~\ref{prop:divisor is cycle} tells us that there is a distinguished choice of $(T',\mu')$ for which $\mathrm{rank}(T) = \mathrm{rank}(T')$ and 
 \[
 \mathcal{Y}(0,\mu_{d+1},w) \iso \mathcal{Z}(T',\mu').
 \]
We deduce that for this choice of $(T',\mu')$ the morphism   \eqref{drop one endo} is  a closed immersion between stacks of the same dimension.
Now form the first Chern class 
\[
c_1\left(\omega^{-1}|_{\mathcal{Z}(T',\mu')}\right)\in \mathrm{CH}^1(\mathcal{Z}(T',\mu')),
\]
where $\omega \in \mathrm{Pic}(\mathcal{M})$ is the tautological bundle, and define
 \begin{equation}\label{constant aux class}
[  \mathcal{Y}(0,\mu_{d+1},w) ]   \in \mathrm{CH}^1(\mathcal{Z}(T,\mu))
\end{equation}
to be its push-forward via ~\eqref{drop one endo}.

Let  $Y (m,\mu_{d+1},w)$ be the generic fiber of \eqref{first aux cycle}, and let 
\begin{equation}\label{generic aux}
[ Y (m,\mu_{d+1},w) ]  \in \mathrm{CH}^1 ( Z(T ,\mu ) )
\end{equation}
be the restriction of \eqref{first aux bundle} and \eqref{constant aux class} to the generic fiber of $\mathcal{Z}(T,\mu)$.
The  following proposition is our version of   \cite[Proposition 2.6]{zhang-thesis}.
One should regard it as a corollary of a theorem of  Borcherds \cite{Bor:GKZ}.

\begin{proposition}\label{prop:generic divisor generating}
The formal generating series 
\[
\sum_{   \substack{    m \in\Q  \\  w \in  W   \\  \mu_{d+1} \in L^\vee/L    }  }
  [ Y ( m , \mu_{d+1} , w ) ]  \otimes \phi^*_ {\mu_{d+1}} 
\cdot 
 q^{ m+ Q(w) }   \xi_1^{   [ w , e_1]  } \cdots \xi_{d}^{   [ w , e_d]  } 
\]
converges\footnote{Convergence is understood in the sense of Theorem \ref{BigThm:generic_modularity}.
That is to say,  after applying any $\C$-linear functional $\mathrm{CH}^1(Z(T,\mu)) \to \C$.} to  a holomorphic function
\[
\phi_T(\tau', z) :  \mathcal{H} \times \C^d \to \mathrm{CH}^1(Z(T,\mu)) \otimes S_L^*. 
\]
The corresponding  function \eqref{jacobi augment} on $\mathcal{H}_{d+1}$ satisfies 
\[
\Phi_T(\gamma \cdot \tau) =  j_\gamma (\tau)^{2+n}  \omega_{L}^*( \tilde{ \gamma} ) \cdot \Phi_T(\tau) 
\]  
for all elements  $( \gamma , j_\gamma) \in \widetilde{\Gamma} \subset \widetilde{\Gamma}_{d+1}$  as in \eqref{meta element}.
\end{proposition}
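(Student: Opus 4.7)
The plan is to reduce the claim to Borcherds' modularity theorem \cite{Bor:GKZ} for generating series of divisors on smaller orthogonal Shimura varieties, using the generic fiber decomposition of $Z(T,\mu)$ provided by Proposition~\ref{prop:ZTmu_uniformization}.

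First I would fix an isometric embedding $W\hookrightarrow V$ with orthogonal complement $V^\flat$ of signature $(n^\flat,2)$ where $n^\flat = n-\mathrm{rank}(T)$, and invoke Proposition~\ref{prop:ZTmu_uniformization} to decompose
\[
Z(T,\mu) \iso \bigsqcup_{g\in G^\flat(\Q)\backslash\Xi(T,\mu)/K} M^\flat_g,
\]
where $M^\flat_g$ is the Shimura variety attached to the lattice $L^\flat_g = V^\flat\cap gL_{\widehat{\Z}}$. I would set $\Lambda_g = W\cap gL_{\widehat{\Z}}$, so that $L^\flat_g\oplus\Lambda_g$ is a finite-index sublattice of $L_g = V\cap gL_{\widehat\Z}$ (which is isometric to $L$ as a $\widehat\Z$-quadratic module). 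Since Chow groups of a disjoint union split as a direct product, the transformation law can be verified one component at a time.

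Next I would compute the restriction of $[Y(m,\mu_{d+1},w)]$ to each $M^\flat_g$. Using Remark~\ref{rem:special identification}, a complex point of $M^\flat_g$ arises from a pair $(z,hg)$ with $z\in\mathcal{D}^\flat$, and a special quasi-endomorphism $x_{d+1}\in V_{\mu_{d+1}}(\mathcal{A}_s)$ is recorded by a vector $x_{d+1}\in V$ orthogonal to $z$ lying in $hg(\mu_{d+1}+L_{\widehat\Z})$. Writing $x_{d+1} = w + x^\flat$ with $x^\flat\in V^\flat$, the conditions defining $\mathcal{Y}(m,\mu_{d+1},w)$ reduce to $Q(x^\flat)=m$ and $[x^\flat,z] = 0$ (the $W$-part being automatically orthogonal to $z\in\mathcal{D}^\flat$). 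Further, membership in the coset $hg(\mu_{d+1}+L_{\widehat\Z})$ forces $w$ to lie in a particular coset of $\Lambda_g$ determined by $\mu_{d+1}$ and $g$, while $x^\flat$ is constrained to lie in a coset $\mu^\flat$ of $L^\flat_g$. Consequently, the restriction of $[Y(m,\mu_{d+1},w)]$ to $M^\flat_g$ is either zero or equals Kudla's special divisor class $[Z^\flat_g(m,\mu^\flat)]$ on $M^\flat_g$ for the associated $\mu^\flat$.

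Assembling this, the generating series restricted to $M^\flat_g$ factors as the tensor product of Borcherds' generating series of divisors on $M^\flat_g$---which by \cite{Bor:GKZ} is a holomorphic modular form of weight $n^\flat/2+1$ and representation $\omega^*_{L^\flat_g}$---with the (partial) theta series
\[
\vartheta_{\Lambda_g,e}(\tau',z) = \sum_{w\in W} q^{Q(w)}\,\xi_1^{[w,e_1]}\cdots\xi_d^{[w,e_d]}\otimes\phi^*_{w\bmod\Lambda_g}
\]
attached to the positive definite lattice $\Lambda_g\subset W$ and the tuple $e=(e_1,\ldots,e_d)$, which under the normalizations of \S\ref{ss:jacobi} is a Jacobi theta series of weight $\mathrm{rank}(T)/2$ and representation $\omega^*_{\Lambda_g}$ (its transformation behaviour under $\widetilde\Gamma_1\hookrightarrow \widetilde{J}_{d+1}$ is the classical calculation). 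The weights sum to $n/2+1$, matching the proposition's claim. To obtain the representation $\omega^*_L$ rather than $\omega^*_{L^\flat_g}\otimes\omega^*_{\Lambda_g}\iso \omega^*_{L^\flat_g\oplus\Lambda_g}$, one then invokes the standard see-saw identity: the finite-index inclusion $L^\flat_g\oplus\Lambda_g\subset L_g\iso L$ induces a $\widetilde\Gamma_1$-equivariant embedding $S^*_L\hookrightarrow S^*_{L^\flat_g}\otimes S^*_{\Lambda_g}$, and the coefficients of the restricted series, parametrized by $(m,\mu_{d+1},w)$, land precisely in the image of this embedding when summed across all components $g$. Convergence, in the sense of pairing against a $\C$-linear functional, is inherited from the convergence of the Borcherds series on each $M^\flat_g$ together with that of $\vartheta_{\Lambda_g,e}$.

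The main obstacle is this last bookkeeping step: verifying that the sum over $g\in G^\flat(\Q)\backslash\Xi(T,\mu)/K$ of Borcherds-times-theta pieces, each naturally valued in $S^*_{L^\flat_g}\otimes S^*_{\Lambda_g}$, reassembles \emph{exactly} into an element of $\mathrm{CH}^1(Z(T,\mu))\otimes S^*_L$ transforming by $\omega^*_L$. Making the identification $\Xi(T,\mu)/K\iso$ (cosets parametrizing the combined $\mu_{d+1}$-and-$w$ data) compatible with the formulas of \S\ref{ss:jacobi} for $\omega_L$ on diagonal, translation, and inversion generators will require the most care, but is otherwise routine once one sets up the canonical surjection $L_g^\vee/L_g\twoheadrightarrow (L^\flat_g\oplus\Lambda_g)^\vee/(L^\flat_g\oplus\Lambda_g)$ dictated by the orthogonal decomposition.
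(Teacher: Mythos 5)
Your proposal follows essentially the same route as the paper's proof: decompose $Z(T,\mu)$ into the Shimura varieties $M^\flat_g$ via Proposition \ref{prop:ZTmu_uniformization}, identify the restriction of $[Y(m,\mu_{d+1},w)]$ to each $M^\flat_g$ with Kudla divisors $[Z^\flat_g(m,\nu)]$ (the paper's Lemma \ref{lem:aux divisor compare}), apply Borcherds' modularity on each $M^\flat_g$, and multiply by the Jacobi theta series of $\Lambda_g\subset W$, with the weights $1+\tfrac{n^\flat}{2}$ and $\tfrac{n-n^\flat}{2}$ adding up correctly. The bookkeeping you defer is handled in the paper by the $\widetilde{\Gamma}$-equivariant composite \eqref{tensor zigzag} (a dual surjection $S^*_{L^\flat_g}\otimes S^*_{\Lambda_g}\to S^*_{L_g}\iso S^*_L$ followed by contraction against the $S^*_{\Lambda_g}$-valued theta function) rather than your embedding of duals, but these are equivalent formulations of the same see-saw identity.
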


\begin{proof}
The claim is vacuously true if $Z(T,\mu)$ is empty.
Hence, as in \S \ref{ss:special shimura}, we may fix an orthogonal decomposition
$
V = V^\flat \oplus W
$
and use this to express
\begin{equation}\label{special shimura app}
   Z(T, \mu) =
\bigsqcup_{ g \in G^\flat(\Q)\backslash \Xi(T,\mu)/K } M^\flat_g .
\end{equation}
as a disjoint union of smaller Shimura varieties.
As in \eqref{prime sublattices}, each $g\in \Xi(T,\mu)$ determines lattices
$L_g^\flat \subset V^\flat$ and  $\Lambda_g \subset W$.

The Shimura variety  $M^\flat_g$ has its own tautological line bundle $\omega^\flat_g$, its own Kuga-Satake abelian scheme $A^\flat_g \to M^\flat_g$, and its own family of special cycles
 \[
 Z^\flat_g(m,\nu) \to M^\flat_g
 \]
 indexed by  $m\in \Q$ and $\nu \in  L_g^{\flat,\vee} / L^\flat_g$. 
 When $m\neq 0$ there is an associated class
 \[
 [ Z^\flat_g(m,\nu)] \in \mathrm{CH}^1(M^\flat_g) 
 \]
by Remark \ref{rem:cartier bundle} and Proposition \ref{prop:divisors are divisors}.  
When $m=0$ we define 
\[
 [ Z^\flat_g(0,\nu)] = \begin{cases}
  c_1( \omega^{\flat,-1}_g) & \mbox{if }\nu =0 \\
 0 & \mbox{otherwise}.
 \end{cases}
\]

\begin{remark}
At a prime $p\not\in \Sigma$ the lattice $L_g^\flat$ may be far from maximal.
Fortunately, we have no need for any integral model of $M_g^\flat$ over $\Z[\Sigma^{-1}]$.
All constructions and proofs from \S \ref{ss:integral model} and \S \ref{ss:cycle basics} can be carried out (usually with less effort) directly  on the canonical model  over $\Q$. 
\end{remark}

 \begin{lemma}\label{lem:aux divisor compare}
 For every $g\in \Xi(T,\mu)$  there is a canonical isomorphism of $M_g^\flat$-stacks
 \begin{equation}\label{divisor comparison 1}
Y(m,\mu_{d+1},w) \times_{Z(T,\mu)}  M^\flat_g \iso 
 \bigsqcup_{\substack{  \nu \in L_g^{\flat,\vee} / L^\flat_g \\ \nu+w \in g  \cdot( \mu_{d+1} +L_{\widehat{\Z}}) } } 
 Z^\flat_g(m,\nu) .
\end{equation}
Here we regard $\mu_{d+1} +L_{\widehat{\Z}} \subset V_{\A_f}$.  Moreover, 
 \begin{equation}\label{divisor comparison}
[Y(m,\mu_{d+1},w)] \vert_{M^\flat_g} =
 \sum_{\substack{  \nu \in L_g^{\flat,\vee} / L^\flat_g \\ \nu+w \in g  \cdot( \mu_{d+1} +L_{\widehat{\Z}}) } } 
[  Z^\flat_g(m,\nu) ] \in \mathrm{CH}^1(M^\flat_g)
\end{equation}
where the left hand side is the projection of \eqref{generic aux} to the $g$-summand in 
\[
 \mathrm{CH}^1 ( Z(T ,\mu ) ) \iso \bigoplus_{g \in G^\flat(\Q)\backslash \Xi(T,\mu)/K }\mathrm{CH}^1(M_g^\flat).
\] 
 \end{lemma}

 \begin{proof}
Applying the  proof of Proposition \ref{prop:naive pullback} to the morphism $M^\flat_g \to M$ of Remark \ref{twisted shimura embedding}, 
we see that that  for any $M^\flat_g$-scheme $S$ there is a canonical isometry
\[
V(A_S)_\Q = V(A^\flat_{g,S})_\Q \oplus W, 
\]
which restricts to a bijection
 \begin{align}\label{eqn:mu to nu decomp}
   V_{\mu_{d+1}}( A_S) =
    \bigsqcup_{  \substack{ {\nu\in L_g^{\flat,\vee}/L_g^\flat }  \\ 
    { \lambda\in \Lambda_g^\vee/\Lambda_g} \\ 
    { \nu+\lambda \in  g \cdot ( \mu_{d+1} + L_{\widehat{\Z}} )  } }} 
    V_\nu(A^\flat_{g,S})\times (\lambda + \Lambda_g).
 \end{align}
 This isometric embedding $W \to V(A_S)_\Q$ agrees with that of \eqref{W special embed 2}.
 
A $S$-point  of  the left hand side of \eqref{divisor comparison 1}
is an $S$-point of $M_g^\flat \subset Z(T,\mu)$, together with a special quasi-endomorphism 
 \[
 x_{d+1}\in V_{\mu_{d+1}}(A_S)
 \]
 whose orthogonal projection to $W$ is $w$, and such that  $Q(x_{d+1}-w) = m$. 
   Using~\eqref{eqn:mu to nu decomp}, we find that there is a unique pair of cosets
 \[
 \nu\in L_g^{\flat,\vee}/L_g^\flat \quad \mbox{and}\quad  \lambda\in \Lambda_g^\vee/\Lambda_g
 \]
such that $\nu +\lambda \in g\cdot (\mu_{d_1}+L_{\widehat{\Z}})$, and such that 
\[
 x_{d+1}-w \in  V_\nu(A^\flat_{g,S})      \quad \mbox{and}\quad    w\in \lambda + \Lambda_g.
\]
In particular, $x_{d+1}-w$ determines an $S$-point of $Z_g^\flat(m,\nu)$.

This construction establishes the  isomorphism \eqref{divisor comparison 1}, from which  \eqref{divisor comparison} follows directly. We note that when $m=0$
 both sides of  \eqref{divisor comparison} are equal to
 \[
 \begin{cases}
 c_1(\omega^{-1}_{M^\flat_g}) & \mbox{if }w \in g \cdot (\mu_{d+1} +  L_{\widehat{\Z}} ) \\
 0 & \mbox{otherwise.} 
 \end{cases} \qedhere
 \]
  \end{proof}

Dualizing the tautological map $S_{\Lambda_g} \otimes S^*_{\Lambda_g} \to \C$ yields a homomorphism
\begin{equation}\label{dual contraction}
\C \to S^*_{\Lambda_g} \otimes S_{\Lambda_g}
\end{equation}
sending $1\mapsto \sum_{ w\in \Lambda_g^\vee / \Lambda_g } \phi^*_w \otimes \phi_w$.
On the other hand, if we abbreviate
\[
L_g = V \cap g L_{\widehat{\Z}},
\]
we may use the inclusions
\begin{equation*}
L^\flat_g \oplus \Lambda_g \subset L_g \subset L_g^\vee \subset L_g^{\flat,\vee} \oplus \Lambda_g^\vee
\end{equation*}
to define a homomorphism
$
 S^*_{L^\flat_g}   \otimes  S^*_{\Lambda_g}   \to S_{L_g}^*
$
by 
\[
\phi^*_\nu \otimes \phi^*_w \mapsto \begin{cases}
\phi^*_{ \nu + w } & \mbox{if }  \nu+w \in L_g^\vee\\
0 & \mbox{otherwise}
\end{cases}
\]
for all $\nu \in L_g^{\flat,\vee}/L^\flat_g$ and $w\in \Lambda_g^\vee/\Lambda_g$.
This defines the second arrow in the $\widetilde{\Gamma}_1$-equivariant composition
\begin{equation}\label{tensor zigzag}
S^*_{L^\flat_g}  \map{  \eqref{dual contraction}} S^*_{L^\flat_g} \otimes S^*_{\Lambda_g} \otimes S_{\Lambda_g}
 \to S^*_{L_g}    \otimes  S_{\Lambda_g} \to S^*_L    \otimes  S_{\Lambda_g}
\end{equation}
The third arrow is defined using  the isomorphism $S^*_{L_g}  \iso S^*_L $ induced by $g^{-1} : L^\vee_g/L_g \to L^\vee/L$.


 It is a theorem of Borcherds \cite{Bor:GKZ} that the formal generating series
\[
 \sum_{ \substack{    m \ge 0 \\ \nu \in  L_g^{\flat,\vee} / L^\flat_g  } }
 [ Z^\flat_g(m,\nu) ] \otimes \phi^*_\nu \cdot q^m
\]
 with coefficients in   $ \mathrm{CH}^1( M^\flat_g) \otimes S^*_{ L^\flat_g}$ is a modular form on $\mathcal{H}$ of weight $1+\frac{n^\flat}{2}$ and representation 
 \[
 \omega^*_{L^\flat_g} : \widetilde{\Gamma} \to \GL(  S^*_{L^\flat_g} ) .
 \]
 More precisely, as Borcherds works only in the complex fiber, one should use   \cite[Theorem B]{HMP} for the corresponding modularity statement on the canonical model.
 
Applying  \eqref{tensor zigzag}   to this last generating series coefficient-by-coefficient  yields the formal generating series 
 
  \begin{equation}\label{third divisor modularity} 
  \sum_{ \substack{    m \ge 0 \\ \nu \in  L_g^{\flat,\vee} / L^\flat_g   } }
 \sum_{   \substack{    \mu_{d+1} \in L^\vee/L \\  w \in  \Lambda^\vee_g/\Lambda_g  \\ \nu+w \in  g\cdot( \mu_{d+1} + L_{\widehat{\Z}})  }  }
[ Z^\flat_g ( m , \nu ) ]  \otimes   \phi^*_{\mu_{d+1}} \otimes \phi_ w \cdot  q^m 
\end{equation}
with coefficients  in $\mathrm{CH}^1(M^\flat_g) \otimes S^*_L \otimes S_{ \Lambda_g }$, which is therefore a modular form on $\mathcal{H}$ of weight $1+\frac{n^\flat}{2}$ and representation 
 \[
 \omega^*_{L} \otimes \omega_{ \Lambda_g  }  : \widetilde{\Gamma} \to \GL( S^*_{L} \otimes S_{ \Lambda_g } ) .
 \]

  

Consider the  theta function 
$
 \mathcal{H} \times \C^d \to S_{\Lambda_g}^*
$
defined by
\begin{equation}\label{generalized theta}
\vartheta_w (\tau',z) =  \sum_{   w  \in  \Lambda_g^\vee    }   
 \phi^*_w \cdot 
 q^{Q(w) }   \xi_1^{   [ w , e_1]  } \cdots \xi_d^{   [ w , e_d]  } .
\end{equation}
If, as in Definition \ref{def:jacobi form}, we define a function on $\mathcal{H}_{d+1}$ by 
\[
\Theta_w( \tau) = \vartheta_w (\tau',z)\cdot e^{2\pi i \mathrm{Tr}( T\cdot  \tau'' ) }
\]
 then \cite[Lemma 2.8]{zhang-thesis} implies the equality
 \[
 \Theta_w(  \gamma \cdot \tau) = j_\gamma(\tau)^{n-n^\flat}  \omega^*_{\Lambda_g}( \tilde{\gamma}) \cdot \Theta_w (\tau)
 \]
 for all \eqref{meta element} in the subgroup $\widetilde{\Gamma} \subset \widetilde{\Gamma}_{d+1}$.   

Now use the tautological pairing  
$
S_{\Lambda_g} \otimes S_{\Lambda_g}^* \to \C
$
 to multiply  \eqref{third divisor modularity} with  \eqref{generalized theta}.
 Lemma \ref{lem:aux divisor compare} implies that the resulting generating series is 
\[
 \sum_{   \substack{    m \in\Q  \\  w \in  W   \\  \mu_{d+1} \in L^\vee/L    }  }
[ Y ( m , \mu_{d+1} , w ) ]\rvert_{M^\flat_g}  \otimes \phi^*_ {\mu_{d+1}} 
\cdot 
 q^{ m+ Q(w) }   \xi_1^{   [ w , e_1]  } \cdots \xi_d^{   [ w , e_d]  } ,
\]
which  therefore satisfies the transformation  law stated in Proposition \ref{prop:generic divisor generating}.    
Varying $g$ and using  \eqref{special shimura app} completes the proof of that proposition.
\end{proof}

\begin{corollary}\label{cor:integral divisor generating}
The generating series 
\[
\sum_{   \substack{    m \in\Q  \\  w \in  W   \\  \mu_{d+1} \in L^\vee/L    }  }
[ \mathcal{Y} ( m , \mu_{d+1} , w ) ]  \otimes \phi^*_ {\mu_{d+1}} 
\cdot 
 q^{ m+ Q(w) }   \xi_1^{   [ w , e_1]  } \cdots \xi_{d}^{   [ w , e_d]  } 
\]
 defines a holomorphic function 
\[
\mathcal{H} \times \C^d \to  \mathrm{CH}^1(\mathcal{Z}(T,\mu)) \otimes S_L^*
\]
satisfying the same transformation law under $\widetilde{\Gamma} \subset \widetilde{\Gamma}_{d+1}$ as the generating series of 
Proposition \ref{prop:generic divisor generating}.
\end{corollary}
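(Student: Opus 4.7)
The idea is to deduce the corollary mechanically from Proposition \ref{prop:generic divisor generating} by invoking the injectivity of restriction to the generic fiber established in Proposition \ref{prop:low key}. Hypothesis \ref{hyp:low codimension} says $d+1 \le (n-2r(L)-4)/3$, so in particular $\mathrm{rank}(T) \le d$ satisfies the bound of Proposition \ref{prop:low key}, which applies to give the injection
\[
r \otimes \mathrm{id} : \mathrm{CH}^1(\mathcal{Z}(T,\mu)) \otimes S_L^* \hookrightarrow \mathrm{CH}^1(Z(T,\mu)) \otimes S_L^*.
\]
The plan is to show coefficient-by-coefficient that the formal integral series maps under $r \otimes \mathrm{id}$ to the formal generic series of Proposition \ref{prop:generic divisor generating}, and then to transfer both convergence and the transformation law back across this injection.

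First I would verify the compatibility $r([\mathcal{Y}(m,\mu_{d+1},w)]) = [Y(m,\mu_{d+1},w)]$. When $m \ne 0$, both classes are the first Chern class of the line bundle attached to the generalized Cartier divisor \eqref{first aux cycle} of Proposition \ref{prop:divisor is cycle}, and first Chern classes commute with flat pullback, so restriction to the generic fiber sends the integral class to the generic one. When $m = 0$, the integral class \eqref{constant aux class} is the pushforward of $c_1(\omega^{-1}|_{\mathcal{Z}(T',\mu')})$ along \eqref{drop one endo}; the compatibility of pushforward and pullback in a cartesian square, together with the fact that $\omega$ is defined over $\mathcal{M}$ and thus restricts compatibly, gives the same conclusion. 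Summing over $\mu_{d+1}$ against the basis $\phi^*_{\mu_{d+1}}$ yields equality of integral and generic series under $r \otimes \mathrm{id}$.

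For convergence, fix any $\Q$-linear functional $\iota : \mathrm{CH}^1(\mathcal{Z}(T,\mu)) \to \C$. Since $r$ is injective, the composite $\iota \circ r^{-1}$ is a well-defined $\Q$-linear functional on the image of $r$; extending it arbitrarily (as $\Q$-vector spaces always split) yields $\tilde\iota : \mathrm{CH}^1(Z(T,\mu)) \to \C$ with $\tilde\iota \circ r = \iota$. Applying $\iota$ to the integral series coefficient-wise therefore gives the same scalar series as applying $\tilde\iota$ to the generic series, which converges to a holomorphic function on $\mathcal{H} \times \C^d$ by Proposition \ref{prop:generic divisor generating}. For the transformation law, both sides of the identity
\[
\Phi_T(\gamma\cdot\tau) = j_\gamma(\tau)^{2+n}\,\omega_L^*(\tilde\gamma)\cdot \Phi_T(\tau)
\]
lie in $\mathrm{CH}^1(\mathcal{Z}(T,\mu)) \otimes S_L^*$, since $\omega_L^*$ acts only on the second tensor factor, and their images under $r \otimes \mathrm{id}$ agree by Proposition \ref{prop:generic divisor generating}; the injectivity of $r \otimes \mathrm{id}$ then yields the desired equality on the integral model. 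There is really no serious obstacle here: the entire content of the corollary is packed into Proposition \ref{prop:low key}, and the rest is formal transport along an injection.
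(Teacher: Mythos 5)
Your proposal is correct and follows exactly the paper's argument: the proof in the text consists of invoking Proposition \ref{prop:low key} (via Hypothesis \ref{hyp:low codimension}) for the injectivity of $\mathrm{CH}^1(\mathcal{Z}(T,\mu)) \to \mathrm{CH}^1(Z(T,\mu))$ and then citing Proposition \ref{prop:generic divisor generating}. The only superfluous step is your verification that $[\mathcal{Y}(m,\mu_{d+1},w)]$ restricts to $[Y(m,\mu_{d+1},w)]$ — in the paper the generic classes are \emph{defined} as those restrictions, so this is automatic.
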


\begin{proof}
Proposition \ref{prop:low key} (which applies, thanks to Hypothesis  \ref{hyp:low codimension}) implies the injectivity  of the restriction map  
\[
\mathrm{CH}^1(\mathcal{Z}(T,\mu)) \to \mathrm{CH}^1(Z(T,\mu)),
\]
and so the claim is  immediate from Proposition \ref{prop:generic divisor generating}. 
\end{proof}


\subsection{Proof of Proposition \ref{prop:jacobi generating}}


Proposition \ref{prop:jacobi generating} is vacuously true if $T\in \Sym_d(\Q)$  is not positive semi-definite.  Indeed, in this case $\mathcal{Z}(T,\mu)= \emptyset$ by Remark \ref{rem:pos def T}, and it follows from \eqref{eqn:def_corrected_class} and \eqref{drop one endo}  that the generating series of Proposition \ref{prop:jacobi generating} vanishes coefficient-by-coefficient.
Thus we may  assume, as in \S \ref{ss:aux}  that $T\in \Sym_d(\Q)$ is  positive semi-definite, and let  $e_1,\ldots, e_d  \in W$ be as in  \eqref{W space}.

By Hypothesis \ref{hyp:low codimension} and Proposition \ref{prop:low key}, 
for any $\mu=(\mu_1,\ldots, \mu_d) \in (L^\vee/L)^d$   the canonical finite unramified map
\[
f^{(T,\mu)}  :  \mathcal{Z}(T,\mu) \to \mathcal{M} 
\]
has equidimensional image of codimension $\mathrm{rank}(T)$, and so induces (Proposition \ref{prop:chow pushforward}) a pushforward
\[
f^{(T,\mu)}_*: 
 \mathrm{CH}^1  (\mathcal{Z}(T ,\mu))  \to \mathrm{CH}^{ \mathrm{rank}(T) + 1 }  (\mathcal{M}).
\]
The following lemma relates the images of the cycle classes \eqref{first aux bundle}  and \ref{constant aux class} under this map to the coefficients  appearing in Proposition \ref{prop:jacobi generating}.

\begin{lemma}\label{lem:first relation}
For any  $m\in \Q$,   $w \in W$, and $\mu_{d+1} \in L^\vee/L$ we have the equality
\[
\underbrace{ c_1(\omega^{-1}) \cdots c_1(\omega^{-1}) }_{ d  - \mathrm{rank}(T ) } \cdot 
f^{(T,\mu)}_*  [ \mathcal{Y}(m,\mu_{d+1},w) ]  
=   \mathcal{C}( T' , \mu') 
\]
in $ \mathrm{CH}^{d+1}(\mathcal{M})$.  On the right,   $T' \in \Sym_{d+1}(\Q)$ and $\mu' \in ( L^\vee/ L )^{d+1}$ have the same meaning   as in  Proposition~\ref{prop:divisor is cycle}.
 \end{lemma}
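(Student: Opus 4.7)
The proof plan splits into the two cases distinguished in Proposition \ref{prop:divisor is cycle}, namely $m \neq 0$ and $m = 0$, which behave slightly differently because the definition of $[\mathcal{Y}(m,\mu_{d+1},w)]$ is different in each.

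First I would record the following geometric input. Under the isomorphism $\mathcal{Y}(m,\mu_{d+1},w) \iso \mathcal{Z}(T',\mu')$ from Proposition \ref{prop:divisor is cycle}, the structure map \eqref{drop one endo} becomes a closed immersion
\[
j : \mathcal{Z}(T',\mu') \to \mathcal{Z}(T,\mu),
\]
and this identification is compatible with the tower
\[
\mathcal{Z}(T',\mu') \xrightarrow{j} \mathcal{Z}(T,\mu) \xrightarrow{f^{(T,\mu)}} \mathcal{M},
\]
i.e.\ $f^{(T',\mu')} = f^{(T,\mu)}\circ j$. Crucially, Proposition \ref{prop:low key} applies to both $\mathcal{Z}(T,\mu)$ and $\mathcal{Z}(T',\mu')$ under Hypothesis \ref{hyp:low codimension}, so both stacks are flat, reduced, Cohen-Macaulay, and equidimensional of the expected codimension in $\mathcal{M}$; in particular, the naive cycle classes of Definition \ref{defn:naive class} are defined and satisfy $f^{(T',\mu')}_* 1_{\mathcal{Z}(T',\mu')} = [\mathcal{Z}(T',\mu')]$.

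Now suppose $m\neq 0$, so that $\mathrm{rank}(T') = \mathrm{rank}(T)+1$. Since $\mathcal{Z}(T',\mu')\to \mathcal{Z}(T,\mu)$ is a generalized Cartier divisor (Proposition \ref{prop:divisor is cycle}) and the target $\mathcal{Z}(T',\mu')$ is reduced, the first Chern class of the associated line bundle is exactly the cycle class of this divisor, which in our notation is $j_* 1_{\mathcal{Z}(T',\mu')}$. Thus
\[
f^{(T,\mu)}_*[\mathcal{Y}(m,\mu_{d+1},w)] = f^{(T,\mu)}_* j_* 1_{\mathcal{Z}(T',\mu')} = [\mathcal{Z}(T',\mu')] \in \mathrm{CH}^{\mathrm{rank}(T')}(\mathcal{M}),
\]
and multiplying through by $c_1(\omega^{-1})^{d-\mathrm{rank}(T)} = c_1(\omega^{-1})^{d+1-\mathrm{rank}(T')}$ gives exactly $\mathcal{C}(T',\mu')$ by definition \eqref{eqn:def_corrected_class}.

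Now suppose $m=0$, so that $\mathrm{rank}(T') = \mathrm{rank}(T)$. In this case the class $[\mathcal{Y}(0,\mu_{d+1},w)]$ is defined in \eqref{constant aux class} as $j_*\,c_1\bigl(\omega^{-1}|_{\mathcal{Z}(T',\mu')}\bigr)$. Since $\omega^{-1}|_{\mathcal{Z}(T',\mu')} = (f^{(T',\mu')})^*\omega^{-1}$, the projection formula for the proper map $f^{(T',\mu')}$ yields
\[
f^{(T,\mu)}_*[\mathcal{Y}(0,\mu_{d+1},w)] = f^{(T',\mu')}_*\bigl((f^{(T',\mu')})^* c_1(\omega^{-1})\cdot 1_{\mathcal{Z}(T',\mu')}\bigr) = c_1(\omega^{-1})\cdot [\mathcal{Z}(T',\mu')].
\]
Multiplying by $c_1(\omega^{-1})^{d-\mathrm{rank}(T)}$ yields $c_1(\omega^{-1})^{d+1-\mathrm{rank}(T)}\cdot [\mathcal{Z}(T',\mu')]$, which, using $\mathrm{rank}(T') = \mathrm{rank}(T)$, is precisely $\mathcal{C}(T',\mu')$. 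The main (minor) obstacle in writing this out is simply to keep the exponent bookkeeping on $c_1(\omega^{-1})$ consistent between the two cases; this is arranged precisely by the fact that $[\mathcal{Y}(0,\mu_{d+1},w)]$ was defined with an extra Chern class factor that compensates for the fact that $\mathrm{rank}(T')$ fails to jump when $m=0$.
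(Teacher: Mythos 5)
Your proposal is correct and follows essentially the same route as the paper's proof: the case split on $m\neq 0$ versus $m=0$, the identification of $\mathcal{Y}(m,\mu_{d+1},w)$ with $\mathcal{Z}(T',\mu')$ via Proposition \ref{prop:divisor is cycle} together with the commutative triangle over $\mathcal{M}$, and the appeal to Proposition \ref{prop:projection formula} in the $m=0$ case are exactly the steps in the text. The only cosmetic difference is that you invoke the reducedness of $\mathcal{Z}(T',\mu')$ from Proposition \ref{prop:low key} to match the Cartier-divisor class with the naive cycle class, a point the paper leaves implicit in ``it follows directly from the definitions.''
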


 \begin{proof}
 First suppose  $m\neq 0$, so that   $\mathrm{rank}(T') = \mathrm{rank}(T)+1$ by Proposition~\ref{prop:divisor is cycle}.
It  follows directly from the definitions and the commutative diagram
\[
\xymatrix{
{ \mathcal{Y}(m,\mu_{d+1},w)   }  \ar[dr]_{\eqref{first aux cycle}} \ar@{=}[rr]^{\eqref{divisor is cycle}} & & { \mathcal{Z}  ( T' ,  \mu' )  }  \ar[dl]^{\eqref{drop one endo} }\\ 
&  { \mathcal{Z}  ( T  ,  \mu )  }  \ar[d]_{f^{(T,\mu)}}   \\
& {\mathcal{M}} 
}
\]
 that
\[
f^{(T,\mu)}_* [ \mathcal{Y}(m,\mu_{d+1},w)]  
= [\mathcal{Z}(T',\mu') ]  
\]
in the codimension $\mathrm{rank}(T)+1$ Chow group of $\mathcal{M}$, and 
intersecting both sides with $d-\mathrm{rank}(T)$ copies of  $c_1(\omega^{-1})$ proves the claim.

Suppose now that $m=0$, so that  $\mathrm{rank}(T') = \mathrm{rank}(T)$ by Proposition~\ref{prop:divisor is cycle}.
In this case 
\[
f^{(T,\mu)}_*  [ \mathcal{Y}(0,\mu_{d+1},w) ] 
=
f^{(T',\mu')}_*    \left( \omega^{-1}|_{\mathcal{Z}(T' , \mu' )}  \right)
 = c_1(\omega^{-1})\cdot [ \mathcal{Z}(T' ,\mu' )] 
\]
holds in the codimension $\mathrm{rank}(T)+1$ Chow group, 
where the first equality is by the definition of \eqref{constant aux class}, and the second is by Proposition~\ref{prop:projection formula}.   
Once again, the claim follows. 
 \end{proof}

\begin{lemma}
\label{lem:C_nonzero_criterion}
Suppose $m\in \Q$ and $\alpha\in \Q^d$.  If 
\[
\mathcal{C}  \left(\begin{matrix}  T &  \frac{\alpha}{2} \\  \frac{{}^t \alpha}{2}   & m\end{matrix}\right) \neq 0
\]
then there exists a unique $w\in W$ such that $\alpha_i = [w,e_i]$  for all $i=1,\ldots,d$.
\end{lemma}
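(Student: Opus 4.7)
\bigskip

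\noindent\textbf{Proof proposal.} The plan is to first reduce the nonvanishing hypothesis to a statement about positive semi-definiteness of $T'$, and then to derive the required $w$ from a purely linear algebra argument.

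First I would observe that if
\[
\mathcal{C}\begin{pmatrix} T & \alpha/2 \\ {}^t\alpha/2 & m \end{pmatrix}\neq 0
\]
(more precisely, its projection to some component $\phi^*_{\mu'}$ of $S^*_{L,d+1}$ is nonzero), then by the definition \eqref{eqn:def_corrected_class} we must have $[\mathcal{Z}(T',\mu')]\neq 0$ for some $\mu'\in (L^\vee/L)^{d+1}$, where $T'$ is the displayed matrix. In particular $\mathcal{Z}(T',\mu')$ is non-empty, so by Remark \ref{rem:pos def T} the matrix $T'$ is positive semi-definite.

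Next, I would translate the desired conclusion into a statement about column spaces. Writing an element $w\in W$ by a representative $\tilde w\in\Q^d$, the relation $[w,e_i]=\alpha_i$ for all $i$ is equivalent to the linear system $2T\tilde w=\alpha$. Since two representatives of the same $w\in W$ differ by an element of $\mathrm{rad}(Q)=\ker T$, uniqueness of $w$ (if it exists) is automatic. Thus it suffices to show that $\alpha\in 2\cdot\mathrm{col}(T)$, or equivalently, that $\alpha$ is orthogonal (in the standard inner product on $\Q^d$) to $\ker T$.

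The main (and only real) step is the linear algebra implication: if $T'$ is positive semi-definite then $\alpha\in 2\cdot\mathrm{col}(T)$. Suppose for contradiction that there exists $v\in\ker T$ with $\langle v,\alpha\rangle\neq 0$; replacing $v$ by $-v$ if necessary, assume $\langle v,\alpha\rangle>0$. Plugging the vector $(-\lambda v,1)\in\Q^{d+1}$ into the quadratic form attached to $T'$ yields
\[
(-\lambda v,1)\,T'\,\begin{pmatrix}-\lambda v\\ 1\end{pmatrix}=\lambda^2\,{}^t\!vTv-\lambda\,{}^t\!v\alpha+m=-\lambda\langle v,\alpha\rangle+m,
\]
which is negative for $\lambda\gg 0$, contradicting positive semi-definiteness of $T'$. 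Hence $w$ exists, and by the uniqueness argument above it is unique, completing the proof.

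No step here is a genuine obstacle; the one point to be careful about is simply the bookkeeping between $\Q^d$, its quotient $W=\Q^d/\mathrm{rad}(Q)$, and the identification of the bilinear form $[\,,\,]$ on $W$ with twice the pairing induced by $T$.
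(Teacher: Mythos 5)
Your proposal is correct, and after the common first step it diverges from the paper's argument in an interesting way. Both proofs begin identically: nonvanishing of the corrected class forces $[\mathcal{Z}(T',\mu')]\neq 0$ for some $\mu'$, hence $\mathcal{Z}(T',\mu')\neq\emptyset$. From there the paper stays geometric: it takes a non-empty scheme $S$ mapping to the cycle, obtains special quasi-endomorphisms $x_1,\ldots,x_{d+1}\in V(\mathcal{A}_S)_\Q$ with $Q(x)=T$ and $[x_{d+1},x_i]=\alpha_i$, and defines $w$ as the orthogonal projection of $x_{d+1}$ onto the embedded copy of $W$ inside the positive definite space $V(\mathcal{A}_S)_\Q$. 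You instead extract only the numerical consequence of non-emptiness (Remark \ref{rem:pos def T}: $T'$ is positive semi-definite) and then solve $2T\tilde w=\alpha$ by pure linear algebra, showing $\alpha\perp\ker T=\mathrm{rad}(Q)$ via the test vector $(-\lambda v,1)$. Your normalizations check out: with $Q(e)=T$ one has $[e_i,e_j]=2T_{ij}$, so $[w,e_i]=\alpha_i$ is indeed the system $2T\tilde w=\alpha$, and your uniqueness argument (solutions of the linear system differ by $\ker T=\mathrm{rad}(Q)$) is equivalent to the paper's appeal to nondegeneracy of $W$. What your route buys is self-containment — it uses nothing about special endomorphisms beyond positive semi-definiteness of the moment matrix — at the cost of the explicit bookkeeping between $\Q^d$, $W=\Q^d/\mathrm{rad}(Q)$, and the factor of $2$, which the paper's projection argument absorbs automatically.
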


\begin{proof}
Our assumption implies that there is some $\mu' \in (L^\vee/L)^{d+1}$ for which 
\[
\mathcal{Z}\left(\begin{pmatrix}  T &  \frac{\alpha}{2} \\  \frac{{}^t \alpha}{2}   & m\end{pmatrix},
\mu ' \right) \neq \emptyset.
\]
Any non-empty scheme $S$ mapping to it determines special quasi-endomorphisms
\[
 x_1,\ldots,x_{d+1}  \in V(\mathcal{A}_S)_\Q.
\]
The first $d$-coordinates $x=(x_1,\ldots, x_d)$ satisfy $Q(x)=T$, and so determine an isometric embedding 
 \[
 W\xrightarrow{e_i\mapsto x_i}V(\mathcal{A}_S)_\Q . 
 \]
Using the relation  $[x_{d+1},x_i] = \alpha_i$  for $i=1,\ldots,d$, we see that the orthogonal projection 
 of $x_{d+1}$ to $W \subset V(\mathcal{A}_S)_\Q$ is a vector  $w\in W$ with the desired properties.
 The uniqueness is clear, as $e_1,\ldots, e_d$ span the positive definite quadratic space $W$.
\end{proof}

\begin{lemma}\label{lem:weil cycle}
For any $m\in \Q$ and column vectors   $\alpha \in \Q^d$ and $x,y \in \Z^d$ we have
\[
\omega_{L,d+1}^*(\tilde{\gamma}) \cdot 
  \mathcal{C}  \begin{pmatrix}  T&\frac{\alpha}{2}  \\ \frac{{}^t \alpha}{2} & m  \end{pmatrix}   
      = 
  e^{2\pi i \, {}^tx \alpha  } \cdot
    \mathcal{C}   \begin{pmatrix}  T& T  y + \frac{\alpha}{2}  \\   {}^ty \, {}^tT + \frac{{}^t \alpha}{2} & 
    {}^ty \,T \, y + {}^t y\,\alpha + m  \end{pmatrix} ,
\]
where 
\begin{equation}\label{uni gamma}
\tilde{\gamma} = \left( \mathrm{id}_{  \widetilde{\Gamma}}  , \begin{pmatrix} {}^tx \\ {}^ty \end{pmatrix} \right)
\in  \widetilde{\Gamma} \imes M_{2,d}(\Z) =  \widetilde{J}_{d+1} .
\end{equation}
\end{lemma}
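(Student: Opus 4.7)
The proof combines the linear invariance of Proposition~\ref{prop:naive linear invariance} with a direct calculation of the Weil representation on the image of $\tilde{\gamma}$ in $\widetilde{\Gamma}_{d+1}$. Set $P = \begin{pmatrix} I_d & y \\ 0 & 1 \end{pmatrix} \in \GL_{d+1}(\Z)$; a direct block computation establishes the matrix identity ${}^tP \cdot T' \cdot P = T''$. Proposition~\ref{prop:naive linear invariance} applied with $A = P$ then yields, for every $\mu' \in (L^\vee/L)^{d+1}$, an isomorphism $\mathcal{Z}(T', \mu') \iso \mathcal{Z}(T'', \mu' P)$ of $\mathcal{M}$-stacks. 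Since the corrected cycle class in \eqref{eqn:def_corrected_class} is formed from the naive cycle by intersecting with powers of $c_1(\omega^{-1})$ pulled back from $\mathcal{M}$, this descends to the equality
\[
\mathcal{C}(T', \mu') = \mathcal{C}(T'', \mu' P) \in \mathrm{CH}^{d+1}(\mathcal{M}).
\]

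Next, compute the Weil representation action explicitly. The image of $\tilde{\gamma}$ in $\widetilde{\Gamma}_{d+1}$ is a unipotent metaplectic element with $j = 1$; writing the corresponding symplectic matrix as the product of a block-diagonal element (handled by property (1) of $\omega_{L,d+1}$, producing the permutation $\phi_{\mu'} \mapsto \phi_{\mu' P}$) and a symmetric upper-triangular element with off-diagonal block $B_x = \bigl(\begin{smallmatrix} 0 & x \\ {}^tx & 0 \end{smallmatrix}\bigr)$ (handled by property (2), producing the phase $e^{-2\pi i\,\mathrm{Tr}(Q(\tilde\mu')B_x)}$), and then invoking the short trace computation
\[
\mathrm{Tr}(Q(\tilde\mu') B_x) = \sum_{i=1}^d x_i \, [\tilde\mu_i, \tilde\mu_{d+1}],
\]
one obtains, after passing to the contragredient representation,
\[
\omega^*_{L,d+1}(\tilde\gamma) \phi^*_{\mu'} = e^{2\pi i \sum_{i=1}^d x_i [\tilde\mu_i, \tilde\mu_{d+1}]} \cdot \phi^*_{\mu' P}.
\]

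Substituting both relations into $\omega^*_{L,d+1}(\tilde\gamma) \mathcal{C}(T') = \sum_{\mu'} \mathcal{C}(T', \mu') \, \omega^*_{L,d+1}(\tilde\gamma) \phi^*_{\mu'}$ and reindexing via $\nu = \mu' P$, the lemma reduces to verifying, for every $\mu'$ with $\mathcal{Z}(T', \mu') \neq \emptyset$, the congruence
\[
\sum_{i=1}^d x_i \, [\tilde{\mu}_i, \tilde{\mu}_{d+1}] \equiv {}^tx\,\alpha \pmod{\Z}.
\]
On this locus, the moment-matrix form of Remark~\ref{rem:special congruence} forces $[\tilde{\mu}_i, \tilde{\mu}_{d+1}] \equiv 2 T'_{i, d+1} = \alpha_i \pmod{\Z}$, from which the congruence follows by summing against $x_i$. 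The main obstacle is the correct identification and ordering of the factorization of $\tilde{\gamma}$ into its ``$x$-part'' and ``$y$-part'': the phase in the Weil representation formula must be evaluated on $\tilde\mu'$ rather than on the translated tuple $\tilde{\mu'P}$, since the latter choice would produce a spurious $2\,{}^txTy$ correction that is not in general absorbed into ${}^tx\alpha$ modulo $\Z$.
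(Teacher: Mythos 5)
Your proof is correct and follows essentially the same route as the paper's: compute $\omega^*_{L,d+1}(\tilde\gamma)\phi^*_{\mu'} = e^{2\pi i\sum_i x_i[\mu_i,\mu_{d+1}]}\phi^*_{\mu'A}$ with $A=\left(\begin{smallmatrix} I_d & y\\ 0&1\end{smallmatrix}\right)$, use Remark~\ref{rem:special congruence} on the support of the nonzero coefficients to identify the phase with $e^{2\pi i\,{}^tx\alpha}$, reindex by $A$, and conclude by the linear invariance of Proposition~\ref{prop:naive linear invariance} applied to the corrected classes \eqref{eqn:def_corrected_class}. The only difference is that you spell out the factorization of the unipotent symplectic element into its diagonal and upper-triangular parts, which the paper leaves implicit.
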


\begin{proof}
By the explicit formulas for \eqref{weil},  if we set
\[
A= \begin{pmatrix}  I_d &  y  \\ 0 & 1 \end{pmatrix}  \in \GL_{d+1}(\Z)
\]
then  $\tilde{\gamma}$  acts on $S^*_{L,d+1}$ as 
\[ 
\omega_{L,d+1}^*(\tilde{\gamma}) \cdot \phi_{\mu' }^*  
=     e^{2\pi i x_1 [ \mu_1 , \mu_{d+1} ] }  \cdots   e^{2\pi i x_d [ \mu_d , \mu_{d+1} ] }    \cdot \phi_{  \mu' A }^* 
\]
for any $\mu' \in (L^\vee / L)^{d+1}$.  It follows that 
\[
  \mathcal{C}  \left(  \begin{pmatrix}  T&\frac{\alpha}{2}  \\ \frac{{}^t \alpha}{2} & m  \end{pmatrix}  , \mu' \right)  \otimes  \left(   \omega_{L,d+1}^*(\tilde{\gamma}) \cdot  \phi^*_{\mu'}    \right)
  =   e^{2\pi i \, {}^tx \alpha }    
   \mathcal{C} \left(   \begin{pmatrix}  T&\frac{\alpha}{2}  \\ \frac{{}^t \alpha}{2} & m  \end{pmatrix}   
   ,  \mu'  \right)  \otimes\phi^*_{\mu'  A}.   
\]
Indeed, the essential point here is that  
\[
  \mathcal{C} \left(  \begin{pmatrix}
T&\frac{\alpha}{2}\\
\frac{{}^t \alpha}{2} & m 
\end{pmatrix}  , \mu' \right)  \neq  0 \implies 
    \mathcal{Z}   \left( \begin{pmatrix}
T&\frac{\alpha}{2}\\
\frac{{}^t \alpha}{2} & m 
\end{pmatrix}  ,\mu' \right) \neq \emptyset,
\]
which implies,  using  Remark \ref{rem:special congruence}, that
$[ \mu_i ,  \mu_{d+1} ] \equiv \alpha_i \pmod{\Z} $.

The preceding paragraph allows us to compute
\begin{align*}
\omega_{L,d+1}^*(\tilde{\gamma}) \cdot 
  \mathcal{C}  \begin{pmatrix}  T&\frac{\alpha}{2}  \\ \frac{{}^t \alpha}{2} & m  \end{pmatrix}  
  & = 
     e^{2\pi i \, {}^tx \alpha }     \sum_{ \mu' \in (L^\vee/L)^{d+1}} 
   \mathcal{C} \left(   \begin{pmatrix}  T&\frac{\alpha}{2}  \\ \frac{{}^t \alpha}{2} & m  \end{pmatrix}   
   ,  \mu'  \right)  \otimes\phi^*_{\mu'  A}  \\
     & =     e^{2\pi i \, {}^tx \alpha }     \sum_{ \mu' \in (L^\vee/L)^{d+1}}   
   \mathcal{C} \left(   \begin{pmatrix}  T&\frac{\alpha}{2}  \\ \frac{{}^t \alpha}{2} & m  \end{pmatrix}   
   ,  \mu' A^{-1}  \right)  \otimes\phi^*_{\mu' }  \\
     & =     e^{2\pi i \, {}^tx \alpha }       \sum_{ \mu' \in (L^\vee/L)^{d+1}}   
   \mathcal{C} \left(  {}^tA \begin{pmatrix}  T&\frac{\alpha}{2}  \\ \frac{{}^t \alpha}{2} & m  \end{pmatrix}   A
   ,  \mu'   \right)  \otimes\phi^*_{\mu' }  \\
      & = 
   e^{2\pi i \, {}^tx \alpha }    \cdot 
   \mathcal{C} \left(  {}^tA \begin{pmatrix}  T&\frac{\alpha}{2}  \\ \frac{{}^t \alpha}{2} & m  \end{pmatrix}   A
    \right)   .
\end{align*}
In the third equality we have used the linear invariance of special cycles proved in  Proposition~\ref{prop:naive linear invariance}, which implies the same invariance  for the corrected cycle classes \eqref{eqn:def_corrected_class}.  
\end{proof}

\begin{proof}[Proof of Proposition \ref{prop:jacobi generating}]
For a fixed $\mu \in (L^\vee/L)^d$,  consider the generating series 
\[
 \sum_{   \substack{    m \in\Q  \\  w \in  W   \\  \mu_{d+1} \in L^\vee/L    }  }
f^{(T,\mu)}_* [ \mathcal{Y}( m , \mu_{d+1} , w ) ]  \otimes \phi^*_ {\mu_{d+1}} 
\cdot 
 q^{ m+ Q(w) }   \xi_1^{   [ w , e_1]  } \cdots \xi_d^{   [ w , e_d]  } .
\]
This agrees with the  pushforward via $\mathcal{Z}(T,\mu) \to \mathcal{M}$  of the  generating series of   Corollary \ref{cor:integral divisor generating}, and so converges to a holomorphic function 
\[
\mathcal{H} \times \C^d \to \mathrm{CH}^{\mathrm{rank}(T) +1}(\mathcal{M}) \otimes S_L^*
\]
transforming under  $\widetilde{\Gamma} \subset \widetilde{J}_{d+1}$ as in   Proposition  \ref{prop:generic divisor generating}.

The linear map $S_L^*  \to S^*_{L,d} \otimes S^*_{L} \iso S^*_{L,d+1}$ sending
\[
\phi^*_{\mu_{d+1}} \mapsto
\phi^*_{\mu}  \otimes\phi^*_{\mu_{d+1}} 
\]
is $\widetilde{\Gamma}$-equivariant, where the action on the source is via $\omega^*_L$, and the action on the target is via the restriction of $\omega^*_{L,d+1}$ to $\widetilde{\Gamma} \subset \widetilde{\Gamma}_{d+1}$.
Applying this map to the above generating series, summing over $\mu$, and using 
Lemma \ref{lem:first relation}, we find that 
 \begin{equation}\label{pre cycle jacobi}
 \sum_{   \substack{    m \in\Q  \\  w \in  W      }  }
  \mathcal{C} \begin{pmatrix}
T&\frac{\alpha}{2}\\
\frac{{}^t \alpha}{2} & m + Q(w)
\end{pmatrix}  
\cdot 
 q^{ m+ Q(w) }   \xi_1^{   \alpha_1  } \cdots \xi_d^{   \alpha_d } 
\end{equation}
(inside the sum, $\alpha\in \Q^d$ has components $\alpha_i = [ w,e_i]$)
defines a holomorphic function 
 \[
 \mathcal{H} \times \C^d \to 
 \mathrm{CH}^{d+1}(\mathcal{M}) \otimes S_{L,d+1}^*
 \]
   transforming under the subgroup $\widetilde{\Gamma} \subset \widetilde{\Gamma}_{d+1}$ in the same way as a  Jacobi form of  index $T$,  weight $1+\frac{n}{2}$,  and representation 
\[
 \omega^*_{L,d+1}:\widetilde{\Gamma}_{d+1}  \to \GL(S_{L,d+1}^*).
\]
Using the change of variables $m\mapsto m - Q(w)$ and  Lemma \ref{lem:C_nonzero_criterion}, 
we may rewrite \eqref{pre cycle jacobi} as
\[
  \phi_T(\tau',z) \define
 \sum_{   \substack{    m \in\Q  \\  \alpha\in \Q^d    }  }
  \mathcal{C}  \begin{pmatrix}
T&\frac{\alpha}{2}\\
\frac{{}^t \alpha}{2} & m 
\end{pmatrix} 
\cdot 
 q^{ m  }   \xi_1^{   \alpha_1  } \cdots \xi_d^{  \alpha_d  } ,
\]
which therefore transforms  under  $\widetilde{\Gamma} \subset \widetilde{\Gamma}_{d+1}$ in the same way.

To complete the proof, it only remains to check that this function  also transforms correctly  under any  $\tilde{\gamma} \in M_{2,d}(\Z) \subset \widetilde{J}_{d+1}$, which we write in the form \eqref{uni gamma}.
Using Lemma \ref{lem:weil cycle},  an elementary manipulation of the sum defining $\phi_T$  shows  that 
\[
  \omega_{L,d+1}^*(\tilde{\gamma}) \cdot   \phi_T(\tau',z) 
  =
     e^{ 2\pi i   \tau'   (     {}^t yT  y  )      }
        e^{ -4 \pi i    (      {}^t z     + {}^tx  ) T y  }
  \cdot         \phi_T( \tau' , z -  y \tau'   + x ) .
\]
  Unpacking Definition \ref{def:jacobi form} shows that this is precisely the transformation law satisfied by a Jacobi form of the desired index, weight, and representation.
  \end{proof}



 
\section{Corrected  cycle classes}
\label{s:derived cycle classes}
 

Keep the quadratic lattice $L\subset V$ and the integral model  $\mathcal{M}$ over  $\Z[\Sigma^{-1}]$ as in  \S \ref{ss:initial data} and \S \ref{ss:integral model}.  
In this subsection we construct from the naive special cycles $\mathcal{Z}(T,\mu)$, which need not be equidimensional, canonical cycle classes $\mathcal{C}(T,\mu)$ in the Chow group of $\mathcal{M}$.  


 
\subsection{Construction of the classes}
\label{ss:cycle construction}
 

We will make essential use of Theorems \ref{thm:GS_K-theory} and \ref{thm:GS}, as well as Proposition \ref{prop:inductive coniveau}.
For this reason we assume, throughout the entirety of \S \ref{s:derived cycle classes},  that $\mathcal{M}$ is regular (see Proposition \ref{prop:regularity}).

Suppose $\mathcal{Z}$ is a Deligne-Mumford stack equipped with a finite morphism  $\pi: \mathcal{Z}\to \mathcal{M}$. 
In \S \ref{ss:chow} we associate to this data a $\Q$-vector space $G_0(\mathcal{Z})_\Q$,  with the  descending  filtration 
\[
F^d G_0(  \mathcal{Z} )_\Q =
 \bigcup_{  \substack{ 
 \mathrm{closed\ substacks\ }  \mathcal{Y} \subset \mathcal{Z} \\ 
 \mathrm{codim}_\mathcal{M}( \pi(\mathcal{Y}) ) \ge d
 }  }  
 \mathrm{Image}\big( G_0(\mathcal{Y})_\Q \to G_0( \mathcal{Z} )_\Q \big)
\]
from  \eqref{G filtration}.
By Remark \ref{rem:coherent_sheaf_class},   any coherent $\co_\mathcal{Z}$-module $\mathcal{F}$ determines an \[
[\mathcal{F}] \in G_0(\mathcal{Z})_\Q.
\]

Fix a   $T\in \Sym_d(\Q)$  with $d\ge 1$,  and a tuple of cosets 
$
\mu=(\mu_1,\ldots,\mu_d) \in (L^\vee/L)^d.
$
We have defined in \eqref{special cycle} a finite and unramified morphism 
\[
\mathcal{Z}(T,\mu) \to \mathcal{M}.
\]
If we denote by $t_1,\ldots, t_d \in \Q$  the diagonal entries of $T$, there are  forgetful morphisms 
$
\mathcal{Z}(T,\mu) \to  \mathcal{Z}(t_i,\mu_i),
$
  sending an $S$-point 
\[
x=(x_1,\ldots, x_d) \in V_{\mu_1}(\mathcal{A}_S) \times \cdots \times V_{\mu_d}(\mathcal{A}_S)
\] 
to its $i^\mathrm{th}$ coordinate $x_i\in V_{\mu_i}(\mathcal{A}_S)$.  
The product of these maps  defines a morphism of $\mathcal{M}$-stacks
\begin{equation}\label{open and closed}
\mathcal{Z}(T,\mu) \to
  \mathcal{Z}(t_1,\mu_1) \times_\mathcal{M} \cdots\times_\mathcal{M} \mathcal{Z}(t_d,\mu_d), 
\end{equation}
which  presents $\mathcal{Z}(T,\mu)(S)$ as the locus of $S$-points of the codomain for which the moment matrix $Q(x)$ of $x=(x_1,\ldots,x_d)\in\prod_i V_{\mu_i}(\mathcal{A}_S)$ is equal to  $T$.
The moment matrix   $Q(x)$ is   locally constant on $S$, and hence  \eqref{open and closed} is an open and closed immersion,  by  the rigidity lemma for endomorphisms of abelian schemes: if an endomorphism of $\mathcal{A}_S$ is $0$  at some point of $S$, then it is $0$ on the entire connected component of $S$ containing that point \cite[Corollary 6.2]{MFK}.

By iterating  the  pairing  of Proposition \ref{prop:inductive coniveau}, we obtain a multilinear map
\begin{equation}\label{multilinear classes}
\xymatrix{
{  F^1  G_0 ( \mathcal{Z}(t_1,\mu_1) )_\Q \otimes \cdots \otimes  F^1 G_0 ( \mathcal{Z}(t_d,\mu_d) )_\Q  } \ar[d]^{ z_1 \otimes \cdots \otimes z_d \mapsto z_1 \cap \cdots \cap z_d }  \\
{    F^d  G_0 \big( \mathcal{Z}(t_1,\mu_1)  \times_{\mathcal{M}} \cdots \times_{\mathcal{M}} \mathcal{Z}(t_d,\mu_d) \big) _\Q } \ar[d]  \\
{    F^d  G_0 (    \mathcal{Z}(T,\mu)  )_\Q  } , 
}
\end{equation}
where the second arrow is restriction via   \eqref{open and closed}.

The multilinear map just defined has a  distinguished input $z_1\otimes \cdots \otimes z_d$.
If   $(t_i,\mu_i) \neq (0,0)$ we define
\[
z_i  =  [ \co_{\mathcal{Z}(t_i,\mu_i)} ]   \in G_0 ( \mathcal{Z}(t_i,\mu_i) )_\Q.
\]
If  $(t_i,\mu_i) = (0,0)$ then  $\mathcal{Z}(0,0)=\mathcal{M}$ by Proposition \ref{prop:divisors are divisors},  and we define
\[
z_i = [ \co_{\mathcal{M}} ] - [\omega ]  \in G_0 ( \mathcal{M}  )_\Q,
\]
where $\omega$ is the tautological bundle  \eqref{taut bundle}.

\begin{lemma}
For all $1\le i \le d$ we have $z_i \in F^1 G_0 ( \mathcal{Z}(t_i,\mu_i) )_\Q.$
\end{lemma}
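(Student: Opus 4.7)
My plan is to split the verification into two cases according to whether $(t_i,\mu_i)$ equals $(0,0)$.

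In the case $(t_i,\mu_i)\neq (0,0)$, the claim is essentially formal and follows from Proposition~\ref{prop:divisors are divisors}. If $t_i<0$, or if $t_i=0$ and $\mu_i\neq 0$, then $\mathcal{Z}(t_i,\mu_i)$ is empty and $z_i=0$, so there is nothing to check. If $t_i>0$, then the morphism $\mathcal{Z}(t_i,\mu_i)\to\mathcal{M}$ is a generalized Cartier divisor, so in particular its image in $\mathcal{M}$ has codimension at least one. Taking $\mathcal{Y}=\mathcal{Z}(t_i,\mu_i)$ in the definition of the filtration then makes $z_i=[\co_{\mathcal{Z}(t_i,\mu_i)}]$ lie in $F^1$ by inspection, since it is the image of itself under the identity map on $G_0(\mathcal{Y})_\Q$.

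The case $(t_i,\mu_i)=(0,0)$ is the substantive one: here $\mathcal{Z}(0,0)=\mathcal{M}$ and I must show $[\co_\mathcal{M}]-[\omega]$ lies in $F^1 G_0(\mathcal{M})_\Q$. My plan is to establish the general statement that for any line bundle $\mathcal{L}$ on a regular Noetherian Deligne--Mumford stack $\mathcal{M}$, the class $[\co_\mathcal{M}]-[\mathcal{L}]$ has this property. Working on a single connected (hence irreducible, by regularity) component $\mathcal{M}_j$ at a time, I would choose a non-zero rational section $s$ of $\omega|_{\mathcal{M}_j}$ and write its divisor as a difference $D_j^+-D_j^-$ of effective Cartier divisors. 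Then $s$ induces an isomorphism of line bundles $\omega|_{\mathcal{M}_j}(-D_j^+)\cong \co_{\mathcal{M}_j}(-D_j^-)$; combining this with the standard short exact sequences $0\to\co(-D_j^\pm)\to\co\to \co_{D_j^\pm}\to 0$ should yield
\[
[\omega|_{\mathcal{M}_j}]-[\co_{\mathcal{M}_j}] = [\omega|_{D_j^+}]-[\co_{D_j^-}]
\]
in $G_0(\mathcal{M}_j)_\Q$. The right-hand side is visibly a sum of classes of coherent sheaves supported in codimension $\geq 1$, and so belongs to $F^1 G_0(\mathcal{M}_j)_\Q$ by definition. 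Summing over connected components finishes the case.

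The main point requiring care is ensuring that on the regular DM stack $\mathcal{M}_j$ the line bundle $\omega$ really does admit a non-zero rational section whose divisor decomposes as a difference of effective Cartier divisors; this is where the regularity assumption from Proposition~\ref{prop:regularity} enters in an essential way. If the stacky subtleties of that step turn out to be more delicate than expected, a convenient fallback is the fact (a consequence of the Gillet--Soul\'e formalism cited in~\cite{Gillet1987-ny,Gillet1984-tk,Gillet2009-tw}) that $F^1 G_0(\,\cdot\,)_\Q$ coincides rationally with the kernel of the rank homomorphism on a regular Noetherian DM stack; the claim then reduces to the manifest observation that $[\co_\mathcal{M}]-[\omega]$ has virtual rank zero at every generic point of $\mathcal{M}$.
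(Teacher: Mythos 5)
Your proposal is correct and follows essentially the same route as the paper: for $(t_i,\mu_i)\neq(0,0)$ the paper likewise invokes Proposition~\ref{prop:divisors are divisors} to see the image is empty or of codimension one, so that $F^1 G_0 = G_0$ and the claim is vacuous; for $(t_i,\mu_i)=(0,0)$ the paper cites the relation \eqref{line filter} from the proof of Lemma~\ref{lem:pic complex}, which is exactly the rational-section/Cartier-divisor computation you carry out (your variant, keeping $[\omega|_{D^+}]$ rather than replacing it by $[\co_{D^+}]$, even avoids the codimension-two condition on $D^+\cap D^-$ that the paper needs for its finer statement). Your fallback via the kernel of the rank map is also a valid, and arguably cleaner, justification.
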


\begin{proof}
If $(t_i,\mu_i) \neq (0,0)$ then Proposition \ref{prop:divisors are divisors} implies that the image of $\mathcal{Z}(t_i , \mu_i ) \to \mathcal{M}$ is either empty or of codimension $1$. Hence 
\[
F^1 G_0 ( \mathcal{Z}(t_i,\mu_i) )_\Q = G_0 ( \mathcal{Z}(t_i,\mu_i) )_\Q,
\]
 and the claim is vacuously true.
If $(t_i,\mu_i) = (0,0)$ the claim follows from the proof of Lemma \ref{lem:pic complex}, especially the relation \eqref{line filter}.
\end{proof}

\begin{remark}
The motivation for the definition of $z_i$ in the case $(t_i,\mu_i) = (0,0)$ comes from Lemma \ref{lem:pic complex}, which implies that the image of $[ \co_{\mathcal{M}} ] - [\omega ]$ under 
\[
F^1 G_0( \mathcal{M} )_\Q  \map{ \eqref{more coniveau push} } F^1 K_0(\mathcal{M})_\Q \map{ \eqref{ChowGroth} } \mathrm{CH}^1(\mathcal{M})  
\]
is  the first Chern class $c_1(\omega^{-1})$.
\end{remark}

\begin{definition}\label{def:corrected classes}
The \emph{derived fundamental class} 
\[
[ \co^{\mathrm{derived}}_{ \mathcal{Z}(T,\mu)} ]   \in  F^d G_0 (    \mathcal{Z}(T,\mu)  )_\Q 
\]
 is the image of $z_1\otimes \cdots \otimes z_d$ under  \eqref{multilinear classes}. 
The \emph{corrected cycle class}
\[
 \mathcal{C}(T,\mu)  \in \mathrm{CH}^d_{  \mathcal{Z}(T,\mu) }( \mathcal{M} )
 \]
  is the image of the derived fundamental class   under the composition
\[
F^d G_0 (    \mathcal{Z}(T,\mu)  )_\Q  \map{\eqref{more coniveau push}} 
F^d K_0^{\mathcal{Z}(T,\mu)} ( \mathcal{M} )_\Q  \map{ \eqref{ChowGroth} }
\mathrm{CH}^d_{ \mathcal{Z}(T,\mu)} ( \mathcal{M} ).
\]
\end{definition}

\begin{remark}
\label{rem:fancy construction}
A somewhat fancier way to understand this construction is to calculate the fiber product
\[
\mathcal{Z}(t_1,\mu_1)\times_{\mathcal{M}}\times\cdots \times_{\mathcal{M}}\mathcal{Z}(t_d,\mu_d)
\]
in a \emph{derived} sense, and hence as a \emph{derived stack} over $\mathcal{M}$.

When forming this derived fiber product, one should interpret any  factor of the form  $\mathcal{Z}(t_i,\mu_i)=\mathcal{Z}(0,0)$ as itself being a derived stack. 
More precisely,  inside the total space of the cotautological line bundle on $\mathcal{M}$, one can construct the derived self-intersection of the zero section.
The result is a derived stack whose  underlying classical stack is canonically identified with $\mathcal{M}$, but with virtual dimension $\dim(\mathcal{M})-1$.  Every instance of $\mathcal{Z}(0,0)$  in the above fiber product should be replaced by this derived stack.

The underlying classical stack of this derived fiber product is of course just the classical fiber product, and the open and closed substack $\mathcal{Z}(T,\mu)$ of this classical stack lifts canonically to an open and closed substack of the derived fiber product, which we denote by $\mathcal{Z}^{\mathrm{der}}(T,\mu)$. 

By construction, $\mathcal{Z}^{\mathrm{der}}(T,\mu)$ is \emph{quasi-smooth} over $\mathcal{M}$ of virtual codimension $d$: this is just the derived analogue of a local complete intersection of codimension $d$. A general result of Khan~\cite[\S 6]{Khan2022-eq}---a derived generalization of results found in~\cite{sga6} for local complete intersections---now shows that the structure sheaf of $\mathcal{Z}^{\mathrm{der}}(T,\mu)$ defines a class in $F^dK_0(\mathcal{M})_\Q$, whose image recovers the corrected class $\mathcal{C}(T,\mu)$ defined above.

Note that the derived interpretation as explained here does not (yet) shed any light on why Theorems C, D, and E from the introduction are true. 

We will not need this perspective in this paper, but these ideas are explored further in~\cite{Madapusi2022}, where it is shown that the derived stack $\mathcal{Z}^{\mathrm{der}}(T,\mu)$ admits a moduli interpretation, which leads to alternate proofs of Theorems C, D and E.
\end{remark}

The remainder of \S \ref{s:derived cycle classes} is devoted to studying the properties of $\mathcal{C}(T,\mu)$.

 
\subsection{Intersections of cycle classes}
 
 
We first explain how our corrected cycle classes behave under the intersection pairing in the Chow ring, as this is one of the few properties that follow directly from the definition.

Fix positive integers $d'$ and $d''$,  symmetric matrices 
\[
T' \in \Sym_{d'}(\Q) \quad  \mbox{and} \quad T'' \in \Sym_{d''}(\Q),
\]
and tuples $\mu' \in (L^\vee/L)^{d'}$ and  $\mu'' \in (L^\vee / L)^{d''}$.

\begin{proposition}\label{prop:intersection formula}
The  corrected  cycle classes 
\[
\mathcal{C}(T',\mu') \in \mathrm{CH}^{d'}_{ \mathcal{Z}(T',\mu')}(\mathcal{M}) 
\quad  \mbox{and}\quad 
\mathcal{C}(T'',\mu'') \in \mathrm{CH}^{d''}_{ \mathcal{Z}(T'',\mu'')}(\mathcal{M}) .
\]
of Definition \ref{def:corrected classes} satisfy the intersection formula
\[
\mathcal{C} (T',\mu') \cdot \mathcal{C} (T'',\mu'') 
= \sum_{ T = \left(\begin{smallmatrix}  T' & * \\ * & T''  \end{smallmatrix} \right)   } \mathcal{C}(T,\mu),
\]
where $\mu=(\mu',\mu'')$ is the concatenation of $\mu'$ and $\mu''$, and  the product   
\[
\mathrm{CH}^{d'} _{\mathcal{Z}(T',\mu') } ( \mathcal{M} ) \otimes \mathrm{CH}^{d''} _{\mathcal{Z}(T'',\mu'') } ( \mathcal{M} )
\to  \mathrm{CH}^{d' + d''} _{  \mathcal{Z}(T',\mu') \times_\mathcal{M} \mathcal{Z}(T'',\mu'')    }  ( \mathcal{M} )
\]
on the left is  the  intersection pairing  of \S \ref{ss:chow}.
\end{proposition}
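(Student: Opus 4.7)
The plan is to unwind Definition \ref{def:corrected classes} on both sides of the asserted identity and then invoke the associativity of the iterated cap pairing. First, by Proposition \ref{prop:naive intersection}, the fiber product
\[
\mathcal{Z}(T',\mu') \times_{\mathcal{M}} \mathcal{Z}(T'',\mu'') \iso \bigsqcup_{T} \mathcal{Z}(T,\mu)
\]
decomposes as a disjoint union indexed by matrices $T$ of the stated block form, so the target of the intersection pairing identifies with $\bigoplus_T \mathrm{CH}^{d'+d''}_{\mathcal{Z}(T,\mu)}(\mathcal{M})$. Consequently, it suffices to prove for each such $T$ that the component of $\mathcal{C}(T',\mu') \cdot \mathcal{C}(T'',\mu'')$ supported on $\mathcal{Z}(T,\mu)$ is exactly $\mathcal{C}(T,\mu)$.

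Fix one such $T$. By construction, its diagonal entries are the diagonal entries $t_1,\ldots,t_{d'}$ of $T'$ followed by the diagonal entries $t_{d'+1},\ldots,t_{d'+d''}$ of $T''$, and $\mu = (\mu',\mu'')$ lists the corresponding cosets. In the notation of \S \ref{ss:cycle construction}, the class $\mathcal{C}(T,\mu)$ is obtained by applying the iterated pairing of Proposition \ref{prop:inductive coniveau} to the full tuple $z_1 \otimes \cdots \otimes z_{d'+d''}$, restricting along the open and closed immersion \eqref{open and closed} into $\mathcal{Z}(T,\mu)$, and then pushing forward through \eqref{more coniveau push} and \eqref{ChowGroth}. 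Meanwhile $\mathcal{C}(T',\mu')$ is the analogous class built from the first $d'$ factors, and $\mathcal{C}(T'',\mu'')$ is built from the last $d''$ factors.

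The central move is then to use the associativity of the iterated cap pairing: forming $z_1 \cap \cdots \cap z_{d'+d''}$ in a single pass coincides with first forming the derived fundamental classes $[\co^{\mathrm{derived}}_{\mathcal{Z}(T',\mu')}] \in F^{d'} G_0(\mathcal{Z}(T',\mu'))_\Q$ and $[\co^{\mathrm{derived}}_{\mathcal{Z}(T'',\mu'')}] \in F^{d''} G_0(\mathcal{Z}(T'',\mu''))_\Q$ from the two groups of factors separately, and then pairing these two classes. Combined with the compatibility between the $G_0$-theoretic cap pairing and the intersection product on supported Chow groups---the Gillet formalism recalled in \S \ref{ss:chow}---and with the decomposition of the fiber product above, this identifies $\mathcal{C}(T',\mu') \cdot \mathcal{C}(T'',\mu'')$ with $\sum_T \mathcal{C}(T,\mu)$ term by term.

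I expect the only non-formal input to be the compatibility between cap products in $G$-theory and intersection products on the supported Chow groups, but this is general and follows from \cite{Gillet1987-ny,Gillet1984-tk,Gillet2009-tw}; no feature of special cycles beyond the naive decomposition of Proposition \ref{prop:naive intersection} and the block-factorization of the diagonal entries enters. In this sense the result really is ``direct from the definitions,'' the main subtlety being the careful bookkeeping that turns the block structure of $T$ into the corresponding factorization of the iterated pairing.
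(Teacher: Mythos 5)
Your proposal is correct and follows essentially the same route as the paper: decompose the fiber product via Proposition \ref{prop:naive intersection}, form the single iterated cap class $z_1'\cap\cdots\cap z'_{d'}\cap z_1''\cap\cdots\cap z''_{d''}$ on the big product $\mathcal{Z}_1'\times_{\mathcal{M}}\cdots\times_{\mathcal{M}}\mathcal{Z}''_{d''}$, observe that its two restrictions (to $\mathcal{Z}(T',\mu')\times_{\mathcal{M}}\mathcal{Z}(T'',\mu'')$ and to $\bigsqcup_T\mathcal{Z}(T,\mu)$) give respectively the cap of the two derived fundamental classes and the sum of the derived fundamental classes, and then push forward through \eqref{more coniveau push} and \eqref{ChowGroth}. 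The "associativity" you invoke is exactly the step the paper uses implicitly when identifying the restriction along the top diagonal arrow with $[\co^{\mathrm{derived}}_{\mathcal{Z}(T',\mu')}]\cap[\co^{\mathrm{derived}}_{\mathcal{Z}(T'',\mu'')}]$.
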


\begin{proof}
Let $t_1',\ldots, t'_{d'}$ and $t_1'',\ldots, t''_{d''}$ be the diagonal entries of $T'$ and $T''$.  If we abbreviate
\[
\mathcal{Z}_i' = \mathcal{Z}(t_i ' , \mu_i') \quad \mbox{and}\quad  \mathcal{Z}_i'' = \mathcal{Z}(t_i '' , \mu_i''),
\]
 there is a commutative diagram
\[
\xymatrix{
{ \mathcal{Z}(T',\mu') \times \mathcal{Z}(T'',\mu'') } \ar[dr] \ar@{=}[dd]  \\
&   {    \mathcal{Z}_1'\times \cdots \times \mathcal{Z}_{d'}' \times \mathcal{Z}''_1 \times \cdots \times \mathcal{Z}''_{d''} }  \\
  {  \bigsqcup\limits_{   T    }  \mathcal{Z}(T,\mu)  }  \ar[ur]
}
\]
in which the  vertical $=$ is the canonical isomorphism of Proposition \ref{prop:naive intersection}, and both diagonal arrows are open and closed immersions.   Here and below, all fiber products are over $\mathcal{M}$.

Using the pairing \eqref{new intersection}, the constructions of \S \ref{ss:cycle construction} provide us with a class
\[
z_1'\cap \cdots \cap  z'_{d'} \cap z_1'' \cap\cdots \cap z_{d''}'' \in 
F^{d'+d''} G_0 (      \mathcal{Z}_1' \times \cdots  \times \mathcal{Z}_{d'}'  \times \mathcal{Z}''_1  \times \cdots  \times \mathcal{Z}''_{d''}  )_\Q 
\]
whose  restriction along the top diagonal arrow is 
\[
[ \co^\mathrm{derived}_{\mathcal{Z}(T',\mu')} ] \cap [  \co^\mathrm{derived}_{\mathcal{Z}(T'',\mu'') }] 
 \in G_0 (   Z(T',\mu') \times Z(T'',\mu'')  )_\Q ,
\]
and whose restriction along the bottom diagonal arrow is
\[
\sum_T [  \co^\mathrm{derived} _{\mathcal{Z}(T,\mu)}  ]  
\in  \bigoplus_T G_0( \mathcal{Z}(T,\mu) )_\Q  =  G_0 \Big( \bigsqcup_T \mathcal{Z}(T,\mu) \Big)_\Q .  
\]
In particular, the vertical $=$ in the above diagram identifies
\[
[ \co^\mathrm{derived}_{\mathcal{Z}(T',\mu')} ] \cap [  \co^\mathrm{derived}_{\mathcal{Z}(T'',\mu'') }]  
= 
\sum_T [  \co^\mathrm{derived} _{\mathcal{Z}(T,\mu)}  ]  . 
\]
The proposition follows by applying the composition  
\begin{align*}
 F^{d'+d''} G_0 \left(   Z(T',\mu') \times Z(T'',\mu'')   \right)_\Q  
& \map{ \eqref{more coniveau push} }   
F^{d'+d''} K_0^{   Z(T',\mu') \times Z(T'',\mu'')  } ( \mathcal{M} )_\Q  \\
& \map{ \eqref{ChowGroth} } \mathrm{CH}^{d'+d''}_{   Z(T',\mu') \times Z(T'',\mu'')  } ( \mathcal{M} )
\end{align*}
to both sides of this last equality.
\end{proof}


\subsection{An alternate construction}
\label{ss:alt derived}


 In this subsection we will give a different characterization of the derived fundamental classes of Definition  \ref{def:corrected classes}.
This  will be used in the proof of Proposition \ref{prop:linear invariance} below.

Consider again the situation of \S \ref{ss:cycle construction}, where we have fixed $\mu\in (L^\vee/L)^d$ and 
$t_1,\ldots, t_d \in \Q$ are the diagonal entries of $T\in \Sym_d(\Q)$.

Fix an \'etale surjection $U \to \mathcal{M}$ with $U$ scheme.  The special divisors
\[
 \mathcal{Z}(t_i,\mu_i) \to \mathcal{M}
\]
are finite and unramified, and so, as in Definition \ref{def:general cartier}, we may assume that $U$ is chosen so that the natural map
$
Z_i \to U
$
is a closed immersion of schemes for every $1\le i \le d$  and every connected component 
\[
Z_i \subset  \mathcal{Z}(t_i,\mu_i)_U .
\]

Fix a tuple $(Z_1,\ldots, Z_d)$ with each $Z_i \subset \mathcal{Z}(t_i,\mu_i)_U$ a connected component.
If $(t_i,\mu_i) \neq (0,0)$ then $Z_i \subset U$ is an effective Cartier divisor (Proposition \ref{prop:divisors are divisors}), and its ideal sheaf $I_{Z_i} \subset \co_U$  determines a chain complex of locally free $\co_U$-modules
\[
C_{Z_i}  = ( \cdots \to 0 \to I_{Z_i} \to \co_U \to 0 \to \cdots )
\]
supported in degrees $1$ and $0$.   
If $(t_i,\mu_i) = (0,0)$, so that $Z_i = U$, we instead define
\[
C_{Z_i}  = ( \cdots \to 0 \to \omega|_U \map{0} \co_U \to 0 \to \cdots ).
\]
The tensor product  
$
C_{Z_1} \otimes_{\co_U} \cdots \otimes_{\co_U} C_{Z_d}
$
 is a complex of locally free $\co_U$-modules, whose   $\ell^\mathrm{th}$  homology 
 \begin{equation}\label{homology components}
H_\ell( C_{Z_1} \otimes_{\co_U} \cdots \otimes_{\co_U} C_{Z_d} )
\end{equation}
   is a coherent sheaf on $U$ annihilated by the ideal sheaf of the closed subscheme 
$Z_1 \times_U \cdots \times_U Z_d \subset U$.
We  may therefore view this sheaf as a coherent sheaf on this closed subscheme.

By varying the tuple  $(Z_1,\ldots, Z_d)$, we obtain from \eqref{homology components} a coherent sheaf 
\begin{equation}\label{there is no complex}
H_\ell  \big( C_{\mathcal{Z}(t_1,\mu_1)_U}  \otimes_{\co_U} \cdots \otimes_{\co_U} C_{\mathcal{Z}(t_d,\mu_d)_U}  \big)
\end{equation}
(this is just notation;  no complex $C_{\mathcal{Z}(t_i,\mu_i)_U}$ of $\co_U$-modules will be defined)
on the disjoint union 
 \[
 \mathcal{Z}(t_1,\mu_1)_U \times_U  \cdots \times_U  \mathcal{Z}(t_d,\mu_d)_U
 = \bigsqcup_{ (Z_1, \ldots, Z_d) } 
 Z_1 \times_U \cdots \times_U Z_d ,
 \]
 which admits a canonical descent to a coherent sheaf
 \begin{equation}\label{fake homology complex}
 H_\ell  \big( C_{\mathcal{Z}(t_1,\mu_1)}  \otimes_{\co_\mathcal{M}} \cdots \otimes_{\co_\mathcal{M}} \co_{\mathcal{Z}(t_d,\mu_d)}  \big)
 \end{equation}
(again, no complex $C_{\mathcal{Z}(t_i,\mu_i)}$ of $\co_\mathcal{M}$-modules will be defined)  on 
 \[
  \mathcal{Z}(t_1,\mu_1) \times_\mathcal{M}  \cdots \times_\mathcal{M}  \mathcal{Z}(t_d,\mu_d).
 \]
 This last sheaf may be restricted to the open and closed substack \eqref{open and closed}.

 \begin{proposition}\label{prop:alt derived}
 The derived fundamental class of Definition \ref{def:corrected classes} is equal to 
 \[
 [ \co^{\mathrm{derived}}_{ \mathcal{Z}(T,\mu)} ]   = 
  \sum_{\ell\ge 0} (-1)^\ell  \cdot
    \big[  H_\ell  \big( C_{\mathcal{Z}(t_1,\mu_1)}  \otimes_{\co_\mathcal{M}} \cdots \otimes_{C_\mathcal{M}} \co_{\mathcal{Z}(t_d,\mu_d)}  \big)|_{  \mathcal{Z}(T,\mu)  }   \big] .
 \]
 \end{proposition}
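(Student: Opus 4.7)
The plan is to reduce the identity to the standard fact that the cap product $\cap$ of Proposition \ref{prop:inductive coniveau}, applied to two coherent sheaves $\mathcal{F}$ on $\mathcal{X}$ and $\mathcal{G}$ on $\mathcal{Y}$ (with $\mathcal{X},\mathcal{Y}\to\mathcal{M}$ finite), is computed by the formula
\[
[\mathcal{F}]\cap[\mathcal{G}] \;=\; \sum_{\ell\ge 0}(-1)^\ell\bigl[\mathrm{Tor}_\ell^{\co_\mathcal{M}}(\mathcal{F},\mathcal{G})\bigr]
\]
in $G_0(\mathcal{X}\times_\mathcal{M}\mathcal{Y})_\Q$. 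The regularity of $\mathcal{M}$ (our standing hypothesis in \S\ref{s:derived cycle classes}) ensures that these Tor sheaves vanish for $\ell$ larger than $\dim(\mathcal{M})$, so the formula makes sense. Iterating this pairing, together with the associativity built into the construction \eqref{multilinear classes}, expresses $z_1\cap\cdots\cap z_d$ as the $K$-theoretic class of an iterated derived tensor product, provided we interpret the $(t_i,\mu_i)=(0,0)$ factors correctly.

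The next step is to make the derived tensor product explicit using étale-local Koszul-type resolutions. Fix an \'etale cover $U\to\mathcal{M}$ as in Definition \ref{def:general cartier}, so that for every $i$ with $(t_i,\mu_i)\ne(0,0)$ and every connected component $Z_i\subset\mathcal{Z}(t_i,\mu_i)_U$, the ideal sheaf $I_{Z_i}\subset \co_U$ is invertible. The two-term complex $C_{Z_i}=(I_{Z_i}\to\co_U)$ of locally free $\co_U$-modules is then a resolution of $\co_{Z_i}$, and its class in $K_0(U)$ equals $[\co_{Z_i}]$. For $(t_i,\mu_i)=(0,0)$, the corresponding complex $C_{Z_i}=(\omega|_U\xrightarrow{0}\co_U)$ is already a complex of locally free modules, and its class in $K_0(U)$ is $[\co_U]-[\omega|_U]$, matching $z_i$. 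In either case, $C_{Z_i}$ is a locally free representative for $z_i$, so the iterated derived tensor product is computed by the honest tensor product $C_{Z_1}\otimes_{\co_U}\cdots\otimes_{\co_U}C_{Z_d}$, whose homology sheaves are exactly \eqref{homology components}.

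Combining the two previous steps, the class $z_1\cap\cdots\cap z_d$ in $F^dG_0$ of the fiber product over $U$ is the alternating sum of the homology sheaves of $C_{Z_1}\otimes\cdots\otimes C_{Z_d}$; these descend canonically along $U\to\mathcal{M}$ to the coherent sheaves \eqref{fake homology complex} on $\mathcal{Z}(t_1,\mu_1)\times_\mathcal{M}\cdots\times_\mathcal{M}\mathcal{Z}(t_d,\mu_d)$, since the formation of the derived tensor product is étale-local and functorial. Restricting along the open and closed immersion \eqref{open and closed} and using the definition of $[\co^{\mathrm{derived}}_{\mathcal{Z}(T,\mu)}]$ as the image of $z_1\otimes\cdots\otimes z_d$ yields the claimed identity.

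The main technical obstacle is the honest handling of the $(t_i,\mu_i)=(0,0)$ factors. In this case $\mathcal{Z}(0,0)=\mathcal{M}$, and $z_i=[\co_\mathcal{M}]-[\omega]$ is a \emph{virtual} class rather than the class of a coherent sheaf, so the Tor formula for $\cap$ needs to be extended by bilinearity and compatibility with short exact sequences (equivalently, by the fact that $\cap$ is defined on $G_0$, and the complex $\omega\xrightarrow{0}\co_\mathcal{M}$ is a perfect complex whose associated $K$-class equals $z_i$). This extension is a formal consequence of representing $z_i$ by the perfect complex $C_{Z_i}$ of locally free sheaves: the cap product of perfect complexes is computed by the honest tensor product, and the contribution of any bounded locally free complex to the $G_0$-class is the alternating sum of its homology sheaves. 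Once this is in place, the rest of the argument goes through uniformly, regardless of which indices $i$ have $(t_i,\mu_i)=(0,0)$.
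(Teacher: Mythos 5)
Your proposal is correct and follows essentially the same route as the paper: the paper's proof simply declares the identity $z_1\cap\cdots\cap z_d = \sum_\ell(-1)^\ell[H_\ell(\cdots)]$ to be an elementary homological-algebra exercise, verified in the naive Grothendieck group via \eqref{naive intersection} and the commutative diagram of Lemma \ref{lem:new intersection}, which is exactly the reduction you carry out (representing each $z_i$ by the locally free complex $C_{Z_i}$ étale-locally and taking the alternating sum of homology of the tensor product). Your more careful treatment of the $(t_i,\mu_i)=(0,0)$ factors as perfect complexes with zero differential is consistent with the paper's definitions and fills in the "tedious" part the paper elides.
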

 
 \begin{proof}
An elementary but tedious exercise in homological algebra  shows that 
 \[
 z_1\cap \cdots \cap z_d = \sum_{\ell\ge 0} (-1)^\ell  \cdot \big[   H_\ell  ( \co_{\mathcal{Z}(t_1,\mu_1)}  \otimes_{\co_\mathcal{M}} \cdots \otimes_{\co_\mathcal{M}} \co_{\mathcal{Z}(t_d,\mu_d)}  )     \big]  
 \]
 as elements of 
 \[
 G_0 ( \mathcal{Z}(t_1,\mu_1)  \times_{\mathcal{M}} \cdots \times_{\mathcal{M}} \mathcal{Z}(t_d,\mu_d) )_\Q,
 \]
 where each $z_i \in G_0 ( \mathcal{Z}(t_i,\mu_i) ) $ is as in \S \ref{ss:cycle construction}, and the intersection on the left is obtained by iterating the pairing of Lemma \ref{lem:new intersection}.  The claim follows immediately from this.

We point out that verifying the equality above does not require unpacking the use of derived algebraic geometry or the sheaf of spectra $\mathbf{G}_{Z_1 \times_M Z_2}$ in the proof of Lemma \ref{lem:new intersection}.  
One need only verify the same equality in the naive Grothendieck group $G_0^\mathrm{naive}$ of Remark \ref{rem:naive G}, with the $\cap$ on the left defined by \eqref{naive intersection}, and use the commutativity of the diagram in Lemma \ref{lem:new intersection}.
 \end{proof}
 
 \begin{remark}
 The slightly complicated  constructions above are done solely to account for the failure of the special divisors $\mathcal{Z}(t_i,\mu_i) \to \mathcal{M}$ to be closed immersions. 
  If they were closed immersions, we would simply have defined complexes of locally free $\co_\mathcal{M}$-modules 
 \[
C_{\mathcal{Z}(t_i,\mu_i) }  =
\begin{cases}
 ( \cdots \to 0 \to I_{\mathcal{Z}(t_i,\mu_i)} \to \co_\mathcal{M} \to 0 \to \cdots )  & \mbox{if } (t_i,\mu_i) \neq (0,0) \\ \\
( \cdots \to 0 \to \omega  \map{0} \co_\mathcal{M} \to 0 \to \cdots ) & \mbox{if } (t_i,\mu_i) = (0,0) .
\end{cases} 
\]
The  sheaf \eqref{fake homology complex} would then be understood in the literal sense,  as  the $\ell^\mathrm{th}$ homology of the tensor product of complexes.
 \end{remark}

 
\subsection{Linear invariance}


Suppose $X$ is any abelian group.  Given a $d$-tuple $x \in X^d$ and an $A \in \GL_d(\Z)$, we define $xA\in X^d$ using the habitual rule for multiplication of a row vector by a matrix.

Fix a matrix $T\in \Sym_d(\Q)$ and a tuple $\mu \in (L^\vee/L)^d$.
Fix also a matrix $A \in \GL_d(\Z)$, and set
\[
( T' ,\mu' )   =  ( {}^t A T A  , \mu A ) .
\]
By  Proposition \ref{prop:naive linear invariance} there is  an  isomorphism  of $\mathcal{M}$-stacks
\begin{equation}\label{naive linear equivalence}
\mathcal{Z}(T,\mu) \iso \mathcal{Z}(T',\mu'),
\end{equation}
sending an $S$-point $x\in V(\mathcal{A}_S)_\Q^d$ of the left hand side to the $S$-point $xA \in V(\mathcal{A}_S)^d_\Q$ of the right hand side.   
Hence the special cycles in \eqref{naive linear equivalence} have the same images in $\mathcal{M}$, and  there is an equality  
\begin{equation}\label{chow invariance}
\mathrm{CH}^d_{ \mathcal{Z} (T,\mu) }( \mathcal{M} ) = \mathrm{CH}^d_{ \mathcal{Z} (T',\mu') }( \mathcal{M} )
\end{equation}
of Chow groups with support.

\begin{proposition}\label{prop:linear invariance}
The isomorphism \eqref{naive linear equivalence} identifies the derived fundamental classes 
\[
[ \co^{\mathrm{derived}}_{ \mathcal{Z}(T,\mu)} ]  \in G_0(\mathcal{Z}(T,\mu) )
\quad \mbox{and}\quad  
[ \co^{\mathrm{derived}}_{ \mathcal{Z}(T',\mu')} ]  \in G_0(\mathcal{Z}(T',\mu') ).
\]
In particular, the equality $\mathcal{C}(T,\mu) = \mathcal{C}(T',\mu')$ holds in \eqref{chow invariance}.
\end{proposition}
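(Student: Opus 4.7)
The plan is to factor $A \in \GL_d(\Z)$ as a product of the standard generators---permutation matrices, sign-change diagonal matrices, and unit transvections $E_{ij} = I + e_{ij}$ for $i \neq j$---and verify the identification of derived fundamental classes for each generator in turn. For permutation matrices the isomorphism \eqref{naive linear equivalence} is the commutativity isomorphism of the fiber product of the $\mathcal{Z}(t_i,\mu_i)$, under which the construction of Definition \ref{def:corrected classes} is manifestly symmetric. Sign changes and transvections will be handled by a parallel argument using the explicit Koszul-type description given in Proposition \ref{prop:alt derived}; the substantive case is a transvection $A = E_{12}$, for which $x'_2 = x_1 + x_2$ and $x'_i = x_i$ for $i \neq 2$, giving $(t'_2,\mu'_2) = (t_1 + 2T_{12} + t_2,\,\mu_1 + \mu_2)$ and $(t'_i,\mu'_i) = (t_i,\mu_i)$ otherwise.

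Fix an étale cover $U \to \mathcal{M}$ as in \S\ref{ss:alt derived}. On $U$, each irreducible component $Z_i \subset \mathcal{Z}(t_i,\mu_i)_U$ with $(t_i,\mu_i) \neq (0,0)$ is cut out by an obstruction section $f_i = \mathrm{obst}_{x_i} \in H^0(U,\omega^{-1}|_U)$, where $x_i \in V_{\mu_i}(\mathcal{A}_U)$ is the tautological special quasi-endomorphism attached to $Z_i$ (see the proof of Proposition \ref{prop:divisors are divisors}). The central technical input, globalizing the local arguments of \cite{How19} from unitary Rapoport-Zink spaces to the present setting, is the $\Q$-linearity of the obstruction: for $x \in V_\mu(\mathcal{A}_U)$ and $y \in V_\nu(\mathcal{A}_U)$ we have $x+y \in V_{\mu+\nu}(\mathcal{A}_U)$ and
\[
\mathrm{obst}_{x+y} = \mathrm{obst}_x + \mathrm{obst}_y
\]
as sections of $\omega^{-1}|_U$. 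This should be deduced by unwinding the construction of the obstruction from \cite[Proposition 6.5.1]{HMP} as a linear pairing between the special quasi-endomorphism and the Hodge filtration on the de Rham realization of $\mathcal{A}$, followed by a careful tracking of the torsor structure underlying the $V_{\mu_i}$ for $\mu_i \neq 0$. Verifying linearity in this generality is the step we expect to be the main obstacle.

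Granted the linearity of $\mathrm{obst}$, the conclusion is formal. The tensor product of complexes
\[
C_{\mathcal{Z}(t_1,\mu_1)_U} \otimes_{\co_U} \cdots \otimes_{\co_U} C_{\mathcal{Z}(t_d,\mu_d)_U}
\]
appearing in Proposition \ref{prop:alt derived} is determined by the tuple of degree-one differentials $(f_1, f_2, \ldots, f_d)$ in the nonzero slots, with each $(0,0)$-slot contributing the complex $[\omega \xrightarrow{0} \co_U]$. The analogous complex for $(T',\mu')$ is determined by $(f_1, f_1+f_2, f_3, \ldots, f_d)$, since $\mathrm{obst}_{x_1+x_2} = f_1 + f_2$ by linearity, while the $(0,0)$-slots and the remaining $f_i$ are unchanged. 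The $\co_U$-linear change of Koszul generators induced by $A$ then provides an explicit isomorphism of tensor products of complexes intertwining the two tuples of differentials; sign changes are handled identically by negating a single generator. Passing to homology sheaves, restricting to the open and closed substack $\mathcal{Z}(T,\mu)_U = \mathcal{Z}(T',\mu')_U$, and descending along $U \to \mathcal{M}$ yields the identification of the derived fundamental classes of Definition \ref{def:corrected classes}, and with it the equality $\mathcal{C}(T,\mu) = \mathcal{C}(T',\mu')$ in \eqref{chow invariance}.
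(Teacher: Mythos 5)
Your overall strategy is the same as the paper's: reduce to elementary matrices (the paper uses the single lower-triangular transvection, since permutations and diagonal matrices are easy), pass to the Koszul-type description of Proposition \ref{prop:alt derived}, and exploit additivity of the deformation obstruction, globalizing \cite{How19}. You also correctly identify the additivity of $\mathrm{obst}$ as the crux. However, there is a concrete imprecision in your setup that is exactly where the real work lies, and as written your "formal" conclusion does not go through.

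The problem is your claim that each component $Z_i \subset \mathcal{Z}(t_i,\mu_i)_U$ is cut out by a section $f_i = \mathrm{obst}_{x_i} \in H^0(U,\omega^{-1}|_U)$. The obstruction of \cite[Proposition 6.5.1]{HMP} is not a global section on $U$: the special quasi-endomorphism $x_i$ exists only over $Z_i$, and $\mathrm{obst}_{x_i}$ is defined only on the first-order infinitesimal neighborhood $\widetilde{Z}_i \subset U$ (its zero locus there being $Z_i$). Consequently the identity $\mathrm{obst}_{x_1+x_2} = \mathrm{obst}_{x_1} + \mathrm{obst}_{x_2}$ only makes sense after restriction to the infinitesimal neighborhood of a common component $Z \subset Z_1 \cap Z_2$, not "as sections of $\omega^{-1}|_U$." To get an isomorphism of the actual Koszul complexes over an open set, one must choose \emph{lifts} $\sigma_1,\sigma_2,\sigma'_1$ of these infinitesimal sections to a Zariski open $V\subset U$ with prescribed zero loci; one cannot in general arrange $\sigma'_1=\sigma_1+\sigma_2$ on $V$, only $\sigma'_1=\sigma_1+\alpha\sigma_2$ for a unit $\alpha$ with $\alpha|_{Z_2\cap V}=1$ (this is the analogue of \cite[Lemma 5.2]{How19}). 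The change of basis intertwining the differentials must then incorporate $\alpha$ (e.g.\ $(s_1,s_2)\mapsto(s_1,s_2-\alpha s_1)$), and because the lifts are not unique one must further check that different choices yield homotopic isomorphisms of complexes, so that the induced maps on homology sheaves are canonical and glue over the cover. Your proposal skips both the lifting step and the homotopy-independence check; with those supplied, the argument closes as you describe.
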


\begin{proof}
Using the alternate construction of Proposition \ref{prop:alt derived}, we are reduced to proving the existence,  for every $\ell \ge 0$,   of an isomorphism
\begin{align}\label{higher tor equivalence}\lefteqn{ 
 H_\ell  \big( C_{\mathcal{Z}(t_1,\mu_1)}  \otimes_{\co_\mathcal{M}} \cdots \otimes_{C_\mathcal{M}} \co_{\mathcal{Z}(t_d,\mu_d)}  \big)|_{  \mathcal{Z}(T,\mu) }  } \\
&  \iso 
  H_\ell  \big( C_{\mathcal{Z}(t'_1,\mu'_1)}  \otimes_{\co_\mathcal{M}} \cdots \otimes_{C_\mathcal{M}} \co_{\mathcal{Z}(t'_d,\mu'_d)}  \big)|_{  \mathcal{Z}(T',\mu') } \nonumber
\end{align}
of coherent sheaves on \eqref{naive linear equivalence}.  Here $t_1,\ldots, t_d$ and $t_1',\ldots, t_d'$ are the diagonal entries of $T$ and $T'$.

Moreover, it  suffices to treat the case in which $d\ge 2$ and 
\begin{equation}\label{Asimple}
A  = \left(  \begin{matrix}    1 & 0    \\  1 & 1 \\ & & I_{d-2}  \end{matrix}   \right).
\end{equation}
Indeed,  the group $\GL_d(\Z)$ is generated by $A$, the permutation matrices, and the diagonal matrices, and the claim is easily proved in the latter two cases.

As in the constructions of  \S \ref{ss:alt derived}, choose   an \'etale  surjection $U\to \mathcal{M}$ fine enough 
that the morphisms
\[
\mathcal{Z}(t_i,\mu_i)_U \to U \quad \mbox{and} \quad  \mathcal{Z}(t'_i,\mu'_i)_U \to U,
\]
for all $1\le i \le d$, restrict to closed immersions on all connected components
\[
Z_i \subset \mathcal{Z}(t_i,\mu_i)_U \quad \mbox{and} \quad   Z'_i \subset \mathcal{Z}(t'_i,\mu'_i)_U .
\]

To simplify notation, we abbreviate $\mathcal{Z}$ for the $\mathcal{M}$-stack \eqref{naive linear equivalence}.
Fix a geometric point $z$ of $\mathcal{Z}$.  The finite \'etale $z$-scheme $z_U$ defined by the cartesian diagram
\[
 \xymatrix{  
 { z_U } \ar[r] \ar[d]   &  { \mathcal{Z}_U }  \ar[d]   \\
 {  z  }  \ar[r]    &  {  \mathcal{Z}  }   
 }
\]
decomposes as a finite disjoint union of points  $z_U = \bigsqcup y$.

Fix one connected component $y \subset z_U$. Its image in $\mathcal{Z}_U$ lands on some connected component $Z \subset \mathcal{Z}_U$, whose images under the  two maps
\[
\xymatrix{
{  \mathcal{Z}(T,\mu)_U  } \ar@{=}[r] \ar[d]  &  {  \mathcal{Z}_U }   \ar@{=}[r]   &  \mathcal{Z}(T',\mu')_U \ar[d] \\
 { \mathcal{Z}(t_i,\mu_i)_U }   & & { \mathcal{Z}(t'_i,\mu'_i)_U  } 
 } 
\]
are then contained in unique connected components
\[
Z_i \subset \mathcal{Z}(t_i,\mu_i)_U
 \quad \mbox{and}\quad 
 Z'_i \subset \mathcal{Z}(t'_i,\mu'_i)_U .
\]
The natural map $Z \to U$ is a closed immersion, realizing $Z$ as a connected component of both intersections
\[
Z_1\times_U \cdots \times_U Z_d \quad \mbox{and}\quad  Z'_1\times_U \cdots \times_U Z'_d .
\]
The construction \eqref{homology components} gives us coherent sheaves 
\[
H_\ell( C_{Z_1} \otimes_{\co_U} \cdots \otimes_{\co_U} C_{Z_d} )
 \quad \mbox{and}\quad 
H_\ell( C_{Z'_1} \otimes_{\co_U} \cdots \otimes_{\co_U} C_{Z'_d} )
\]
on these two intersections, and we will construct a Zariski open neighborhood $y \in V_y \subset Z$ together with a canonical isomorphism
\begin{equation}\label{key local invariance}
H_\ell( C_{Z_1} \otimes_{\co_U} \cdots \otimes_{\co_U} C_{Z_d} )|_{V_y}
 \iso 
 H_\ell( C_{Z'_1} \otimes_{\co_U} \cdots \otimes_{\co_U} C_{Z'_d} )|_{V_y} 
\end{equation}
of coherent sheaves on $V_y$.

Before we construct \eqref{key local invariance}, we explain how it implies the existence of the desired isomorphism \eqref{higher tor equivalence}, and hence completes the proof of Proposition \ref{prop:linear invariance}.
Recalling from \eqref{open and closed} that $\mathcal{Z}_U$ is an open and closed subscheme of both
\[
\mathcal{Z}(t_1,\mu_1)_U \times_U \cdots \times_U \mathcal{Z}(t_d,\mu_d)_U
\]
and
\[
\mathcal{Z}(t'_1,\mu'_1)_U \times_U \cdots \times_U \mathcal{Z}(t'_d,\mu'_d)_U , 
\]
varying the connected component $y \in \pi_0(z_U)$  in \eqref{key local invariance} defines an isomorphism
\begin{align*}\lefteqn{
H_\ell  \big( C_{\mathcal{Z}(t_1,\mu_1)_U}  \otimes_{\co_U} \cdots \otimes_{\co_U} C_{\mathcal{Z}(t_d,\mu_d)_U}  \big)|_{V_z}  } \\
& \iso 
H_\ell  \big( C_{\mathcal{Z}(t'_1,\mu'_1)_U}  \otimes_{\co_U} \cdots \otimes_{\co_U} C_{\mathcal{Z}(t'_d,\mu'_d)_U}  \big)|_{V_z}
\end{align*}
between the sheaves of \eqref{there is no complex}, after restriction to the Zariski open neighborhood 
\[
V_z \define \bigsqcup_{ y\in \pi_0(z_U)}  V_y \subset \mathcal{Z}_U
\]
 of the image of $z_U \to \mathcal{Z}_U$.  Varying the geometric point $z$ and gluing over the resulting Zariski open cover $\{ V_z\}_z$ of $\mathcal{Z}_U$ defines an isomorphism
 \begin{align*}\lefteqn{
H_\ell  \big( C_{\mathcal{Z}(t_1,\mu_1)_U}  \otimes_{\co_U} \cdots \otimes_{\co_U} C_{\mathcal{Z}(t_d,\mu_d)_U}  \big)|_{\mathcal{Z}_U}   } \\
& \iso 
H_\ell  \big( C_{\mathcal{Z}(t'_1,\mu'_1)_U}  \otimes_{\co_U} \cdots \otimes_{\co_U} C_{\mathcal{Z}(t'_d,\mu'_d)_U}  \big)|_{\mathcal{Z}_U},
\end{align*}
and finally \'etale descent via $\mathcal{Z}_U \to \mathcal{Z}$ defines the desired isomorphism \eqref{higher tor equivalence}.

We now turn to the construction of \eqref{key local invariance}.
Consider the   first order infinitesimal neighborhood 
\[
Z \subset \widetilde{Z} \subset U
\]
of the closed subscheme $Z\subset U$.
In other words, if $I_Z\subset \co_U$ is the ideal sheaf defining $Z$, then $\widetilde{Z}$ is defined by the ideal sheaf $I_Z^2$.   Similarly, denote by 
\[
Z_i \subset \widetilde{Z}_i \subset U,\qquad Z'_i \subset \widetilde{Z}'_i \subset U
\]
the first order infinitesimal neighborhoods of $Z_i$ and $Z_i'$.  
Clearly $\widetilde{Z}$ is contained in both $\widetilde{Z}_i$ and $\widetilde{Z}'_i$.

The following is the analogue of~\cite[Theorem 5.1]{How19}.

\begin{lemma}
For every $1\le i \le d$ there are canonical sections 
\[
s_i \in H^0 \big( \widetilde{Z}_i , \omega|^{-1}_{\widetilde{Z}_i  } \big) \quad \mbox{and}\quad 
s'_i \in H^0 \big( \widetilde{Z}'_i ,  \omega|^{-1}_{\widetilde{Z}'_i  } \big)
\]
with scheme-theoretic zero loci $Z_i \subset \widetilde{Z}_i$ and $Z'_i \subset \widetilde{Z}'_i$, respectively.
After restriction to $\widetilde{Z}$, these sections are related by  $s'_1 = s_1 + s_2$, and $s_i'=s_i$ when $i >1$.
\end{lemma}

\begin{proof}
By virtue of the moduli problem defining $\mathcal{Z}(t_i,\mu_i)$, there is a canonical special endomorphism $x_i \in V_{\mu_i}(\mathcal{A}_{Z_i})$.
The desired section 
\[
s_i  = \mathrm{obst}_{x_i}\in H^0 \big( \widetilde{Z}_i , \omega|^{-1}_{\widetilde{Z}_i  } \big)
\]
is the obstruction to deforming $x_i$, as  in the proof of Proposition~\ref{prop:divisors are divisors} 
(if $x_i=0$ we understand   $\mathrm{obst}_{x_i}=0$, because there is no obstruction to deforming the $0$ endomorphism).
The section $s_i'$ is defined similarly.

Because of the particular choice of matrix \eqref{Asimple}, after restriction to $Z$ the special quasi-endomorphisms 
$x_i$ and $x_i'$ are related by $x_1' = x_1+x_2$,  and $x_i'=x_i$ if $i>1$. This leads to similar relations between  $s_i$ and $s_i'$.
 \end{proof}

\begin{lemma}\label{lem:obstruction lifts}
Around every point of $Z$ one can find a Zariski  open affine  neighborhood $V \subset U$ over which $\omega|_V \iso \co_V$, and sections
\[
\sigma_1,  \sigma_2   \in H^0( V, \omega|_V^{-1}) \quad\mbox{and}\quad 
 \alpha  \in H^0(V,\co_V) 
\]
such that
\begin{enumerate}
\item[(i)]
$\sigma_1$ has zero locus $Z_1 \cap V$ and   agrees with $s_1$ on $\widetilde{Z}_1 \cap V $,
\item[(ii)]
$\sigma_2$ has zero locus $Z_2 \cap V$ and   agrees with $s_2 $ on $\widetilde{Z}_2\cap V $,
\item[(iii)]
$\alpha$ restricts to the constant function $1$ on $Z_2\cap V$,
\item[(iv)]
the section  \[\sigma'_1 \define \sigma_1 + \alpha \sigma_2\] has zero locus $Z'_1\cap V$
and agrees with $s'_1$ on the closed formal subscheme, lying between $Z'_1\cap V$ and $\widetilde{Z}'_1 \cap V$, defined by the ideal sheaf 
\[
I_{Z'_1\cap V} \cdot \big(  I_{ Z'_1\cap V } +   I_{ Z_2\cap V} \big)    \subset \co_V.
\]
\end{enumerate}
\end{lemma}
\begin{proof}
The proof is identical to that of ~\cite[Lemma 5.2]{How19}, and makes crucial use of the fact that $\mathcal{M}$ is regular and Noetherian.
\end{proof}

Choose a Zariski open $V\subset U$ as in Lemma \ref{lem:obstruction lifts} containing the image of the geoemtric point $y \to Z \subset U$.
The sections of Lemma \ref{lem:obstruction lifts} determine chain complexes of locally free $\co_V$-modules
\begin{align*}
D_{Z_1} & = (   \cdots \to 0 \to \omega|_V \map{ \sigma_1 } \co_V \to 0  )\\
D_{Z_2} & = (   \cdots \to 0 \to \omega|_V \map{ \sigma_2 } \co_V\to 0  ) \\
D_{Z'_1} & = (   \cdots \to 0 \to \omega|_V \map{ \sigma'_1 } \co_V\to 0 ),
\end{align*}
and there are canonical isomorphisms
\[
 D_{Z_1}  \iso C_{Z_1} |_V ,\quad 
  D_{Z_2}  \iso C_{Z_2} |_V ,\quad 
 D_{Z'_1}   \iso C _{Z'_1} |_V.
\]
Indeed, if  $t_1\neq 0$, so that $Z_1 \subset U$ is a Cartier divisor and $\sigma_1\neq 0$, the first isomorphism is
\begin{equation*}
\xymatrix{
{\cdots}\ar[r]  & 0 \ar[r] \ar@{=}[d] & {\omega|_V} \ar[r]^{ \sigma_1}  \ar[d]^{ \sigma_1} & { \co_V } \ar[r] \ar@{=}[d]& 0  \\
{\cdots}\ar[r]   & 0 \ar[r] & { I_{Z_1 \cap V } }  \ar[r] & { \co_V } \ar[r]& 0
}
\end{equation*}
If  $t_1=0$, so that $Z_1=U$ and $\sigma_1=0$,  then  $D_{Z_1}=C_{Z_1} |_V$ by definition. 
The other isomorphisms are entirely similar.

Using the relation $\sigma'_1 = \sigma_1 + \alpha \sigma_2$,  we see that the diagram
\[
\xymatrix{
{ \cdots }\ar[r]    &  {  0  } \ar[r] \ar@{=}[d] &    {   \omega|_V\otimes\omega|_V } \ar[rr]^{\partial_2  }  \ar@{=}[d] &  &   { \omega|_V \oplus \omega|_V }  \ar[rr]^{\partial_1}  \ar[d]^{ g_1  } &  & {\co_V }\ar[r]\ar@{=}[d] & 0 \\
{ \cdots }\ar[r]    &  {  0  } \ar[r]  &    {   \omega|_V\otimes\omega|_V } \ar[rr]^{\partial^*_2 }  &  &   { \omega|_V \oplus \omega|_V }  \ar[rr]^{\partial^*_1}  &  & {\co_V}\ar[r] & 0
}
\]
determined  by $g_1( \eta_1,\eta_2) =  (   \eta_1 ,  \eta_2 -  \alpha   \eta_1  )$  and
\begin{align*}
 \partial_1  (\eta_1,\eta_2)  & =  \sigma_1(\eta_1) + \sigma_2 (\eta_2) \\
 \partial^*_1  (\eta_1,\eta_2)  & =  \sigma'_1(\eta_1) + \sigma_2 (\eta_2) \\
 \partial_2 (    \eta_1 \otimes \eta_2   ) &=  \big(   \sigma_2 (-\eta_2) \eta_1  , \sigma_1 (\eta_1 )\eta_2 \big) \\
 \partial^*_2 (    \eta_1 \otimes \eta_2   ) &=  \big(   \sigma_2 (-\eta_2) \eta_1  , \sigma'_1 (\eta_1 )\eta_2 \big)
 \end{align*}
 commutes, and defines the middle  isomorphism in
\[
C_{Z_1} |_V  \otimes C_{Z_2} |_V  \iso
D_{Z_1}  \otimes D_{Z_2}  \iso  D_{Z'_1}  \otimes D_{Z_2}  
 \iso  C_{Z'_1} |_V  \otimes C_{Z_2}  |_V.
\]

As our choice of \eqref{Asimple} implies $Z_i=Z_i'$ and $C_{Z_i} = C_{Z_i'}$ for all $i> 1$, we obtain an isomorphism 
\begin{equation*}
( C_{Z_1}   \otimes\cdots \otimes  C_{Z_d} )  |_V  \iso  ( C_{Z'_1}   \otimes \cdots \otimes  C_{Z'_d} ) |_V ,
\end{equation*}
of complexes of locally free $\co_V$-modules. 
 This isomorphism depends on the choices of sections in Lemma \ref{lem:obstruction lifts}, which are not unique.
 However, exactly as in  the proof of \cite[Lemma 5.2]{How19},  the conditions of that lemma imply that different choices  yield homotopic isomorphisms, and so the induced isomorphism 
 \[
 H_\ell ( C_{Z_1}   \otimes\cdots \otimes  C_{Z_d})  |_V \iso  H_\ell ( C_{Z'_1}   \otimes\cdots \otimes  C_{Z_d'} )  |_V
 \]
 is independent of the choices. 
 
  In this last  isomorphism both sides are coherent sheaves on $V$ annihilated by the ideal sheaf of the closed subscheme 
 \[
 V_y \define V\cap Z,
 \]
  yielding the desired isomorphism \eqref{key local invariance}.
\end{proof}

 
\subsection{Comparison with the naive cycle}
\label{ss:naive comparison}
 

The following result shows  that the  corrected cycle class $\mathcal{C}(T,\mu)$ agrees with the class obtained by imitating the construction of \eqref{intro generic corrected}, whenever that construction makes sense.
We remark that the proof uses the linear invariance property of Proposition  \ref{prop:linear invariance} in an essential way.  

\begin{proposition}\label{prop:naive comparison}
Fix    $T\in \Sym_d(\Q)$  and $\mu \in (L^\vee/L)^d.$
If the naive special cycle $\mathcal{Z}(T,\mu)$ is equidimensional with
 \[
 \mathrm{dim}(  \mathcal{Z}(T,\mu)  ) =   \mathrm{dim}(\mathcal{M})-\mathrm{rank}(T),
 \]
so that  the naive cycle class
\[
[\mathcal{Z}(T,\mu)]\in \mathrm{CH}^{\mathrm{rank}(T)}_{\mathcal{Z}(T,\mu)}(\mathcal{M})
\]
is defined (Definition \ref{defn:naive class}),  then 
\[
\mathcal{C}(T,\mu)  =  \underbrace{c_1(\omega^{-1})\cdots c_1(\omega^{-1})}_{ d - \mathrm{rank}(T)}
\cdot  [\mathcal{Z}(T,\mu)]\in  \mathrm{CH}_{ \mathcal{Z}(T,\mu) }^d(\mathcal{M}) .
\]
\end{proposition}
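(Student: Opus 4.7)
The plan is to use linear invariance (Proposition \ref{prop:linear invariance}) to reduce $T$ to a convenient block form, and then to compute the derived fundamental class directly via the alternate description of Proposition \ref{prop:alt derived}. If $T$ is not positive semi-definite, then $\mathcal{Z}(T,\mu) = \emptyset$ by Remark \ref{rem:pos def T} and both sides of the target identity vanish, so assume $T$ is positive semi-definite and set $r = \mathrm{rank}(T)$. By Smith normal form applied to the saturation of $\ker(T)\cap\Z^d$ in $\Z^d$, one finds $A \in \mathrm{GL}_d(\Z)$ with ${}^t A T A = \left(\begin{smallmatrix} T_0 & 0 \\ 0 & 0\end{smallmatrix}\right)$, where $T_0 \in \mathrm{Sym}_r(\Q)$ is the nondegenerate — hence positive definite — top-left block; in particular all diagonal entries $t_1,\ldots,t_r$ of $T_0$ are strictly positive. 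Propositions \ref{prop:naive linear invariance} and \ref{prop:linear invariance} imply that both sides of the target identity are preserved under $(T,\mu) \mapsto ({}^t A T A, \mu A)$, so I may assume $T$ is already in this block form. Write $\mu = (\mu_0, \mu_*)$ accordingly. If any component of $\mu_*$ is nonzero, then $\mathcal{Z}(T,\mu) = \emptyset$ (a special endomorphism $x_i$ with $Q(x_i) = 0$ must vanish by positive definiteness of $V(\mathcal{A}_S)_\Q$, but $0 \notin V_{\mu_i}(\mathcal{A}_S)$ for $\mu_i \neq 0$), again trivializing the identity; so I may assume $\mu_* = 0$, which yields a canonical isomorphism $\mathcal{Z}(T,\mu) \iso \mathcal{Z}(T_0, \mu_0)$.

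With these reductions, I apply Proposition \ref{prop:alt derived} on an \'etale cover $U \to \mathcal{M}$ as in \S \ref{ss:alt derived}. For $i \leq r$ each $t_i > 0$, so $C_{Z_i}$ is the two-term Koszul complex $(I_{Z_i} \hookrightarrow \mathcal{O}_U)$ of an effective Cartier divisor; for $i > r$ it is the zero-differential complex $(\omega|_U \xrightarrow{0} \mathcal{O}_U)$. The total tensor product accordingly factors as $K_\bullet \otimes_{\mathcal{O}_U} \bigotimes_{i>r} (\omega|_U \xrightarrow{0} \mathcal{O}_U)$, where $K_\bullet = C_{Z_1} \otimes_{\mathcal{O}_U} \cdots \otimes_{\mathcal{O}_U} C_{Z_r}$ is the Koszul complex on the $r$ divisor equations. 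Since $\mathcal{M}$ is regular and $\mathcal{Z}(T,\mu)$ is equidimensional of codimension $r$ by hypothesis, these $r$ equations form a regular sequence on an \'etale neighborhood of $\mathcal{Z}(T,\mu)$, so the higher homologies of $K_\bullet$ vanish there while $H_0(K_\bullet)|_{\mathcal{Z}(T,\mu)} = \mathcal{O}_{\mathcal{Z}(T,\mu)}$. The second tensor factor has $H_\ell$ equal to $\binom{d-r}{\ell}$ copies of $\omega^{\otimes \ell}$, so by Künneth,
\[
\bigl[\mathcal{O}^{\mathrm{derived}}_{\mathcal{Z}(T,\mu)}\bigr] \;=\; \sum_{\ell \geq 0} (-1)^\ell \binom{d-r}{\ell}\, [\mathcal{O}_{\mathcal{Z}(T,\mu)} \otimes \omega^{\otimes \ell}] \;=\; [\mathcal{O}_{\mathcal{Z}(T,\mu)}] \cdot (1 - [\omega])^{d-r}
\]
in $G_0(\mathcal{Z}(T,\mu))_\Q$.

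Finally, the passage from $G_0$ to Chow via \eqref{more coniveau push} followed by \eqref{ChowGroth} sends $[\mathcal{O}_{\mathcal{Z}(T,\mu)}]$ to the naive cycle class $[\mathcal{Z}(T,\mu)]$ of Definition \ref{defn:naive class} (the dimension hypothesis being exactly what justifies this), and sends $1 - [\omega] \in F^1 K_0(\mathcal{M})_\Q$ to $c_1(\omega^{-1})$. Multiplicativity of this passage on associated graded pieces, a standard consequence of the Chern-character formalism underpinning \eqref{ChowGroth}, then gives
\[
\mathcal{C}(T,\mu) = [\mathcal{Z}(T,\mu)] \cdot c_1(\omega^{-1})^{d-r},
\]
as required. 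The main technical step is the regular-sequence claim in the middle paragraph: since the $\mathcal{Z}(t_i,\mu_i)$ are only generalized Cartier divisors, one must pass to an \'etale local chart and invoke the standard fact that in a regular local ring, elements cutting out a subscheme of the expected codimension form a regular sequence; this is the sole place where both the regularity of $\mathcal{M}$ and the dimension hypothesis on $\mathcal{Z}(T,\mu)$ are used in combination.
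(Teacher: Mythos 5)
Your proof is correct, and it shares the two essential ingredients with the paper's argument: the reduction via linear invariance (Proposition \ref{prop:linear invariance}) to a block form $\left(\begin{smallmatrix} T_0 & 0 \\ 0 & 0\end{smallmatrix}\right)$ with $T_0$ positive definite, and the observation that under the equidimensionality hypothesis the $r$ local equations cutting out $\mathcal{Z}(T,\mu)$ form a regular sequence in the (regular, hence Cohen--Macaulay) \'etale local rings of $\mathcal{M}$, killing the higher Tor/Koszul homology along $\mathcal{Z}(T,\mu)$. Where you diverge is in how the factor $c_1(\omega^{-1})^{d-\mathrm{rank}(T)}$ is extracted. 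The paper treats two extreme cases separately --- $T$ positive definite (where it shows $[\co^{\mathrm{derived}}]=[\co_{\mathcal{Z}(T,\mu)}]$ via Serre's Tor-vanishing) and $T=0_d$ (where Lemma \ref{lem:pic complex} gives $\mathcal{C}(0_d,0)=c_1(\omega^{-1})^d$ directly) --- and then glues them using the already-established intersection formula of Proposition \ref{prop:intersection formula}, noting that the only positive semi-definite matrix with the given block structure is $T$ itself. You instead compute the derived fundamental class in one pass via Proposition \ref{prop:alt derived}: the total complex factors as a Koszul complex on the $r$ divisor equations tensored with the zero-differential complexes, and the K\"unneth decomposition yields $\sum_\ell(-1)^\ell\binom{d-r}{\ell}[\omega^{\otimes\ell}|_{\mathcal{Z}(T,\mu)}]$. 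Your route avoids invoking Proposition \ref{prop:intersection formula}, at the cost of a direct tensor-product computation; the paper's route avoids the K\"unneth bookkeeping by reusing a result it has already proved. Both are legitimate, and both lean equally on linear invariance, which is the real content.

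One point of phrasing rather than substance: your appeal to ``the Chern-character formalism'' for the final multiplicativity step is vaguer than necessary. What is actually needed is (i) the projection formula $\pi_*(\mathcal{F}\otimes\pi^*\omega^{\otimes\ell})\iso\pi_*\mathcal{F}\otimes\omega^{\otimes\ell}$, identifying the pushforward of your class with the $K$-theoretic product of $[\pi_*\co_{\mathcal{Z}(T,\mu)}]$ with $([\co_{\mathcal{M}}]-[\omega])^{d-r}$, and (ii) Theorem \ref{thm:GS_K-theory}(1) together with Theorem \ref{thm:GS}, which is how the intersection pairing on Chow groups with support is \emph{defined} in this paper; with those references in place the step is immediate.
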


\begin{proof}
We may assume that $T$ is positive semi-definite, for otherwise the Chow group with support $ \mathrm{CH}_{ \mathcal{Z}(T,\mu) }^d(\mathcal{M})$ is trivial by Remark \ref{rem:pos def T}.

First suppose  $\mathrm{rank}(T)=d$.  In particular,  $T$ is positive definite,  so has all  diagonal entries nonzero.
Recalling the open and closed immersion \eqref{open and closed}, consider a  closed geometric point
\[
s\to \mathcal{Z}(T,\mu) \subset \mathcal{Z}(t_1,\mu_1) \times_{\mathcal{M}} \cdots \times_{\mathcal{M}} \mathcal{Z}(t_d,\mu_d) .
\]
For every $1\le i \le d$,  Proposition \ref{prop:divisors are divisors} implies that the natural map 
\[
\co_{\mathcal{M},s} ^\et  \to \co^\et _{ \mathcal{Z}(t_i,\mu_i)  , s} 
\]
on \'etale local rings is surjective with kernel generated by a single element $f_i$.
As 
\[
\co_{ \mathcal{Z}(T,\mu) ,s}^\et \iso \co_{\mathcal{M},s} ^\et  / ( f_1,\ldots, f_d) ,
\]
our assumptions imply that   $f_1,\ldots, f_d \in \co_{\mathcal{M},s} ^\et $  is  a regular sequence.

For every  $1\le e\le d$  the \'etale local ring at $s$ of 
\[
\mathcal{Y}_e = \mathcal{Z}(t_1,\mu_1) \times_{\mathcal{M}}\cdots\times_{\mathcal{M}} \mathcal{Z}(t_e,\mu_e)
\]
 is therefore Cohen-Macaulay of dimension   $\mathrm{dim}(\mathcal{M}) - e$, and a result of Serre   \cite[Section V.B.6]{serre}  implies that 
 \[
\mathrm{Tor}^{\co^\et_{ \mathcal{M},s}}_\ell \big(  \co_{ \mathcal{Y}_e,s }^\et , \co_{  \mathcal{Z}(t_{e+1},\mu_{e+1})  ,s }^\et \big) =0
\]
for all $\ell >0$.    Using \eqref{naive intersection} and the commutative diagram of Lemma \ref{lem:new intersection}, one sees by induction on $e$ that the  intersection 
\begin{equation*}
[  \co_{\mathcal{Z}(t_1,\mu_1)}  ]\cap \cdots  \cap [  \co_{\mathcal{Z}(t_e,\mu_e)}  ] \in F^e  G_0 ( \mathcal{Y}_e )_\Q  ,
\end{equation*}
 has the form 
\[
[  \co_{\mathcal{Z}(t_1,\mu_1)}  ]\cap \cdots  \cap [  \co_{\mathcal{Z}(t_e,\mu_e)}  ] = [ \co_{  \mathcal{Y}_e   } ] + [ \mathcal{F}_e ] -  [\mathcal{G}_e]
\]
for coherent sheaves $\mathcal{F}_e$ and $\mathcal{G}_e$ on  $\mathcal{Y}_e$ with trivial stalks at any closed geometric point  
$
s \to \mathcal{Z}(T,\mu) \to  \mathcal{Y}_e .
$

Taking $d=e$ shows that 
\[
 [ \co^\mathrm{derived}_{ \mathcal{Z}(T,\mu) }] 
 =  [ \co_{ \mathcal{Z}(T,\mu) }]   \in F^d G_0( \mathcal{Z}(T,\mu)),
\]
as both are equal to the image of the class
 \[
 [  \co_{\mathcal{Z}(t_1,\mu_1)}  ]\cap \cdots  \cap [  \co_{\mathcal{Z}(t_d,\mu_d)}  ] 
 \in 
  F^d  G_0 \big( \mathcal{Z}(t_1,\mu_1)  \times_{\mathcal{M}} \cdots \times_{\mathcal{M}} \mathcal{Z}(t_d,\mu_d) \big) _\Q
\]
under the second arrow in \eqref{multilinear classes}.  The equality of cycle classes
\[
\mathcal{C}(T,\mu) = [ \mathcal{Z}(T,\mu)]
\]
now follows from Theorem~\ref{thm:GS}.

Now consider the other extreme, in which    $T=0_d$ has rank $0$.  In this case
\[
\mathcal{Z}(0_d ,\mu) = 
\begin{cases}
\mathcal{M} & \mbox{if }\mu=0 \\
\emptyset & \mbox{if }\mu \neq 0.
\end{cases}
\]
If $\mu \neq 0$ the the proposition is vacuously true, as the Chow group with support vanishes. 
On the other hand,  by construction  $\mathcal{C}(0_d ,0)$ is the image of 
\[
  \underbrace{ ( [\co_\mathcal{M}] - [\omega] ) \cap \cdots \cap  ( [\co_\mathcal{M}] - [\omega] ) }_{ d  }
  \in F^d G_0( \mathcal{M})_\Q 
\]
under 
\[
F^d G_0( \mathcal{M})_\Q \map{\eqref{coniveau push}} F^d K_0( \mathcal{M})_\Q \map{\eqref{ChowGroth}} \mathrm{CH}^d(\mathcal{M}) . 
\]
It follows from Lemma \ref{lem:pic complex} that this image is
\[
\mathcal{C}(0_d , 0 )  =  \underbrace{c_1(\omega^{-1})\cdots c_1(\omega^{-1})}_{ d  }
\in  \mathrm{CH} ^d(\mathcal{M}) ,
\]
as desired.

For the general case, let $r=\mathrm{rank}(T)$.
As the cycle classes $[ \mathcal{Z}(T,\mu)]$ and $\mathcal{C}(T,\mu)$ satisfy the same linear invariance property
(Proposition \ref{prop:naive linear invariance} and Proposition  \ref{prop:linear invariance}), we may reduce to the case  in which 
\[
T = \left( \begin{matrix} T' & 0 \\ 0 & 0_{d-r} \end{matrix}\right)
\]
for a positive definite $ r \times r$-matrix $T'$.
We may further assume that 
 \[
 \mu_{r+1}=\cdots=\mu_d=0.
 \]
 Indeed,  if  some $\mu_i\neq 0$ with $r <i\le d$ then $\mathcal{Z}(0,\mu_i)=\emptyset$ by Proposition \ref{prop:divisors are divisors}, and so    $\mathcal{Z}(T,\mu)=\emptyset$ by \eqref{open and closed}.

Set $\mu'=(\mu_1,\ldots, \mu_r)$.  Directly from the moduli interpretation we see
\[
\mathcal{Z}(T,\mu) \iso \mathcal{Z}(T',\mu')
\]
as $\mathcal{M}$-stacks.  Combining this with  the positive definite  and  rank $0$ cases already proved yields the first equality in
\begin{align*}
  [\mathcal{Z}(T,\mu)]  \cdot 
\underbrace{c_1(\omega^{-1})\cdots c_1(\omega^{-1})}_{ d - r} 
&=
\mathcal{C}(T',\mu') \cdot \mathcal{C}\big( 0_{ d-r } , 0 \big)  \\
& = 
\sum_S  \mathcal{C}(S,\mu).
\end{align*}
The second equality is by   the intersection formula of Proposition \ref{prop:intersection formula}, and the sum runs over all matrices of the form
\[
S = \begin{pmatrix}  T' & * \\  * &  0_{ d-r}   \end{pmatrix}  \in \Sym_d(\Q).
\]
The only nonzero terms come from positive semi-definite $S$, and the only such $S$ is $S=T$.
This completes the proof of Proposition \ref{prop:naive comparison}.
\end{proof}

\begin{corollary}
For any $T\in \Sym_d(\Q)$  and $\mu \in (L^\vee/L)^d$, restriction to the generic fiber
\[
\mathrm{CH}^d_{\mathcal{Z}(T,\mu) } ( \mathcal{M}) \to \mathrm{CH}^d_{Z(T,\mu) } (M)
\]
sends the corrected cycle class $\mathcal{C}(T,\mu)$ to the class $C(T,\mu)$ of \eqref{intro generic corrected}.
\end{corollary}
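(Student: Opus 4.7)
The plan is to reduce to the analogue of Proposition~\ref{prop:naive comparison} in the generic fiber. Although $\mathcal{Z}(T,\mu)$ may fail to be equidimensional over $\Z[\Sigma^{-1}]$, by Proposition~\ref{prop:ZTmu_uniformization} its generic fiber $Z(T,\mu)$ is a disjoint union of smooth Shimura varieties, and in particular is equidimensional of codimension exactly $\mathrm{rank}(T)$ in the smooth stack $M$. Thus the hypothesis of Proposition~\ref{prop:naive comparison} is automatically satisfied on the generic fiber, and it suffices to check that the construction of the corrected cycle class commutes with restriction to $M$.

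First I would verify that Definition~\ref{def:corrected classes} is compatible with the open immersion $j : M \hookrightarrow \mathcal{M}$. The flat pullback $j^*$ on $G$-theory carries $[\co_{\mathcal{Z}(t_i,\mu_i)}]$ to $[\co_{Z(t_i,\mu_i)}]$ and $[\co_\mathcal{M}] - [\omega]$ to $[\co_M] - [\omega|_M]$; it respects the coniveau filtration $F^\bullet$, since an open immersion preserves the codimension of closed substacks; and it commutes with the refined cap product of Proposition~\ref{prop:inductive coniveau} as well as with the realization maps \eqref{more coniveau push} and \eqref{ChowGroth}. Combining these compatibilities applied factor-by-factor to \eqref{multilinear classes}, we conclude that $j^* \mathcal{C}(T,\mu)$ is computed on $M$ by precisely the same recipe of Definition~\ref{def:corrected classes}, now applied to the generic-fiber special divisors $Z(t_i, \mu_i) \to M$ and the generic fiber of $\omega$.

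Second, I would run the proof of Proposition~\ref{prop:naive comparison} with $\mathcal{M}$ replaced by its smooth generic fiber $M$. That proof relies only on: regularity of the ambient stack; the intersection formalism in $G$- and $K$-theory on regular Deligne--Mumford stacks; the fact (immediate from Proposition~\ref{prop:divisors are divisors}) that each $Z(t_i,\mu_i)$ is a generalized Cartier divisor or is equal to all of $M$; the intersection formula of Proposition~\ref{prop:intersection formula}; and the linear invariance of Proposition~\ref{prop:linear invariance}, which holds verbatim after restriction to the generic fiber. Since $Z(T,\mu)$ is smooth and therefore Cohen--Macaulay of codimension exactly $\mathrm{rank}(T)$ in $M$, Serre's Tor vanishing applies and the same three-step argument (rank $=d$ case, rank $=0$ case, general case via linear invariance and Proposition~\ref{prop:intersection formula}) gives
\[
j^* \mathcal{C}(T,\mu) \;=\; \underbrace{c_1(\omega^{-1})\cdots c_1(\omega^{-1})}_{d-\mathrm{rank}(T)}\cdot [Z(T,\mu)] \;=\; C(T,\mu).
\]

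The main obstacle is the bookkeeping in the first step: one must confirm that the refined pairing of Proposition~\ref{prop:inductive coniveau}, constructed in the appendix from a sheaf of $K$-theory spectra, really does commute with flat pullback along $j$. This should follow formally from the fact that $j$ is flat and an open immersion, so that both the derived tensor product used to define the pairing and the underlying coherent structure sheaves are preserved on the nose; no genuine intersection-theoretic input is required to pass to the generic fiber.
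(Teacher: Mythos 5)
Your argument is correct in outline, but it takes a genuinely different route from the paper's. The paper's proof is a two-line reduction: since the statement concerns a single fixed pair $(T,\mu)$, one is free to enlarge $\Sigma$ to include every prime at which $\mathcal{Z}(T,\mu)$ has a vertical irreducible component; after this enlargement every component of $\mathcal{Z}(T,\mu)$ is the closure of a component of $Z(T,\mu)$, so $\mathcal{Z}(T,\mu)$ is equidimensional of codimension $\mathrm{rank}(T)$ in $\mathcal{M}$, Proposition~\ref{prop:naive comparison} applies on the integral model, and one only has to restrict the elementary expression $c_1(\omega^{-1})^{d-\mathrm{rank}(T)}\cdot[\mathcal{Z}(T,\mu)]$ to the generic fiber, where the naive class visibly restricts to $[Z(T,\mu)]$. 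You instead push the entire derived construction through restriction to $M$ and then re-run the proof of Proposition~\ref{prop:naive comparison} on the generic fiber, where equidimensionality is automatic by Proposition~\ref{prop:ZTmu_uniformization}. Both routes rest on the same formal input — compatibility of Definition~\ref{def:corrected classes} with flat localization of the base — but the paper only needs it for the easy output of the construction (a naive class intersected with Chern classes), whereas you need it for every stage: the classes $z_i$, the coniveau filtrations, the refined pairing of Proposition~\ref{prop:inductive coniveau}, and the maps \eqref{more coniveau push} and \eqref{ChowGroth}. You correctly identify this as the main burden, and it does go through by flat base change for $\underline{\mathrm{Tor}}$ and for resolutions by vector bundles; what your approach buys is a statement that is uniform in $\Sigma$ rather than one proved pair-by-pair after discarding primes. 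Two small imprecisions: the generic fiber $M\to\mathcal{M}$ is not an open immersion but a filtered limit of open immersions (it is flat, which is all you use, but the codimension bookkeeping for the coniveau filtration requires the standard dimension argument that an irreducible horizontal $Y\subset\mathcal{M}$ satisfies $\dim Y = \dim Y_\Q + 1$, not just "open immersions preserve codimension"); and the linear invariance on $M$ is most cleanly obtained by restricting Proposition~\ref{prop:linear invariance} from $\mathcal{M}$ rather than re-proving it, which is consistent with your setup once the first step is in place.
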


\begin{proof}
For a fixed pair $(T,\mu)$, it suffices to prove the claim after enlarging the finite set of primes $\Sigma$ that we have inverted on the base. 
By adding to $\Sigma$  all primes $p$ for which  $\mathcal{Z}(T,\mu)$ has an irreducible component supported in characteristic $p$, we may assume that no such primes exist.

As the generic fiber $Z(T,\mu)$ is equidimensional of codimension $\mathrm{rank}(T)$ in the generic fiber $M$, for example by Proposition \ref{prop:ZTmu_uniformization}, also $\mathcal{Z}(T,\mu)$ is equidimensional of codimension $\mathrm{rank}(T)$ in  $\mathcal{M}$. 
The claim now follows from Proposition \ref{prop:naive comparison}.
\end{proof}

 
\subsection{Pullbacks of cycle classes}


We will now consider the setup of \eqref{shimura embedding}, so that we have an isometric embedding $L \to L^\beef$ of quadratic lattices inducing a morphism $M\to M^\beef$ of canonical models of Shimura varieties.
Assume our finite set of primes $\Sigma$ is chosen so that both $L_p$ and $L_p^\beef$ are maximal at all $p\not\in \Sigma$, so that the above morphism of canonical models extends to a finite morphism
\[
f : \mathcal{M} \to \mathcal{M}^\beef
\]
of integral models over $\Z[\Sigma^{-1}]$ as in Remark~\eqref{rem:int shimura embedding}.
Assume further that both integral models $\mathcal{M}$ and $\mathcal{M}^\beef$ are regular, so that the corrected cycle classes of Definition \ref{def:corrected classes} are defined for both integral models.

The results of \S \ref{ss:chow} provide us with a pullback 
\[
f^* : \mathrm{CH}^d_{Z^\beef}( \mathcal{M}^\beef ) 
 \to \mathrm{CH}^d_{Z^\beef \times_{\mathcal{M}^\beef} \mathcal{M} }  (\mathcal{M}) 
\]
for any finite morphism $\mathcal{Z}^\beef \to \mathcal{M}^\beef$.
Given a pair $(T^\beef,\mu^\beef)$ with $T^\beef \in \Sym_d(\Q)$ and $\mu^\beef \in ( L^{\beef,\vee} / L^\beef)^d$, we can form the corrected cycle class
\[
\mathcal{C}^\beef(T^\beef,\mu^\beef) \in  \mathrm{CH}^d_{  \mathcal{Z}^\beef(T^\beef,\mu^\beef)   } ( \mathcal{M}^\beef ) ,
\]
 and ask how its pullback  is related to the corrected  cycle classes on $\mathcal{M}$.

The answer to this equation is exactly what one would expect given the decomposition 
\begin{equation}\label{naive decomp}
 \mathcal{Z}^\beef (T^\beef, \mu^\beef)  \times_{ \mathcal{M}^\beef} \mathcal{M}
\iso 
\bigsqcup_{   \substack{ T \in \Sym_d(\Q)  \\  \mu\in (L^\vee / L)^d } } 
 \bigsqcup_{ \substack{   \nu \in ( \Lambda^\vee/\Lambda)^d \\ \mu+\nu =  \mu^\beef   } } 
 \bigsqcup_{   \substack{ y\in  \nu + \Lambda^d \\  T+ Q(y) = T^\beef   }    }
   \mathcal{Z} (  T  , \mu ),
\end{equation}
of Proposition \ref{prop:naive pullback}.   Recall that here $\Lambda \subset L^\beef$ is the positive definite quadratic lattice of vectors orthogonal to $L \subset L^\beef$, and the relation $\mu+\nu = \mu^\beef$ means that the natural map 
\[
(L^\vee \oplus \Lambda^\vee) / (L  \oplus \Lambda) \to (L^\vee \oplus \Lambda^\vee ) / L^\beef
\]
sends 
\[
\mu + \nu \mapsto   \mu^\beef \in   L^{\beef,\vee} / L^\beef   \subset     ( L^\vee \oplus \Lambda^\vee ) / L^\beef .
\]

\begin{proposition}\label{prop:derived pullback}
The equality of cycle classes
\[
f^* \mathcal{C}^\beef(T^\beef, \mu^\beef)  =
\sum_{   \substack{ T \in \Sym_d(\Q)  \\  \mu\in (L^\vee / L)^d } } 
 \sum_{ \substack{   \nu \in ( \Lambda^\vee/\Lambda)^d \\ \mu+\nu = \mu^\beef   } } 
 \sum_{   \substack{ y\in  \nu + \Lambda^d \\  T+ Q(y) = T^\beef   }    }
   \mathcal{C} (  T  , \mu )
\]
holds in  $\mathrm{CH}^d_{ \mathcal{Z}^\beef(T^\beef, \mu^\beef)  \times_{ \mathcal{M}^\beef} \mathcal{M}}( \mathcal{M})$.
\end{proposition}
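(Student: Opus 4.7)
The plan is to pull the construction of Definition \ref{def:corrected classes} through $f$ and match the result term-by-term with the right-hand side using Proposition \ref{prop:naive pullback}. Let $t_1^\beef,\ldots,t_d^\beef$ denote the diagonal entries of $T^\beef$. By construction, $\mathcal{C}^\beef(T^\beef,\mu^\beef)$ is obtained from the iterated intersection $z_1^\beef \cap \cdots \cap z_d^\beef$ (where $z_i^\beef$ is the tautological class in $G_0$) by restriction along the open and closed immersion $\mathcal{Z}^\beef(T^\beef,\mu^\beef)\hookrightarrow \prod_{i} \mathcal{Z}^\beef(t_i^\beef,\mu_i^\beef)$ and then pushing forward to $\mathrm{CH}^d$. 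Because both $\mathcal{M}$ and $\mathcal{M}^\beef$ are regular, pullback along $f$ is compatible with both the intersection product in $G_0$ and the open-closed decomposition. Thus it suffices to establish the pullback formula for single factors and then assemble.

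For a single index $i$, I would prove the identity
\[
f^*\, z_i^\beef \;=\; \sum_{\substack{(t_i,\mu_i,\nu_i,y_i)\\ \mu_i+\nu_i=\mu_i^\beef\\ t_i+Q(y_i)=t_i^\beef,\; y_i\in \nu_i+\Lambda}} z_i^{(t_i,\mu_i)}
\]
in $F^1 G_0$ of the fiber product $\mathcal{Z}^\beef(t_i^\beef,\mu_i^\beef)\times_{\mathcal{M}^\beef}\mathcal{M}$, where on the right, each component of the fiber product has been identified with $\mathcal{Z}(t_i,\mu_i)$ via Proposition \ref{prop:naive pullback}, and $z_i^{(t_i,\mu_i)}$ denotes the corresponding tautological class from \S\ref{ss:cycle construction}. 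The case $(t_i^\beef,\mu_i^\beef)=(0,0)$ is immediate because $f^*\omega^\beef\cong\omega$ by Remark \ref{rem:int shimura embedding}. For $(t_i^\beef,\mu_i^\beef)\neq(0,0)$ the divisor $\mathcal{Z}^\beef(t_i^\beef,\mu_i^\beef)$ is locally the vanishing locus of an obstruction section $s^\beef$ of $\omega^{\beef,-1}$, so $z_i^\beef=[\mathcal{O}_{\mathcal{Z}^\beef(t_i^\beef,\mu_i^\beef)}]$ is represented by the Koszul complex $[\omega^\beef\xrightarrow{s^\beef}\mathcal{O}_{\mathcal{M}^\beef}]$, which pulls back to $[\omega\xrightarrow{f^*s^\beef}\mathcal{O}_{\mathcal{M}}]$. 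This complex is then to be decomposed according to the components of the fiber product.

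The delicate point is the treatment of components with $(t_i,\mu_i)=(0,0)$, which arise exactly when $t_i^\beef=Q(y_i)$ for some $y_i\in\mu_i^\beef+\Lambda$. On such a component, the special quasi-endomorphism of $\mathcal{A}^\beef$ being parametrized is $x_i^\beef=y_i$, which comes from the canonical isometric embedding $\Lambda\hookrightarrow V(\mathcal{A}^\beef)|_{\mathcal{M}}$ produced in the proof of Proposition \ref{prop:naive pullback}. Since this embedding is defined identically over the whole of $\mathcal{M}$, the quasi-endomorphism $y_i$ extends canonically to any square-zero thickening of $\mathcal{M}$, forcing the obstruction section $f^*s^\beef$ to vanish identically on a neighborhood of the component. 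The resulting pullback of the Koszul complex is therefore $[\omega\xrightarrow{0}\mathcal{O}_{\mathcal{M}}]$, whose class in $G_0$ is exactly $z_i^{(0,0)}=[\mathcal{O}_{\mathcal{M}}]-[\omega]$. This will be the main technical hurdle: identifying the behavior of the obstruction section under $f^*$ at the degenerate components, using the compatibility of the obstruction formalism of \cite{HMP} with the decomposition $V(\mathcal{A}^\beef_S)_\Q\cong V(\mathcal{A}_S)_\Q\oplus \Lambda_\Q$.

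Granting the single-factor formula, I would form the iterated $G_0$-intersection, use multiplicativity of $f^*$ (which holds since $\mathcal{M}^\beef$ is regular), and restrict to the open and closed substack $\mathcal{Z}^\beef(T^\beef,\mu^\beef)\times_{\mathcal{M}^\beef}\mathcal{M}$. By Proposition \ref{prop:naive pullback} this fiber product is the disjoint union $\bigsqcup \mathcal{Z}(T,\mu)$ indexed by tuples $(T,\mu,\nu,y)$ with $T+Q(y)=T^\beef$ and $\mu+\nu=\mu^\beef$. The summand indexed by such a tuple contributes the iterated intersection $z_1^{(t_1,\mu_1)}\cap\cdots\cap z_d^{(t_d,\mu_d)}$ restricted to $\mathcal{Z}(T,\mu)$, which by Proposition \ref{prop:alt derived} is precisely the derived fundamental class $[\mathcal{O}^{\mathrm{derived}}_{\mathcal{Z}(T,\mu)}]$. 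Passing through $F^d G_0\to F^d K_0\to \mathrm{CH}^d$ yields the claimed formula.
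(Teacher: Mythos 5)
Your proposal is correct and shares the paper's two-step architecture: a single-factor statement for the tautological classes $z_i^\beef$, followed by assembly via multiplicativity and the open--closed decomposition of Proposition \ref{prop:naive pullback}. (The single-factor statement appears in the paper as Lemma \ref{lem:adjunction}, phrased as a map $G_0(\mathcal{Z}_i^\beef)_\Q \to G_0(\mathcal{Z}_i)_\Q$ for each fixed component rather than as a sum over components, but this is only a difference of bookkeeping.) Where you genuinely diverge is in the ``improper'' case $(t_i^\beef,\mu_i^\beef)\neq(0,0)$, $(t_i,\mu_i)=(0,0)$. The paper factors the derived restriction through the diagonal $\Delta:\mathcal{Z}_i^\beef\to\mathcal{Z}_i^\beef\times_{\mathcal{M}^\beef}\mathcal{Z}_i^\beef$, identifies the resulting derived self-intersection with a corrected class $\mathcal{C}^\beef(S,\eta)$ for a rank-one $2\times 2$ matrix $S$, and invokes the linear invariance of Proposition \ref{prop:linear invariance} to evaluate it as $[\co_{\mathcal{Z}_i^\beef}]-[\omega^\beef|_{\mathcal{Z}_i^\beef}]$; this is precisely why the introduction flags linear invariance as an essential input to Theorem E. You instead compute directly: the scheme-theoretic content of Proposition \ref{prop:naive pullback} (equivalently, the fact that $y_i\in\Lambda\subset V(\mathcal{A}^\beef_{\mathcal{M}})$ deforms over every square-zero thickening inside $\mathcal{M}$) forces the pulled-back local equation to vanish identically on the degenerate component, so the Koszul complex degenerates to $[\,\omega\xrightarrow{0}\co_{\mathcal{M}}\,]$ and yields $[\co_{\mathcal{M}}]-[\omega]=z_i^{(0,0)}$. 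This is more elementary and decouples the pullback formula from Proposition \ref{prop:linear invariance}; what the paper's route buys is that all compatibilities stay inside the $G_0$/$K_0$ diagrams already established, whereas your route requires two extra checks that a careful writeup must supply: (i) the identification $f^*\mathcal{I}_{Z^\beef}|_{U_0}\iso\omega|_{U_0}$ on the degenerate component, which follows from the conormal bundle computation $\mathcal{I}_{Z^\beef}/\mathcal{I}_{Z^\beef}^2\iso\omega^\beef|_{Z^\beef}$ implicit in Proposition \ref{prop:divisors are divisors} (the obstruction section is a priori only defined on the first-order neighborhood, so one cannot simply write the Koszul complex globally as $[\omega^\beef\to\co_{\mathcal{M}^\beef}]$); and (ii) the ``multiplicativity of $f^*$'' in the assembly step, which is exactly the compatibility of the pairing \eqref{eqn:k0_pairing} with the pullback \eqref{eqn:k0_pullback} together with the diagram of Lemma \ref{lem:new intersection}. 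Finally, in the last step you do not need Proposition \ref{prop:alt derived}: the restriction of $z_1^{(t_1,\mu_1)}\cap\cdots\cap z_d^{(t_d,\mu_d)}$ to $\mathcal{Z}(T,\mu)$ is the derived fundamental class by Definition \ref{def:corrected classes} itself.
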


\begin{proof}
Fix one $\mathcal{Z} (  T  , \mu )$ appearing in the right hand side of \eqref{naive decomp}, in the part of the decomposition indexed by some $\nu \in (\Lambda^\vee/\Lambda)^d$ and $y\in \nu+\Lambda^d$.
 Let $t_1,\ldots, t_d$ be the diagonal entries of $T$, let $t_1^\beef, \ldots, t_d^\beef$ be the diagonal entries of $T^\beef$, and abbreviate
\[
\mathcal{Z}_i = \mathcal{Z}(t_i,\mu_i) 
\quad  \mbox{and} \quad 
\mathcal{Z}_i^\beef= \mathcal{Z}^\beef(t^\beef_i,\mu^\beef_i)
\]
for the associated special divisors.  We must have $T$ positive semi-definite (for otherwise $\mathcal{C}(T,\mu)=0$), and hence all $t_i \ge 0$.

For every $1\le i \le d$, the codimension one case of Proposition \ref{prop:naive pullback} provides us with  a commutative diagram
 \begin{equation}\label{wee pullback diagram}
 \xymatrix{
{  \mathcal{Z}_i }  \ar[r]  \ar[d]  &  {  \mathcal{Z}_i^\beef  }  \ar[d] \\
{ \mathcal{M} } \ar[r]  & { \mathcal{M}^\beef },
}
\end{equation}
which defines an open and closed immersion
 \[
 j : \mathcal{Z}_i  \hookrightarrow  \mathcal{Z}_i^\beef \times_{\mathcal{M}^\beef } \mathcal{M}
 \]
 On moduli, this sends a special quasi-endomorphism $x_i \in V_{\mu_i}(\mathcal{A}_S)$ to 
\begin{equation}\label{pullback ortho decomp}
x_i^\beef = x_i + y_i \in V_{\mu^\beef_i}(\mathcal{A}^\beef_S). 
\end{equation}
 In particular, there is a homomorphism 
 \begin{equation}\label{divisor pullback}
G_0 ( \mathcal{Z}_i^\beef )_\Q   \map{ \cap [\co_\mathcal{M} ] }      
 G_0(    \mathcal{Z}_i^\beef \times_{\mathcal{M}^\beef} \mathcal{M} )_\Q  \map{j^*}  G_0 ( \mathcal{Z}_i)_\Q
\end{equation}
obtained by  composing the intersection pairing 
\[
G_0 ( \mathcal{Z}_i^\beef )_\Q \otimes G_0 ( \mathcal{M} )_\Q 
\map{\cap}  G_0(    \mathcal{Z}_i^\beef \times_{\mathcal{M}^\beef} \mathcal{M} )_\Q 
\]
of Lemma \ref{lem:new intersection} with  restriction along $j$.

\begin{lemma}\label{lem:adjunction}
Recall from  \S \ref{ss:cycle construction} the distinguished classes 
\[
z_i^\beef\in  G_0(\mathcal{Z}_i^\beef) \quad \mbox{and} \quad z_i\in G_0(\mathcal{Z}_i).
\]
The homomorphism \eqref{divisor pullback} sends  $z_i^\beef \mapsto  z_i$.
\end{lemma}

\begin{proof}
First suppose $(t^\beef_i,\mu^\beef_i) = (0,0)$.  As 
\[
0 = t_i^\beef = t_i+ Q(y_i),
\]
both $t_i=0$ and $y_i=0$, and the latter implies $\nu_i=0$.   Thus $\mathcal{Z}_i^\beef = \mathcal{M}^\beef$ and $\mathcal{Z}_i=\mathcal{M}$, and we have
\[
z_i^\beef = [\co_{\mathcal{M}^\beef}] - [\omega^{\beef}]
\quad \mbox{and} \quad 
z_i = [\co_{\mathcal{M}}] - [\omega] .
\]
  Using \eqref{naive intersection}  and the fact that the  tautological bundle $\omega^\beef \in \mathrm{Pic}(\mathcal{M}^\beef)$  pulls back to the tautological bundle $\omega \in \mathrm{Pic}(\mathcal{M})$, we see that \eqref{divisor pullback} sends
\[
[ \co_{\mathcal{M}^\beef}  ]  \mapsto  [  \co_\mathcal{M}  ]
\quad\mbox{and}\quad 
[ \omega^\beef  ]   \mapsto  [  \omega  ] . 
\]
The lemma follows immediately from this.

Next assume that $(t^\beef_i,\mu^\beef_i) \neq (0,0)$ and $(t_i,\mu_i) \neq (0,0)$.
Fix a geometric point $y\to \mathcal{Z}_i$, which we can also view as a point on $\mathcal{M},\mathcal{M}^\beef$ and $\mathcal{Z}_i^\beef$. 
As both 
\[
\mathcal{Z}_i \to \mathcal{M} \quad \mbox{and}\quad \mathcal{Z}^\beef_i \to \mathcal{M}^\beef
\]
are generalized Cartier divisors (Proposition \ref{prop:divisors are divisors}), we can write
\[
\co^\et_{\mathcal{Z}_i^\beef,y} \iso \co^\et_{\mathcal{M}^\beef,y}/(g)
\]
for a nonzero $g\in \co^\et_{\mathcal{M}^\beef,y}$ whose image  in $\co^\et_{\mathcal{M}, y}$ satisfies
\[
\co^\et_{\mathcal{Z}_i,y} \iso  \co^\et_{\mathcal{M},y}/(g).
\]
It follows that 
\[
\mathrm{Tor}^{\co^\et_{\mathcal{M}^\beef,y}}_\ell( \co^\et_{\mathcal{Z}_i^\beef,y}   ,   \co^\et_{\mathcal{M},y}) \iso \begin{cases}
\co^\et_{\mathcal{Z}_i,y}&\mbox{if $\ell = 0$}\\
0&\mbox{if $\ell>0$}.
\end{cases} 
\]
Allowing $y$ to vary shows that 
\[
\underline{\mathrm{Tor}}^{\co_{\mathcal{M}^\beef}}_\ell(\co_{\mathcal{Z}_i^\beef},\co_{\mathcal{M}})\vert_{\mathcal{Z}_i} \iso \begin{cases}
\co_{\mathcal{Z}_i}&\mbox{if $\ell = 0$}\\
0&\mbox{if $\ell > 0$},
\end{cases}
\]
and hence \eqref{divisor pullback} sends  $z_i^\beef = [ \co_{\mathcal{Z}_i^\beef}  ]$ to $z_i= [ \co_{\mathcal{Z}_i}  ]$, as desired.

Finally, we treat the subtle case in which  $(t^\beef_i,\mu^\beef_i) \neq (0,0)$ and $(t_i,\mu_i) = (0,0)$.  
This is the case that accounts for improper intersection between the images of  $\mathcal{M} \to \mathcal{M}^\beef$ and $\mathcal{Z}^\beef(T^\beef,\mu^\beef) \to \mathcal{M}^\beef$.
The left vertical arrow in \eqref{wee pullback diagram} is an isomorphism
\[
 \mathcal{Z}_i \iso \mathcal{M} ,
\]
and the top horizontal arrow is identified with the closed immersion
\[
i : \mathcal{M} \to \mathcal{Z}_i^\beef
\]
sending a functorial point  $S \to \mathcal{M}$ to the point $S\to \mathcal{Z}_i^\beef$ determined by the special quasi-endomorphism $y_i \in V_{\mu_i^\beef}(\mathcal{A}_S^\beef)$ of \eqref{pullback ortho decomp}.
This induces the open and closed immersion
\[
j : \mathcal{M} \iso \mathcal{Z}_i \hookrightarrow
 \mathcal{Z}_i^\beef \times_{\mathcal{M}^\beef} \mathcal{M},
\]
and the composition \eqref{divisor pullback} factors as
\begin{equation}\label{improper diagram}
\xymatrix{
 {   G_0(\mathcal{Z}_i^\beef)_\Q  }  \ar[d]_{   \cap [\co_{\mathcal{Z}_i^\beef}]     }    \ar[rr]^{  \cap [\co_\mathcal{M}]  }   &   &  {   G_0 (  \mathcal{Z}_i^\beef \times_{ \mathcal{M}^\beef } \mathcal{M})_\Q  }   \ar[dd]^{j^*}   \\
{     G_0(\mathcal{Z}_i^\beef\times_{\mathcal{M}^\beef}\mathcal{Z}_i^\beef)_\Q  }  \ar[d]_{\Delta^*}  & &    \\
{  G_0( \mathcal{Z}_i^\beef )_\Q  }  \ar[rr] & &  {   G_0 (  \mathcal{M})_\Q  }   .  } 
\end{equation}
Here $\Delta : \mathcal{Z}_i^\beef \to \mathcal{Z}_i^\beef\times_{\mathcal{M}^\beef}\mathcal{Z}_i^\beef$ is the  diagonal morphism, which is both an open and closed immersion, and the bottom horizontal arrow is the derived pullback 
\begin{equation}\label{degenerating pullback}
[ \mathcal{F} ] \mapsto \sum_{\ell \ge 0} (-1)^\ell \cdot  [ \underline{\mathrm{Tor}}_\ell^{ \co_{\mathcal{Z}^\beef_i} }  ( \mathcal{F} , i_*\co_\mathcal{M} )  ] 
\end{equation}
along the closed immersion $i$.

Abbreviating
\[
(S_0,\eta_0) = 
\left( \begin{pmatrix}
t_i^\beef&0\\
0&0
\end{pmatrix},(\mu_i^\beef,0)\right )
\quad \mbox{and} \quad
(S_0',\eta_0') = 
\left( \begin{pmatrix}
t_i^\beef&t_i^\beef\\
t_i^\beef&t_i^\beef
\end{pmatrix},(\mu_i^\beef,\mu_i^\beef)\right ) ,
\]
there are canonical isomorphisms
\[
  \mathcal{Z}^\beef (S_0,\eta_0)  \iso \mathcal{Z}_i^\beef    \iso   \mathcal{Z}^\beef (S_0',\eta_0') .
\]
Under the moduli interpretations, a special quasi-endomorphism $x$ of $\mathcal{A}^\beef$ representing  a point of the stack in the middle is sent to $(x,0)$ on the left, and $(x,x)$ on the right.
Proposition \ref{prop:naive intersection} realizes
\[
\mathcal{Z}_i^\beef    \iso   \mathcal{Z}^\beef (S_0',\eta_0')  \subset  \mathcal{Z}_i^\beef \times_{\mathcal{M}}\mathcal{Z}_i^\beef
\]
as an open and closed substack, and this agrees with the diagonal embedding denoted $\Delta$ above.

The linear invariance  proved in
Proposition \ref{prop:linear invariance} implies the equality of derived fundamental classes
\[
[  \co^\mathrm{derived}_{ \mathcal{Z}^\beef (S_0,\eta_0) }  ] = [ \co^\mathrm{derived}_{ \mathcal{Z}^\beef (S,\eta) } ] \in G_0( \mathcal{Z}_i^\beef )_\Q,
\]
Unpacking their definitions shows that the composition of the left vertical arrows in \eqref{improper diagram} sends
\[
[\co_{\mathcal{Z}^\beef_i}] 
 \mapsto 
[  \co^\mathrm{derived}_{ \mathcal{Z}^\beef (S_0',\eta_0') }  ]   =
[  \co^\mathrm{derived}_{ \mathcal{Z}^\beef (S_0,\eta_0) }  ]    =   
[ \co_{\mathcal{Z}^\beef_i} ]  -     [  \omega^\beef |_{  \co_{\mathcal{Z}^\beef_i}   } ]  ,
\]
The bottom horizontal arrow  \eqref{degenerating pullback}, which simplifies to $[\mathcal{F}] \mapsto [ i^* \mathcal{F}]$ when $\mathcal{F}$ is a vector bundle on $\mathcal{Z}_i^\beef$,  then sends 
\[
[ \co_{\mathcal{Z}^\beef_i} ]  -     [  \omega^\beef |_{  \co_{\mathcal{Z}^\beef_i}   } ] \mapsto [ \co_\mathcal{M} ] - [ \omega] .
\]
Combining these calculations with the commutativity of the diagram shows that  \eqref{divisor pullback}  sends $z_i^\beef  = [\co_{\mathcal{Z}^\beef_i}]$ to 
$z_i = [ \co_\mathcal{M} ] - [ \omega]$, as desired.
\end{proof}

Now consider the commutative diagram
\[
\xymatrix{
{   \mathcal{Z}(T,\mu)    }  \ar[r] \ar[d]  & {   \mathcal{Z}(T^\beef, \mu^\beef)     } \ar[d]  \\
{   \mathcal{Z}_1 \times_\mathcal{M} \cdots \times_\mathcal{M} \mathcal{Z}_d   }  \ar[r]  \ar[d]  &  {  \mathcal{Z}_1^\beef \times_{\mathcal{M}^\beef}  \cdots \times_{\mathcal{M}^\beef} \mathcal{Z}_d^\beef } \ar[d] \\
{  \mathcal{M} } \ar[r]^{f} & { \mathcal{M}^\beef .}
}
\]
The upper vertical arrows are open and closed immersions.
The middle horizontal arrow  identifies 
\[
 \mathcal{Z}_1 \times_\mathcal{M} \cdots \times_\mathcal{M} \mathcal{Z}_d  
   \subset
    (   \mathcal{Z}_1^\beef \times_{\mathcal{M}^\beef}  \cdots \times_{\mathcal{M}^\beef} \mathcal{Z}_d^\beef) \times_{ \mathcal{M}^\beef } \mathcal{M}
\]
as an open and closed substack, and induces a morphism
\begin{align*}
G_0(   \mathcal{Z}_1^\beef \times_{\mathcal{M}^\beef}  \cdots \times_{\mathcal{M}^\beef} \mathcal{Z}_d^\beef  ) 
\to
G_0(   \mathcal{Z}_1 \times_\mathcal{M} \cdots \times_\mathcal{M} \mathcal{Z}_d    ) 
\end{align*}
exactly as in \eqref{divisor pullback}.
 It follows from  Lemma \ref{lem:adjunction} that this morphism sends 
\[
z_1^\beef \cap \cdots \cap z_d^\beef \mapsto z_1 \cap \cdots\cap z_d,
\]
and so the commutativity of the diagram
\[
\xymatrix{
{G_0(   \mathcal{Z}_1^\beef \times_{\mathcal{M}^\beef}  \cdots \times_{\mathcal{M}^\beef} \mathcal{Z}_d^\beef  )   } \ar[r]\ar[d] &  {G_0(   \mathcal{Z}_1 \times_\mathcal{M} \cdots \times_\mathcal{M} \mathcal{Z}_d    )   }  \ar[d] \\
{  G_0(  \mathcal{Z}(T^\beef,\mu^\beef)  )   }  \ar[r] &  {G_0(  \mathcal{Z}(T,\mu)  )   }  
}
\]
 implies that the bottom horizontal arrow   sends   
 \[
[  \co^\mathrm{derived}_{  \mathcal{Z}^\beef(T^\beef,\mu^\beef) } ]  \mapsto [  \co^\mathrm{derived}_{ \mathcal{Z}(T,\mu) }] .
 \]
 
 Allowing $\mathcal{Z}(T,\mu)$ to vary over the  right hand side of \eqref{naive decomp}, we see that 
 in the commutative diagram 
 \[
 \xymatrix{
 {    G_0 \big(  \mathcal{Z}^\beef(T^\beef, \mu^\beef)   \big)_\Q   } \ar[rr]^{ \cap [\co_\mathcal{M}] }  \ar[d]_{ \eqref{coniveau push}  }  &  & {   G_0 \big(  \mathcal{Z}^\beef(T^\beef, \mu^\beef)  \times_{\mathcal{M}^\beef} \mathcal{M}  \big)_\Q } \ar[d]^{ \eqref{coniveau push} }  \\
 {  K^{   \mathcal{Z}^\beef(T^\beef, \mu^\beef)  } _0(\mathcal{M}^\beef )_\Q    } \ar[rr]^{f^*}  &  & { K^{  \mathcal{Z}^\beef(T^\beef, \mu^\beef)  \times_{\mathcal{M}^\beef} \mathcal{M}    } _0( \mathcal{M} )_\Q } ,
   }
 \]
the top horizontal arrow sends  
 \[
 [  \co^\mathrm{derived}_{  \mathcal{Z}^\beef(T^\beef,\mu^\beef) } ]      
   \mapsto 
 \sum_{   \substack{ T \in \Sym_d(\Q)  \\  \mu\in (L^\vee / L)^d } } 
 \sum_{ \substack{   \nu \in ( \Lambda^\vee/\Lambda)^d \\ \mu+\nu =  \mu^\beef   } } 
 \sum_{   \substack{ y\in  \nu + \Lambda^d \\  T+ Q(y) = T^\beef   }    }
[  \co^\mathrm{derived}_{ \mathcal{Z}(T,\mu) }]  .
   \]
 Proposition \ref{prop:derived pullback} follows immediately from this and the diagram
 \[
 \xymatrix{
 { F^d   K^{   \mathcal{Z}^\beef(T^\beef, \mu^\beef)  } _0(\mathcal{M}^\beef )_\Q    } \ar[rr]^{f^*}  \ar[d]_{ \eqref{ChowGroth} }   
 &  & { F^d K^{  \mathcal{Z}^\beef(T^\beef, \mu^\beef)  \times_{\mathcal{M}^\beef} \mathcal{M}    } _0( \mathcal{M} )_\Q }  \ar[d]^{ \eqref{ChowGroth} }   \\
 {  \mathrm{CH}^d_{   \mathcal{Z}^\beef(T^\beef, \mu^\beef)  }(\mathcal{M}^\beef )    } \ar[rr]_{f^*} 
 & &  {   \mathrm{CH}^d_{  \mathcal{Z}^\beef(T^\beef, \mu^\beef)  \times_{\mathcal{M}^\beef} \mathcal{M}    } ( \mathcal{M} ) , } 
 }
 \]
 which commutes by the very definition of the bottom horizontal arrow.
 \end{proof}


\section{Modularity in all codimensions}
\label{s:main result}


In this section we prove our main result.  
We remind the reader that $V$ is a quadratic space of signature $(n,2)$ with $n\ge 1$,   $L\subset V$ is a  lattice on which the quadratic form is $\Z$-valued,   $\Sigma$ is a finite set of primes containing all primes for which $L_p$ is not maximal (an assumption that will be strengthened below), and 
\[
\mathcal{M} \to \Spec(\Z[\Sigma^{-1}])
\]
 is the integral model of dimension $n+1$ from \S \ref{ss:integral model}.


\subsection{Siegel theta series}


Let $(\Lambda,Q)$ be a positive definite quadratic space over $\Z$, satisfying $\Lambda^\vee =\Lambda$.
The self-duality condition implies that the rank of $\Lambda$ is even, say 
\[
\mathrm{rank}(\Lambda)=2k.
\]
 For any  positive integer $d$,  the  \emph{genus $d$ Siegel theta series}
\begin{equation}\label{siegel theta}
\vartheta_{\Lambda,d} (\tau) = \sum_{  y \in \Lambda^d  } q^{Q(y)}
\end{equation}
is  Siegel modular form of genus $d$ and weight $k$.  Here $\tau \in \mathcal{H}_d \subset\Sym_d(\C)$ is the variable on the  Siegel half-space of genus $d$,  $Q(y) \in \Sym_d(\Q)$ is the moment matrix as in \eqref{inner products}, and $q^{Q(y)} = e^{2\pi i \mathrm{Tr}(\tau Q(y))}$.

\begin{theorem}[B\"ocherer \cite{Bocherer}]\label{thm:thetamania}
If  $4\mid k$  and  $k > 2d$, the space of  $\C$-valued Siegel modular forms of genus $d$ and weight $k$ is spanned by the genus $d$ Siegel  theta series \eqref{siegel theta} as $\Lambda$ varies over all self-dual positive definite  $\Z$-quadratic spaces  of rank $2k$.
\end{theorem}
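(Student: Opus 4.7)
The plan is to reduce to two classical inputs: the Siegel--Weil formula for the Eisenstein part, and the non-degeneracy of the theta correspondence in the stable range for the cuspidal part. Write $\Theta_k^{(d)} \subseteq M_k(\Gamma_d)$ for the subspace spanned by the $\vartheta_{\Lambda,d}$ as $\Lambda$ ranges over self-dual positive definite $\Z$-lattices of rank $2k$. Decompose $M_k(\Gamma_d) = \mathcal{E}_k^{(d)} \oplus S_k^{(d)}$ into Eisenstein and cuspidal parts; the claim is that $\Theta_k^{(d)}$ contains each summand.

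For the Eisenstein part, the Siegel--Weil formula identifies the mass-weighted genus average
\[
\sum_\Lambda \frac{1}{|\mathrm{O}(\Lambda)|} \vartheta_{\Lambda,d}(\tau) = c \cdot E_k^{(d)}(\tau)
\]
(with a nonzero constant $c$) with a Siegel Eisenstein series of genus $d$ and weight $k$; the absolute convergence hypothesis $k > d+1$ is implied by $k > 2d$, and the hypothesis $4 \mid k$ guarantees that the relevant Weil representation is a genuine (non-metaplectic) representation of $\mathrm{Sp}_{2d}$, so scalar-valued self-dual theta series indeed live in $M_k(\Gamma_d)$. This puts the top-degree Siegel Eisenstein series into $\Theta_k^{(d)}$. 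To capture the Klingen Eisenstein series induced from cusp forms of lower genus, I would argue by induction on $d$, using that the Siegel $\Phi$-operator sends $\vartheta_{\Lambda,d} \to \vartheta_{\Lambda,d-1}$: by the inductive hypothesis applied to $d-1$ (the condition $k > 2d$ clearly implies $k > 2(d-1)$), the map $\Phi \colon \Theta_k^{(d)} \to M_k(\Gamma_{d-1})$ is surjective, and together with the top Eisenstein already produced, this fills out all of $\mathcal{E}_k^{(d)}$ modulo $S_k^{(d)}$.

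For the cuspidal part, suppose $f \in S_k^{(d)}$ is orthogonal to every $\vartheta_{\Lambda,d}$ under the Petersson inner product. Unwinding the pairing, one identifies $\langle f, \vartheta_{\Lambda,d}\rangle_{\mathrm{Pet}}$ with a period integral expressing the value at the identity of the theta lift of $f$ to $\mathrm{O}(\Lambda_\Q)$. The inequality $k > 2d$, i.e.\ $2k > 4d$, puts the reductive pair $(\mathrm{Sp}_{2d}, \mathrm{O}(\Lambda_\Q))$ well inside the stable range of Howe's theta correspondence. In this range, the Rallis inner product formula expresses the norm of the lift as a product of a non-vanishing archimedean integral and a special value of a standard $L$-function, which is non-vanishing by the analytic properties in the convergence range. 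Varying $\Lambda$ through the available self-dual genera of rank $2k$ (of which there are many, by mass formulas) then forces $f = 0$.

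The main obstacle is the cuspidal step: both the Rallis inner product formula and Howe duality are deep results requiring careful local analysis, and one needs that the family of self-dual lattices of rank $2k$ is rich enough to exhaust the spectral decomposition. A secondary technical difficulty is the inductive handling of Klingen Eisenstein series via the $\Phi$-operator, which requires knowing that the kernel of $\Phi$ is accounted for by the top Siegel Eisenstein series together with the cuspidal subspace already produced in the first step of the induction.
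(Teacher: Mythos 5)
The paper does not actually prove this statement: it is quoted as a black box from B\"ocherer's article \cite{Bocherer}, so there is no internal argument to compare against. Your sketch is, in substance, the standard proof of the cited result. B\"ocherer's original argument runs through Garrett's pullback formula for the genus-$2d$ Siegel Eisenstein series together with Siegel's Hauptsatz, which is precisely the classical incarnation of the Siegel--Weil plus doubling/Rallis strategy you describe, and the nonvanishing of the theta lift of a cusp form in the stable range is J.-S. Li's theorem; so the ingredients you invoke are the right ones and the reduction is sound. Two corrections are worth recording. First, for $4\mid k$ the positive definite self-dual $\Z$-lattices of rank $2k$ form a \emph{single} genus, not ``many genera'': what you vary over are the classes within that genus, and this is exactly what makes ``$\langle f,\vartheta_{\Lambda,d}\rangle=0$ for all $\Lambda$'' equivalent to the vanishing of the automorphic theta lift $\theta(f)$ on the class set $\mathrm{O}(\Lambda_\Q)\backslash \mathrm{O}(\Lambda_{\A})/K$. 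Second, the difficulty you flag about Klingen Eisenstein series is not there: for the full group $\mathrm{Sp}_{2d}(\Z)$ there is a single cusp, so the kernel of the Siegel operator $\Phi$ on $M_k(\Gamma_d)$ is exactly $S_k(\Gamma_d)$; hence the surjectivity of $\Phi$ on the theta span, obtained by induction on $d$, already reduces the whole problem to the cuspidal step, and the separate Siegel--Weil identification of the top Siegel Eisenstein series is subsumed. Finally, note that $4\mid k$ and $k>2d$ together force $k\ge 2d+2$, which is what places the genus-$2d$ doubling Eisenstein series in its range of absolute convergence and guarantees the nonvanishing of the standard $L$-value in the Rallis inner product formula.
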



\subsection{The main result}


We now extend the modularity of generating series proved in  \cite{BWR} from complex Shimura varieties to  their integral models. 

 Assume throughout that  $\Sigma$ contains
\begin{itemize}
  \item all odd primes   $p$  such that  $p^2$ divides  $[ L^\vee : L]$, and
  \item $p=2$, if  $L_2$ is not hyperspecial.
\end{itemize}
In particular  $\mathcal{M}$ is regular by Proposition \ref{prop:regularity}, allowing us  to define  the derived cycle classes 
\[
 \mathcal{C}(T,\mu)  \in \mathrm{CH}^d ( \mathcal{M} )
\]
of Definition \ref{def:corrected classes}.
By   Proposition \ref{prop:naive comparison},  these are given by the elementary formula 
\begin{equation}\label{no derived}
 \mathcal{C}(T,\mu) =
  \underbrace{c_1(\omega^{-1}) \cdots c_1(\omega^{-1})}_{d-\mathrm{rank}(T) } \cdot  [\mathcal{Z}(T,\mu)] \in \mathrm{CH}^{d} (\mathcal{M}) 
 \end{equation}
whenever the naive special cycle $\mathcal{Z}(T,\mu) \to \mathcal{M}$ is equidimensional of codimension $\mathrm{rank}(T)$.
Using the notation of \S \ref{ss:jacobi},  abbreviate
 \[
 \mathcal{C}(T) = \sum_{\mu\in (L^\vee/L)^d} \mathcal{C}(T,\mu) \otimes \phi_{\mu}^* \in 
  \mathrm{CH}^d ( \mathcal{M} ) \otimes S^*_{L,d}.
 \]

\begin{theorem}\label{thm:modularity}
For every integer $1\le d \le n+1$,  the formal generating series 
\begin{equation}\label{main generating}
 \phi(\tau) = \sum_{ T\in \Sym_d(\Q) } \mathcal{C}(T) \cdot q^T 
\end{equation}
valued in $ \mathrm{CH}^d( \mathcal{M} ) \otimes S^*_{L,d}$ converges to  a Siegel modular form of weight $1+\frac{n}{2}$ and representation 
\begin{equation}\label{main siegel rep}
\omega^*_{L,d} : \widetilde{\Gamma}_d \to \GL( S^*_{L,d}).
\end{equation}
The convergence and modularity are understood in the sense of Theorem \ref{BigThm:generic_modularity}:  they hold after applying any $\Q$-linear functional $\mathrm{CH}^d(\mathcal{M}) \to \C$.
\end{theorem}

\begin{proof}
When $d=1$ the desired modularity is \cite[Theorem B]{HMP}.
  We remark that the isomorphism $S_L \iso S_L^*$ sending  $\phi_\mu \mapsto \phi_\mu^*$ identifies the representation  $\rho_{L}$ in the statement of \emph{loc.~cit.}~ with the representation $\omega_L^*$ defined in \S \ref{ss:jacobi}.
Henceforth we assume $d\ge 2$.

It is a theorem of Bruinier and Westerholt-Raum \cite{BWR} that \eqref{main generating} is modular of the stated weight and representation if and only if two conditions are satisfied:
\begin{enumerate}
\item
For every $T\in \Sym_d(\Q)$ and $A \in \GL_d(\Z)$,  the coefficients satisfy the linear invariance relation
\[
\mathcal{C} (T) = \mathcal{C} ({}^t A T A).
\]

\item
For every $T_0\in \Sym_{d-1}(\Q)$,  the   generating series 
\[
 \sum_{    \substack{  m\in \Q   \\   \alpha\in   \Q^{d-1}     }   } 
\mathcal{C}  \left(\begin{matrix}  T_0 &  \frac{\alpha}{2} \\  \frac{ {}^t \alpha }{2}  & m   \end{matrix}\right)   
 \cdot   q^m \xi_1^{\alpha_1}\cdots \xi_{d-1}^{\alpha_{d-1}} 
\]
with coefficients in $\mathrm{CH}^d(\mathcal{M})\otimes S_{L,d}^*$  is a Jacobi form  of weight $1+\frac{n}{2}$, index $T_0$,    and representation \eqref{main siegel rep}.
  \end{enumerate}

Let $r(L)$ be the integer of Definition \ref{defn:rL}.
If we assume that 
 \[
 n \ge   3d+ 2r(L) +4 
 \]
 then the special cycles $\mathcal{Z}(T,\mu) \to \mathcal{M}$ indexed by $T\in \Sym_d(\Q)$ are equidimensional of codimension $\mathrm{rank}(T)$ by Proposition \ref{prop:low key}, and so the equality \eqref{no derived} holds.
The linear invariance of condition (1)  is   Proposition \ref{prop:linear invariance} 
The Jacobi modularity of condition (2) is (up to a change of notation) Proposition \ref{prop:jacobi generating}.
Note that we are using \eqref{no derived} to compare the cycle classes of Definition \ref{def:corrected classes}  with the cycle classes \eqref{eqn:def_corrected_class} used throughout \S \ref{s:low codimension}. 
This proves the theorem when  $ n \ge   3d+ 2r(L) +4$.

 To treat the general case,  let $\Lambda$ be any positive definite self-dual quadratic lattice over $\Z$, chosen so that 
 \[
   \mathrm{rank}(\Lambda) \ge  3d+  2r(L) +4.
 \]
The  $\Z$-quadratic space $L^\beef = L\oplus \Lambda$  has signature $(n^\beef,2)$  with
\[
n^\beef \ge  \mathrm{rank}(\Lambda) \ge 3d + 2r(L^\beef)  +4  ,
\]
where we have used  $r(L^\beef) \leq r(L)$. 
The quadratic lattice $L^\beef$ determines its own integral model $\mathcal{M}^\beef$ over $\Z[\Sigma^{-1}]$,  with its own corrected special cycles
\[
\mathcal{C}^\beef(T,\mu) \in \mathrm{CH}^d(\mathcal{M}^\beef).
\]

As in Remark \ref{rem:int shimura embedding}, there is a  finite  morphism
$
f :  \mathcal{M} \to \mathcal{M}^\beef 
$
of regular stacks over $\Z[\Sigma^{-1}]$.
The self-duality of $\Lambda$ implies  that $S_{L^\beef,d} \iso S_{L,d}$, so there is a  pullback (\S \ref{ss:chow})
\[
f^* : \mathrm{CH}^d(\mathcal{M}^\beef)  \otimes  S^*_{L^\beef,d}   \to \mathrm{CH}^d(\mathcal{M}) \otimes S^*_{L,d} .
\]

By the special case proved  above, the  generating series
\[
\phi^\beef (\tau) = \sum_{ T\in \Sym_d(\Q) } \mathcal{C}^\beef(T) \cdot q^T 
\]
valued in $\mathrm{CH}^d(\mathcal{M}^\beef) \otimes S^*_{ L^\beef,d}$  is a Siegel modular form of genus $d$,  
weight $1+\frac{n^\beef}{2}$, and representation $\omega^*_{L^\beef,d}$.
On the other hand,  Proposition \ref{prop:derived pullback} (more precisely, the special case stated in the introduction as Theorem \ref{BigThm:intro pullback}) implies the factorization of generating series 
\[
f^* \phi^\beef (\tau) = \phi (\tau)  \cdot \vartheta_{\Lambda,d} (\tau),
\]
where the second factor on the right is the Siegel theta series \eqref{siegel theta}.
It follows  that $\phi(\tau)$ is a \emph{meromorphic} Siegel modular form of weight
\[
1 + \frac{n^\beef}{2} - \frac{ \mathrm{rank}(\Lambda) }{2} =1 + \frac{n}{2} 
\]
with poles supported on the zero locus of  $\vartheta_{\Lambda,d}(\tau)$.   

It remains to verify that  $\phi(\tau)$ is holomorphic, which we do by allowing $\Lambda$ to vary.
Fix a point $\tau_0\in \mathcal{H}_d$ in the genus $d$ Siegel half-space.
As in the arguments of  \cite{Baily}, it follows from the construction of Poincar\'e series found in \cite{Cartan} that there exists a Siegel modular form of genus $d$ and some weight  $k$ that does not vanish at $\tau_0$.
Moreover, we may  choose this form in such a way that  its weight satisfies  $4\mid k$  and   $2k \ge  3d + 2r(L)+4$. 
 By Theorem \ref{thm:thetamania}, there exists a self-dual  $\Lambda$ as above with  
 \[
 \mathrm{rank}(\Lambda)=2k\ge  3d + 2r(L)+4 ,
 \]
  whose associated genus $d$ Siegel theta series  $\vartheta_{\Lambda,d}$ does not vanish at $\tau_0$.
The existence of such a $\Lambda$ implies that   $\phi(\tau)$ is holomorphic near $\tau_0$.\footnote{There is a subtlety in this proof pointed out to us by Steve Kudla. In the proof we are implicitly making use of the following fact: The ring of formal $q$-series satisfying the linear invariance property enjoyed by the series here is an integral domain, since it is the complete local ring of the \emph{normal} Baily-Borel compactification of the Siegel modular variety. See the discussion in~\cite[\S 8]{kudla21}.}
\end{proof}

\appendix

 
\section{Chow groups}
 

We need a working theory of Chow groups (always with $\Q$-coefficients) for Deligne-Mumford stacks $M$,  and in greater generality than is usually found in the literature.  
For example,  \S \ref{s:good cycles} and \S \ref{s:low codimension} make systematic use of  Chow groups of stacks that are not locally integral.

Throughout this section we fix a ring $S$ that is either a field or an excellent Dedekind domain 
(for example, $S=\Z$).
The term \emph{stack} will mean an equidimensional and separated Deligne-Mumford stack  of finite type over $S$.


\subsection{Chow groups of Deligne-Mumford stacks}
\label{ss:DMchow}


Our goal in this subsection is to define Chow groups of stacks, and show that it is covariant with respect to finite morphisms.   Most of what we need can be deduced directly from the results of ~\cite{Gillet1984-tk}.

As in~\cite[Definition 3.2]{Gillet1984-tk} a stack $M$ has an underlying topological space $|M|$,  whose points are the integral closed substacks $Z\subset M$.  Each open substack $U \subset M$ determines an open set $|U| \subset |M|$, whose points are those integral closed substacks $Z\subset M$ for which $Z \cap U \neq \emptyset$.
Any morphism of stacks $M' \to M$ induces a continuous map 
\begin{equation}\label{top map}
|M'| \to |M|
\end{equation}
 by \cite[Corollary 3.4]{Gillet1984-tk}.

We write $M^{(d)}$ for the subset of $|M|$ consisting of the those integral substacks of codimension $d$.


\begin{remark}
A field-valued point $x\in M(k) $ determines a map of topological spaces $|\Spec(k)| \to |M|$ whose image is a single point. This point,  the \emph{Zariski closure} of $x$, is an integral closed substack denoted $ \overline{\{x\}} \subset M$.  
  Taking Zariski closures  of field-valued points establishes a bijection between the topological space $|M|$ as defined above, and the more common definition in terms of equivalence classes of field-valued points  \cite[Tag 04XE]{stacks-project}.  
\end{remark}

Recall from \cite[\S 3]{Gillet1984-tk} that a  stack $\xi$ is \emph{punctual} if it is reduced and $|\xi|$ is a single point.
The ring of  global functions 
\[
k(\xi) = H^0(\xi,\co_{\xi})
\]
of such a $\xi$ is a field.  Moreover, if $U\to \xi$ is an \'etale chart, then  $U = \Spec(E)$ where $E$ is a separable $k(\xi)$-algebra.  If we write  $U\times_{\xi}U = \Spec(E') $  then $E'$ is a free $E$-module (for either of the two natural maps $E\to E'$), and we define the \emph{ramification index} of $\xi$ to be
  \[
   e(\xi) = \frac{\mathrm{rank}_E(E')}{\dim_{k(\xi)}(E)}.
  \]
  This is independent of the choice of chart.

According to \cite[Tag 0H22]{stacks-project}, one can associate to any  $Z\in |M|$  a distinguished punctual stack $\xi$  together with a map $\xi \to M$ such that the image of $|\xi| \to |M|$ is $Z$.  
This $\xi$ is known as the \emph{residual gerb} at $Z$, but we will refer to it below simply as the \emph{generic point} of $Z$.
 We usually conflate $Z\in M^{(d)}$ with its generic point, for example by writing $\xi\in M^{(d)}$ and referring to $\xi$ as a codimension $d$ point of $M$.

If $\xi$ is the generic point of an integral stack $M$, then we call $k(\xi)$ the \emph{field of rational functions of} $M$, and also denote it by $k(M)$.   If we write  $\mathrm{Et}(M)$ for the category of \'etale maps $U\to M$ with $U$ a scheme,  then 
\begin{align}\label{eqn:rat fns invlim}
k(M) = \varprojlim_{(U\to M)\in \mathrm{Et}(M)}k(U).
\end{align}

For an integer $d\geq 0$, define   the \emph{vector space of $d$-cycles} $\mathscr{Z}^d(M)$ as the free $\Q$-vector space on the set $M^{(d)}$ of codimension $d$ integral closed substacks.
In particular, each point $\xi\in M^{(d)}$ gives us a basis vector $[\xi]\in \mathscr{Z}^d(M)$. 
By~\cite[Lemma 4.3]{Gillet1984-tk}, we have
\begin{align}\label{eqn:cycles invlim}
\mathscr{Z}^d(M) = \varprojlim_{(U\to M)\in \mathrm{Et}(M)}\mathscr{Z}^d(U).
\end{align}

When $M$ is an integral scheme there is a divisor map $\mathrm{div}:k(M)\to \mathscr{Z}^1(M)$ defined by
\[
\mathrm{div}(f) = \sum_{\xi\in M^{(1)}}\mathrm{ord}_{\xi}(f)[\xi] . 
\]
Here  
\[
\mathrm{ord}_\xi(f) = \mathrm{length}(R/(g)) - \mathrm{length}(R/(h))
\]
where   $R=\co_{M,\xi}$ is the local ring of $M$ at $\xi$, and  we have written  $f=g/h$ in its field of fractions. 
Combining this construction with~\eqref{eqn:rat fns invlim} and~\eqref{eqn:cycles invlim} allows us to extend the definition of  the divisor map $\mathrm{div}:k(M)\to \mathscr{Z}^1(M)$ to any integral stack $M$.

For any finite morphism $\pi:M'\to M$ of stacks with $\dim(M') = \dim(M) - r$, there is a pushforward
\begin{align}\label{eqn:cycles pushforward}
\pi_*:\mathscr{Z}^{d-r}(M')\to \mathscr{Z}^d(M).
\end{align}
Indeed, given a codimension $d-r$ point  $\xi' \in |M'|$, its image under \eqref{top map} is a codimension $d$ point $\xi \in |M|$, and there is a canonical finite morphism of punctual stacks $\xi'\to\xi$.  
This allows us to define  
\[
\pi_*[\xi'] =   \frac{e(\xi)}{e(\xi')}  \cdot \deg\big( k(\xi')/ k(\xi)\big) \cdot  [ \xi],
\]
where the degree is the usual degree of a field extension.
  
In particular, if $\xi\in M^{(d-1)}$ is the generic point of an integral closed substack $W\subset M$ there is  a divisor map
\[
\mathrm{div}_{\xi}:k(\xi)^\times  = k(W)^\times \xrightarrow{\mathrm{div}} \mathscr{Z}^1(W) \to \mathscr{Z}^{d}(M),
\]
where the final arrow is the pushforward along the inclusion $W\hookrightarrow M$.

\begin{definition}
Setting
\[
\mathscr{R}^d(M) = \bigoplus_{\xi\in M^{(d-1)}}k(\xi)^\times_{\Q},
\] 
define the ($\Q$-coefficient)  \emph{codimension $d$ Chow group} of a stack $M$ by  
\[
\mathrm{CH}^d(M) = \mathrm{coker}\left(\mathscr{R}^d(M) \xrightarrow{\sum_\xi\mathrm{div}_\xi}\mathscr{Z}^d(M)\right).
\]
As usual, cycles in the kernel of the natural map $\mathscr{Z}^d(M)\to \mathrm{CH}^d(M)$ are said to be \emph{rationally equivalent to $0$}. 
\end{definition}

\begin{proposition}
\label{prop:chow pushforward}
Suppose  $\pi:M'\to M$ is a finite morphism of stacks with $\dim(M') = \dim(M) - r$. 
The pushforward on cycles~\eqref{eqn:cycles pushforward} descends to
\[
\pi_*:\mathrm{CH}^{d-r}(M')\to \mathrm{CH}^d(M).
\]
\end{proposition}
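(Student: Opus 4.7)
The plan is to show that the cycle-level pushforward $\pi_*$ takes rationally trivial cycles on $M'$ to rationally trivial cycles on $M$. By $\Q$-linearity, it suffices to prove that for every $\xi' \in (M')^{(d-r-1)}$ and every $f' \in k(\xi')^\times$, the cycle $\pi_*\mathrm{div}_{\xi'}(f')$ lies in the image of $\sum_\xi \mathrm{div}_\xi \colon \mathscr{R}^d(M) \to \mathscr{Z}^d(M)$. Write $W' = \overline{\{\xi'\}} \subset M'$ and $W = \pi(W') \subset M$, with generic point $\xi \in M^{(d-1)}$. Because $\pi$ is finite and $\dim(M') = \dim(M) - r$, the restriction $p \define \pi|_{W'} \colon W' \to W$ is a finite surjective morphism of integral stacks of the same dimension, so $k(\xi')/k(\xi)$ is a finite extension. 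Since $\mathrm{div}_{\xi'}$ is by construction $\mathrm{div}$ on $W'$ followed by pushforward along the closed immersion $W' \hookrightarrow M'$, one obtains the factorization $\pi_*\mathrm{div}_{\xi'}(f') = (W \hookrightarrow M)_* \, p_*\mathrm{div}(f')$. Hence it suffices to establish the stacky norm formula
\[
p_*\mathrm{div}(f') \;=\; \mathrm{div}\bigl(N_{k(\xi')/k(\xi)}(f')\bigr) \qquad \text{in } \mathscr{Z}^1(W),
\]
whose right-hand side lies in the image of $\mathrm{div}_\xi$ and thus represents zero in $\mathrm{CH}^d(M)$ after pushforward along $W \hookrightarrow M$.

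To prove this norm formula I would descend to the case of schemes using~\eqref{eqn:cycles invlim}. Choose an \'etale chart $V \to W$ by a noetherian scheme, taken fine enough that the base change $V' = W' \times_W V$ is a disjoint union of schemes, each mapping finitely and dominantly to a component of $V$. For such morphisms of integral noetherian schemes of the same dimension, the identity $p_*\mathrm{div}(g) = \mathrm{div}(N(g))$ is classical: at each codimension-one point it reduces to the identity $\sum_{\eta'\mid \eta}[k(\eta'):k(\eta)]\cdot \mathrm{ord}_{\eta'}(g) = \mathrm{ord}_\eta(N(g))$ for finite extensions of discrete valuation rings, and the global statement follows by summing. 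Excellence of $S$ ensures that $W$ and $W'$ are universally catenary, so the relevant dimension-theoretic facts behave as expected. Passing to the limit over sufficiently fine $\mathrm{Et}(W)$ and using compatibility of~\eqref{eqn:cycles invlim} with the divisor and pushforward constructions recovers the stacky identity from its scheme-theoretic analogue.

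The main obstacle is bookkeeping the ramification-index factors that appear in our definition of $\pi_*$ on punctual stacks. The combination $\frac{e(\xi)}{e(\xi')}\cdot [k(\xi'):k(\xi)]$ (and similarly at codimension-one points $\eta' \in (W')^{(1)}$ over $\eta \in W^{(1)}$) is calibrated so that it agrees with the total scheme-theoretic degree of any \'etale-local scheme realization of $p$ over the corresponding generic point. The cleanest way to verify this is to shrink $V$ further so that each punctual gerbe appearing as a generic point of a codimension-one substack of $W'$ or $W$ is trivialized; then the residue field extensions and the automorphism-group contributions separate, and the scheme-theoretic norm identity on $V' \to V$ translates term-by-term into the stacky identity on $W' \to W$ after multiplying each pointwise order by the appropriate ramification-index ratio. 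Once this calibration is checked, the proposition follows.
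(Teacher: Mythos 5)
Your proposal is correct and takes the same route as the paper: reduce to the norm identity $\pi_*\mathrm{div}_{\xi'}(f) = \mathrm{div}_{\xi}\bigl(\mathrm{Nm}_{k(\xi')/k(\xi)}(f)\bigr)$, which is exactly the one-line content of the paper's proof. The only difference is that you go on to sketch a proof of that identity (\'etale descent to the classical scheme-level norm formula, with the $e(\xi)/e(\xi')$ factors calibrated to make the descent work), whereas the paper simply asserts it; your extra detail is sound, modulo the minor point that the \'etale local rings at codimension-one points need not be discrete valuation rings, so the local identity should be phrased via lengths for one-dimensional local domains as in the paper's definition of $\mathrm{ord}_\xi$.
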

\begin{proof}
Given a codimension $d-r+1$ point  $\xi' \in |M'| $  with image $\xi \in |M|$, and  an $f\in k(\xi' )^\times$,  we need to check that $\pi_*\mathrm{div}_{\xi'}(f)\in \mathscr{Z}^d(M)$ is rationally equivalent to $0$. 
This follows from the fact that $k(\xi')$ is a finite field extension of $k(\xi)$ satisfying
\[
\pi_*\mathrm{div}_{\xi'}(f) = \mathrm{div}_{\xi} \big(\mathrm{Nm}_{k(\xi')/k(\xi)}(f)  \big) . \qedhere
\]
\end{proof}

We need the notion of  Chow groups with support from~\cite[I.2]{soule92}.
For any finite map $\pi:Z\to M$ as in Definition \ref{defn:naive class}, set
\[
\mathrm{CH}^d_Z(M) = \mathrm{CH}^{d-r}(\pi(Z)),
\]
where $\pi(Z)\subset M$ is the stack theoretic image of $\pi$: the  reduced closed substack characterized by the property that
for every \'etale map $U\to M$ with $U$  a scheme,  the closed subscheme $U\times_M \pi(Z) \subset U$ is equal to the image of the finite morphism $U\times_MZ\to U$.

 \begin{definition}
\label{defn:naive class}
Suppose $\pi:Z \to M$ is a finite morphism of stacks with $Z$ equidimensional of dimension $\dim(Z) = \dim(M) - r$.
Define 
\[
[Z] = \sum_{i=1}^n  m_i  \cdot \pi_*[\xi_i]\in \mathrm{CH}_Z^r(M), 
\]
where  $\xi_1,\ldots,\xi_n\in Z^{(0)}$ are the  generic points of the irreducible components of $Z$, 
and  $m_i$ is the length of the \'etale local ring $\co^\et_{Z,\xi_i}$, 
\end{definition}


\subsection{Chow groups and Grothendieck groups}
\label{ss:chow}


The Chow groups  defined above are contravariant with respect to morphisms between regular stacks, and also admit a bilinear intersection pairing.  The key to these properties are the results of~\cite[\S 8]{Gillet1987-ny},~\cite[Ch. I]{soule92} and~\cite{Gillet2009-tw}, relating Chow groups to Grothendieck groups of locally free sheaves.

For a scheme $M$, let $K_0(M)$ be the quotient of the free abelian group generated by symbols $[\mathcal{Q}_\bullet]$,  where $\mathcal{Q}_\bullet$ runs over finite complexes of vector bundles on $M$,  by the relations
\begin{itemize}
\item
$[\mathcal{Q}_\bullet] = [ \mathcal{R}_\bullet]$ whenever $\mathcal{Q}_\bullet$ and $\mathcal{R}_\bullet$ are quasi-isomorphic, 
\item
$[\mathcal{Q}_\bullet] = [ \mathcal{P}_\bullet ] + [ \mathcal{R}_\bullet ]$ whenever there is a short exact sequence
\[
0 \to \mathcal{P}_\bullet \to \mathcal{Q}_\bullet \to \mathcal{R}_\bullet \to 0.
\]
\end{itemize}
If $\pi:Z\to M$ is a finite morphism, the group $K_0^Z(M)$ is defined in exactly the same way, except that we only consider complexes that become exact after restriction to the open subscheme $M \smallsetminus \pi(Z)$.

Still assuming that $M$ is a scheme, we similarly write $G_0(M)$ for the Grothendieck group of the category of coherent $\co_M$-modules. 
Thus $G_0(M)$ is the abelian group generated by symbols $[\mathcal{Q}]$ with $\mathcal{Q}$ is a coherent sheaf on $M$, subject to the relations $[\mathcal{Q}] = [ \mathcal{P} ] + [ \mathcal{R} ]$ whenever there is a short exact sequence
\[
0 \to \mathcal{P} \to \mathcal{Q}\to \mathcal{R} \to 0.
\]
Our  $G_0(M)$ is the group denoted $K_0'(M)$ in \cite{soule92}.

\begin{remark}\label{rem:naive G}
One can naively imitate these definitions when $M$ is a stack.  For example, we define  $G_0^{\mathrm{naive}}(M)$ to be the free group generated by symbols $[\mathcal{Q}]$ where $\mathcal{Q}$ is a coherent sheaf on $M$, modulo the relations $[\mathcal{Q}] = [ \mathcal{P} ] + [ \mathcal{R} ]$ whenever there is a short exact sequence as above.
While $G_0^{\mathrm{naive}}(M)$  will be of use to us, the analogous naive extensions of $K_0^Z(M)$ and $K_0(M)$ will not.
For example, Theorem \ref{thm:GS} below  is  false if one uses these naive definitions.
\end{remark}

In light of the previous remark, we associate $\Q$-vector spaces $K_0(M)_\Q$ and $G_0(M)_\Q$  to a stack $M$  following the more sophisticated constructions of~\cite[\S 2]{Gillet2009-tw}.  
This requires the machinery of $K$-theory and $G$-theory spectra as laid out in~\cite[\S 3]{MR1106918}. 
Recall that $\mathrm{Et}(M)$ is the \'etale site of $M$, whose objects are  schemes $U$ equipped with an \'etale morphism $U \to M$.

Quillen $K$-theory defines a presheaf 
\[
\mathbf{K}_M (U\to M)\define K(U)_{\Q}
\]
on  $\mathrm{Et}(M)$  valued in spectra over the Eilenberg-MacLane spectrum $H\mathbb{Q}$ (we will call this a \emph{rational spectrum} for concision), or, more prosaically, in the derived category of bounded below chain complexes of $\Q$-vector spaces; see~\cite[\S 2.1]{Takeda2004-if} for an elementary and explicit representation as a chain complex. 
By a result of Thomason, this presheaf is in fact a sheaf, and one now defines the rational $K$-theory $K(M)_{\Q}$  to be its global sections.
The vector space $K_0(M)_\Q$ is  defined as the $0^\mathrm{th}$ homology of $K(M)_{\Q}$.

A completely analogous construction, using the rational spectrum associated with the exact category of coherent sheaves,  gives us a sheaf of spectra
\[
\mathbf{G}_M(U\to M)\define G(U)_{\Q}
\]
on  $\mathrm{Et}(M)$.
Taking global sections defines the rational $G$-theory space $G(M)_{\Q}$, and the vector space $G_0(M)_{\Q}$ is defined as its $0^\mathrm{th}$ homology.

To get $K_0$-groups with support along a finite map $Z\to M$, one now repeats the construction  using the presheaf
\[
\mathbf{K}^Z_M(U\to M)\mapsto K^{Z\times_M U}(U)_{\Q}
\]
associating to $U$ the rational spectrum associated with the exact category of bounded complexes of vector bundles on $U$ with cohomology sheaves supported on the image of $Z\times_M U \to U$. 
Taking the $0$-th homology group of the global sections of this presheaf (which is, once again, actually a sheaf) defines the vector space $K^Z_0(M)_\Q$.

Still assuming that $Z\to M$ is a finite morphism of stacks,  fix a morphism 
\[
f:M'\to M 
\]
 and set $Z' = Z\times_M M' $.
Given an \'etale morphism $(U \to M) \in \mathrm{Et}(M)$,  if we set $U' = U\times_M M'$ then pullback via  $U'\to U$  takes  bounded complexes of vector bundles on $U$,  acyclic outside the image of $Z\times_M U \to U$,  
 to bounded complexes of vector bundles on $U'$,  acyclic outside the image of $Z'\times_{M'} U' \to U'$. 
 This induces a pullback map on the corresponding sheaves of spectra, and hence a map 
\begin{align}\label{eqn:k0_pullback}
f^*: K_0^Z(M)_\Q\to K_0^{Z'}(M')_\Q. 
\end{align}

Now suppose we have  finite morphisms of stacks  $Z_1\to M$ and $Z_2\to M$.
For any  $(U\to M) \in \mathrm{Et}(M)$ the tensor product of bounded complexes of vector bundles determines a map of rational spectra
\[
K^{Z_1\times_M U}(U)_\Q\otimes K^{Z_2\times_M U}(U)_\Q \to K^{Z_1\times_MZ_2\times_M U}(U)_\Q,
\]
which in turn gives rise to a canonical pairing
\begin{equation}\label{eqn:k0_pairing}
K^{Z_1}_0(M)_\Q\otimes K^{Z_2}_0(M)_\Q \to K^{Z_1\times_MZ_2}_0(M)_\Q
\end{equation}
By construction, this pairing is compatible (in the obvious sense) with the pullback  \eqref{eqn:k0_pullback}.

The above vector spaces $K_0(M)_\Q$, $K_0^Z(M)_\Q$,  and $G_0(M)_\Q$  agree with those defined  at the beginning of this subsection when $M$ is a scheme, and the operations \eqref{eqn:k0_pullback} and \eqref{eqn:k0_pairing} are the obvious ones defined by pullbacks and tensor products of complexes of sheaves.
For general stacks $M$ these  vector spaces  do not admit  obvious descriptions in terms of coherent sheaves on $M$.

\begin{remark}
\label{rem:k_theory_punctual}
When $\xi$ is a punctual stack, the rank map $K_0(\xi)_\Q\to \Q$ is an isomorphism. 
To see this one  reduces to the case where $\xi$ admits a finite \'etale cover $\Spec (L) \to \xi$ by a field $L$, and then uses  \cite[Corollary 2.7]{Gillet2009-tw}, which shows that  $K_0(\xi)_\Q \to K(\Spec (L) )_\Q\iso \Q$ is an isomorphism. 
\end{remark}

\begin{remark}
\label{rem:coherent_sheaf_class}
Every coherent sheaf $\mathcal{F}$ on $M$ gives rise to\footnote{This can be seen for instance from the explicit chain complex from~\cite[\S 2.1]{Takeda2004-if}, where $G(X)_\Q$ is represented by a complex of $\Q$-vector spaces whose degree $0$ component is a quotient of the free abelian group on the set of coherent sheaves on $X$.} a map of sheaves of spectra $\underline{H\Q}\to \mathbf{G}_M$, which evaluates on global sections to a canonical class 
\[
[\mathcal{F}]\in G_0(M)_{\Q} .
\]
 Here $\underline{H\Q}$ is the locally constant sheaf of spectra assigning to every connected $(U\to M) \in \mathrm{Et}(M)$ the constant spectrum $H\Q$ (equivalently, the object $\Q[0]$ in the derived category of bounded below complexes of $\Q$-vector spaces).
 Recalling the notation of Remark \ref{rem:naive G},  this construction defines a homomorphism of vector spaces
 \begin{equation}\label{naive push}
G_0^{\mathrm{naive}}(M)_{\Q} \to G_0(M)_{\Q},
\end{equation}
which is  surjective  by  \cite[Lemma 2.5]{Gillet2009-tw}.
\end{remark}

\begin{remark}
The homomorphism \eqref{naive push} need  not be an isomorphism. Consider  the  punctual stack
\[
M = [\Spec(\C)/H]
\]
determined by a finite group  $H$   acting trivially on $\C$.
In this case $G_0^{\mathrm{naive}}(M)_{\Q}$ is the free $\Q$-module on the finite set of isomorphism classes of irreducible representations of $H$,  
while  $G_0(M)_{\Q} \iso \Q$ by Remark \ref{rem:k_theory_punctual}. 
The map \eqref{naive push}  sends an irreducible representation to its dimension.
\end{remark}

\begin{proposition}\label{prop:GtoK}
Assume $M$ is regular.
Any  finite morphism of stacks $\pi : Z \to M$ induces  a pushforward homomorphism
\begin{equation}\label{coniveau push}
\pi_* : G_0( Z )_\Q \to K_0^Z( M )_\Q. 
\end{equation}
It is an isomorphism if $M$ is a scheme and $\pi:Z\to M$ is a closed immersion.
\end{proposition}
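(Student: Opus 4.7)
The plan is to construct the pushforward via the sheaf-of-spectra formalism from \cite{Gillet2009-tw}, reducing everything to étale-local statements about schemes, and then to deduce the isomorphism claim from Quillen's classical comparison of $K$- and $G$-theory on regular schemes.

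For the construction, I would proceed étale-locally. Given an étale chart $(U\to M)\in\mathrm{Et}(M)$, set $Z_U = Z\times_M U$ and note that the induced morphism $\pi_U : Z_U\to U$ is finite, hence affine, so $\pi_{U,*}$ is an exact functor from $\mathrm{Coh}(Z_U)$ to the full subcategory $\mathrm{Coh}^{Z_U}(U)\subset \mathrm{Coh}(U)$ of coherent $\mathcal{O}_U$-modules supported on the scheme-theoretic image $\pi_U(Z_U)\subset U$. This exact functor yields a map of rational $G$-theory spectra $G(Z_U)_\Q\to G^{Z_U}(U)_\Q$. Because $M$ is regular, so is $U$, and therefore Quillen's resolution theorem produces a canonical equivalence of rational spectra $K^{Z_U}(U)_\Q\xrightarrow{\sim}G^{Z_U}(U)_\Q$: any coherent sheaf on $U$ supported on $\pi_U(Z_U)$ admits a finite resolution by vector bundles whose restriction to $U\smallsetminus\pi_U(Z_U)$ is acyclic. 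Composing with the inverse of this equivalence, and checking compatibility with étale pullback between different choices of $U$, defines a morphism $\pi_*\mathbf{G}_Z\to\mathbf{K}^Z_M$ of sheaves of rational spectra on $\mathrm{Et}(M)$. Taking global sections and then $\pi_0$ produces the desired pushforward \eqref{coniveau push}.

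For the isomorphism claim, the sheaf-of-spectra machinery is unnecessary: when $M$ is a scheme and $\pi:Z\hookrightarrow M$ is a closed immersion, we can work directly with Grothendieck groups. Setting $U = M\smallsetminus Z$, the open/closed decomposition induces the commuting diagram of localization sequences
\[
\xymatrix{
K_0^Z(M)_\Q \ar[r]\ar[d]^{\pi_*} & K_0(M)_\Q \ar[r]\ar[d] & K_0(U)_\Q \ar[r]\ar[d] & 0 \\
G_0(Z)_\Q \ar[r] & G_0(M)_\Q \ar[r] & G_0(U)_\Q \ar[r] & 0,
}
\]
in which the rows are exact. Regularity of $M$ (and hence of $U$) implies, again by Quillen's resolution theorem, that the middle and right vertical maps are isomorphisms. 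A standard diagram chase—or, equivalently, direct application of Quillen's dévissage—shows that the left vertical map is also an isomorphism. (Concretely: a perfect complex on $M$ acyclic off $Z$ is $K$-theoretically equivalent to its associated graded with respect to the $\mathcal{I}_Z$-adic filtration, each piece of which is the pushforward of a coherent sheaf on $Z$.)

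The principal technical obstacle is the spectrum-level compatibility underlying step one: one must verify that the resolution equivalences $K^{Z_U}(U)_\Q\simeq G^{Z_U}(U)_\Q$ glue along étale maps in $\mathrm{Et}(M)$ to give a morphism of sheaves of spectra, not just of their $\pi_0$-groups. This is essentially the content of \cite[\S 2]{Gillet2009-tw}; once it is granted, the passage from schemes to Deligne--Mumford stacks is formal, and the isomorphism claim then reduces—since it only concerns a scheme—to Quillen's classical theorem.
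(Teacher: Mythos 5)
Your construction is correct and follows essentially the same route as the paper: the pushforward is built \'etale-locally as a morphism of sheaves of rational spectra, using regularity of $M$ to identify $K$-theory with supports with $G$-theory via resolution by vector bundles (the paper phrases this as an exact functor into perfect complexes on $M'$ acyclic off $Z'$, citing Thomason--Trobaugh, and cites Soul\'e \cite[I.3.1 Lemma 4]{soule92} for the scheme-level isomorphism that you instead deduce from the localization sequences). One cosmetic slip: in your diagram the left vertical arrow labelled $\pi_*$ points from $K_0^Z(M)_\Q$ down to $G_0(Z)_\Q$, whereas $\pi_*$ as constructed goes the other way; the vertical maps you actually want there are the Cartan comparison maps, and the five-lemma/d\'evissage argument is unaffected.
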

\begin{proof}
In the case where $M$ is a scheme, this is \cite[I.3.1 Lemma 4]{soule92}. The pushforward homomorphism sends the class of a coherent sheaf $[\mathcal{F}] \in G_0( Z )$ to any finite resolution of $\pi_*\mathcal{F}$ by vector bundles on $M$.

In general, for every  $(M'\to M) \in \mathrm{Et}(M)$ we have a map
\[
\pi'_*:G(Z')_{\Q} \to K^{Z'}(M')_\Q
\]
of spectra arising from a functor of exact categories. The right hand side can be identified with the rational spectrum associated with the category of perfect complexes on $M'$ that are acyclic outside of $Z'$ (see the argument in~\cite[Theorem 3.21]{MR1106918}, and the left hand side is associated with the exact category of bounded complexes of coherent sheaves on $Z' = Z\times_MM'$. The functor is now induced by pushforward along $\pi':Z'\to M'$.
\end{proof}

Any finite morphisms of stacks  $Y\to Z\to M$ induce maps 
\[
K_0^Y(M)_\Q \to K_0^Z(M)_\Q. 
\]
Define the  \emph{coniveau filtration} on $K_0^{Z}(M)_\Q$  by 
\begin{equation}\label{Kconiveau}
F^d K_0^Z(M)_\Q = 
\bigcup_{  \substack{ \mathrm{closed\ substacks\ }  Y \subset Z  \\  \mathrm{codim}_M(Y) \ge d } } \mathrm{Image} \big(  K_0^Y(M)_\Q \to K_0^Z(M)_\Q\big) ,
\end{equation}
and denote by 
\[
\mathrm{Gr}^d_{\gamma}K_0^Z(M)_\Q \define F^dK_0^Z(M)_\Q/F^{d+1}K_0^Z(M)_\Q
\]
the graded pieces of the filtration.
For schemes, the following theorems of Gillet-Soul\'e are proved in~\cite{Gillet1987-ny} and~\cite{soule92}. 
The extensions to stacks are addressed in~\cite[\S 2.4]{Gillet2009-tw}.

\begin{theorem}[Gillet-Soul\'e] \label{thm:GS_K-theory}
Suppose $M$ is a regular stack.
\begin{enumerate}
\item
Given finite morphisms $Z_1\to M$ and $Z_2\to M$, the pairing~\eqref{eqn:k0_pairing} restricts to a bilinear pairing
\[
F^{d_1} K_0^{Z_1}(M)_\Q \otimes  F^{d_2} K_0^{Z_2}(M)_\Q 
\to F^{d_1+d_2} K_0^{Z_1 \times_M Z_2}(M)_\Q.
\]
\item
Given a morphism $f : M' \to M$ with  $M'$ another regular stack, and a finite morphism
$Z \to M$, the pullback ~\eqref{eqn:k0_pullback} restricts to 
\[
f^* : F^d  K_0^{Z }(M)_\Q  \to F^d  K_0^{Z  \times_{M} M'   }(M')_\Q .
\]
\end{enumerate}
\end{theorem}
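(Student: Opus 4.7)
The plan is to reduce both assertions to the case of regular schemes, where they are the classical theorems of Gillet-Soul\'e proven in \cite{Gillet1987-ny} and \cite{soule92}. The key input from that work, for a regular noetherian scheme $M$ of finite Krull dimension, is the rational identification of the coniveau filtration $F^\bullet K_0^Z(M)_\Q$ with the $\gamma$-filtration $F^\bullet_\gamma K_0^Z(M)_\Q$ arising from the $\lambda$-ring structure on $K$-theory. Since the $\gamma$-filtration is automatically multiplicative under the tensor pairing and preserved by arbitrary pullbacks, both (1) and (2) follow immediately from this identification.

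For (1) in the scheme case a direct argument is also available and makes the underlying geometry transparent. A class $\alpha_i \in F^{d_i} K_0^{Z_i}(M)_\Q$ can be represented by a finite complex of vector bundles on $M$ acyclic outside a closed subset $Y_i$ of codimension $\ge d_i$ in $M$. The tensor product of such complexes represents the pairing $\alpha_1 \cap \alpha_2$ and has cohomology supported on $Y_1 \cap Y_2$, which by Serre's intersection inequality for the regular scheme $M$ has codimension $\ge d_1 + d_2$. Since $Y_1 \cap Y_2$ is contained set-theoretically in $Z_1 \times_M Z_2$, this places $\alpha_1 \cap \alpha_2$ in $F^{d_1 + d_2} K_0^{Z_1 \times_M Z_2}(M)_\Q$. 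For (2) the analogous direct argument is insufficient, because the codimension of a closed subset can drop under an arbitrary morphism of regular schemes; at this point one is forced to invoke the $\gamma$-filtration, whose naturality is formal from the naturality of the $\lambda$-operations.

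To extend to stacks, the sheaf-of-spectra definition of $K_0^Z(M)_\Q$ as $\pi_0$ of the global sections of $\mathbf{K}^Z_M$ on $\mathrm{Et}(M)$ is essential. The pairing~\eqref{eqn:k0_pairing} and pullback~\eqref{eqn:k0_pullback} are both defined section-wise, so choosing an \'etale atlas $U \to M$ by a regular scheme reduces the problem to the corresponding statements on $U$ and on the fiber products $U \times_M U, U \times_M U \times_M U,\ldots$ that appear in the descent spectral sequence. Since \'etale morphisms preserve codimension, a filtration piece $F^d$ on $M$ is recovered faithfully from the filtration pieces on the atlas, and the scheme-level result propagates through descent to give the conclusions. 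The main obstacle is the bookkeeping needed to show that the coniveau filtration defined stack-theoretically via closed substacks of codimension $\ge d$ coincides with the filtration obtained by descent from the atlas; this coincidence is carried out in \cite[\S 2.4]{Gillet2009-tw} by combining \'etale invariance of codimension with the hyperdescent property of the sheaves $\mathbf{K}_M$ and $\mathbf{K}^Z_M$.
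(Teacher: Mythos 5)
Your main route coincides with the paper's treatment: the paper gives no proof of its own, simply citing \cite{Gillet1987-ny} and \cite{soule92} for the scheme case and \cite[\S 2.4]{Gillet2009-tw} for the extension to Deligne--Mumford stacks, and your appeal to the rational identification of the coniveau filtration with the $\gamma$-filtration, together with the multiplicativity and pullback-naturality of the latter, is exactly the content of those references. Your descent sketch for stacks is at the same level of detail as the paper's citation.

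However, the ``direct argument'' you offer for part (1) in the scheme case is wrong, and wrong in a way that misrepresents where the difficulty lies. Serre's intersection inequality for a regular scheme $M$ says that every irreducible component $W$ of $Y_1\cap Y_2$ satisfies $\mathrm{codim}(W)\le \mathrm{codim}(Y_1)+\mathrm{codim}(Y_2)$: this is an \emph{upper} bound on the codimension of the intersection, whereas you need a lower bound. Intersections in a regular scheme can be improper --- take $Y_1=Y_2$ of codimension $d$, where the intersection has codimension $d$ rather than $2d$ --- and then the tensor product of the two complexes is supported on a closed set of codimension possibly much smaller than $d_1+d_2$, so the support argument alone does not place the product in $F^{d_1+d_2}$. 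Coping with exactly this excess-intersection phenomenon is the entire point of the Adams-operations/$\gamma$-filtration machinery of \cite{Gillet1987-ny}; if intersections were always proper, multiplicativity of the coniveau filtration would be elementary and that machinery would be superfluous. Since your primary argument via the $\gamma$-filtration is the correct one (and is what the cited references actually prove), the proposal is salvaged by simply deleting the spurious direct argument.
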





\begin{theorem}[Gillet-Soul\'e]
\label{thm:GS}
Let $M$ be a regular stack. For any  finite morphism 
\[
\pi:Z \to M
\]
 with $\dim(Z) = \dim(M) - r$, there is a canonical isomorphism
\begin{equation}\label{ChowGroth}
\mathrm{CH}^d_{Z}(M)  \iso \mathrm{Gr}^d_{\gamma} K_0^{Z}(M)_\Q
\end{equation}
carrying the  class $[Z]\in \mathrm{CH}^r_Z(M)$ of Definition \ref{defn:naive class} to the image of  $[\co_Z]$ under 
\[
G_0(Z)_\Q \map{\eqref{coniveau push}}  K_0^{Z}(M)_\Q .
\]
\end{theorem}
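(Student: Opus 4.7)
The plan is to combine the classical Gillet--Soul\'e theorem for regular schemes with \'etale descent for the sheaves of spectra $\mathbf{K}_M^Z$ and $\mathbf{G}_M$ that define the vector spaces on both sides of \eqref{ChowGroth}. The map itself is the natural one: a codimension $d$ integral substack $\xi\subset \pi(Z)$ (relative to $M$) determines a coherent sheaf $[\co_\xi]\in G_0(\pi(Z))_\Q$, hence by \eqref{coniveau push} a class in $K_0^Z(M)_\Q$; one checks this lands in $F^dK_0^Z(M)_\Q$ (by the very definition \eqref{Kconiveau}, since $\xi$ has codimension $d$ in $M$) and that rational equivalences go to zero in the graded piece. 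The content is to verify this is an isomorphism and to identify $[Z]\mapsto [\co_Z]$.

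The first step is to settle the scheme case with $Z\to M$ a closed immersion. This is the original theorem of Gillet--Soul\'e in \cite{Gillet1987-ny,soule92}: one uses the Brown--Gersten--Quillen spectral sequence computing $K$-theory with supports, whose $E_1$-page after rationalization and dimension-shifting identifies $\mathrm{Gr}^d_\gamma K_0^Z(M)_\Q$ with the cokernel of
\[
\bigoplus_{\xi\in Z^{(d-r-1)}} K_1(\kappa(\xi))_\Q \to \bigoplus_{\xi\in Z^{(d-r)}} K_0(\kappa(\xi))_\Q.
\]
The first group is $\bigoplus_\xi \kappa(\xi)^\times_\Q$, the second is $\bigoplus_\xi \Q$, and the boundary map recovers the divisor map defining rational equivalence, so the cokernel is exactly $\mathrm{CH}^{d-r}(Z)_\Q = \mathrm{CH}^d_Z(M)_\Q$. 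A direct computation of $\pi_*[\co_Z]$ through this identification gives the formula in Definition \ref{defn:naive class}.

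The second step extends to finite morphisms $\pi: Z \to M$ with $M$ still a scheme. Both sides of \eqref{ChowGroth} depend only on the scheme-theoretic image $\pi(Z)\subset M$: the left by the definition $\mathrm{CH}^d_Z(M) = \mathrm{CH}^{d-r}(\pi(Z))$, and the right because the presheaf $\mathbf{K}_M^Z$ only sees complexes acyclic off $\pi(Z)\times_M U$. Factoring $\pi$ as $Z\to \pi(Z)\to M$ and using the surjectivity of \eqref{naive push} together with Proposition \ref{prop:GtoK}, one reduces to the closed immersion case; the formula $\pi_*[\co_Z] = [Z]$ follows from the fact that lengths of local rings at generic points give the multiplicities in Definition \ref{defn:naive class}.

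The third and essential step is \'etale descent from schemes to Deligne--Mumford stacks. By construction $K_0^Z(M)_\Q$ is the zeroth homotopy of the global sections of the sheaf of rational spectra $\mathbf{K}_M^Z$ on $\mathrm{Et}(M)$, and $\mathrm{CH}^d_Z(M)_\Q$ is an inverse limit over $\mathrm{Et}(M)$ by \eqref{eqn:cycles invlim}. For each \'etale chart $(U\to M)\in \mathrm{Et}(M)$ the scheme case gives an isomorphism compatible with restriction; the required descent of the coniveau filtration is precisely the content of \cite[\S 2.4]{Gillet2009-tw}. The main obstacle is to track the coniveau filtration through this descent: the notion of codimension on $M$ involves residual gerbes and ramification indices $e(\xi)$, so one must verify that the stack-theoretic filtration \eqref{Kconiveau} matches the one obtained by descending filtrations from charts after normalizing by the factors $e(\xi)$ that already appear in the pushforward \eqref{eqn:cycles pushforward}. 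Granting this compatibility, passing to graded pieces and taking the inverse limit turns the scheme-case isomorphism into the desired isomorphism \eqref{ChowGroth}, with the image of $[\co_Z]$ equal to $[Z]$ by descent of the corresponding identification on each chart.
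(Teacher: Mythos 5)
Your skeleton (Brown--Gersten--Quillen/coniveau spectral sequence plus Bloch's formula for regular schemes, then an appeal to \cite{Gillet2009-tw} for stacks) is close to the paper's, but the step where you pass from \'etale charts to the stack is not sound as written. Neither side of \eqref{ChowGroth} is an inverse limit over $\mathrm{Et}(M)$: the group $K_0^Z(M)_\Q$ is the $0$-th homotopy group of the \emph{global sections of a sheaf of spectra}, i.e.\ of a homotopy limit, and $\pi_0$ does not commute with homotopy limits; likewise \eqref{eqn:cycles invlim} is a statement about the cycle groups $\mathscr{Z}^d$, not about Chow groups, and the cokernel of the divisor map does not commute with the limit over charts. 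So ``passing to graded pieces and taking the inverse limit'' of chart-level isomorphisms does not yield the stack-level isomorphism, and the filtration \eqref{Kconiveau} is not obtained by descending filtrations from charts. What \cite[\S 2.4]{Gillet2009-tw} (and the paper, which recalls the argument) actually does is run the BGQ spectral sequence \emph{directly on the stack}: its $E_1$-page is indexed by the residual gerbes $\xi\in M^{(p)}\cap\pi(Z)$, it is assembled from the localization fiber sequences of the sheaves of spectra $\mathbf{K}_M^Y$, the identification $E_2^{p,p}\iso \mathrm{CH}^p_Z(M)$ uses Quillen's computation together with the punctual-stack fact $K_0(\xi)_\Q\iso\Q$ (Remark \ref{rem:k_theory_punctual}), and the rational degeneration at $E_2$ uses Adams operations. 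Note also that your step 1 already presupposes this degeneration (identifying $\mathrm{Gr}^d_\gamma K_0^Z(M)_\Q$ with the $E_2$-cokernel requires $E_2=E_\infty$); that is fine as a citation of \cite{Gillet1987-ny}, but it is the same mechanism you would need, and do not supply, at the stack level.

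The final assertion $[Z]\mapsto[\co_Z]$ has the same issue: the paper verifies it by a direct computation at residual gerbes, showing that for a generic point $\zeta\in Z^{(0)}$ over $\xi\in M^{(r)}$ the image of $\co_\zeta$ in $K_0(\xi)_\Q\iso\Q$ equals $\tfrac{e(\xi)}{e(\zeta)}\deg\big(k(\zeta)/k(\xi)\big)$, matching the pushforward \eqref{eqn:cycles pushforward}. Your appeal to ``descent of the corresponding identification on each chart'' inherits the descent problem above, and the ramification indices of residual gerbes must be confronted directly rather than absorbed into an unverified compatibility. Your reduction of finite morphisms to closed immersions via the scheme-theoretic image is harmless for the isomorphism itself (both sides only see $\pi(Z)$), but for the statement about $[Z]$ one must still track $\pi_*\co_Z$ with its multiplicities, which is precisely the computation just described.
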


\begin{proof}
In the case of schemes, the existence of this isomorphism is~\cite[Theorem 8.2]{Gillet1987-ny}, and this argument is generalized to stacks in~\cite[Theorem 2.8]{Gillet2009-tw}. For the convenience of the reader, we recall some key inputs into these proofs. This will also help us justify the last assertion about the relationship between the cycle class $[Z]$ and the $G$-theory class $[\co_Z]$, since this is not made completely explicit in the references cited.

The starting point is the Brown-Gersten-Quillen spectral sequence with first page 
\[
E_1^{p,q} = \bigoplus_{\xi\in M^{(p)}\cap \pi(Z)}K_{-p-q}(\xi)_\Q, 
\]
converging to the (higher) $K$-groups with support $K_{-p-q}^Z(M)_\Q$.
This converges to the coniveau filtration on $K_0^Z(M)_\Q$.  See~\cite[Theorem 6]{soule92} or~\cite[Theorem 2.8]{Gillet2009-tw} .

Next, we have Bloch's formula (due to Quillen), which shows that, on the second page, we have $E_2^{p,p} \iso \mathrm{CH}^p_Z(M)$. More precisely, we obtain the composition
\begin{align*}
\bigoplus_{\xi\in M^{(p-1)}\cap \pi(Z)} k(\xi)^\times_{\Q} \iso \bigoplus_{\xi\in M^{(p-1)}\cap \pi(Z)}K_1(\xi)_\Q &= E_1^{p-1,p}\to E_1^{p,p} \\
&=\bigoplus_{\eta\in M^{(p)}\cap \pi(Z)}K_0(\eta)_\Q \iso \mathscr{Z}^p_Z(M).
\end{align*}
The arrow in the middle is the differential in the spectral sequence, and Quillen shows that this is exactly the divisor map whose cokernel is $\mathrm{CH}^p_Z(M)$ (for the case of stacks, we also need the observation from Remark~\ref{rem:k_theory_punctual}).

Finally, the interaction between this spectral sequence and Adams operations is used to show that the spectral sequence stabilizes on the second page (see~~\cite[\S 6.4]{soule92},~\cite[Theorem 8.2]{Gillet1987-ny} and~\cite[\S 2.4]{Gillet2009-tw}), which establishes the isomorphisms
\[
\mathrm{CH}^p_Z(M) \iso E_2^{p,p}\iso \mathrm{Gr}^p_{\gamma}K_0^Z(M)_\Q.
\] 
Here, for $\xi\in M^{(p)}\cap \pi(Z)$, the associated map
\begin{equation}\label{eqn:points coniveau}
K_0(\xi)_\Q \to \mathrm{Gr}^p_{\gamma}K_0^Z(M)_\Q
\end{equation}
can be described as follows.
 Let $Y\subset M$ be the integral substack with generic point $\xi$. Then we have $F^pK_0^Y(M)_\Q = K_0^Y(M)_\Q$, as well as an exact sequence
\[
F^{p+1}K_0^Y(M)_\Q \to K_0^Y(M)_\Q\to K_0(\xi)_\Q\to 0,
\]
where the second map is just the map $K_0^Y(M) \to K_0(\xi)$ obtained by restriction to the generic point. This arises from the \emph{localization sequence} for $K$-theory spectra \cite[Theorems 6.8, 7.4, 7.6]{MR1106918}, which shows that, for every closed substack $Z\subset Y$, we have a fiber sequence of sheaves of spectra
\[
\mathbf{K}_M^Z\to \mathbf{K}_M^Y\to \mathbf{K}_{M\smallsetminus Z}^{Y\smallsetminus Z}.
\]
Taking global sections and then looking at $H_0$ gives us an exact sequence
\[
K_0^Z(M)_\Q \to K_0^Y(M)_\Q \to K_0^{Y\smallsetminus Z}(M\smallsetminus Z)_\Q\to 0
\]
To finish, we need to observe that
\[
\mathrm{colim}_{\underset{Z\neq Y}{Z\subset Y}}K_0^{Y\smallsetminus Z}(M\smallsetminus Z)_\Q \iso K_0(\xi)_\Q,
\]
which can be checked on the level of the corresponding sheaves of spectra.

This gives us an isomorphism
\[
K_0(\xi)_\Q \iso \mathrm{Gr}^p_{\gamma}K_0^Y(M)_\Q,
\]
and composing it with the natural map
\[
 \mathrm{Gr}^p_{\gamma}K_0^Y(M)_\Q\to  \mathrm{Gr}^p_{\gamma}K_0^Z(M)_\Q
\]
now yields~\eqref{eqn:points coniveau}.

It still remains to verify the assertion about the class $[Z]$.
That $[\pi_*\co_Z]\in F^rK^Z_0(M)_\Q$ is immediate from the definitions.
That its image in 
\[
\mathrm{Gr}^r_{\gamma}K^Z_0(M)_\Q \iso \mathrm{CH}^r_Z(M)
\]
is $[Z]$ comes down to the fact that  for any generic point $\zeta\in Z^{(0)}$ with image $\xi\in M^{(r)}\cap \pi(Z)$, the image of $\co_\zeta$ in $K_0(\xi)_\Q \iso \Q$ is
\[
\frac{e(\xi)}{e(\zeta)}  \cdot \deg\big( k(\zeta)/ k(\xi)\big). \qedhere
\]
\end{proof}

Combining Theorems~\ref{thm:GS_K-theory} and ~\ref{thm:GS} yields intersection pairings and pullbacks on Chow groups of regular stacks. 
If $M$ is a regular stack and $Z_1,Z_2 \to M$ are finite morphisms,  there is a canonical bilinear intersection pairing
\[
\mathrm{CH}^{d_1}_{Z_1}(M) \otimes \mathrm{CH}^{d_2}_{Z_2}(M) 
\to \mathrm{CH}^{d_1+d_2}_{Z_1 \times_M Z_2 }(M).
\]
For any morphism  $M' \to M$   between regular stacks and any finite morphism $Z \to M$, there is a pullback 
\[
\mathrm{CH}^d_{Z}(M) \to \mathrm{CH}^d_{Z'}(M') ,
\]
where   $Z' = Z \times_{M}M'$.



\subsection{Line bundles and divisor classes}


Suppose $\mathcal{L}$ is a line bundle on an integral scheme $M$. 
A \emph{rational trivialization} $s$ of $\mathcal{L}$ is an equivalence class of pairs $(U,\xi)$, where $U\subset M$ is a dense open subscheme, and $\xi:\co_U\xrightarrow{\simeq}\mathcal{L}\vert_U$ is a trivialization;   two such pairs $(U_1,\xi_1)$ and $(U_2,\xi_2)$ are equivalent if the trivializations $\xi_1,\xi_2$ agree on the intersection $U_1\cap U_2$. Write $k(\mathcal{L})^\times$ for the set of such rational trivializations. 

The \emph{divisor} of $s\in k(\mathcal{L})^\times$ is the cycle
\[
\mathrm{div}(s) \define \sum_{\xi\in M^{(1)}}\mathrm{ord}_\xi(s)[\xi]\in \mathscr{Z}^1(M),
\]
where the integer $\mathrm{ord}_\xi(s)$ is defined as follows.
Let $R = \co_{M,\xi}$, and choose an isomorphism $\mathcal{L}\vert_{\Spec(R)} \iso \co_{\Spec (R)}$. Via this isomorphism $s$ corresponds to a rational function $f/g$ in the fraction field of $R$, and 
\[
\mathrm{ord}_\xi(s) = \mathrm{length}(R/(f)) - \mathrm{length}(R/(g)).
\]

More generally, for  a line bundle $\mathcal{L}$ on an integral stack $M$ define
\[
k(\mathcal{L})^\times \define \varprojlim_{(U\to M)\in \mathrm{Et}(M)} k(\mathcal{L}\vert_U)^\times.
\]
From the case of schemes discussed above, we obtain a map
\[
\mathrm{div}:k(\mathcal{L})^\times \iso \varprojlim_{(U\to M)\in \mathrm{Et}(M)} k(\mathcal{L}\vert_U)^\times \to \varprojlim_{(U\to M)\in \mathrm{Et}(M)}\mathscr{Z}^1(U)\iso \mathscr{Z}^1(M).
\]
Note that  $k(\co_M )^\times = k(M)^\times$ is the set of nonzero elements in \eqref{eqn:rat fns invlim}.

If $M$ is any (not necessarily integral) stack, let $Z_1,\ldots,Z_r$ be  its irreducible components.
Viewing these as integral stacks, we define
\[
k(\mathcal{L})^\times = \prod_{i=1}^r k(\mathcal{L}\vert_{Z_i})^\times
\]
and
\[
\mathrm{div}(s) = \sum_i\mathrm{div}(s_i)\in \mathscr{Z}^1(M)
\]
for any $s = (s_1,\ldots,s_r) \in k(\mathcal{L})^\times$.
It is easy to see that the class of $\mathrm{div}(s)$ in $\mathrm{CH}^1(M)$ depends only on the isomorphism class of $\mathcal{L}$, and not on the particular choice of $s$.  This allows us to make the following definition.

\begin{definition}
\label{def:first chern class}
The \emph{first Chern class map}
\[
c_1:\mathrm{Pic}(M) \to \mathrm{CH}^1(M)
\]
sends   a line bundle $\mathcal{L}$ to the cycle class  $[\mathrm{div}(s)]$ for any $s\in k(\mathcal{L})^\times$.  
\end{definition}

Suppose that $D\subset M$ is an effective Cartier divisor.
In other words, $D$ a closed substack whose ideal sheaf $\mathcal{I}_D\subset \co_M$ is a line bundle.  
We have two ways of associating to $D$  a class in $\mathrm{CH}^1(M)$.
First, we can take the class $[D]$ as in Definition~\ref{defn:naive class}.
 Second, we can take the first Chern class of the line bundle
 \[
 \mathcal{L}(D) \define \mathcal{I}_D^{-1} .
 \] 
 These two constructions agree, as the canonical section $\co_M\to \mathcal{L}(D)$ determines an $s\in k(\mathcal{L})^\times$ with 
 \[
c_1(\mathcal{L}(D)) =  [\mathrm{div}(s)] = [D].
\]


\begin{lemma}\label{lem:pic complex}
If   $M$ is regular, the composition 
\[
 \mathrm{Pic}(M) \map{c_1} \mathrm{CH}^1(M) \map{ \eqref{ChowGroth}  }   \mathrm{Gr}^1_\gamma K_0(M)_\Q
\]
sends $\mathcal{L} \mapsto [\co_M]-[\mathcal{L}^{-1}]$.
By slight abuse of notation,  we are here identifying $[\co_M]$ and  $[\mathcal{L}^{-1}]$ with their images under 
\[
 G_0(M)_\Q \map{\eqref{coniveau push}} K_0(M)_\Q .
\]
 \end{lemma}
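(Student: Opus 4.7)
The plan is to first verify the formula on line bundles of the form $\mathcal{L}(D)$ for $D$ an effective Cartier divisor, and then bootstrap to the general case using the ring structure on $K_0(M)_\Q$.

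For the effective case, I would start from the tautological short exact sequence
\[
0 \to \mathcal{L}(D)^{-1} = \mathcal{I}_D \to \co_M \to \co_D \to 0,
\]
which gives the equality $[\co_M] - [\mathcal{L}(D)^{-1}] = [\co_D]$ in $K_0(M)_\Q$ via the surjection \eqref{naive push}. This class manifestly lies in $F^1 K_0(M)_\Q$, and by Theorem~\ref{thm:GS} applied to the finite morphism $D \to M$ its image in $\mathrm{Gr}^1_\gamma K_0^D(M)_\Q \iso \mathrm{CH}^1_D(M)$ is the naive cycle class $[D]$. Since $c_1(\mathcal{L}(D)) = [D]$ in $\mathrm{CH}^1(M)$ by the paragraph preceding the lemma, the effective case is settled.

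For the general case, I would first reduce to $M$ integral: regularity implies local integrality, so the irreducible components of $M$ are open-and-closed, and all the groups in question split along this decomposition. Then, on an integral regular $M$, every line bundle can be written as $\mathcal{L} \iso \mathcal{L}(D_0) \otimes \mathcal{L}(D_\infty)^{-1}$ for effective Cartier divisors $D_0, D_\infty$ (separate the zeros and poles of a nonzero rational section). I would then rewrite $[\mathcal{L}^{-1}] = [\mathcal{L}(D_0)^{-1}] \cdot [\mathcal{L}(D_\infty)]$ in the ring $K_0(M)_\Q$ and use the effective case together with the relation $[\mathcal{L}(D_\infty)] \cdot [\mathcal{L}(D_\infty)^{-1}] = [\co_M]$, which gives
\[
[\mathcal{L}(D_\infty)] - [\co_M] = [\mathcal{L}(D_\infty)] \cdot [\co_{D_\infty}] \in F^1.
\]
A short manipulation then produces the explicit identity
\[
[\co_M] - [\mathcal{L}^{-1}] = [\mathcal{L}(D_\infty)] \cdot \bigl([\co_{D_0}] - [\co_{D_\infty}]\bigr),
\]
which a priori lies in $F^1$ by Theorem~\ref{thm:GS_K-theory}(1), and which reduces modulo $F^2$ to $[\co_{D_0}] - [\co_{D_\infty}]$ since $[\mathcal{L}(D_\infty)] \equiv [\co_M] \pmod{F^1}$. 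The effective case then identifies this with $[D_0] - [D_\infty] = c_1(\mathcal{L}(D_0)) - c_1(\mathcal{L}(D_\infty)) = c_1(\mathcal{L})$.

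The main obstacle I anticipate is bookkeeping: one must verify that the membership $[\co_M] - [\mathcal{L}^{-1}] \in F^1 K_0(M)_\Q$ holds before invoking the filtered ring structure to reduce modulo $F^2$, and the chicken-and-egg character of this is precisely what the explicit factorization above resolves. A secondary, largely notational, difficulty is the passage from schemes to Deligne-Mumford stacks in the identification $\mathrm{Pic}(M) \iso \mathrm{Cl}(M)$ that underlies writing a line bundle as a difference of two effective divisor bundles; this follows from étale descent from the scheme case, using that regularity is preserved under étale maps.
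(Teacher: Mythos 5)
Your argument is correct, but it follows a genuinely different route from the paper's. You prove the effective case from the sequence $0 \to \mathcal{I}_D \to \co_M \to \co_D \to 0$ (exactly as the paper implicitly does) and then handle a general $\mathcal{L} \iso \mathcal{L}(D_0)\otimes\mathcal{L}(D_\infty)^{-1}$ by working in the filtered ring $K_0(M)_\Q$: the factorization $[\co_M]-[\mathcal{L}^{-1}] = [\mathcal{L}(D_\infty)]\cdot([\co_{D_0}]-[\co_{D_\infty}])$ checks out, it establishes membership in $F^1$ via Theorem \ref{thm:GS_K-theory}(1), and reducing modulo $F^2$ using $[\mathcal{L}(D_\infty)]\equiv[\co_M] \pmod{F^1}$ together with $F^1\cdot F^1\subset F^2$ gives $c_1(\mathcal{L})$. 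The paper instead writes $\mathcal{L}^{-1}\iso \mathcal{I}_D\otimes\mathcal{I}_E^{-1}$ with $D\cap E$ of codimension $\ge 2$, tensors the sequence $0\to\co_M\to\mathcal{I}_E^{-1}\to\mathcal{I}_E^{-1}/\co_M\to 0$ with $\mathcal{I}_D$, and verifies by hand that $[\mathcal{I}_D\otimes(\mathcal{I}_E^{-1}/\co_M)] = [\co_M/\mathcal{I}_E]$ up to classes supported in codimension two; this is a direct computation in $G_0^{\mathrm{naive}}(M)$ that never invokes the multiplicativity of the coniveau filtration. Your version is cleaner and avoids the codimension-two bookkeeping on $D\cap E$, at the cost of leaning on the Gillet--Soul\'e Adams-operation machinery behind Theorem \ref{thm:GS_K-theory}(1); the paper's version is more elementary in that it only needs exact sequences of coherent sheaves plus the final claim of Theorem \ref{thm:GS}. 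One point both treatments leave tacit, and which you rightly flag, is the existence of a rational trivialization of $\mathcal{L}$ on an integral regular Deligne--Mumford stack (equivalently, the surjectivity of $\mathrm{Pic}(M)\to\mathrm{Cl}(M)$ needed to write $\mathcal{L}$ as a difference of effective divisor bundles); your appeal to \'etale descent and local factoriality is the same implicit input the paper uses when it writes $\mathcal{L}^{-1}\iso\mathcal{I}_D\otimes\mathcal{I}_E^{-1}$.
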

 
\begin{proof}
If we write   $\mathcal{L}^{-1} \iso \mathcal{I}_D \otimes \mathcal{I}_E^{-1}$  for  effective Cartier divisors $D, E  \subset M$ 
with $D\cap E \subset M$ of codimension $\ge 2$, then $c_1(\mathcal{L}) \in \mathrm{CH}^1(M)$ is represented by the class of the associated Weil divisor $D-E$.

Tensoring the short  exact sequence
\[
0 \to \co_M \to \mathcal{I}_E^{-1} \to   \mathcal{I}_E^{-1} / \co_M  \to 0
\]
with $I_D$ shows that 
\[
[ \mathcal{L}^{-1} ]  = [ \mathcal{I}_D ]  + [ \mathcal{I}_D \otimes ( \mathcal{I}_E^{-1} / \co_M )  ] 
\] 
holds in $G_0^\mathrm{naive}(M)$, which we rewrite as
\[
 [ \co_M]  - [ \mathcal{L}^{-1} ]  =   [  \co_M/ \mathcal{I}_D ]  -  [ \mathcal{I}_D \otimes ( \mathcal{I}_E^{-1} / \co_M )  ] .
\]
Using the assumption that  $D\cap E$ has codimension $\ge 2$, and the fact that $\mathcal{I}_E$ is locally principal, one can check that the equalities 
\[
[ \mathcal{I}_D \otimes ( \mathcal{I}_E^{-1} / \co_M )  ]  = [  \mathcal{I}_E^{-1} / \co_M ] = [ \co_M / \mathcal{I}_E] 
\]
hold  in $G_0^\mathrm{naive}(M)$, up to a linear combination of classes $[\mathcal{F}]$  with $\mathcal{F}$ the pushforward to $M$ of a coherent sheaf  on a codimension two closed substack  (contained in $E$).
Hence
\begin{equation}\label{line filter}
 [ \co_M]  - [ \mathcal{L}^{-1} ]  =   [  \co_M/ \mathcal{I}_D ]  -  [  \co_M/ \mathcal{I}_E ] = [\co_D] - [\co_E] 
\end{equation}
holds in $G_0^\mathrm{naive}(M)$ up to the same ambiguity.
Using the final claim of Theorem \ref{thm:GS},  we find that that the image of $c_1(\mathcal{L})= D-E$ under \eqref{ChowGroth} is equal to $ [ \co_M]  - [ \mathcal{L}^{-1} ]$.
\end{proof}

\begin{proposition}
\label{prop:projection formula}
As in Proposition~\ref{prop:chow pushforward},  let $\pi:M'\to M$ be a finite morphism of stacks with image of codimension $r$.   Suppose in addition that $M$ is a regular stack over $\Z$, and that for every prime $p$, there exists a quasi-projective scheme $X$ over $\Z[1/p]$ equipped with the action of a finite group $G$ such that 
\[
[X/G]\iso  M_{\Z[1/p]}.
\]
For any line bundle $\mathcal{L} \in \mathrm{Pic}(M)$ we have
\[
\pi_*c_1( \pi^*\mathcal{L} ) = c_1(\mathcal{L})\cdot [M']\in \mathrm{CH}^{r+1}(M).
\]
\end{proposition}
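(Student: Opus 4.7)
The plan is to pass to $K$-theory via the isomorphism of Theorem \ref{thm:GS} and reduce the statement to a projection formula for $K_0$. First, I would apply Lemma \ref{lem:pic complex} to represent $c_1(\pi^*\mathcal{L}) \in \mathrm{CH}^1(M')$ by the class $[\co_{M'}] - [\pi^*\mathcal{L}^{-1}] \in F^1 K_0(M')_\Q$, and similarly $c_1(\mathcal{L})$ by $[\co_M] - [\mathcal{L}^{-1}]$. The last assertion of Theorem \ref{thm:GS} identifies $[M'] \in \mathrm{CH}^r(M)$ with the image of $\pi_*[\co_{M'}] \in F^r K_0^{\pi(M')}(M)_\Q$ under \eqref{ChowGroth}, where $\pi_*$ is the homomorphism of Proposition \ref{prop:GtoK}. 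Using the $K$-theoretic product of Theorem \ref{thm:GS_K-theory} together with the pushforward, I would then rewrite the desired equality as
\[
\pi_*\bigl([\co_{M'}] - [\pi^*\mathcal{L}^{-1}]\bigr) = \bigl([\co_M] - [\mathcal{L}^{-1}]\bigr) \cdot \pi_*[\co_{M'}]
\]
in $F^{r+1} K_0(M)_\Q$ modulo $F^{r+2}$, which after rearrangement collapses to the single projection formula
\[
\pi_*[\pi^*\mathcal{L}^{-1}] = [\mathcal{L}^{-1}] \cdot \pi_*[\co_{M'}].
\]

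The next step is to establish this $K$-theoretic projection formula. At the level of coherent sheaves, the classical identity $\pi_*(\pi^*\mathcal{E} \otimes_{\co_{M'}} \mathcal{F}) \iso \mathcal{E} \otimes_{\co_M} \pi_*\mathcal{F}$ for a locally free $\mathcal{E}$ on $M$ and a coherent $\mathcal{F}$ on $M'$ is immediate, since $\pi$ is finite and so $\pi_*$ is exact on quasi-coherent sheaves. Applied with $\mathcal{E} = \mathcal{L}^{-1}$ and $\mathcal{F} = \co_{M'}$, this yields the identity in $G_0$. To lift it to $K_0$ one must pass through the canonical map $G_0(M)_\Q \to K_0(M)_\Q$ afforded by the resolution property, and then identify the resulting $K_0$-product with the pairing \eqref{eqn:k0_pairing} that feeds into Theorem \ref{thm:GS_K-theory}. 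This is where the quotient presentation enters: the hypothesis says that Zariski-locally on $\Spec(\Z)$ the stack $M$ is a global quotient $[X/G]$ with $X$ quasi-projective, which has the resolution property. Since both sides of the desired equality in $\mathrm{CH}^{r+1}(M)$ are compatible with restriction to the open substacks $M_{\Z[1/p]}$, and two such open substacks suffice to cover $M$, I would verify the identity on each such chart separately.

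The main obstacle will be tracking the compatibility between the sheaf-theoretic projection formula and the product structure on the derived $K$-theory spectra $\mathbf{K}_M$ used to construct the pairing \eqref{eqn:k0_pairing}. On a quotient stack $[X/G]$ this reduces to Thomason's equivariant $K$-theory for quasi-projective schemes, where the projection formula for a finite $G$-equivariant morphism is standard and where $K_0$ can be computed directly via $G$-equivariant vector bundles on $X$. Once this compatibility is in hand, substituting back into the chain of identifications in the first paragraph produces the equality in $\mathrm{Gr}^{r+1}_\gamma K_0(M)_\Q = \mathrm{CH}^{r+1}(M)$, completing the proof.
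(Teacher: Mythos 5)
Your reduction of the right-hand side is fine: the identity $[\mathcal{L}^{-1}]\cdot\pi_*[\co_{M'}]=\pi_*[\pi^*\mathcal{L}^{-1}]$ in $K_0^{M'}(M)_\Q$ really is the exact projection formula for a finite morphism and a locally free sheaf, and it needs neither the resolution property nor the quotient presentation (since $M$ is regular, Proposition \ref{prop:GtoK} already converts $G_0$ into $K_0$ with support). The genuine gap is on the left-hand side. You invoke Lemma \ref{lem:pic complex} on $M'$ to write $c_1(\pi^*\mathcal{L})$ as $[\co_{M'}]-[\pi^*\mathcal{L}^{-1}]$, but $M'$ is not assumed regular — it may be non-reduced and have embedded components — so that lemma does not apply; and even granting some version of it, you would still need to know that the Gillet--Soul\'e identification intertwines the cycle-level pushforward $\pi_*:\mathrm{CH}^1(M')\to\mathrm{CH}^{r+1}(M)$ of Proposition \ref{prop:chow pushforward} with the $K$-theoretic pushforward $G_0(M')_\Q\to K_0^{M'}(M)_\Q$, a Riemann--Roch-type compatibility the paper only establishes for fundamental classes (the last assertion of Theorem \ref{thm:GS}). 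Identifying $\pi_*c_1(\pi^*\mathcal{L})$, which is defined via divisors of rational trivializations on the components of $M'$, with a class in $\mathrm{Gr}^{r+1}_\gamma K_0^{M'}(M)_\Q$ is precisely the crux of the proposition, and your proposal does not supply it.

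This is also where the quotient-presentation hypothesis actually earns its keep, and you have misallocated it. The paper uses $[X/G]\iso M_{\Z[1/p]}$ with $X$ quasi-projective to produce, for any finite set $T\subset|M|$, a global section $s$ of some $\mathcal{L}^{\otimes r}$ not vanishing on $T$ (find a section on $X$ avoiding the preimage of $T$, take the product over the $G$-orbit, then multiply by a power of $p$ to extend over $\Spec(\Z)$). Applying this with $T$ the image of the associated points of $M'$ makes $D'=D\times_M M'$ an honest effective Cartier divisor on $M'$, so that \emph{both} sides of the proposition become the class $[\pi_*\co_{D'}]$: the left side directly from Definition \ref{defn:naive class} and Theorem \ref{thm:GS}, the right side from a short Tor computation using the resolution $0\to\mathcal{I}_D\to\co_M\to\co_D\to 0$. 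By contrast, your use of the hypothesis — "resolution property on charts, then verify the identity on each $M_{\Z[1/p]}$ separately" — does not close: restriction to an open substack is not injective on Chow groups, and an identity checked on two opens covering $\Spec(\Z)$ does not glue to an identity in $\mathrm{CH}^{r+1}(M)$ without a further localization-sequence argument that you have not given. To repair the proof you should import the paper's section-choosing step (or an equivalent statement that $\pi^*\mathcal{L}$ admits a rational trivialization on $M'$ that is a nonzerodivisor), after which your $K$-theoretic bookkeeping goes through.
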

\begin{proof}
We claim first that for any finite subset $T\subset |M|$, there exists an $r \in \Z^+$ and a section $s\in H^0(M,\mathcal{L}^{\otimes r})$ whose vanishing locus is disjoint from $T$.  
For this,  choose a prime $p$ that does not divide the characteristics of $k(\xi)$ for any $\xi\in T$, and fix  $[X/G]\iso M_{\Z[1/p]}$ as in the statement of the proposition. 
By~\cite[Prop. 9.1.11]{Liu2002-tj} there is a section $ \sigma \in H^0(X,\mathcal{L}\vert_X)$ whose vanishing locus is disjoint from the pre-image of $T$ in $X$.   If we write $G = \{g_1,\ldots,g_r\}$, then
\[
g_1 \sigma \otimes  \cdots \otimes g_r \sigma \in H^0(X,\mathcal{L}^{\otimes r} \vert_X)^G = H^0(M_{\Z[1/p]} ,\mathcal{L}^{\otimes r})
\]
is a section  whose vanishing locus in $M_{\Z[1/p]}$ is disjoint from $T$. 
Multiplying this section by a sufficiently large power of $p$  provides us with the  desired section  $s\in H^0(M,\mathcal{L}^{\otimes r})$.

We apply the paragraph above with $T$ equal to the image under $\pi$ of the set of associated points\footnote{An \emph{associated point} of a stack $Z$ is one that is the image of an associated prime of $R$ for some \'etale map $\Spec (R) \to Z$.} of $M'$. The Cartier divisor $D$ of the resulting section $s \in H^0( M , \mathcal{L}^{\otimes r})$ then has the property that 
\[
D' \define D\times_M M'
\]
is  an effective Cartier divisor on $M'$, and $\mathcal{L}^{\otimes r} \iso \mathcal{I}_D^{-1}$.
Recalling that the Chow group $ \mathrm{CH}^{r+1}(M)$ has rational coefficients, it suffices to prove the stated equality after replacing $\mathcal{L}$ by $\mathcal{L}^{\otimes r}$.  Thus we may ease notation by assuming $r=1$.

The left hand side of the desired equality is now just the cycle class $[D']$ associated to the finite map
$D'  \to M$ by Definition \ref{defn:naive class}, 
so is represented in $\mathrm{Gr}^{r+1}_{\gamma}K_0(M)_\Q$ by the class $[\pi_*\co_{D'}]$. 

On the other hand, the right hand side is represented by 
\[
[  \co_D \otimes^{\mathbb{L}}_{\co_M} \pi_*\co_{M'}   ]  
= \sum_{i\ge 0} (-1)^i \cdot [\underline{\mathrm{Tor}}_i^{\co_M}(\co_D, \pi_*\co_{M'})].
\]
Using the  resolution
\[
0\to \mathcal{I}_D \to \co_M \to \co_D\to 0 
\]
of $\co_D$ by vector bundles on $M$, 
  the $\mathrm{Tor}$ sheaves in the sum  can be computed by taking the  homology of the complex
\[
\cdots \to 0 \to \mathcal{I}_{D} \otimes_{\co_M} \pi_*\co_{M'} \map{f} \pi_*\co_{M'} \to 0,
\]
where $f(a\otimes b) = ab$ is the multiplication map.
Our assumption that $D'$ is an effective Cartier divisor on $M'$ guarantees that $f$ is injective with image $\pi_* \mathcal{I}_{D'} \subset \pi_*\co_{M'}$.  It follows that the  $i=0$ term in the sum is $[\pi_*\co_{D'}]$, while all  terms with $i>0$ vanish.
\end{proof}


\subsection{A generalized intersection pairing}


Throughout this subsection we assume that $M$ is a regular stack. Our goal is to construct a  refinement of the intersection pairing of Theorem \ref{thm:GS_K-theory}.

Analogously to the coniveau filtration \eqref{Kconiveau} on $K^Z( M )_\Q$, for a finite morphism $Z \to M$ we define 
 \begin{equation}\label{G filtration}
F^d G_0( Z)_\Q = \bigcup_{ \substack{ Y \subset Z \\ \mathrm{codim}_M(Y) \ge d}  }  \mathrm{Image}\big( G_0(Y)_\Q \to G_0( Z)_\Q \big),
\end{equation}
where the union is over all closed substacks $Y\subset Z$  whose image  $\pi(Y) \subset M$ has codimension $\ge d$.  
This defines the  \emph{coniveau-in-$M$ filtration}  on  $G_0( Z)_\Q$.
Of course the filtration  depends on the morphism $\pi: Z \to M$, but we suppress this from the notation as it will always be clear from context.  It is clear that \eqref{coniveau push} restricts to a morphism 
\begin{equation}\label{more coniveau push}
F^d G_0( Z)_\Q  \to F^d K^Z_0( M)_\Q.
\end{equation}

Suppose  we are given finite  morphisms
\[
\xymatrix{
{ Z_1 } \ar[dr]_{\pi_1}&  & {Z_2} \ar[dl]^{\pi_2}\\
& {M}.
}
\]
The natural map   $\pi :  Z_1\times_M Z_2 \to M$ is also finite, hence affine, and so
\begin{equation}\label{specrel}
Z_1 \times_M Z_2 \iso \underline{\Spec}_{\co_M} ( \pi_*  \co_{ Z_1\times_M  Z_2 }   ).
\end{equation}
 Given coherent sheaves $\mathcal{F}_1$ and $\mathcal{F}_2$ on $Z_1$ and $Z_2$, respectively,  
 we can  form, for every $\ell\ge 0$, the coherent sheaf 
\begin{equation}\label{tor sheaf}{}
\underline{\mathrm{Tor}}_\ell^{\co_M} (\pi_{1*}\mathcal{F}_1 , \pi_{2*}\mathcal{F}_2)
\end{equation}
on $M$. 
 As the formation of Tor  is functorial in both variables, \eqref{tor sheaf} carries an action of the  $\co_M$-algebra 
\[
\pi_* \co_{Z_1\times_M Z_2} \iso \pi_{1*}\co_{Z_1} \otimes_{\co_M} \pi_{2*}\co_{Z_2} ,
\]
which  determines a lift of \eqref{tor sheaf}  to a coherent sheaf on \eqref{specrel}. 
This lift then determines a  class
\[
[  \underline{\mathrm{Tor}}_\ell^{\co_M} (\pi_{1*}\mathcal{F}_1 , \pi_{2*}\mathcal{F}_2) ] \in G^\mathrm{naive}_0( Z_1 \times_M Z_2 )_\Q
\]
in the naive $G$-theory group of Remark \ref{rem:naive G}.   In this way we obtain   a bilinear pairing
\[
G^\mathrm{naive}_0( Z_1 )_\Q  \otimes G^\mathrm{naive}_0( Z_2 )_\Q  \map{\cap} G^\mathrm{naive}_0( Z_1 \times_M Z_2 )_\Q
\]
defined by 
\begin{equation}\label{naive intersection}
[ \mathcal{F}_1 ] \cap [\mathcal{F}_2 ] 
= \sum_{\ell \ge 0}(-1)^\ell \cdot [ \underline{\mathrm{Tor}}_\ell^{\co_M} (\pi_{1*}\mathcal{F}_1 , \pi_{2*}\mathcal{F}_2) ].
\end{equation}
Note that the sum on the right hand side is  finite as $M$, being assumed regular,  has finite Tor dimension.

\begin{lemma}\label{lem:new intersection}
There is a unique bilinear pairing
 \begin{equation}\label{new intersection}
 G_0( Z_1 )_\Q  \otimes G_0( Z_2 )_\Q  \map{\cap} G_0( Z_1 \times_M Z_2 )_\Q
 \end{equation}
 making the diagram 
 \begin{equation*}
\xymatrix{
{ G^\mathrm{naive}_0( Z_1 )_\Q  \otimes G^\mathrm{naive}_0( Z_2 )_\Q } \ar[rr]^\cap \ar[d]  & &{  G^\mathrm{naive}_0( Z_1\times_M Z_2)_\Q  } \ar[d]  \\
{ G_0( Z_1 )_\Q  \otimes G_0( Z_2 )_\Q } \ar[rr]^\cap \ar[d]_{ \pi_{1*}\otimes\pi_{2*}}  & &{  G_0( Z_1\times_M Z_2)_\Q  } \ar[d]^{\pi_*}  \\
{ K_0^{Z_1}(M )_\Q \otimes K_0^{Z_2}(M) } \ar[rr]^{\otimes} & & {  K_0^{Z_1\times_M Z_2}(M)_\Q }
}
\end{equation*}
 commute, where the top vertical arrows are the surjections of Remark \ref{rem:coherent_sheaf_class}, and the bottom vertical arrows are those of Proposition \ref{prop:GtoK}.
\end{lemma}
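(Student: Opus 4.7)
The uniqueness assertion is immediate from Remark \ref{rem:coherent_sheaf_class}: since the map $G_0^{\mathrm{naive}}(Z_i)_\Q \to G_0(Z_i)_\Q$ is surjective, any pairing making the top square of the diagram commute is determined by the naive pairing \eqref{naive intersection} on a spanning set. Moreover, once such a pairing exists, commutativity of the bottom square can be checked on the image of $G_0^{\mathrm{naive}} \otimes G_0^{\mathrm{naive}}$, where it reduces to the standard fact that if one resolves $\pi_{i*}\mathcal{F}_i$ by finite complexes of vector bundles on the regular stack $M$, the tensor product of the resolutions is a complex of vector bundles whose cohomology sheaves are exactly $\underline{\mathrm{Tor}}_\ell^{\co_M}(\pi_{1*}\mathcal{F}_1,\pi_{2*}\mathcal{F}_2)$.

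For existence I would upgrade the naive Tor construction to a pairing of sheaves of spectra on $\mathrm{Et}(M)$. For each $(U \to M) \in \mathrm{Et}(M)$ and each finite morphism $Z \to M$, regularity of $U$ ensures that any coherent sheaf on $Z \times_M U$ has pushforward to $U$ of finite Tor dimension, and hence can be resolved by a bounded complex of vector bundles. The derived tensor product $(\pi_{1*}\mathcal{F}_1) \otimes^{\mathbb{L}}_{\co_U}(\pi_{2*}\mathcal{F}_2)$ is therefore a bounded complex of coherent $\co_U$-modules, carrying a canonical module structure over
\[
\pi_{1*}\co_{Z_1 \times_M U} \otimes_{\co_U} \pi_{2*}\co_{Z_2 \times_M U} \iso \pi_*\co_{Z_1 \times_M Z_2 \times_M U},
\]
and so, via \eqref{specrel}, descends to a bounded complex of coherent sheaves on $Z_1 \times_M Z_2 \times_M U$. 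This biexact construction promotes to a natural map of rational spectra
\[
G(Z_1 \times_M U)_\Q \wedge G(Z_2 \times_M U)_\Q \to G(Z_1 \times_M Z_2 \times_M U)_\Q,
\]
which, being functorial in $U$, defines a map of sheaves of rational spectra $\mathbf{G}_{Z_1} \wedge \mathbf{G}_{Z_2} \to \mathbf{G}_{Z_1 \times_M Z_2}$. Taking global sections and passing to $\pi_0$ gives the desired pairing \eqref{new intersection}, and commutativity of the top square is built into the definition, since the homology sheaves of the derived tensor product are exactly the $\mathrm{Tor}$ sheaves of \eqref{naive intersection}.

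The main technical step is precisely this spectrum-level construction rather than a pairing defined only on $\pi_0$. Without it one cannot conclude that the naive pairing \eqref{naive intersection} descends to $G_0(Z_1)_\Q \otimes G_0(Z_2)_\Q$, because the kernel of the surjection $G_0^{\mathrm{naive}} \to G_0$ involves descent relations from the sheaf-of-spectra structure and is not visible purely through short exact sequences of coherent sheaves. It is the regularity of $M$ that makes the step possible, by bounding the Tor-amplitude and guaranteeing that the derived tensor product lands in the Grothendieck group of ordinary bounded complexes of coherent sheaves. With the pairing of sheaves of spectra in place, the remaining assertions of the lemma follow formally from Remark \ref{rem:coherent_sheaf_class} and the observation made in the first paragraph.
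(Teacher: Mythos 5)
Your uniqueness argument, and your overall strategy for existence (upgrade the Tor formula to a pairing of sheaves of rational spectra on $\mathrm{Et}(M)$ built from the derived tensor product, then take global sections and pass to degree zero), coincide with the paper's. The gap is in the step where you assert that $(\pi_{1*}\mathcal{F}_1)\otimes^{\mathbb{L}}_{\co_U}(\pi_{2*}\mathcal{F}_2)$ carries a canonical module structure over $\pi_{1*}\co_{Z_1\times_M U}\otimes_{\co_U}\pi_{2*}\co_{Z_2\times_M U}$ and hence ``descends to a bounded complex of coherent sheaves on $Z_1\times_M Z_2\times_M U$.'' It does not: computing the derived tensor product by resolving $\pi_{1*}\mathcal{F}_1$ by vector bundles on the regular scheme $U$, the terms of the resulting complex are not $\pi_{1*}\co_{Z_1}$-modules, and the only structure canonically present on the complex is a module structure over the \emph{derived} tensor product $\pi_{1*}\co_{Z_1}\otimes^{\mathbb{L}}_{\co_U}\pi_{2*}\co_{Z_2}$; restriction of scalars along the truncation map $\pi_{1*}\co_{Z_1}\otimes^{\mathbb{L}}_{\co_U}\pi_{2*}\co_{Z_2}\to \pi_{1*}\co_{Z_1}\otimes_{\co_U}\pi_{2*}\co_{Z_2}$ goes the wrong way. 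Only the homology sheaves are modules over the classical tensor product, and retaining only the homology puts you back in $G_0^{\mathrm{naive}}$, which, as you yourself observe, is not enough.

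The paper closes exactly this gap with derived algebraic geometry: the derived tensor product is an honest coherent sheaf on the derived affine $M$-stack with structure sheaf $\pi_{1*}\co_{Z_1}\otimes^{\mathbb{L}}_{\co_M}\pi_{2*}\co_{Z_2}$, whose underlying classical stack is $Z_1\times_M Z_2$, and the derived invariance of $G$-theory (\cite[Corollary 3.4]{Khan2022-eq}) then identifies its $G$-theory with that of $Z_1\times_M Z_2$, producing a genuine global section of $\mathbf{G}_{Z_1\times_M Z_2}$ whose image in $G_0$ is computed by the Tor formula \eqref{naive intersection}. You need this, or an equivalent device (for instance a Waldhausen-category pairing on complexes with the module structure tracked coherently), to land in $G_0(Z_1\times_M Z_2)_\Q$ rather than in $G_0^{\mathrm{naive}}$. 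With that correction the rest of your argument --- uniqueness from the surjectivity in Remark \ref{rem:coherent_sheaf_class}, and commutativity of the two squares checked on classes of coherent sheaves --- goes through as in the paper.
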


\begin{proof}
In the case of schemes, so that $G_0=G_0^\mathrm{naive}$,  this is clear from the definitions.

For the  stack case, recall that  \eqref{naive push}  is surjective.  In particular $G_0( Z_1 )_\Q  \otimes G_0( Z_2 )_\Q$ is generated by elements of the form $[\mathcal{F}_1]\otimes [\mathcal{F}_2]$ for coherent sheaves $\mathcal{F}_i$ on $Z_i$, so  there can be at most one pairing \eqref{new intersection} making the top square of the diagram commute.

The cleanest way to the prove existence of \eqref{new intersection}  involves a little bit of derived algebraic geometry. 
Namely, the derived tensor product $\pi_{1*}\mathcal{F}_1\otimes^{\mathbb{L}}_{\co_M}\pi_{2*}\mathcal{F}_2$ gives a coherent sheaf on the derived affine scheme over $M$ with underlying structure sheaf $\pi_{1*}\co_{Z_1}\otimes^{\mathbb{L}}_{\co_M}\pi_{2*}\co_{Z_2}$. The underlying classical scheme here is just $Z_1\times_MZ_2$. Therefore, using~\cite[Corollary 3.4]{Khan2022-eq}, this actually gives a global section of the sheaf $\mathbf{G}_{Z_1\times_MZ_2}$, which can be identified explicitly with the right hand side of \eqref{naive intersection}. 
\end{proof}

\begin{remark}
It is natural to expect that \eqref{new intersection} restricts to 
\begin{equation}\label{G-filtration-pairing}
F^{d_1} G_0( Z_1 )_\Q  \otimes F^{d_2}  G_0( Z_2 )_\Q \map{?} F^{d_1+d_2} G_0( Z_1\times_M Z_2)_\Q.
\end{equation}
If $\pi_1$ and $\pi_2$ are closed immersions of schemes this is clear from   Theorem \ref{thm:GS_K-theory} and the final claim of Proposition \ref{prop:GtoK}.  
In general, even if one assume that $\pi_1$ and $\pi_2$ are finite morphisms of schemes, we are unable to provide a proof. 
If one attempts to imitate the proof of the analogous claim in Theorem \ref{thm:GS_K-theory}, one is immediately obstructed by the lack of Adams operators in this context.

To give a concrete sense of why  finite maps are more difficult to deal with than closed immersions,  let $C_1,\ldots, C_r$ be the connected components of 
$ Z_1\times_M Z_2 $.   Given a  class
\[
[\mathcal{F}_1] \otimes [\mathcal{F}_2] \in F^{d_1} G_0( Z_1 )_\Q  \otimes F^{d_2}  G_0( Z_2 )_\Q,
\]
we may  decompose
\[
[\mathcal{F}_1] \cap [\mathcal{F}_2]  =  c_1+ \cdots + c_r  \in \bigoplus_{j=1}^r G_0(C_j)_\Q = G_0(   Z_1\times_M Z_2 )_\Q.
\]
The image  of the sum $c_1+ \cdots + c_r$ in $K_0^{ Z_1\times_M Z_2 }(M)_\Q$  lies in the $d_1+d_2$ part of the coniveau filtration  by Theorem \ref{thm:GS_K-theory} and the commutativity of the diagram in Lemma \ref{lem:new intersection}, but if \eqref{G-filtration-pairing} holds  then   the image of each \emph{individual} $c_j$ in $K_0^{ Z_1\times_M Z_2 }(M)_\Q$ must also  lie in the $d_1+d_2$ part of the coniveau filtration.  
Even this weaker property seems quite subtle. (Note that the images of  $C_1,\ldots, C_r$ in $M$ may no longer be disjoint, leading to cancellation among the terms in  $c_1+ \cdots + c_r$ after pushforward to $M$.)
\end{remark}

\begin{remark}
One can define a coniveau-in-$M$ filtration on $G_0^\mathrm{naive}(Z)_\Q$ in exactly the same way as \eqref{G filtration}, but it is dubious that one should expect the analogue of \eqref{G-filtration-pairing} to hold with this naive definition. 
\end{remark}

The following weaker version of \eqref{G-filtration-pairing} is enough for our applications.

\begin{proposition}\label{prop:inductive coniveau}
Suppose   $Z_1\to M$ and $Z_2\to M$ are finite and unramified.
For any $d\ge 0$,  the pairing \eqref{new intersection} restricts to 
\[
F^d G_0(Z_1)_\Q \otimes F^1 G_0(Z_2)_\Q \map{\cap} F^{d+1}G_0(Z_1\times_M Z_2)_\Q .
\]
\end{proposition}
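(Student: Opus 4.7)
The plan is to reduce the claim to a purely $K$-theoretic statement that follows from Theorem \ref{thm:GS_K-theory}(1). The key observation is that for a closed immersion $Z \hookrightarrow M$ into a regular scheme, Proposition \ref{prop:GtoK} identifies $G_0(Z)_\Q$ with $K_0^Z(M)_\Q$, and under this identification the coniveau-in-$M$ filtration \eqref{G filtration} corresponds to the $K$-theoretic coniveau filtration \eqref{Kconiveau}.

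The first step is to reduce to the case of closed immersions of schemes. Since both $Z_1 \to M$ and $Z_2 \to M$ are finite and unramified, \'etale-locally on $M$ each $Z_i$ decomposes as a finite disjoint union of closed immersions into $M$. Both the coniveau-in-$M$ filtration on $G_0$ and the cap product of Lemma \ref{lem:new intersection} are defined compatibly with \'etale base change (the filtration via closed substacks, the cap product via Tor-sheaves on the \'etale site), so I expect it to suffice to prove the proposition under the assumption that $M$ is a regular scheme and $Z_1, Z_2 \hookrightarrow M$ are closed immersions.

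In this reduced setting, $Z_1 \times_M Z_2 = Z_1 \cap Z_2$ is itself a closed subscheme of $M$, and Proposition \ref{prop:GtoK} gives canonical isomorphisms
\[
G_0(Z_i)_\Q \xrightarrow{\sim} K_0^{Z_i}(M)_\Q \quad (i=1,2), \qquad G_0(Z_1 \times_M Z_2)_\Q \xrightarrow{\sim} K_0^{Z_1 \times_M Z_2}(M)_\Q.
\]
These carry the coniveau-in-$M$ filtrations on the left to the coniveau filtrations on the right: for any closed $Y \subset Z_i$ the isomorphism $G_0(Y)_\Q \iso K_0^Y(M)_\Q$ is functorial in $Y$, so the union of images from $G_0(Y)$ with $\mathrm{codim}_M(Y) \ge d$ corresponds exactly to the union of images from $K_0^Y(M)$ with the same codimension condition. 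By the commutative diagram in Lemma \ref{lem:new intersection}, the $G$-theoretic cap product $\cap$ then translates to the $K$-theoretic tensor product $\otimes$. The proposition thereby reduces to
\[
F^d K_0^{Z_1}(M)_\Q \otimes F^1 K_0^{Z_2}(M)_\Q \xrightarrow{\otimes} F^{d+1} K_0^{Z_1 \cap Z_2}(M)_\Q,
\]
which is precisely the $(d_1,d_2) = (d,1)$ case of Theorem \ref{thm:GS_K-theory}(1).

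The main obstacle I anticipate is making the \'etale-descent step in the first paragraph rigorous: the filtration is defined as a union of images from $G_0$ of various closed substacks, and such unions require some care to sheafify, especially because the indexing family of closed substacks itself varies with the \'etale chart. One must verify that a class in $G_0(Z_1 \times_M Z_2)_\Q$ lies in $F^{d+1}$ whenever its pullback to an \'etale cover does, which ultimately reduces to the fact that codimension of the image in $M$ is preserved under \'etale base change and that rational $G_0$ satisfies descent by construction. Once this book-keeping is dispatched, the rest of the argument is a clean application of the already established compatibility between $G$-theory, $K$-theory, and the coniveau filtration.
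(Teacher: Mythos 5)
Your reduced case is fine: for closed immersions of schemes, Proposition \ref{prop:GtoK} does identify the coniveau-in-$M$ filtration on $G_0(Z)_\Q$ with the coniveau filtration on $K_0^Z(M)_\Q$, and Theorem \ref{thm:GS_K-theory}(1) then gives the multiplicativity --- the paper itself records this in the remark following Lemma \ref{lem:new intersection}. The gap is the descent step you label ``book-keeping'' and never carry out. What you need is: if $c\in G_0(Z_1\times_M Z_2)_\Q$ and its pullback to an \'etale cover $U\to M$ lies in $F^{d+1}$ for the coniveau-in-$U$ filtration, then $c\in F^{d+1}$. Unwinding via the localization sequence, this comes down to showing that $c$ vanishes on the complement of a suitable closed substack $Y$ knowing only that its pullback to $(Z_1\times_M Z_2\smallsetminus Y)_U$ vanishes, i.e.\ to injectivity of pullback on rational $G_0$ along the \'etale surjection $U\to M$. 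For a \emph{finite} \'etale cover this would follow from a transfer argument, but $U\to M$ need not be finite, and no such injectivity is established or available here. That this is not a technicality is shown by the paper itself: if your descent worked, the same argument would prove the full bilinear statement $F^{d_1}\otimes F^{d_2}\to F^{d_1+d_2}$ of \eqref{G-filtration-pairing} for finite unramified morphisms, which the authors explicitly state they are unable to prove; the proposition carries the asymmetric hypothesis $F^1$ in one slot precisely because only that case is accessible.

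The paper's actual proof avoids global descent entirely. After reducing to the case $\mathrm{codim}_M(Z_1)=\mathrm{codim}_M(Z_1\times_M Z_2)=d$ and $\mathrm{codim}_M(Z_2)\ge 1$, it proves (Lemma \ref{lem:coniveau key}) that $z_1\cap z_2$ dies on a Zariski open neighborhood of the generic point of every codimension-$d$ component of $Z_1\times_M Z_2$. This is done at the level of the \'etale local rings of $M$ at such a point --- where the finite unramified maps do become surjections of local rings, so Theorem \ref{thm:GS_K-theory} applies --- combined with the fact that $F^{d+1}K_0$ with supports vanishes on a $d$-dimensional regular local ring. A localization sequence and a colimit over open neighborhoods then place $z_1\cap z_2$ in the image of $G_0$ of a closed substack whose image in $M$ has codimension $\ge d+1$. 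This local-vanishing mechanism is exactly what forces the restriction to $F^1$ in the second factor, which your approach would not explain.
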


\begin{proof}
Assume first that 
\begin{equation}\label{simple dimension}
\mathrm{codim}_M(Z_1) \ge  d,\quad \mathrm{codim}_M(Z_2) \ge  1.
\end{equation}
If    $\mathrm{codim}_M(Z_1\times_M Z_2) \ge d+1$ then  
\[
F^{d+1} G_0(Z_1\times_M Z_2) = G_0(Z_1\times_M Z_2),
\]
 and there is nothing to prove. 
 Thus we assume further that 
 \[
 d = \mathrm{codim}_M(Z_1) =  \mathrm{codim}_M(Z_1\times_M Z_2)  .
 \]

\begin{lemma}\label{lem:coniveau key}
Suppose $C\subset Z_1\times_M Z_2$ is an irreducible component with $\mathrm{codim}_M(C)=d$, and with generic point $\eta$. For any pair of classes $(z_1,z_2)\in G_0(Z_1)_\Q\times G_0(Z_2)_\Q$, there is a Zariski   open substack  $U\subset Z_1\times_M Z_2$ containing $\eta$  for which 
   \[
   z_1\cap z_2 \in \mathrm{ker} \big(   G_0 ( Z_1 \times_M Z_2)  \to G_0 (U) \big).
 \]
\end{lemma}

\begin{proof}
Let $\bar{\eta} \to Z_1\times_M Z_2$ be a geometric point above $\eta$, and consider the commutative diagram of \'etale local rings 
\begin{equation}\label{etale local cap}
\xymatrix{
 &  {  \co^\et_{Z_1\times_M Z_2,\bar{\eta}}  }  \\
 {  \co^\et_{Z_1,\bar{\eta}} }  \ar[ur]& &  {   \co^\et_{Z_2,\bar{\eta}} }  \ar[ul]\\
 &   { \co_{M,\bar{\eta}}^\et   } \ar[ul]\ar[ur] 
}
\end{equation}
at $\bar{\eta}$.
As both $Z_1\to M$ and $Z_2\to M$ are finite and unramified, all of the morphisms in \eqref{etale local cap}  are surjective.  
 For any one of these local rings $R$, we abbreviate $G_0(R) = G_0(\Spec(R))$ for the Grothendieck group of 
finitely generated $R$-modules.  If $R\to S$ is any one of the four arrows in the above diagram, 
we similarly abbreviate
\[
K_0^S(R) = K_0^{\mathrm{Spec}(S) } ( \Spec(R) ).
\]
When $R = \co^{\et}_{M,\bar{\eta}}$,  Proposition \ref{prop:GtoK} provides a canonical isomorphism
\[
G_0(S)_\Q \iso K_0^S(R)_\Q.
\]

Consider the commutative diagram
\[
\xymatrix{
{  G_0 (Z_1)_\Q  \otimes   G_0 (Z_2)_\Q  }   \ar[rr]^{ \cap }     \ar[d]  & &  {  G_0 ( Z_1 \times_M Z_2)_\Q    }  \ar[d]  \\
{    G_0(   \co^\et_{Z_1,\bar{\eta}} )_\Q  \otimes  G_0(   \co^\et_{Z_2,\bar{\eta}} )_\Q    }   \ar[rr]  \ar[d]_{\iso}  &  & {  G_0(   \co^\et_{Z_1\times_M Z_2,\bar{\eta}}    )_\Q  }  \ar[d]_{\iso} \\
{    K_0^{    \co^\et_{Z_1,\bar{\eta}}   }  ( \co^\et_{M,\bar{\eta} })_\Q  \otimes  K_0^{    \co^\et_{Z_2,\bar{\eta}}   }  ( \co^\et_{M,\bar{\eta} })_\Q   }   \ar[rr] &  & {  K_0^{    \co^\et_{Z_1\times_M Z_2,\bar{\eta}}   } (    \co^\et_{M,\bar{\eta} })_\Q   }  
}
\]
in which  the middle  arrow is defined in exactly the same way as the top  pairing, and the bottom pairing is that of 
Theorem \ref{thm:GS_K-theory}.

The bottom pairing is multiplicative with respect to the coniveau filtration, but  
\begin{align*}
F^d K_0^{   \co^\et_{Z_1 ,\bar{\eta}}  } ( \co^\et_{M,\bar{\eta}} )_\Q  & = K_0^{   \co^\et_{Z_1 ,\bar{\eta}}  } ( \co^\et_{M,\bar{\eta}} )_\Q  \\
F^1 K_0^{   \co^\et_{Z_2 ,\bar{\eta}}  } ( \co^\et_{M,\bar{\eta}} )_\Q  & = K_0^{   \co^\et_{Z_2 ,\bar{\eta}}  } ( \co^\et_{M,\bar{\eta}} )_\Q  
\end{align*}
and, as $\mathrm{dim}( \co^\et_{M,\bar{\eta}} ) =d$ by hypothesis,
\[
F^{d+1} K_0^{   \co^\et_{ Z_1\times_M Z_2 ,\bar{\eta}}  } ( \co^\et_{M,\bar{\eta}} )_\Q   = 0.
\]
Thus the bottom horizontal arrow is trivial, and hence so is  the composition 
\[
G_0 (Z_1)_\Q  \otimes   G_0 (Z_2)_\Q \map{\cap} G_0 ( Z_1 \times_M Z_2)_\Q    \to 
K_0(\co^\et_{Z_1\times_MZ_2,\bar\eta})_\Q.
\]

As $\co^\et_{Z_1\times_MZ_2,\bar\eta}$ is an Artinian local ring, by d\'evissage (see~\cite[Lemma 7.3]{Gillet1984-tk})  and  Remark~\ref{rem:k_theory_punctual}, we have
\[
K_0(\co^\et_{Z_1\times_MZ_2,\bar\eta})_\Q \iso K_0(k(\bar\eta))_\Q \iso K_0(\eta)_\Q .
\]
Therefore the composition 
\[
G_0 (Z_1)_\Q  \otimes   G_0 (Z_2)_\Q \map{\cap} 
G_0 ( Z_1 \times_M Z_2)_\Q \to   G_0( \eta )_\Q \iso K_0(  \eta  )_\Q.
\]
is also trivial.

To finish, we only need to observe that
\[
\mathrm{colim}_{\eta\in U} G_0 ( U )_\Q \iso G_0( \eta )_\Q,
\]
where on the left hand side the colimit is over pullbacks of inclusions of open neighborhoods of $\eta$ in $Z_1\times_MZ_2$. 
This once again be checked on the level of sheaves of rational spectra, where it comes down to the fact that the exact category of coherent sheaves over a point of a scheme is equivalent to the colimit of the exact categories of coherent sheaves over a system of affine neighborhoods of the point; see for instance~\cite[\S 8.5]{MR217086}.
\end{proof}

We can now complete the proof of Proposition \ref{prop:inductive coniveau} under the assumption \eqref{simple dimension}.
By Lemma \ref{lem:coniveau key} there exists a Zariski open substack $U \subset Z_1\times_M Z_2$ such that 
\[
\mathrm{codim}_M ( (Z_1\times_M Z_2) \smallsetminus U)\ge d+1,
\]
and such that  $z_1\cap z_2$ lies in the kernel of  the second arrow in 
\[
G_0( ( Z_1 \times_M Z_2)  \smallsetminus U )_\Q \to G_0 ( Z_1 \times_M Z_2)_\Q   \to G_0 ( U )_\Q  .
\]
This sequence is exact~\cite[Lemma 7.4]{Gillet1984-tk},  and so   
\begin{align*}
z_1\cap z_2  &
\in \mathrm{Image} \big(   G_0( ( Z_1 \times_M Z_2) \smallsetminus U )_\Q \to G_0 ( Z_1 \times_M Z_2)_\Q  \big) \\
& \subset  F^{d+1} G_0 ( Z_1 \times_M Z_2 )_\Q.
\end{align*}

We now reduce  the general case to the case just proved.  Suppose we are given classes 
\[
z_1  \in F^d G_0(Z_1)_\Q ,\qquad z_2 \in F^1 G_0(Z_2)_\Q.
\]
By definition of the coniveau-in-$M$ filtration, there are closed substacks
$Y_1 \subset Z_1$ and $Y_2\subset Z_2$ such that 
\[
\mathrm{codim}_M(Y_1) \ge d ,\qquad \mathrm{codim}_M(Y_2) \ge 1,
\]
and $z_1 \otimes z_2$ lies in the image of the left vertical arrow in the commutative diagram
\[
\xymatrix{
{ F^d G_0(Y_1)_\Q \otimes    F^1 G_0(Y_2)_\Q } \ar@{=}[d] \\
{   G_0(Y_1)_\Q \otimes    G_0(Y_2)_\Q }  \ar[rr]^{\cap}\ar[d] & & { G_0(Y_1\times_M Y_2)_\Q } \ar[d] \\
{ F^d G_0(Z_1)_\Q \otimes F^1 G_0(Z_2)_\Q }  \ar[rr]^{\cap}  &  & { G_0(Z_1\times_M Z_2)_\Q }. 
}
\]
The  special case of the proposition proved above   shows that the top horizontal arrow  takes values in 
$F^{d+1}G_0(Y_1\times_M Y_2)_\Q$, and 
Proposition \ref{prop:inductive coniveau} follows immediately.
\end{proof}








\section{Quadratic lattices}


This appendix contains some technical results on the existence of isometric embeddings of quadratic lattices.


\subsection{Embeddings of hyperbolic planes}


Let $L$ be a quadratic lattice over $\Z$. That is to say, a free $\Z$-module of finite rank endowed with a $\Z$-valued quadratic form such that $L\otimes \Q$ is nondegenerate.

\begin{lemma}\label{lem:local-global-hyperbolic}
Suppose  $L^\beef$ is an indefinite quadratic lattice such that
\begin{enumerate}
\item
for every prime $p$ there exists an isometric embedding
\[
\alpha_p : L\otimes\Z_p \to L^\beef \otimes \Z_p,
\]
\item
$ \mathrm{rank}_\Z(L^\beef) \ge \mathrm{rank}_\Z(L)+4$.
\end{enumerate}
If there exists an isometric embedding
$
a : L \otimes \Q \to L^\beef \otimes \Q 
$
such that
\begin{equation}\label{loc embedding match}
a(L\otimes \Z_p) = \alpha_p(L\otimes \Z_p)
\end{equation}
 for all but finitely many primes $p$, then  $a$ can be chosen so that \eqref{loc embedding match} holds for every prime $p$.
 \end{lemma}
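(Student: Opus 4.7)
The plan is to use strong approximation for the spin group of $L^\beef$ to correct the global embedding $a$ at the finite set of primes where it disagrees integrally with the $\alpha_p$'s. Throughout, identify $L \otimes \Q$ with its image $a(L \otimes \Q) \subset L^\beef \otimes \Q$.

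First, let
\[
S = \{p : a(L \otimes \Z_p) \neq \alpha_p(L \otimes \Z_p)\},
\]
which is finite by hypothesis. For each $p \in S$, by the local Witt extension theorem, any two isometric embeddings $L \otimes \Q_p \to L^\beef \otimes \Q_p$ differ by an element of $\mathrm{O}(L^\beef \otimes \Q_p)$; applied to $a$ and $\alpha_p$, we obtain $h_p \in \mathrm{O}(L^\beef)(\Q_p)$ with $h_p \cdot a = \alpha_p$ as maps $L \otimes \Q_p \to L^\beef \otimes \Q_p$. The hypothesis $\mathrm{rank}(L^\beef) - \mathrm{rank}(L) \geq 4$ ensures the orthogonal complement of $a(L \otimes \Q_p)$ in $L^\beef \otimes \Q_p$ has rank $\geq 4$, hence contains vectors representing every class in $\Q_p^\times/\Q_p^{\times,2}$; thus composition with reflections in this complement, which fixes $a(L \otimes \Q_p)$ pointwise, lets us adjust $h_p$ so as to arrange $h_p \in \mathrm{SO}(L^\beef)(\Q_p)$ with trivial spinor norm, i.e.\ we may lift $h_p$ to an element $g_p \in \mathrm{Spin}(L^\beef)(\Q_p)$.

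Next, apply strong approximation for $\mathrm{Spin}(L^\beef)$: since $L^\beef$ is indefinite of rank $\geq \mathrm{rank}(L)+4 \geq 4$, the reductive group $\mathrm{Spin}(L^\beef)$ is simply connected and semisimple, and isotropic at $\infty$, so $\mathrm{Spin}(L^\beef)(\Q)$ is dense in $\mathrm{Spin}(L^\beef)(\A_f)$. In particular, given any open subgroup
\[
U = \prod_{p \in S} U_p \times \prod_{p \notin S} \mathrm{Spin}(L^\beef \otimes \Z_p) \subset \mathrm{Spin}(L^\beef)(\A_f),
\]
and any target adele $(g_p)$, there exists $g \in \mathrm{Spin}(L^\beef)(\Q)$ lying in $g_p U_p$ at each $p \in S$ and in $\mathrm{Spin}(L^\beef \otimes \Z_p)$ for $p \notin S$. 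Choosing the $U_p$ small enough --- namely, so that $g_p U_p$ sends $a(L \otimes \Z_p)$ into $\alpha_p(L \otimes \Z_p)$ --- and taking the image $\bar g \in \mathrm{SO}(L^\beef)(\Q)$, we find that $\bar g \circ a$ is an isometric embedding $L \otimes \Q \to L^\beef \otimes \Q$ agreeing with $\alpha_p$ integrally at every $p \in S$, and still agreeing with $\alpha_p$ integrally at $p \notin S$ since $\bar g$ stabilizes $L^\beef \otimes \Z_p$ there.

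The main obstacle is the verification of strong approximation in the precise form needed --- ensuring we can simultaneously prescribe behavior at primes in $S$ and integral stabilization outside $S$ --- and the spinor-norm adjustment of Step~2, which is exactly where the rank hypothesis $\mathrm{rank}(L^\beef) \geq \mathrm{rank}(L) + 4$ enters (both to guarantee a large enough complement to absorb the spinor norm, and to guarantee that $\mathrm{Spin}(L^\beef)$ is noncompact at $\infty$). Everything else is formal manipulation with Witt's theorem.
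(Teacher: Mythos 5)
Your argument follows the same route as the paper's proof: produce local elements of $\mathrm{SO}(L^\beef\otimes\Q_p)$ carrying $a$ to $\alpha_p$ via Witt's theorem, use the rank $\ge 4$ orthogonal complement of $a(L\otimes\Q)$ (on which the spinor norm is surjective at every $p$) to correct determinant and spinor norm without disturbing the relevant lattices, lift to $\mathrm{Spin}(L^\beef\otimes\A_f)$, and conclude by strong approximation using indefiniteness. Up to the invocation of strong approximation, everything you write is sound.

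The final step, however, has a genuine gap at the primes $p\notin S$. You justify $\bar g(a(L\otimes\Z_p))=\alpha_p(L\otimes\Z_p)$ by saying that $\bar g$ stabilizes $L^\beef\otimes\Z_p$ there; but an element of $\mathrm{Spin}(L^\beef\otimes\Z_p)$ preserves the lattice $L^\beef\otimes\Z_p$ while freely permuting its sublattices, so it need not carry the sublattice $a(L\otimes\Z_p)$ onto itself. This cannot be repaired by shrinking the open set used for approximation: the subgroup $\prod_p\mathrm{Stab}\bigl(a(L\otimes\Z_p)\bigr)$ of $\mathrm{Spin}(L^\beef\otimes\A_f)$ is not open, since for essentially no $p$ does the stabilizer of the sublattice contain $\mathrm{Spin}(L^\beef\otimes\Z_p)$. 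In fact the literal statement fails: take $L=\langle 1\rangle$ and $L^\beef=\langle 1,1,1,1,-1\rangle$ with $\alpha_p(e)=e_1$ for $p$ odd, $\alpha_2(e)=e_2$, and $a(e)=e_1$; any $w=a'(e)$ with $\Z_p w=\Z_p e_1$ for even a single odd $p$ forces $w=\pm e_1$ in $L^\beef\otimes\Q$, which is incompatible with $\Z_2 w=\Z_2 e_2$. To be fair, the paper's own one-line conclusion elides exactly the same point. What the strong approximation argument genuinely delivers — and what suffices for Propositions \ref{prop:hyperbolic embeddings} and \ref{prop:r genus appendix} — is the weaker conclusion that $a$ can be chosen with $a(L\otimes\Z_p)=\alpha_p(L\otimes\Z_p)$ at the finitely many primes of $S$, while at the remaining primes $a(L\otimes\Z_p)$ lies in the $\mathrm{Spin}(L^\beef\otimes\Z_p)$-orbit of $\alpha_p(L\otimes\Z_p)$; in particular it is still contained in $L^\beef\otimes\Z_p$, and is a $\Z_p$-module direct summand whenever $\alpha_p(L\otimes\Z_p)$ is. You should either prove this weaker statement or supply a different argument for the equality at all primes.
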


\begin{proof}
As all embeddings 
$
L \otimes \Q_p  \to L^\beef \otimes \Q_p
$
 lie in a single $\mathrm{SO}( L^\beef \otimes \Q_p)$-orbit, there exists a $g  \in \mathrm{SO}( L^\beef \otimes \A_f)$ such that 
\begin{equation}\label{spinor adjust}
g_p  \cdot a (L\otimes \Z_p ) =\alpha_p (L^\beef \otimes\Z_p )  .
\end{equation}

By assumption, the orthogonal complement  
\[
W \define a(L\otimes \Q)^\perp \subset L^\beef \otimes \Q
\]
has dimension $\ge 4$.  As a quadratic space over $\Q_p$ of dimension $\ge 4$ represents every element of $\Q_p^\times$, the spinor norm
\[
\mathrm{SO}(W\otimes \Q_p) \to \Q_p^\times / (\Q_p^\times)^2 
\]
is surjective.  Multiplying  $g$ by a suitable element of 
$\mathrm{SO}(W\otimes \A_f ) \subset \mathrm{SO}( L^\beef \otimes \A_f)$, which does not change the relation \eqref{spinor adjust},  we may  assume that  $g$ has trivial  spinor norm and fix a lift to 
$
g \in \mathrm{Spin}( L^\beef \otimes \A_f).
$

Using strong approximation for the (simply connected) spin group we may replace this lift by a
$g \in \mathrm{Spin}(  L^\beef \otimes \Q)$ in such a way that   \eqref{spinor adjust} still holds, and the resulting embedding
$
g  a : L\otimes\Q \to L^\beef \otimes \Q
$
has the desired properties.
\end{proof}

Let $H$ be the hyperbolic plane over $\Z$.
In other words,   $H=\Z \ell \oplus \Z \ell_*$  where $\ell$ and $\ell_*$ are isotropic vectors with $[\ell ,\ell_*]=1$. 
The following result was used in the proof of Proposition \ref{prop:ZTmu geom conn}.

 \begin{proposition}\label{prop:hyperbolic embeddings}
 Let $\gamma \ge 0$ be the minimal number of elements needed to generate the finite abelian group $L^\vee/L$.
If $L$ is indefinite with
\[
\mathrm{rank}_\Z(L) \ge 2\gamma +6,
\]
then there exists an isometric embedding $H \to L$. 
\end{proposition}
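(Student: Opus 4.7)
The plan is to apply Lemma \ref{lem:local-global-hyperbolic} with $H$ in the role of $L$ and $L$ in the role of $L^\beef$. The rank inequality $\mathrm{rank}(L) \ge \mathrm{rank}(H) + 4 = 6$ and indefiniteness of $L$ are given, so everything reduces to producing (a) an isometric embedding $\alpha_p : H \otimes \Z_p \hookrightarrow L \otimes \Z_p$ at every prime $p$, and (b) a rational embedding $a : H \otimes \Q \hookrightarrow L \otimes \Q$ that agrees with the $\alpha_p$ at almost every prime.

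For (a), I would fix a Jordan decomposition
\[
L \otimes \Z_p = N_0 \oplus p^{e_1} N_1 \oplus \cdots \oplus p^{e_k} N_k,
\]
with each $N_i$ unimodular over $\Z_p$ and $0 = e_0 < e_1 < \cdots$. The discriminant group decomposes as $(L \otimes \Z_p)^\vee / (L \otimes \Z_p) \cong \bigoplus_{i \ge 1} p^{-e_i} N_i / p^{e_i} N_i$, and its minimal number of generators is $\sum_{i \ge 1} \mathrm{rank}(N_i)$. Since this local invariant is bounded above by $\gamma$, we get $\mathrm{rank}(N_0) \ge \mathrm{rank}(L) - \gamma \ge \gamma + 6 \ge 6$. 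Every quadratic form over $\Q_p$ of dimension $\ge 5$ is isotropic, so $N_0 \otimes \Q_p$ contains a nonzero isotropic vector; clearing denominators and dividing out by a power of $p$ produces a primitive isotropic $v \in N_0$. Unimodularity of $N_0$ then yields some $w \in N_0$ with $[v,w] = 1$, and replacing $w$ by $w - Q(w) v$ makes $w$ isotropic while preserving the pairing. The sublattice $\Z_p v + \Z_p w \subset N_0 \subset L \otimes \Z_p$ is then isometric to $H \otimes \Z_p$.

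For (b), I would invoke Hasse-Minkowski: isotropy of $L \otimes \R$ comes from indefiniteness, and isotropy of $L \otimes \Q_p$ at each prime comes from (a), so $L \otimes \Q$ is isotropic, and Witt decomposition produces $a : H \otimes \Q \hookrightarrow L \otimes \Q$. To achieve the matching condition, observe that for $p$ outside the finite set of primes dividing $\mathrm{disc}(L)$ together with the denominators of the entries of $a$, the lattice $L \otimes \Z_p$ is unimodular and both $a(H \otimes \Z_p)$ and $\alpha_p(H \otimes \Z_p)$ are primitive rank-two sublattices of it (primitive because $H$ itself is unimodular); primitive embeddings of $H$ into a fixed unimodular $\Z_p$-lattice form a single $\mathrm{O}(L \otimes \Z_p)$-orbit, so at each such prime we may replace $\alpha_p$ by $a|_{H \otimes \Z_p}$ without loss. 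Lemma \ref{lem:local-global-hyperbolic} then upgrades $a$ to the sought embedding $H \hookrightarrow L$.

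The delicate point will be the Jordan-block analysis at $p = 2$, where binary blocks of type $\begin{pmatrix} 0 & 1 \\ 1 & 0 \end{pmatrix}$ or $\begin{pmatrix} 2 & 1 \\ 1 & 2 \end{pmatrix}$ complicate the structure of unimodular $\Z_2$-lattices. However, the slack $\mathrm{rank}(N_0) \ge 6$ comfortably exceeds what is needed for both isotropy of $N_0 \otimes \Q_2$ and the hyperbolic-plane extraction, and the rest of the argument is insensitive to these refinements.
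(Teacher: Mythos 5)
Your proof is correct, and its global skeleton is the same as the paper's: reduce via Lemma \ref{lem:local-global-hyperbolic} and Hasse--Minkowski to producing local embeddings $H\otimes\Z_p \to L\otimes\Z_p$, then use the bound on $\gamma$ to find a large enough self-dual piece of $L\otimes\Z_p$ in which to place the hyperbolic plane. Where you genuinely diverge is in the local step. The paper splits $L\otimes\Z_p$ into blocks of rank at most $2$, observes that at most $\gamma$ of them can fail to be self-dual, and concludes that the self-dual part $K$ has rank at least $\mathrm{rank}(L)-2\gamma\ge 6$; it then embeds $H\otimes\Z_p$ into $K$ by passing through $K\otimes\Q_p$ and invoking Eichler's theorem that all maximal lattices in a $\Q_p$-quadratic space are isometric. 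You instead use the Jordan splitting, which gives the sharper count $\mathrm{rank}(N_0)\ge\mathrm{rank}(L)-\gamma$ (each nonunimodular Jordan block contributes its \emph{full} rank to the generator count of the discriminant group, whereas the paper's rank-$2$ blocks may contribute only one generator each, forcing the factor of $2$), and you then extract the hyperbolic plane by hand: a primitive isotropic $v\in N_0$, a $w$ with $[v,w]=1$ supplied by unimodularity, and the correction $w\mapsto w-Q(w)v$, which works uniformly at $p=2$. This is more elementary and self-contained than the appeal to Eichler; the paper's route buys less, but its cruder $2\gamma$ bound is exactly what the hypothesis $\mathrm{rank}_\Z(L)\ge 2\gamma+6$ is calibrated to. One small simplification available to you: the matching condition of Lemma \ref{lem:local-global-hyperbolic} at almost all $p$ is automatic, since for $p$ prime to the denominators of $a$ one may simply \emph{define} $\alpha_p=a|_{H\otimes\Z_p}$; the orbit argument for primitive embeddings into unimodular lattices is not needed.
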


\begin{proof}
Using Lemma \ref{lem:local-global-hyperbolic} and the Hasse-Minkowski theorem,
 we are reduced to proving the existence of 
an isometric embedding $H\otimes\Z_p \to L \otimes\Z_p$  for every prime $p$.

Using the classification of quadratic lattices over $\Z_p$, one can find an orthogonal decomposition
\[
L \otimes\Z_p \iso J_1\oplus \cdots \oplus J_t
\]
in such a way that  each $J_i$  has $\Z_p$-rank either $1$ or $2$.
Each summand satisfies $J_i \subset J_i^\vee$, and we collect together into one self-dual $\Z_p$-quadratic space $K$ those summands for which equality holds.  This gives a decomposition
\[
L \otimes\Z_p \iso J_1\oplus \cdots \oplus J_s \oplus K
\]
in such a way that  $J_i \subsetneq J_i^\vee$  and $K=K^\vee$.

Equating the $\Z_p$-ranks of both sides shows that 
\[
\mathrm{rank}_{\Z}(L)  = 
\mathrm{rank}_{\Z_p}(J_1\oplus \cdots\oplus J_s)+\mathrm{rank}_{\Z_p}(K) \le 2 s+\mathrm{rank}_{\Z_p}(K) .
\]
On the other hand,  the definition of $\gamma$ implies the existence of  a surjective $\Z_p$-module map
\[
\Z_p^\gamma \to      ( L^\vee / L ) \otimes\Z_p  \iso  \bigoplus_{i=1}^s J_i^\vee / J_i, 
\]
which in turn implies $s\le \gamma$. Combining these gives the second inequality in
\[
2\gamma+6\le \mathrm{rank}_\Z(L)  \le 2\gamma +\mathrm{rank}_{\Z_p}(K),
\]
and so $\mathrm{rank}_{\Z_p}(K) \ge 6$.

As every quadratic space over $\Q_p$ of dimension at least $5$ contains an isotropic vector,  there exists an isometric embedding
\[
H  \otimes \Q_p \to K\otimes\Q_p.
\]
Certainly the image of $H\otimes\Z_p$ is contained in some maximal lattice (in the sense of Definition \ref{def:max lattice}) in  $K\otimes \Q_p$, and it is a theorem of Eichler  that all maximal lattices in $K\otimes\Q_p$ are isometric.    
Thus   $H \otimes \Z_p$ can be embedded isometrically into \emph{any} maximal lattice in $K\otimes \Q_p$, including  $K$ itself  (which is self-dual, hence maximal).
In particular, $H \otimes \Z_p$ embeds isometrically  into $L\otimes \Z_p$.
\end{proof}


\subsection{Embeddings into self-dual lattices}


As above, let $H$ be the hyperbolic plane over $\Z$.

\begin{lemma}
\label{lem:split_everywhere}
If   $r,s \in \Z_{\ge 0}$ satisfy  $r\equiv s\pmod{8}$, then  there exists a quadratic space $V$ over $\Q$ of signature $(r,s)$ such that  
 \[
 V\otimes\Q_p\iso (H\otimes\Q_p)^{ \frac{r+s}{2}} 
 \]
  for every prime $p$.
\end{lemma}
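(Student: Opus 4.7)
The plan is to prove this via the Hasse-Minkowski classification of quadratic forms over $\Q$: such a form is determined by its dimension, signature, discriminant (in $\Q^\times/(\Q^\times)^2$), and the system of Hasse invariants $\epsilon_v \in \{\pm 1\}$, and conversely a form with prescribed local data exists whenever those data are locally consistent, satisfy the product formula $\prod_v\epsilon_v = 1$, and match the signature at $\R$.  First I would record the relevant local invariants.  The form $(H\otimes\Q_p)^{n/2}$, where $n = r+s$ (note that $n$ is even because $r\equiv s\pmod 2$), has discriminant $(-1)^{n/2}$ and Hasse invariant $(-1,-1)_p^{\binom{n/2}{2}}$; and the real form of signature $(r,s)$ has discriminant $(-1)^s$ and Hasse invariant $(-1)^{\binom{s}{2}}$.

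Next I would check that choosing the global discriminant to be $d = (-1)^{n/2}\in\Q^\times$ is compatible with all places.  The only nontrivial check is at $\R$, where compatibility requires $(-1)^{n/2}\equiv(-1)^s\pmod{(\R^\times)^2}$, i.e.\ $n/2\equiv s\pmod 2$, i.e.\ $r\equiv s\pmod 4$; this follows from the hypothesis $r\equiv s\pmod 8$.  I would then verify the Hasse product formula.  Since $(-1,-1)_p = 1$ for odd primes $p$ and $(-1,-1)_2 = (-1,-1)_\infty = -1$, the product reduces to
\[
\prod_v\epsilon_v = (-1)^{\binom{s}{2}}\cdot(-1)^{\binom{n/2}{2}} = (-1)^{\binom{s}{2}+\binom{n/2}{2}}.
\]
Using the identity
\[
\binom{n/2}{2}-\binom{s}{2} = \frac{(n/2-s)(n/2+s-1)}{2} = \frac{((r-s)/2)\cdot(n/2+s-1)}{2},
\]
and the fact that $r\equiv s\pmod 8$ forces $(r-s)/2\equiv 0\pmod 4$, the difference $\binom{n/2}{2}-\binom{s}{2}$ is even, so the two binomial coefficients have the same parity and the product formula is satisfied.

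With local consistency and the product formula in hand, Hasse-Minkowski produces a $\Q$-quadratic space $V$ of signature $(r,s)$, discriminant $(-1)^{n/2}$, and Hasse invariant $(-1,-1)_p^{\binom{n/2}{2}}$ at every finite $p$.  For each finite prime $p$, the space $V\otimes\Q_p$ then has the same dimension, discriminant, and Hasse invariant as $(H\otimes\Q_p)^{n/2}$, and so by the classification of quadratic forms over $\Q_p$ (valid for $n\geq 2$; the case $n=0$ is trivial) the two spaces are isomorphic.

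The main obstacle I anticipate is the bookkeeping in the Hasse-invariant calculation; the point is that the hypothesis $r\equiv s\pmod 8$ is precisely what is needed (the weaker $r\equiv s\pmod 4$ would suffice for the discriminant to match globally but would fail the product formula in general).  As an aside, one could also give a more explicit construction: assuming $r\geq s$ and writing $r = s + 8k$, take $V = H^s\oplus E_8^k$, using that the $E_8$ lattice is even unimodular of signature $(8,0)$ and becomes hyperbolic over every $\Q_p$; the case $r<s$ is handled symmetrically by replacing $E_8$ with its negative.  The Hasse-Minkowski route avoids invoking properties of $E_8$ and cleanly isolates the role of the congruence $r\equiv s\pmod 8$.
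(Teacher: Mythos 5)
Your proof is correct and takes the same route as the paper: the paper's proof is a one-line citation of the classification of quadratic forms over $\Q$ (Shimura, \emph{Arithmetic of quadratic forms}, Theorem 28.9), and your argument is precisely that classification carried out in detail, with the invariant bookkeeping (discriminant matching at $\R$ needing $r\equiv s\pmod 4$, the Hasse product formula needing $(r-s)/2\equiv 0\pmod 4$) done correctly.
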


\begin{proof}
This is an application of the classification of quadratic forms over $\Q$, as found in  ~\cite[Theorem 28.9]{ShimuraQuadratic}. 
\end{proof}

The following result is needed to make sense of Definition \ref{defn:rL}.  

\begin{proposition}\label{prop:r genus appendix}
Let $L$ be a quadratic lattice over $\Z$ of signature $(n,m)$, with $m>0$.
There exist an integer $r\geq 1$, a self-dual quadratic lattice $L^\beef$ of signature $(n+r,m)$, and an isometric embedding $L \to L^\beef$ identifying $L$ with a $\Z$-module direct summand of $L^\beef$. 
\end{proposition}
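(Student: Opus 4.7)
The strategy is to glue $L$ to a carefully chosen auxiliary positive definite lattice $\Lambda$ along an isomorphism of discriminant quadratic forms. Write $q_L:L^\vee/L\to\Q/\Z$, $[x]\mapsto Q(x)\bmod\Z$, for the discriminant quadratic form of $L$; it is a non-degenerate finite quadratic form on the finite abelian group $L^\vee/L$.

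The main step is to produce a positive definite integral $\Z$-lattice $\Lambda$ of some rank $r\geq 1$ whose discriminant quadratic form $q_\Lambda$ is isomorphic to $-q_L$ via some map $\phi:L^\vee/L\iso\Lambda^\vee/\Lambda$. This is a classical fact in the theory of integral quadratic forms: one decomposes $-q_L$ into its $p$-primary components, each of which is an orthogonal sum of explicit indecomposable pieces, and realizes each piece as the discriminant form of a small positive definite integral lattice (of rank at most $2$ for $p$ odd, and of rank at most $4$ for $p=2$).  Taking the orthogonal direct sum of these realizations, together with enough positive definite self-dual summands to ensure $r\geq 1$ and to satisfy the mod-$8$ signature congruence coming from Milgram's Gauss sum formula, yields the desired $\Lambda$.

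Given $\Lambda$ and $\phi$, define
\[
L^\beef \define \{(x,y)\in L^\vee\oplus \Lambda^\vee : \phi([x])=[y]\},
\]
a $\Z$-submodule of $L_\Q\oplus \Lambda_\Q$ containing $L\oplus\Lambda$ with index $|L^\vee/L|$, since it is the preimage in $L^\vee\oplus\Lambda^\vee$ of the graph of $\phi$ inside $L^\vee/L\oplus\Lambda^\vee/\Lambda$. It is then routine to verify:
\begin{enumerate}
\item[(a)] $L^\beef$ has signature $(n, m) + (r,0)=(n+r,m)$, since $L_\Q$ and $\Lambda_\Q$ are orthogonal.
\item[(b)] The quadratic form $Q$ is $\Z$-valued on $L^\beef$: for $(x,y)\in L^\beef$,
\[
Q(x)+Q(y)\equiv q_L([x]) + q_\Lambda(\phi([x])) = q_L([x]) - q_L([x]) = 0 \pmod \Z.
\]
\item[(c)] $L^\beef$ is self-dual: $\det(L^\beef) = \det(L)\det(\Lambda)/|L^\vee/L|^2 = 1$, using $\det(L) = |L^\vee/L| = |\Lambda^\vee/\Lambda| = \det(\Lambda)$.
\item[(d)] $L$ sits as a $\Z$-module direct summand of $L^\beef$: the projection $L^\beef\to\Lambda^\vee$, $(x,y)\mapsto y$, has kernel $L$ (if $(x,0)\in L^\beef$ then $\phi([x])=0$, forcing $x\in L$) and target torsion-free, so $L^\beef/L$ is torsion-free.
\end{enumerate}

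The main obstacle is the construction of $\Lambda$ in the second paragraph: namely, realizing a prescribed non-degenerate finite quadratic form as the discriminant form of a positive definite integral lattice. This is where the classification of finite quadratic forms and the case-by-case realization lemmas enter, particularly for the 2-adic component. Once $\Lambda$ is produced, the remaining steps are formal bookkeeping.
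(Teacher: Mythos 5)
Your construction is correct, but it takes a genuinely different route from the paper. The paper argues local-to-global: it first shows every $\Z_p$-lattice embeds as a direct summand into a power of the hyperbolic plane over $\Z_p$, chooses $r$ with $n+r\equiv m\pmod 8$ so that Lemma \ref{lem:split_everywhere} produces a rational quadratic space $V^\beef$ split at every finite place, embeds $L_\Q$ into $V^\beef$ by Hasse--Minkowski, takes $L^\beef$ to be a maximal (hence, by Eichler, everywhere hyperbolic and self-dual) lattice containing $L$, and finally adjusts the embedding at the bad primes using Lemma \ref{lem:local-global-hyperbolic}, whose proof rests on strong approximation for the spin group. Your approach instead glues $L$ to a positive definite lattice $\Lambda$ with $q_\Lambda\iso -q_L$ along the graph of an anti-isometry of discriminant forms; granting the realization theorem, the verification that $L^\beef$ is integral, unimodular, and contains $L$ primitively is indeed the routine bookkeeping you describe (note only that $\det(L)=(-1)^m|L^\vee/L|$, so the determinant computation gives $\det(L^\beef)=(-1)^m$, which still yields self-duality). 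What your route buys is a cleaner, more structural construction with no strong approximation; what it costs is the nontrivial classical input that every non-degenerate finite quadratic form is the discriminant form of a positive definite even lattice (Wall, Nikulin Theorem 1.10.1), which is of comparable depth to the Eichler/strong-approximation inputs the paper uses. Two small imprecisions in your sketch of that input: the odd-primary indecomposable pieces are not always realized by positive definite lattices of rank $\le 2$ (a rank-one even lattice has discriminant group of even order, and parity obstructions also rule out some rank-two realizations), and the Milgram congruence is not something you \emph{arrange} by adding self-dual summands --- it holds automatically for any even lattice realizing the form, and adding copies of $E_8$ only changes the rank by multiples of $8$; its role is rather as the constraint governing when a global lattice of prescribed signature realizing the prescribed local data exists. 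Since you defer to the classical theorem for this step anyway, these slips do not affect the correctness of the argument.
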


\begin{proof}
First, we claim that for every prime $p$ and every quadratic lattice $J$ over $\Z_p$ there is an isometric embedding
\[
J \to (H \otimes \Z_p)^{\mathrm{rank}_{\Z_p}(J)}
\]
realizing the source as a $\Z_p$-module direct summand of the target.
As in the proof of Proposition~\ref{prop:hyperbolic embeddings}, one can write $J$ as an orthogonal direct sum of quadratic lattices of rank $\le 2$, so we may assume that  $\mathrm{rank}_{\Z_p}(J)\le 2$.
For rank $1$ lattices, this just amounts to the fact that, for every $m\in\Z_p$, there exists a basis  $v,w \in H\otimes\Z_p$ with $Q(v) = m$.  If  $\mathrm{rank}_{\Z+p}(J)=2$  and $J$ is diagonalizable (which is always the case if $p>2$), we are immediately  reduced to the rank one case.  This leaves us with the case where $p=2$ and $J$ is  non-diagonalizable of rank $2$.  
In this case there is a basis $v,w \in J$ such that 
\[
Q(v) = 2^ka, \quad Q(w) = 2^kb,  \quad [v,w] = 2^kc
\]
 for some $a,b,c\in\Z_2^\times$ and $k\geq 0$. Suppose that $e_1,f_1,e_2,f_2$ is a standard hyperbolic basis for $(H\otimes\Z_2)^2$, and set
\[
v' = e_1 + 2^kaf_1 \quad \mbox{and} \quad w' = a^{-1}ce_1 + e_2+2^kbf_2.
\]
One can easily check that $v\mapsto v'$ and  $w\mapsto w'$ defines an isometric embedding 
$
J \to (H\otimes \Z_2)^2
$
 onto a direct summand.

By the paragraph above, for every prime $p$  and every $r\ge 0$, there exists an isometric embedding
\begin{equation}\label{local primitive summand}
 L\otimes \Z_p \to (H\otimes \Z_p)^{m+n+r}
\end{equation}
realizing the source as a $\Z_p$-module direct summand of the target.  
Choosing $r\ge 4$ so that $n+r \equiv m \pmod{8}$, Lemma \ref{lem:split_everywhere} allows us to choose a quadratic space $V^\beef$ over $\Q$ of signature $(n+r,m)$ such that 
\[
V^\beef \otimes \Q_p \iso (H\otimes \Q_p)^{m+n+r}
\]
for every prime $p$.  
By the Hasse-Minkowski theorem, there exists an isometric embedding
$
a : L \otimes \Q \to V^\beef.
$

Let $L^\beef \subset V^\beef$ be any maximal lattice containing $a(L)$. 
  By Eichler's theorem that all maximal lattices in a $\Q_p$-quadratic space are isometric, 
  \[
   L^\beef \otimes \Z_p \iso  (H\otimes \Z_p)^{m+n+r}
   \] 
   for every prime $p$.  In particular, $L^\beef$ is self-dual.
  For all but finitely many primes $p$, the embedding 
  \[
  \alpha_p : L \otimes \Z_p \to L^\beef \otimes \Z_p
  \]
  induced by $a$ realizes the source as a $\Z_p$-module direct summand of the target. 
   For the remaining primes, we take $\alpha_p$ to be the composition
  \[
   L\otimes \Z_p \map{\eqref{local primitive summand}}  (H\otimes \Z_p)^{m+n+r} \iso L^\beef \otimes \Z_p .
  \]
  By Lemma \ref{lem:local-global-hyperbolic}, there exists an isometric embedding $b:L \to L^\beef$ such that 
  $b(L\otimes \Z_p) = \alpha_p(  L \otimes \Z_p)$ for all $p$, from which it follows that $b(L)$ is a $\Z$-module direct summand of $L^\beef$.
\end{proof}

\bibliographystyle{amsalpha}

\providecommand{\bysame}{\leavevmode\hbox to3em{\hrulefill}\thinspace}
\providecommand{\MR}{\relax\ifhmode\unskip\space\fi MR }
\providecommand{\MRhref}[2]{%
  \href{http://www.ams.org/mathscinet-getitem?mr=#1}{#2}
}
\providecommand{\href}[2]{#2}

\end{document}